\theoremstyle{plain}
\DeclareMathAlphabet\mathrsfso      {U}{rsfso}{m}{n}
\newtheorem{Theorem}{Theorem}[section] 
\newtheorem{Proposition}[Theorem]{Proposition}
\newtheorem{Corollary}[Theorem]{Corollary}
\newtheorem{Definition}[Theorem]{Definition}
\newtheorem{Lemma}[Theorem]{Lemma}
\newtheorem{Remark}[Theorem]{Remark}
\newtheorem{Definition/Proposition}[Theorem]{Definition/Proposition}
\theoremstyle{definition}\newtheorem{Theorem*}{Theorem}
\theoremstyle{definition}\newtheorem{Corollary*}[Theorem*]{Corollary}
\theoremstyle{definition}\newtheorem{Proposition*}[Theorem*]{Proposition}
\date{\today}
\renewcommand\theequation%
\title{Tessellations of an affine apartment by affine weight polytopes}
\author[1]{Claudio \textsc{Bravo}}
\author[2]{Auguste \textsc{Hébert}}
\author[3]{Diego \textsc{Izquierdo}}
\author[4]{Benoit \textsc{Loisel}}
\affil[1]{claudio.bravo-castillo@polytechnique.edu}
\affil[2]{auguste.hebert@univ-lorraine.fr}
\affil[3]{diego.izquierdo@polytechnique.edu}
\affil[4]{benoit.loisel@math.univ-poitiers.fr}
\makeatletter \@addtoreset{figure}{section}\makeatother
\newcommand{\R}{\mathbb{R}}
\newcommand{\A}{\mathbb{A}}
\newcommand{\N}{\mathbb{N}}
\newcommand{\Z}{\mathbb{Z}}
\newcommand{\I}{\mathcal{I}}
\newcommand{\Id}{\mathrm{Id}}
\newcommand{\Fr}{\mathrm{Fr}}
\newcommand{\In}{\mathrm{Int}}
\newcommand{\conv}{\mathrm{conv}}
\newcommand{\supp}{\mathrm{supp}}
\newcommand{\cl}{\mathrm{cl}}
\newcommand{\vect}{\mathrm{vect}}
\newcommand{\germ}{\mathrm{germ}}
\newcommand{\CC}{\mathcal{C}}
\newcommand{\DC}{\mathcal{D}}
\newcommand{\FC}{\mathcal{F}}
\newcommand{\QC}{\mathcal{Q}}
\newcommand{\qp}{\varpi}
\newcommand{\ve}{\mathrm{vert}}
\newcommand{\Int}{\mathrm{Int}}
\newcommand{\bb}{\mathbf{b}}
\newcommand{\bt}{\mathbf{t}}
\newcommand{\cH}{\mathcal{H}}
\newcommand{\Extr}{\mathrm{Extr}}
\newcommand{\cE}{\mathcal{E}}
\newcommand{\cF}{\mathcal{F}}
\newcommand{\cM}{\mathcal{M}}
\newcommand{\cP}{\mathcal{P}}
\newcommand{\cQ}{\mathcal{Q}}
\newcommand{\sF}{\mathscr{F}}
\newcommand{\hh}{h}
\newcommand{\Alc}{\mathrm{Alc}}
\newcommand{\di}{\mathrm{dir}}
\newcommand{\cB}{\mathcal{B}}
\newcommand{\facet}{\mathrm{Facet}}
\newcommand{\bk}{\mathds{k}}
\newcommand{\bK}{\mathbb{K}}
\newcommand{\ff}{f}
\begin{document}


\maketitle

\abstract{Let $\A$ be  a finite dimensional vector space and $\Phi$ be a finite root system in  $\A$. To this data is associated an affine poly-simplicial complex. Motivated by a forthcoming construction of connectified higher buildings, we study ``affine weight polytopes'' associated to these data. We prove that these polytopes tesselate $\A$.  We also prove a kind of ``mixed'' tessellation, involving the affine weight polytopes and the poly-simplical structure on $\A$.}

\section{Introduction}

\paragraph{Tessellation of $\A$ by affine weight polytopes}  Let $\A$ be a finite dimensional vector space. A finite (reduced) root  system in $\A$ is a finite set $\Phi$ of linear forms on $\A$, called  roots, satisfying certain conditions (see \cite[VI, \S1]{bourbaki1981elements}).
The associated affine (locally finite) hyperplane arrangement is $\cH=\{\alpha^{-1}(\{k\})\mid (\alpha,k)\in \Phi\times \Z\}$. The elements of $\cH$ are called affine walls. The hyperplane arrangement $\cH$ equips $\A$ with the structure of a poly-simplicial complex, whose maximal poly-simplices  - the alcoves - are the connected components of $\A\setminus \bigcup_{H\in \cH}H$.
To each affine wall $H$ is  naturally associated a reflection $s_H$.  Let  $W^{\mathrm{aff}}$ be the affine Weyl group associated with $(\A,\cH)$, i.e., $W^{\mathrm{aff}}$ is the group generated by the $s_H$, for $H\in \cH$. This is naturally a Coxeter system and every closed alcove $\overline{C}$ is a fundamental domain for the action of $W^{\mathrm{aff}}$ on $\A$. 
Fix a ``fundamental alcove'' $C_f$ and pick one element $\bb=\bb_{C_f}$ of $C_f$. Then for every alcove $C$, the set $C\cap W^{\mathrm{aff}}.\bb$ contains exactly one element, that we denote $\bb_C$. Denote by $\ve(\A)$ the set of vertices of $(\A,\cH)$, i.e., the set of zero-dimensional faces of the associated poly-simplicial complex. For $F$   a face  of $(\A,\cH)$, we set \[\cF_F=\cF_{\bb,F}=\Int_r(\conv(W^{\mathrm{aff}}_F.\bb_C)),\] where $W^{\mathrm{aff}}_F$ is the fixator of $F$ in $W^{\mathrm{aff}}$, $C$ is any alcove whose closure contains $F$, $\conv$ denotes the convex hull, and given a subset $E \subset \A$, $\Int_r(E)$ denotes the relative interior of $E$ (that is the interior of $E$ in the affine subspace spanned by $E$). When $F$ is $0$-dimensional, i.e., when $F=\{\lambda\}$ for some $\lambda\in \ve(\A)$, we usually write $\cP(\lambda)$ instead of $\cF_{\{\lambda\}}$. 
Our first main result is the following (see Figures~\ref{f_irregular_tessellation_A2}, \ref{f_regular_tessellation_A2}, \ref{f_tessellation_B2} and \ref{f_tessellation_G2}): 

\begin{Theorem}\label{t_tessellation_intro} (see Propositions~\ref{p_description_faces_affine}, \ref{p_faces_intersections} and Theorem~\ref{t_tessellation_A})
\begin{enumerate}

\item Let  $\lambda\in \ve(\A)$.  Then the assignment $F\mapsto \cF_F$ is a bijection between the set of faces of $(\A,\cH)$ whose closure contains $\lambda$ and the set of faces of $\overline{\cP(\lambda)}$. 

\item Let $F$ be a face of $(\A,\cH)$. Then we have  $\overline{\cF_F}=\bigcap_{\lambda\in \ve(F)}\overline{\cP(\lambda)}$,   where $\ve(F)$ denotes the set of vertices of $F$. 

\item Let $\sF$ denote the set of faces of $(\A,\cH)$. Then $\A=\bigsqcup_{F\in \sF}\cF_F.$

\end{enumerate}
\end{Theorem}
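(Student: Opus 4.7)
My plan is to reduce everything to the local geometry at a single vertex, where $\overline{\cP(\lambda)}$ becomes a classical orbit polytope of a generic point under a finite reflection group. Fix $\lambda\in\ve(\A)$ and an alcove $C$ with $\lambda\in\overline{C}$. The fixator $W^{\mathrm{aff}}_\lambda$ is a finite reflection group acting on $\A$, generated by the reflections through the affine walls of $\cH$ that contain $\lambda$; the cone at $\lambda$ through $C$ is a fundamental chamber for $W^{\mathrm{aff}}_\lambda$, and $\bb_C\in C$ lies in its open interior. Consequently, $\overline{\cP(\lambda)}=\conv(W^{\mathrm{aff}}_\lambda.\bb_C)$ is the convex hull of a \emph{generic} $W^{\mathrm{aff}}_\lambda$-orbit.

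For part~(1), I would invoke the classical description of the face lattice of such an orbit polytope: the proper faces are precisely the subsets of the form $w\cdot\conv(W^{\mathrm{aff}}_{\lambda,J}.\bb_C)$, indexed by subsets $J$ of a system of simple reflections of $W^{\mathrm{aff}}_\lambda$ and coset representatives $w\in W^{\mathrm{aff}}_\lambda/W^{\mathrm{aff}}_{\lambda,J}$. The same data parametrize the faces $F$ of $(\A,\cH)$ whose closure contains $\lambda$ (the star of $\lambda$ in the local Coxeter complex), with the fixator realized as $W^{\mathrm{aff}}_F=wW^{\mathrm{aff}}_{\lambda,J}w^{-1}$. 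Matching these two parametrizations, the face of $\overline{\cP(\lambda)}$ associated with $F$ is exactly $\conv(W^{\mathrm{aff}}_F.\bb_C)=\overline{\cF_F}$, yielding the bijection.

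For part~(2), the inclusion $\overline{\cF_F}\subset\overline{\cP(\lambda)}$ for $\lambda\in\ve(F)$ is immediate from~(1) and the containment $W^{\mathrm{aff}}_F\subset W^{\mathrm{aff}}_\lambda$. For the converse, I would take $x\in\bigcap_{\lambda\in\ve(F)}\overline{\cP(\lambda)}$ and, for each $\lambda\in\ve(F)$, consider the unique face $\overline{\cF_{F_\lambda}}$ of $\overline{\cP(\lambda)}$ whose relative interior contains $x$ (the face $F_\lambda$ being extracted via~(1)). A combinatorial comparison of how the polytopes $\overline{\cP(\lambda)}$ meet along $\overline{F}$ should force all the $F_\lambda$ to coincide with a common face lying in $\overline{F}$, so that $x\in\overline{\cF_F}$. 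Checking this consistency of the local face lattices at the different vertices of $F$ is, I expect, the principal technical obstacle.

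For part~(3), coverage reduces, by tiling $\A$ with closed alcoves, to the finite-dimensional convex-geometric assertion that within any closed alcove $\overline{C}$ the ``corner regions'' $\overline{\cP(\lambda)}\cap\overline{C}$ for $\lambda\in\ve(C)$ cover $\overline{C}$; this is a purely local statement about the finite reflection groups at the vertices of a single alcove. Disjointness then follows cleanly from (1) and (2): if $x\in\cF_F\cap\cF_{F'}$ and $\lambda\in\ve(F)$, uniqueness of the face of $\overline{\cP(\lambda)}$ containing $x$ in its relative interior identifies $\overline{\cF_F}$ with this face; applying the same reasoning at a vertex of $F'$ (after using (2) to place $\overline{\cF_{F'}}$ inside the relevant polytope) identifies $\overline{\cF_{F'}}$ with it as well, forcing $F=F'$ by the bijection in~(1).
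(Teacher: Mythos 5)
Your part (1) is essentially the paper's route (Proposition~\ref{p_description_faces_affine}, which reduces to the vectorial weight polytope and proves the ``classical'' orbit-polytope face description rather than citing it), and your disjointness sketch matches the paper's. The genuine gap is in part (2): the inclusion $\bigcap_{\lambda\in \ve(F)}\overline{\cP(\lambda)}\subset \overline{\cF_F}$ cannot be obtained from the face lattices of the individual polytopes alone, yet this is exactly where you stop and say a ``combinatorial comparison \emph{should} force'' the conclusion. The needed input is the two-vertex statement that for adjacent $\lambda,\mu$ one has $\overline{\cP(\lambda)}\cap\overline{\cP(\mu)}=\conv(W^{\mathrm{aff}}_{[\lambda,\mu]}.\bb_C)$, i.e.\ the two polytopes meet \emph{exactly} along their common closed facet and lie on opposite sides of its supporting hyperplane. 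This is a metric separation argument (Lemma~\ref{l_intersection_P(lambda)_P(mu)_adjacent}), resting on the orthogonality $\di(\supp(\cF_{F_1}))=\di(\supp(F_1))^\perp$ from Proposition~\ref{p_description_faces_affine} and the barycenter Lemma~\ref{l_barycenter_orbit}; the general intersection is then computed through extreme points (Lemmas~\ref{l_extreme_points_faces_polytope} and \ref{l_fixator_subset_alcove}, regularity of $\bb_C$, Krein--Milman), which is the content of Proposition~\ref{p_faces_intersections}. Without this you have no control on how $\overline{\cP(\lambda)}$ and $\overline{\cP(\mu)}$ overlap, and your ``consistency of local face lattices'' is not a proof.

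The same missing input undermines part (3): the claim that the corner regions $\overline{\cP(\lambda)}\cap\overline{C}$, $\lambda\in\ve(C)$, cover $\overline{C}$ is not a routine ``purely local statement about finite reflection groups'' --- it is the covering assertion itself restricted to one alcove, and it fails for an arbitrary choice of one polytope per vertex; it needs precisely the facet-matching of adjacent polytopes. The paper instead proves coverage globally in Theorem~\ref{t_tessellation_A}: the union $\bigcup_{\lambda}\overline{\cP(\lambda)}$ is closed by local finiteness (Lemma~\ref{l_inclusion_P(lambda)_union_alcoves}), and it is dense because leaving $\overline{\cP(\nu)}$ through a point in the relative interior of a facet lands you immediately in the adjacent $\overline{\cP(\nu')}$ (Lemma~\ref{l_local_description_polytope} together with the shared-facet statement), so the complement lies in a countable union of affine subspaces of codimension at least two. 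Finally, a small repair to your disjointness step: to identify $\cF_F$ and $\cF_{F'}$ as faces of one and the same polytope you need a vertex (or alcove) dominating both $F$ and $F'$, which again comes from Proposition~\ref{p_faces_intersections}, not from part (1) alone.
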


\paragraph{Thickened tessellation of $\A$}  We then study a variant of the tessellation of $\A$ given by Theorem \ref{t_tessellation_intro}(3).
Let $\eta\in ]0,1]$.  Let $x\in \A$. Let $F=F_{\cP}(x)$ be the face of $(\A,\cH)$ such that $\cF_F$ contains $x$ (the existence and uniqueness of such a face is provided by Theorem~\ref{t_tessellation_intro} (3)).  Set $F_\eta(x)=(1-\eta)F+\eta x=\{(1-\eta)y+\eta x\mid y\in F\}$. For $\lambda\in \ve(\A)$, we denote by $h_{\lambda,\eta}:\A\rightarrow \A$ the homothety of $\A$ with center $\lambda$ and radius $\eta$.    Then $\overline{{F_\eta(x)}}$ is a ``copy'' of $\overline{F}$ containing exactly one point of $h_{\lambda,\eta}(\overline{\cP(\lambda)})$, namely $h_{\lambda,\eta}(x)$, for each $\lambda\in \ve(F)$. This is the convex hull of these points. Then we prove the following theorem:

\begin{Theorem}\label{t_thickened_tessellation_intro}
Let $\eta\in ]0,1]$. Then $\A=\bigsqcup_{x\in \A} \overline{F_\eta(x)}$. 
\end{Theorem}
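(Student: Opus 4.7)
The plan is to separate the degenerate case $\eta = 1$, where $\overline{F_\eta(x)} = \{x\}$ makes the partition trivial, from the case $\eta \in (0,1)$ on which I focus. For $\eta < 1$, I will regroup the sets $\overline{F_\eta(x)}$ according to the face $F = F_\cP(x)$ of $(\A,\cH)$ furnished by Theorem~\ref{t_tessellation_intro}(3). For each face $F$, set
\[
U_F \;:=\; \bigcup_{x \in \cF_F} \overline{F_\eta(x)} \;=\; \eta\,\cF_F + (1-\eta)\,\overline{F}.
\]
Since $\A = \bigsqcup_F \cF_F$, the theorem reduces to two claims: (a) within each tile $U_F$, the sets $\{\overline{F_\eta(x)}\}_{x \in \cF_F}$ are pairwise disjoint with union $U_F$; and (b) the tiles $\{U_F\}_F$ themselves form a partition of $\A$.

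Claim (a) rests on a transversality observation. Every element of the fixator $W^{\mathrm{aff}}_F$ is an affine isometry fixing $\mathrm{aff}(F)$ pointwise, and hence moves each point by a vector in the orthogonal complement $V_F^\perp$ of the direction space $V_F$ of $\mathrm{aff}(F)$. Consequently the orbit $W^{\mathrm{aff}}_F\cdot\bb_C$, and a fortiori $\cF_F$, is contained in an affine subspace of the form $p_F + V_F^\perp$ transverse to $\mathrm{aff}(F)$. Since $\overline{F}\subset\mathrm{aff}(F)$ has direction $V_F$, the map $\Psi_F\colon \cF_F\times\overline{F}\to U_F$, $(x,y)\mapsto \eta x + (1-\eta)y$, is injective: any relation $\eta(x-x') = (1-\eta)(y'-y)$ forces both sides into $V_F\cap V_F^\perp = \{0\}$. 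It is tautologically surjective, hence a bijection, so its fibers $\Psi_F(\{x\}\times\overline{F}) = \overline{F_\eta(x)}$ partition $U_F$.

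Claim (b), the crux of the proof, splits into disjointness and coverage. From the bijection $\Psi_F$, each $U_F$ has a product structure that is open in the directions of $V_F^\perp$ (since $\cF_F$ is a relative interior) and closed in the directions of $V_F$ (since $\overline{F}$ is closed); a careful analysis of this mixed open/closed boundary behaviour across different faces should yield the disjointness $U_F \cap U_{F'} = \emptyset$ whenever $F \neq F'$. For the coverage $\bigcup_F U_F = \A$, I would construct the inverse map $z \mapsto (x,y)$ explicitly: given $z \in \A$, first locate $z$ in some $\cF_{F^*}$ via Theorem~\ref{t_tessellation_intro}(3), then identify the correct ``thickened'' face $F$ (which may strictly contain $F^*$) by analyzing which shrunken weight polytopes $h_{\lambda,\eta}(\overline{\cP(\lambda)})$ contain $z$, and finally recover $x$ by inverting the dilation $h_{\lambda,\eta}$ from any $\lambda \in \ve(F)$. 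The main obstacle will be making this piecewise inverse globally consistent: one must verify that the combinatorial rule always selects a genuine face of $(\A,\cH)$ and that the resulting $x$ always lies in the relative interior $\cF_F$ independently of the chosen vertex. The intersection description $\overline{\cF_F} = \bigcap_{\lambda\in\ve(F)}\overline{\cP(\lambda)}$ from Theorem~\ref{t_tessellation_intro}(2) should provide the compatibility needed to glue the pieces together across boundary strata.
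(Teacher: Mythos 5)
Your claim (a) is correct and matches the paper's own use of Proposition~\ref{p_description_faces_affine}(3): since $\di(\supp(\cF_F))=\di(\supp(F))^\perp$, the map $(x,y)\mapsto \eta x+(1-\eta)y$ from $\cF_F\times\overline{F}$ is injective, so the sets $\overline{F_\eta(x)}$ with $x\in\cF_F$ are pairwise disjoint. But claim (b) is exactly the substance of the theorem, and you leave both halves of it unproven. For the disjointness of $U_F$ and $U_{F'}$ with $F\neq F'$, the ``mixed open/closed boundary behaviour'' does not suffice: if $z\in\overline{F_\eta(x)}\cap\overline{F_\eta(x')}$, Lemma~\ref{l_inclusion_F_eta_face} only tells you that $x$ and $x'$ lie in the same face $G$ of $(\A,\cH)$, and $F=F_\cP(x)$, $F'=F_\cP(x')$ can a priori be two different faces dominated by $G$; ruling this out is what the paper's Lemmas~\ref{l_projection_F_eta} and~\ref{l_projection_separation} do, by showing that the projections $\pi_{\mu,\eta}(z)$ onto the shrunken polytopes determine $\pi_\mu(x)$ for every vertex $\mu$ of an alcove containing $G$, and that these projections separate points of $\overline{C}$. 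Nothing in your sketch replaces this step.

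For coverage, your proposed inverse rule has a concrete flaw: you suggest identifying the thickened face $F$ ``by analyzing which shrunken weight polytopes $h_{\lambda,\eta}(\overline{\cP(\lambda)})$ contain $z$'', but for a generic point of a thickened edge or thickened alcove (e.g.\ the interior of the blue rectangle or of the central white triangle in Figure~\ref{f_explanation_t_thickened}) $z$ lies in \emph{no} shrunken polytope, so this rule selects nothing. The paper instead looks at the orthogonal projections $\pi_{\lambda,\eta}(z)$ for all vertices $\lambda$ of an alcove containing $z$, counts for each $\lambda$ the shrunken polytopes $h_{\lambda,\eta}(\overline{\cP(\nu)})$ containing $\pi_{\lambda,\eta}(z)$, and takes the vertices where this count is minimal (Lemma~\ref{l_convex_hull_projections}); proving that this yields a genuine face $F$, a point $\ff_z\in\cF_F$, and $z\in\overline{F_\eta(\ff_z)}$ requires the whole projection apparatus of Section~\ref{s_thickened_tessellation}: Proposition~\ref{p_projection_polytope_neighbour} (the projection of $\overline{\cP(\mu)}$ onto $\overline{\cP(\lambda)}$ lands in $\overline{\cP(\lambda)}\cap\overline{\cP(\mu)}$), Proposition~\ref{p_projection_preserving_affine_spaces} on projections along parabolic affine subspaces, and the conical description $F=\bigcap_{\lambda\in\ve(F)}(\lambda+\R_{>0}(F-\lambda))$ of Proposition~\ref{p_conical_description_faces}. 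Since none of this (or any substitute for it) appears in your argument, the proof is incomplete at its core; what you have established is only the fibration of each $U_F$ over $\cF_F$, not that the $U_F$ tile $\A$.
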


We deduce the following corollary, which is easier to interpret (see Figures~\ref{f_explanation_t_thickened_A2} and \ref{f_explanation_t_thickened_G2}):

\begin{Corollary}\label{c_thickened_tessellation_intro}
Let $\eta\in ]0,1]$. Then we have \[\A=\bigsqcup_{\lambda\in \ve(\A)} h_{\lambda,\eta}\left(\cP\left(\lambda\right)\right)\sqcup \bigsqcup_{x\in \bigcup_{\lambda\in \ve(\A)}\Fr(\cP(\lambda))}\overline{F_\eta(x)}.\]
\end{Corollary}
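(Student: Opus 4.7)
The strategy is to deduce the corollary from Theorem \ref{t_thickened_tessellation_intro} by regrouping the indexing set $\A$ according to the face $F=F_{\cP}(x)$ containing each $x$. Using part (3) of Theorem \ref{t_tessellation_intro}, I would first rewrite
\[
\A \;=\; \bigsqcup_{x\in \A}\overline{F_\eta(x)} \;=\; \bigsqcup_{F\in \sF}\; \bigsqcup_{x\in \cF_F}\overline{F_\eta(x)},
\]
and then treat separately the zero-dimensional faces and the faces of positive dimension.

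For a vertex $F=\{\lambda\}$, the definition of $F_\eta(x)$ gives $F_\eta(x)=\{(1-\eta)\lambda+\eta x\}=\{h_{\lambda,\eta}(x)\}$, and since $\cF_{\{\lambda\}}=\cP(\lambda)$ the inner disjoint union collapses to
\[
\bigsqcup_{x\in \cP(\lambda)}\overline{F_\eta(x)} \;=\; h_{\lambda,\eta}\bigl(\cP(\lambda)\bigr).
\]
Summing over $\lambda\in \ve(\A)$ produces the first block of the claimed decomposition.

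For the remaining faces, the plan is to identify the index set $\bigcup_{F\in \sF,\,\dim F\geq 1}\cF_F$ with $\bigcup_{\lambda\in \ve(\A)}\Fr(\cP(\lambda))$. Given $x\in \cF_F$ with $\dim F\geq 1$ and any $\lambda\in \ve(F)$, Theorem \ref{t_tessellation_intro}(2) gives $x\in \overline{\cF_F}\subset \overline{\cP(\lambda)}$, while the disjointness in (3) forbids $x\in \cP(\lambda)=\cF_{\{\lambda\}}$; hence $x\in \Fr(\cP(\lambda))$. Conversely, any $x\in \Fr(\cP(\lambda))$ lies in $\overline{\cP(\lambda)}\setminus \cP(\lambda)$, and Theorem \ref{t_tessellation_intro}(1) allows one to decompose $\overline{\cP(\lambda)}$ into the cells $\cF_F$ indexed by faces $F$ with $\lambda\in \overline{F}$, so such an $x$ sits in some $\cF_F$ with $F\neq\{\lambda\}$ and $\lambda\in \ve(F)$, forcing $\dim F\geq 1$.

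The only delicate step is this last identification: it requires interpreting the bijection in Theorem \ref{t_tessellation_intro}(1) not merely as an abstract correspondence but as the statement that the polytope $\overline{\cP(\lambda)}$ is partitioned by the cells $\cF_F$ for those faces $F$ of $(\A,\cH)$ whose closure contains $\lambda$. Once this is granted, the disjointness of all pieces in the corollary, both within and across the two blocks, is automatically inherited from the disjointness in Theorem \ref{t_thickened_tessellation_intro}.
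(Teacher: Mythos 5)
Your proof is correct and is essentially the deduction the paper intends: the corollary is stated there without proof, as an immediate regrouping of the disjoint union in Theorem~\ref{t_thickened_tessellation_intro} according to the cell $\cF_{F_\cP(x)}$ containing $x$, exactly as you do (using $\cF_{\{\lambda\}}=\cP(\lambda)$ and $\overline{F_\eta(x)}=\{h_{\lambda,\eta}(x)\}$ for $x\in\cP(\lambda)$). The ``delicate step'' you flag is already explicit in the paper: Proposition~\ref{p_description_faces_affine} together with \cite[Theorem 1.10]{bruns2009polytopes} gives $\overline{\cP(\lambda)}=\bigsqcup_{F\in \sF_\lambda(\cH)}\cF_F$, a fact used verbatim at the end of the proof of Theorem~\ref{t_tessellation_A}, so your identification of $\bigcup_{\lambda\in\ve(\A)}\Fr(\cP(\lambda))$ with $\bigcup_{\dim F\geq 1}\cF_F$ is fully covered.
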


Note that Theorem~\ref{t_thickened_tessellation_intro} and Corollary~\ref{c_thickened_tessellation_intro} provide some kinds of tessellations of $\A$ involving homothetic images of the $\cP(\lambda)$ and homothetic images of the faces of $(\A,\cH)$.

The polytopes $\overline{\cP(\lambda)}$ that we study already appear in the litterature, in various contexts of representation theory. If $\lambda\in \A$, then we can write $\overline{\cP(\lambda)}=\lambda+\overline{\cP^0_\lambda}$, where $\overline{\cP^0_\lambda}=\conv\{w.\bb'\mid w\in W^v_\lambda\}$, where $W^v_\lambda$ is the finite Weyl group associated with the root system $\Phi_\lambda=\{\alpha\in \Phi\mid \alpha(\lambda)\in \Z\}$ and $\bb'$ is a regular element of $\A$ (i.e., $w.\bb'\neq \bb'$, for every $w\in W^v_\lambda$). The polytope $\overline{\cP^0_\lambda}$ is called  the \textbf{weight polytope} and is studied in \cite{vinberg1991certain} and \cite{besson2023weight}. Its intersection with a closed vectorial chamber of $\A$  is studied in \cite{burrull2023dominant}.

\paragraph{Motivation from the theory of higher buildings} Let $\bK$ be a field equipped with a valuation $\omega:\bK^*\twoheadrightarrow \Z^n$, for some $n\in \Z_{\geq 1}$, where $\Z^n$ is equipped with the lexicographic order.
For example, let $\bk$ be a field and set $\bK=\bk(t_1,\ldots,t_n)$, which is contained in $ \bk(\!(t_n)\!)(\!(t_{n-1})\!)\ldots (\!(t_1)\!)$, with  \[\omega\left(\sum_{(i_1,\ldots,i_n)\in \Z^n} a_{i_1,\ldots,i_n} t_1^{i_1}\ldots t_n^{i_n}\right)=\min\{ (i_1,\ldots,i_n)\in \Z^n\mid a_{i_1,\ldots,i_n}\neq 0\},\] for $\sum_{(i_1,\ldots,i_n)\in \Z^n} a_{i_1,\ldots,i_n} t_1^{i_1}\ldots t_n^{i_n}\in \bK$. Such fields naturally appear as successive completions of function fields of $n$-dimensional algebraic varieties.

We would like to study groups of the form $G=\mathbf{G}(\bK)$, for $\mathbf{G}$ a split reductive group, as well as certain arithmetic subgroups of them (for instance their presentations, certain (co)homological groups or their (co)homological dimension). When $n=1$, then $G$ acts on a classical Bruhat-Tits building  which is connected and contractible (see \cite{bruhat1972groupes} and \cite{bruhat1984groupes}). We would like to use buildings techniques to study $G$, when $n\geq 2$. In this case Bennett and Parshin independently defined analogues of Bruhat-Tits buildings in the case where $\mathbf{G}$ is $\mathrm{SL}_m$ or $\mathrm{PGL}_m$, for $m\in \Z_{\geq 2}$ (see \cite{bennett1994affine} and \cite{parshin1994higher}), using lattices of $\bK^m$. In \cite{bennett1994affine}, Bennett develops an axiomatic definition of $\Lambda$-buildings (where $\Lambda=\omega(\bK^*)$, $\Lambda=\Z^n$ in our case). In \cite{hebert2020Lambda}, the three last named authors prove that $G$ acts on a $\Z^n$-building $\I$, generalizing (in the quasi-split case) Bruhat-Tits construction via ``parahoric'' subgroups.

When $n\geq 2$, $\I$ is a bit like a set of Russian dolls: $\I$ is a Bruhat-Tits building  whose vertices are $\Z^{n-1}$-buildings. A problem is that $\I$ is not connected since the  latter $\Z^{n-1}$-buildings  are connected components of $\I$. As the connectedness and contractibility of Bruhat-Tits buildings are important ingredients of the classical theory, we would like to embed the $\mathbb{Z}^n$-building  $\I$ in a ``connectified building $\hat{\I}$ or $\hat{\I^\eta}$, for $\eta\in ]0,1[$, on which $G$ would act and which would have better topological properties. We hope that our tessellations  are a first step in this direction.

\paragraph{Structure of the paper} The paper is organized as follows. In section~\ref{s_vectorial_Weyl_polytopes}, we study the polytopes $\cP(\lambda)$ (or more precisely translates of these polytopes) one by one. We describe  the set of faces   and  study the orthogonal projection on such a polytope.

In section~\ref{s_tessellation_A}, we study the polytopes $\overline{\cP(\lambda)}$ and prove that they tessellate $\A$.

In section~\ref{s_thickened_tessellation}, we prove Theorem~\ref{t_thickened_tessellation_intro}.

\tableofcontents

\section{Vectorial Weyl polytopes}\label{s_vectorial_Weyl_polytopes}

Let $\A$ be a finite dimensional vector space and $\Phi$ be a finite root system in $\A$. This equips $\A$ with the structure of a poly-simplicial complex. The aim of this paper is to study tesselations  of $\A$ by polytopes $\overline{\cP(\lambda)}$ associated with vertices $\lambda$ of $\A$.  Before studying the layout of the different polytopes, we begin by studying the polytopes one by one.   

In this section, we study the vectorial weight polytope $\overline{\cP^0}=\overline{\cP^0_\Psi}$ associated with a finite root system $\Psi$. In the applications, $\Psi$ will be a root system of the form $\Phi_\lambda=\{\alpha\in \Phi\mid \alpha(\lambda)\in \Z\}$, for $\lambda$ a vertex of $(\A,\Phi)$ and then $\overline{\cP(\lambda)}$ will be a translate of $\overline{\cP^0_{\Phi_\lambda}}$. To define this  polytope, we choose a regular point of $\A$ and we take the convex hull of the orbit of this point under the action of the finite Weyl group of $\Psi$.

In subsection~\ref{ss_preliminary_polytopes}, we study general facts on polyhedra.

In subsection~\ref{ss_vectorial_Weyl_polytope}, we define the vectorial weight polytope $\overline{\cP^0}$.

In subsection~\ref{ss_faces_vectorial_Weyl_polytope}, we describe the faces of $\overline{\cP^0}$.

In subsection~\ref{ss_projections_parabolic_subspaces}, we study the restriction to certain affine subspaces of $\A$ of the orthogonal projection on $\overline{\cP^0}$.

\subsection{Preliminary results on polyhedra and polytopes}\label{ss_preliminary_polytopes}

In this section, $\A$ is a finite  dimensional real vector space. We equip $\A$ with a scalar product $\langle\cdot\mid \cdot\rangle$ (for the moment, $\langle\cdot\mid \cdot\rangle$ is arbitrary, but in most of the applications, it will be invariant under the action of the vectorial or affine Weyl groups defined later).  If $E$ is a subset of $\A$, we denote by $\mathring{E}$ its interior. If $E$ is non-empty, its \textbf{support} $\supp(E)$\index{s@$\supp$} is the smallest affine subspace of $\A$ containing $E$. Its \textbf{relative interior}, denoted $\In_r(E)$\index{i@$\In_r$}, is then the interior of $E$ in its support. In particular, if $E$ is convex and closed, we have $E=\overline{\Int_r(E)}$. When $E$ is an affine subspace of $\A$, its \textbf{direction}, denoted $\di(E)$\index{d@$\di$}, is the vector subspace $\{x-x'\mid x,x'\in E\}$ of $\A$.

We mainly follow the terminology of \cite{bruns2009polytopes}.  A (closed) \textbf{polyhedron} is the intersection of finitely many closed half-spaces of $\A$. A (closed) \textbf{polytope} is a bounded polyhedron. Let $\cQ$ be a polyhedron of $\A$.   A \textbf{support hyperplane} of $\cQ$ is a hyperplane $H$ such that $\cQ$ is contained in one of the closed half-spaces delimited by $H$ and such that $\cQ\cap H\neq \emptyset$. A \textbf{closed face} of $\cQ$ is either  $\cQ$ or  a set   of the form $H\cap \cQ$, where $H$ is a support hyperplane. Contrary to \cite{bruns2009polytopes}, we do not consider $\emptyset$ as a face of $\cQ$. A \textbf{closed facet} of $\cQ$ is a face of codimension $1$ in the support of $\cQ$. By \cite[Theorem 1.10]{bruns2009polytopes}, every proper closed face of $\cQ$ is an intersection of closed facets.  We denote by $\facet(\cQ)$\index{f@$\facet(\cQ)$}   the  set of closed facets of $\cQ$. An \textbf{open face} or simply a \textbf{face} of $\cQ$ is the relative interior of a closed face of $\cQ$. Note that in \cite{bruns2009polytopes}, what we call ``closed faces'' are simply called ``faces'' and the ``open faces'' are the relative interior of faces: we make this choice in order to be coherent with the  usual definition of faces in an apartment (see subsections~\ref{ss_vectorial_Weyl_polytope} and \ref{ss_affine_weight_polytopes}). If $F_1,F_2$ are open faces of a polyhedron for which $\overline{F}_1\cap \overline{F_2}$ is non-empty, we set $F_1\wedge F_2=\Int_r(\overline{F}_1\cap \overline{F_2})$.\index{z@$\wedge$} It is a face of $\cQ$ by \cite[Proposition 1.8 and Theorem 1.10]{bruns2009polytopes}.

\subsubsection{Orthogonal projection on polytopes}

If $E$ is a convex closed subset of $\A$, we denote by $\pi_E:\A\rightarrow E$\index{p@$\pi_E$} the orthogonal projection on $E$: for every $x\in \A$, $\pi_E(x)$ is the unique point $y_x$ of $E$ satisfying $|x-y_x|=\min \{|z-x|\mid z\in E\}$. Moreover, if $y_x\in E$, then we have \begin{equation}\label{e_characterization_projection}  \left(y_x=\pi_E(x)\right)\Longleftrightarrow \left(\langle x-y_x\mid y-y_x\rangle\leq 0, \forall y\in E\right),
\end{equation}
 by \cite[III Theorem 3.1.1]{hiriart1993convex}.

In this preliminary subsection, we study the projection $\pi_{\cQ}$, for $\cQ$ a polyhedron of $\A$.  In particular, Proposition~\ref{p_description_fibers_pi} provides a description of its fibers.

For $x\in \A$ and $r\in \R_{\geq 0}$, we denote by $B(x,r)$\index{b@$B(x,r)$} the open ball of center $x$ and radius $r$.

\begin{Lemma}\label{l_local_description_polytope}
Let $\cQ$ be a polyhedron of $\A$ with non-empty interior. Let $x\in \cQ$ and let $\facet_x(\cQ)$  be the set of closed facets of $\cQ$ containing $x$. For $\overline{\cF}\in \facet_x(\cQ)$ with relative interior $\cF$, we denote by $\DC_\cF$ the closed  half-space of $\A$ delimited by $\cF$ and containing $\cQ$. Let $\tilde{\cQ}=\bigcap_{\overline{\cF}\in \facet_x(\cQ)}\DC_{\cF}$.  Then there exists $r>0$ such that  $B(x,r)\cap \cQ=B(x,r)\cap \tilde{\cQ}$.   
\end{Lemma}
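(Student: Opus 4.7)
The plan is to use an irredundant half-space representation of $\cQ$ and separate the defining inequalities into those that are ``active'' at $x$ (giving the facets through $x$) and those that are ``inactive'' (and therefore hold with strict inequality on some neighborhood of $x$).

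More precisely, since $\cQ$ is a polyhedron with non-empty interior, it admits an irredundant representation $\cQ=\bigcap_{i=1}^n\DC_i$ as a finite intersection of closed half-spaces, where each bounding hyperplane $H_i=\partial \DC_i$ has the property that $H_i\cap \cQ$ is a closed facet of $\cQ$, and this induces a bijection between $\{1,\ldots,n\}$ and $\facet(\cQ)$ (this is standard for full-dimensional polyhedra, see e.g. \cite[Theorem 1.10 and Corollary 1.11]{bruns2009polytopes}). Let $I\subset\{1,\ldots,n\}$ be the set of indices $i$ such that $x\in H_i$. Then by the bijection, $\facet_x(\cQ)=\{H_i\cap \cQ\mid i\in I\}$, and $\tilde{\cQ}=\bigcap_{i\in I}\DC_i$.

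For every $i\notin I$, we have $x\notin H_i$, and since $x\in \cQ\subset \DC_i$, $x$ lies in the open half-space $\mathring{\DC_i}$ associated with $\DC_i$. Therefore there exists $r_i>0$ such that $B(x,r_i)\subset \DC_i$. If $I=\{1,\ldots,n\}$ we simply pick any $r>0$; otherwise, we set $r=\min_{i\notin I}r_i>0$. Then $B(x,r)\subset \bigcap_{i\notin I}\DC_i$, so
\[
B(x,r)\cap \cQ \;=\; B(x,r)\cap \bigcap_{i=1}^n\DC_i \;=\; B(x,r)\cap \bigcap_{i\in I}\DC_i \;=\; B(x,r)\cap \tilde{\cQ},
\]
which is the desired equality.

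The only real point to justify carefully is the existence of the irredundant representation in which the bounding hyperplanes are exactly the support hyperplanes of the closed facets, together with the matching bijection with $\facet(\cQ)$; everything else is then an elementary open-neighborhood argument. I would just cite \cite{bruns2009polytopes} for this preliminary fact rather than re-prove it in the paper.
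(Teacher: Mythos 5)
Your proof is correct, but it takes a different route from the paper. You rely on the fact that a full-dimensional polyhedron has an (irredundant) half-space representation $\cQ=\bigcap_{i=1}^n\DC_i$ whose bounding hyperplanes are exactly the supports of the closed facets, so that $\tilde{\cQ}$ is obtained by keeping the constraints active at $x$, and the inactive constraints are harmless on a small ball because $x\in\mathring{\DC_i}$ for $i\notin I$; granting that representation theorem, the rest is immediate. The paper instead avoids invoking the facet-hyperplane representation of $\cQ$ altogether: it only uses the weaker fact that $\Fr(\cQ)=\bigcup_{\overline{\cF}\in\facet(\cQ)}\overline{\cF}$ (\cite[Corollary 1.7]{bruns2009polytopes}), chooses $r$ so that $B(x,r)$ misses every closed facet not containing $x$, and argues by contradiction: a point $y\in\tilde{\cQ}\cap B(x,r)\setminus\cQ$ joined to an interior point $z\in\mathring{\cQ}\cap B(x,r)$ would force the segment $[z,y]$ to cross $\Fr(\cQ)$ at a point lying on a facet not through $x$, hence outside $B(x,r)$ — impossible by convexity of the ball. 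Your version is shorter and more transparent once the active/inactive dichotomy is set up, at the price of importing the full H-representation statement (and you should double-check the exact reference: the relevant statement in \cite{bruns2009polytopes} is the one identifying the bounding hyperplanes of an irredundant representation with the affine hulls of the facets, not necessarily Theorem 1.10/Corollary 1.11); the paper's version uses only results it already cites elsewhere, at the price of a small boundary-crossing argument. Both establish the two inclusions correctly, and both handle the degenerate case $\facet_x(\cQ)=\facet(\cQ)$ (respectively $I=\{1,\ldots,n\}$) by an arbitrary choice of $r$.
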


\begin{proof}
    We have $\cQ\subset \DC_\cF$ for every $\overline{\cF}\in \facet_x(\cQ)$ and thus $\cQ\subset \tilde{\cQ}$.  

  For $\overline{\cF}\in \facet(\cQ)\setminus \facet_x(\cQ)$, choose $r_{\cF}>0$ such that $B(x,r_\cF)\cap \overline{\cF}=\emptyset.$ Let $r=\min \{r_\cF\mid \overline{\cF}\in \facet(\cQ)\setminus \facet_x(\cQ)\}$ (we set $r=1$ if  $\facet(\cQ)=\facet_x(\cQ)$). Let $y\in \tilde{\cQ}\cap B(x,r)$. Assume by contradiction that $y\notin \cQ$. Let $z\in \mathring{\cQ}\cap B(x,r)$. Then $[z,y]$ meets $\Fr(\cQ)$  in a point $y'$. By \cite[Corollary 1.7]{bruns2009polytopes}, we have $\Fr(\cQ)=\bigcup_{\overline{\cF}\in \facet(\cQ)} \overline{\cF}$. We have $y'\notin \bigcup_{\overline{\cF}\in \facet_x(\cQ)} \overline{\cF}$ and thus $y'\in \bigcup_{\overline{\cF}\in \facet(\cQ)\setminus \facet_x(\cQ)} \overline{\cF}$. Consequently $y'\notin B(x,r)$: a contradiction.  Therefore $\tilde{\cQ}\cap B(x,r)\subset \cQ$, and the lemma follows.
\end{proof}

Let $\cQ$ be a polyhedron. If $\overline{\cF}\in \facet(\cQ)$, we denote by $\delta_\cF$\index{d@$\delta_\cF$} the (closed) ray of $\di(\supp(\cQ))$, which is  orthogonal to the direction of the support of $\overline{\cF}$ and such that for all  $x\in \overline{\cF}$, $(x+\delta_\cF)\cap \cQ=\{x\}$.   If $\overline{\cF}\in \facet(\cQ)$, we denote by $\DC_\cF$ the half-space of $\A$ containing $\cQ$ and  delimited by the hyperplane containing $\cF$. We have \begin{equation}\label{e_description_DF}
    \DC_\cF=\{x\in \A\mid \langle x-a\mid z\rangle \leq 0,\forall z\in \delta_\cF\},
\end{equation} for any $a\in \overline{\cF}$.

\begin{Lemma}\label{l_projection_polyhedron_tilde}
Let $\cQ$ be a polyhedron with non-empty interior.    Let $x\in \A\setminus \cQ$ and $J=\{\overline{\cF}\in \facet(\cQ)\mid \pi_\cQ(x)\in \overline{\cF}\}.$ Then $\pi_\cQ(x)=\pi_{\tilde\cQ}(x)$, where $\tilde{\cQ}=\bigcap_{\overline{\cF}\in J}\DC_\cF$.
\end{Lemma}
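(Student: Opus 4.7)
The plan is to combine Lemma~\ref{l_local_description_polytope} with the variational characterization of the orthogonal projection recalled in~\eqref{e_characterization_projection}. Set $y:=\pi_\cQ(x)$. Since $\cQ\subseteq \DC_\cF$ for every $\overline{\cF}\in\facet(\cQ)$, we have $\cQ\subseteq \tilde\cQ$, so $y\in \tilde\cQ$; as $\tilde\cQ$ is a non-empty closed convex subset of $\A$, the point $\pi_{\tilde\cQ}(x)$ is well-defined, and by~\eqref{e_characterization_projection} it suffices to prove that $\langle x-y\mid z-y\rangle\leq 0$ for every $z\in \tilde\cQ$.

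The key reduction passes from this global inequality on $\tilde\cQ$ to a purely local one near $y$. Since $x\notin\cQ$ one has $y\in\Fr(\cQ)$, and by construction $J$ is exactly the set of closed facets of $\cQ$ containing $y$, so Lemma~\ref{l_local_description_polytope} applied to $y$ produces $r>0$ such that $B(y,r)\cap \cQ=B(y,r)\cap\tilde\cQ$.

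To finish, I would fix an arbitrary $z\in\tilde\cQ$. By convexity of $\tilde\cQ$, the segment $[y,z]$ lies in $\tilde\cQ$, so for $t\in(0,1)$ small enough the point $y_t:=y+t(z-y)$ belongs to $B(y,r)\cap\tilde\cQ=B(y,r)\cap\cQ\subseteq \cQ$. Applying~\eqref{e_characterization_projection} to $y=\pi_\cQ(x)$ gives $\langle x-y\mid y_t-y\rangle\leq 0$, and dividing by $t>0$ yields $\langle x-y\mid z-y\rangle\leq 0$, as required.

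I do not foresee any genuine obstacle. The only mildly delicate points to check are that $\pi_{\tilde\cQ}(x)$ exists (no boundedness of $\tilde\cQ$ is needed, since closedness, convexity and non-emptiness suffice in a finite-dimensional inner product space) and that $J$ really coincides with the set of facets of $\cQ$ through $y$, so that Lemma~\ref{l_local_description_polytope} can be invoked in the form stated.
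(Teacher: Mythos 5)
Your proof is correct and takes essentially the same route as the paper: both hinge on the local identification $B(y,r)\cap\cQ=B(y,r)\cap\tilde\cQ$ provided by Lemma~\ref{l_local_description_polytope} at $y=\pi_\cQ(x)$, and then globalize using convexity of $\tilde\cQ$. The only difference is cosmetic: the paper observes that the convex function $d(x,\cdot)|_{\tilde\cQ}$ has a local, hence global, minimum at $\pi_\cQ(x)$, whereas you verify the variational characterization \eqref{e_characterization_projection} directly by sliding along the segment $[y,z]$; both steps are valid.
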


\begin{proof}
  By Lemma~\ref{l_local_description_polytope},  there exists $r>0$ such that $\cQ\cap B(\pi_\cQ(x),r)=\tilde{\cQ}\cap B(\pi_\cQ(x),r)$. We have $d(x,y)\geq d(x,\pi_\cQ(x))$ for every $y\in \tilde{\cQ}\cap B(\pi_\cQ(x),r)$ and thus $d(x,\cdot)|_{\tilde{\cQ}}$ admits a local minimum at $\pi_\cQ(x)$. But as $d(x,\cdot)|_{\tilde{\cQ}}$ is a convex function, by \cite[IV 1.3 (c)]{hiriart1993convex}, this minimum is actually a global minimum, which proves that $\pi_{\cQ}(x)=\pi_{\tilde{\cQ}}(x)$.
\end{proof}

\begin{Proposition}\label{p_description_fibers_pi}
 Let $\cQ$ be a polyhedron with non-empty interior and let $\pi_\cQ$ be the orthogonal projection on $\cQ$.   Let $x\in \Fr(\cQ)$ and $J=\{\overline{\cF}\in \facet(\cQ)\mid x\in \overline{\cF}\}$. Then $\pi_\cQ^{-1}(\{x\})=x+\sum_{\overline{\cF}\in J} \delta_\cF$.
\end{Proposition}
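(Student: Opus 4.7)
The plan is to combine the characterization \eqref{e_characterization_projection} of the projection with Lemma~\ref{l_projection_polyhedron_tilde} in order to reduce the problem to the polyhedral cone $\tilde\cQ := \bigcap_{\overline{\cF}\in J}\DC_\cF$ at $x$, and then invoke polar duality (the bipolar theorem for finitely generated convex cones) to identify the fiber $\pi_\cQ^{-1}(\{x\})$ with $x + K$, where $K := \sum_{\overline{\cF}\in J}\delta_\cF$.

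For the inclusion $\pi_\cQ^{-1}(\{x\}) \supseteq x + K$, I would take $y = x + \sum_{\overline{\cF}\in J} v_\cF$ with $v_\cF\in\delta_\cF$ and pick an arbitrary $z\in\cQ$. Since $z\in\DC_\cF$ and $x\in\overline{\cF}$ for every $\overline{\cF}\in J$, the description \eqref{e_description_DF} gives $\langle z-x \mid v_\cF\rangle \leq 0$ for each such $\overline{\cF}$. Summing yields $\langle z-x \mid y-x\rangle \leq 0$ for all $z\in\cQ$, and since $x\in\cQ$, \eqref{e_characterization_projection} then forces $\pi_\cQ(y) = x$.

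For the reverse inclusion, take $y\in \pi_\cQ^{-1}(\{x\})$. If $y=x$ there is nothing to do. Otherwise $y\notin\cQ$ (because $\pi_\cQ$ restricts to the identity on $\cQ$), and the set of closed facets of $\cQ$ containing $\pi_\cQ(y) = x$ is precisely $J$, so Lemma~\ref{l_projection_polyhedron_tilde} applied to $y$ yields $\pi_{\tilde\cQ}(y) = x$. By \eqref{e_characterization_projection}, $\langle y-x \mid v\rangle \leq 0$ for every $v\in\tilde\cQ - x$. Using \eqref{e_description_DF}, $\tilde\cQ - x$ is exactly the set of vectors $v\in \A$ with $\langle v \mid w\rangle \leq 0$ for every $w\in K$, i.e., the polar cone of $K$. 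Hence $y-x$ lies in the polar of this polar cone, and the bipolar theorem for closed convex cones (applicable since the finitely generated cone $K$ is automatically closed) yields $y - x \in K$, as required.

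The only nontrivial ingredient is this last invocation of the bipolar theorem (equivalently, Farkas/Minkowski-Weyl duality for finitely generated convex cones in a Euclidean space); all the other steps amount to routine bookkeeping with \eqref{e_characterization_projection}, \eqref{e_description_DF}, and the local reduction afforded by Lemma~\ref{l_projection_polyhedron_tilde}.
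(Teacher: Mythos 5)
Your proof is correct and follows essentially the same route as the paper: both directions use \eqref{e_characterization_projection} and \eqref{e_description_DF}, with Lemma~\ref{l_projection_polyhedron_tilde} reducing to the cone $\tilde\cQ$ at $x$. The only cosmetic difference is that the paper concludes via Farkas' Lemma in its functional form, whereas you phrase the same duality step as the bipolar theorem for the finitely generated (hence closed) cone $\sum_{\overline{\cF}\in J}\delta_\cF$ — these are equivalent.
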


\begin{proof}
 Let $\tilde{\cQ}=\bigcap_{\overline{\cF}\in J} \DC_{\cF}$. Let $y\in x+\sum_{\overline{\cF}\in J}\delta_{\cF}$. Then by Eq . \eqref{e_description_DF}, we have $\langle y-x\mid z-x\rangle\geq 0$ for all $z\in \tilde{\cQ}\supset \cQ$ and hence by Eq . \eqref{e_characterization_projection}, we have $\pi_{\cQ}(y)=x$. Therefore \[\pi_\cQ^{-1}(\{x\})\supset x+\sum_{\overline{\cF}\in J} \delta_\cF.\]

Let now $y\in \pi_{\cQ}^{-1}(\{x\})$. Then by Lemma~\ref{l_projection_polyhedron_tilde}, we have $\pi_{\cQ}(y)=\pi_{\tilde{\cQ}}(y)=x$.  For $\overline{\cF}\in J$, choose $z_\cF\in \delta_\cF\setminus\{0\}$ and respectively define $f_\cF:\A\rightarrow \R$ and $f:\A\rightarrow \R$ by $f_\cF(z)=-\langle z\mid z_\cF\rangle$ and $f(z)=\langle z\mid x-y\rangle$, for $z\in \A$. Let $z\in \A$ be such that $f_\cF(z)\geq 0$, for all $\overline{\cF}\in J$. Let $z'=z+x$. Then we have $f_\cF(z'-x)\geq 0$,  for all $\overline{\cF}\in J$ and thus by Eq . \eqref{e_description_DF}, we have $z'\in \tilde{\cQ}$. By Eq. \eqref{e_characterization_projection}, we deduce $f(z'-x)\geq 0$ and thus $f(z)\geq 0$. Using Farkas' Lemma, we deduce $f\in \sum_{\overline{\cF}\in J} \R_{\geq 0} f_\cF$, or equivalently, $y-x\in \sum_{\overline{\cF}\in J}\R_{\geq 0} z_\cF=\sum_{\overline{\cF}\in J}\delta_\cF$, which proves the proposition.
    
\end{proof}

\subsubsection{Extreme points of a polytope}

Let $\QC$ be a convex subset of $\A$. 
Recall (see \cite[Definition 2.3.1]{hiriart2012fundamentals}) that a point $M$ of $\QC$ is called an \textbf{extreme point} of  $\QC$ if for every $k\in \Z_{\geq 1}$, $M_1,\ldots,M_k\in \QC$, $t_1,\ldots,t_k\in ]0,1]$ such that $\sum_{i=1}^k t_i=1$, we have \[\sum_{i=1}^k t_i M_i =M\Rightarrow M_1=\ldots =M_k=M.\] We denote by $\Extr(\QC)$ the set of extreme points of $\QC$.

\begin{Lemma}\label{l_extreme_points_P}
Let $H$ be a finite subgroup of the set of affine automorphisms of $\A$. Let $\bb \in \A$. Then the set of extreme points of $\conv(H.\bb)$ is $H.\bb$.
\end{Lemma}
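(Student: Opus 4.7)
The plan is to prove the equality $\Extr(\conv(H.\bb))=H.\bb$ by establishing both inclusions separately.

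The inclusion $\Extr(\conv(H.\bb))\subseteq H.\bb$ is the easy direction. Any $M\in\conv(H.\bb)$ can be written as $M=\sum_{h\in H}t_h\,(h.\bb)$ with $t_h\in[0,1]$ and $\sum_h t_h=1$, since $H.\bb$ is a finite set. If $M$ is extreme, then applying the defining property of Definition~2.3.1 of \cite{hiriart2012fundamentals} to the finitely many terms for which $t_h>0$ forces every such $h.\bb$ to equal $M$, whence $M\in H.\bb$.

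For the reverse inclusion, the strategy is to realize $\conv(H.\bb)$ as a subset of a Euclidean ball on whose bounding sphere all elements of $H.\bb$ lie, and to use the strict convexity of that ball. The first step is to produce a fixed point of $H$: the barycenter $b_0=\frac{1}{|H|}\sum_{h\in H} h.\bb$ is $H$-invariant because each $h'\in H$ permutes $H.\bb$. Viewing every $h\in H$ as an affine automorphism fixing $b_0$, its linear part is a linear automorphism of $\A$ (once $b_0$ is taken as origin), and averaging any scalar product on $\A$ over these finitely many linear parts yields an $H$-invariant scalar product $(\cdot|\cdot)_H$ with associated norm $\|\cdot\|_H$. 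Setting $r=\|\bb-b_0\|_H$, every element of $H.\bb$ satisfies $\|h.\bb-b_0\|_H=r$, so $\conv(H.\bb)\subseteq\overline{B}_H(b_0,r)$.

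To conclude, fix $h_0\in H$ and suppose $h_0.\bb=\sum_{i=1}^k t_i M_i$ with $M_i\in\conv(H.\bb)$, $t_i\in]0,1]$, $\sum_i t_i=1$. Setting $v=(h_0.\bb-b_0)/r$, one has $(h_0.\bb-b_0|v)_H=r$ and, by Cauchy--Schwarz, $(M_i-b_0|v)_H\leq\|M_i-b_0\|_H\leq r$ for each $i$. Summing the latter inequalities with weights $t_i$ forces equality throughout, so $(M_i-b_0|v)_H=\|M_i-b_0\|_H=r$ for every $i$; the equality case of Cauchy--Schwarz then yields $M_i-b_0=rv$, i.e.\ $M_i=h_0.\bb$ for all $i$. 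This proves that $h_0.\bb$ is an extreme point of $\conv(H.\bb)$.

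The only substantive step is the construction of the $H$-invariant scalar product: the scalar product originally fixed on $\A$ need not make the affine action of $H$ by isometries, so we cannot directly conclude that $H.\bb$ lies on a sphere. Once an invariant scalar product is available, the existence of a common fixed point $b_0$ and the strict convexity of the Euclidean ball $\overline{B}_H(b_0,r)$ reduce the problem to a standard Cauchy--Schwarz computation.
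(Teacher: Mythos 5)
Your proof is correct, but it takes a genuinely different route from the paper's. The paper's argument is short and structural: after the easy inclusion $\Extr(\conv(H.\bb))\subseteq H.\bb$, it invokes the Krein--Milman theorem to see that the extreme-point set $E$ of the compact convex set $\conv(H.\bb)$ is non-empty, notes that $E$ is $H$-invariant because $H$ stabilizes $\conv(H.\bb)$ by affine automorphisms, and concludes $E=H.\bb$ from the transitivity of $H$ on the orbit. You instead prove directly that every orbit point is extreme: you average a scalar product over the (finitely many) linear parts of the elements of $H$, based at the $H$-fixed barycenter $b_0$, so that the orbit lies on a sphere for the resulting invariant norm, and you conclude via the equality case of Cauchy--Schwarz. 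This is more computational but avoids Krein--Milman altogether and makes the geometric reason for extremality explicit (strict convexity of the invariant ball); the paper's route is shorter but leans on a named theorem and on the symmetry of the extreme-point set rather than exhibiting extremality pointwise. Two minor points to tidy up: state the averaging as being over $H$ (or over the image of $H$ under the linear-part homomorphism), so that invariance of the averaged form is immediate, which is clearly what you intend; and treat separately the degenerate case $r=0$, where you divide by $r$ --- there $\bb=b_0$ is $H$-fixed, so $H.\bb=\{\bb\}$ and the statement is trivial.
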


\begin{proof}
Denote by $E$ the set of extreme points of $\conv(H.\bb)$. Then $E$ is contained in $H.\bb$. Then $\conv(H.\bb)$ is compact. By Krein-Milman theorem (\cite[Theorem 2.3.4]{hiriart2012fundamentals}), $\conv(H.\bb)$ is the convex hull of $E$, and in particular, $E$ is non-empty. As $H$ stabilizes $\conv(H.\bb)$, $E$ is $H$-invariant and thus $E=H.\bb$.
\end{proof}

\begin{Lemma}\label{l_extreme_points_faces_polytope}
Let $\cQ$ be a closed polytope of $\A$.
\begin{enumerate}
\item Let $\overline{\cF}$ be a closed face of $\cQ$. Then $\Extr(\overline{\cF})=\Extr(\cQ)\cap \overline{\cF}$.

\item Let $(\overline{\cF_j})_{j\in J}$ be a family of closed faces of $\cQ$. Then $\Extr(\bigcap_{j\in J} \overline{\cF_j})=\bigcap_{j\in J} \Extr(\overline{\cF_j})$. 
\end{enumerate}
\end{Lemma}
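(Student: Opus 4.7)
The plan is to derive (1) directly from the support-hyperplane characterization of closed faces, and then obtain (2) as a consequence of (1) together with the fact, recalled in the excerpt, that a nonempty intersection of closed faces is again a closed face.

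For (1), the inclusion $\Extr(\cQ) \cap \overline{\cF} \subseteq \Extr(\overline{\cF})$ is essentially tautological: if $M \in \Extr(\cQ)$ lies in $\overline{\cF}$, then any convex decomposition $M = \sum_{i=1}^k t_i M_i$ with $M_i \in \overline{\cF} \subseteq \cQ$ forces $M_i = M$ by the extremality of $M$ in $\cQ$. The nontrivial inclusion $\Extr(\overline{\cF}) \subseteq \Extr(\cQ) \cap \overline{\cF}$ is where I would use the hypothesis that $\overline{\cF}$ is a face: either $\overline{\cF} = \cQ$ (trivial case), or there exists a support hyperplane $H = \{\ell = c\}$, with $\cQ \subseteq \{\ell \leq c\}$, such that $\overline{\cF} = H \cap \cQ$. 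Given $M \in \Extr(\overline{\cF})$ and a convex combination $M = \sum t_i M_i$ with $M_i \in \cQ$ and $t_i \in \,]0,1]$, the equality $c = \ell(M) = \sum t_i \ell(M_i) \leq c$ forces $\ell(M_i) = c$, hence $M_i \in \overline{\cF}$; extremality of $M$ in $\overline{\cF}$ then yields $M_i = M$.

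For (2), I would first observe that a polytope has only finitely many closed faces, so any intersection $\bigcap_{j \in J} \overline{\cF_j}$ reduces to a finite intersection. If this intersection is empty, both sides of the claimed equality are empty. Otherwise, iterating the fact (cited from \cite[Proposition 1.8 and Theorem 1.10]{bruns2009polytopes}) that the intersection of two closed faces meeting nontrivially is a closed face, the intersection $\overline{\cG} := \bigcap_{j \in J} \overline{\cF_j}$ is itself a closed face of $\cQ$. Applying (1) to $\overline{\cG}$ and then to each $\overline{\cF_j}$ gives
\[
\Extr(\overline{\cG}) = \Extr(\cQ) \cap \overline{\cG} = \bigcap_{j \in J}\bigl(\Extr(\cQ) \cap \overline{\cF_j}\bigr) = \bigcap_{j \in J} \Extr(\overline{\cF_j}).
\]

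The only slightly delicate step is the reverse inclusion in (1), where one must exploit that a support hyperplane is exactly the locus where an affine functional attains its extremum on $\cQ$; everything else is formal. I do not anticipate any real obstacle, since the statement ultimately reduces to the classical fact that extreme points of a face of a convex polytope are precisely the vertices of the polytope lying in that face.
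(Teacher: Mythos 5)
Your proof is correct. Part (2) is essentially identical to the paper's argument: reduce to a finite family using the finiteness of the set of faces, note that a non-empty intersection of closed faces is again a closed face (Bruns--Gubeladze), and then apply part (1) to get the chain of equalities; you in fact handle the empty-intersection case more explicitly than the paper does. Part (1) is where you diverge: the paper deduces it by citing \cite[Proposition 1.13]{bruns2009polytopes} (closed faces are exactly the extreme sets, so extreme points are the zero-dimensional faces) together with \cite[Proposition 1.8]{bruns2009polytopes} (a face of a face of $\cQ$ is a face of $\cQ$), whereas you argue directly from the definition of a closed face as $H\cap\cQ$ for a support hyperplane $H=\{\ell=c\}$: for $M\in\Extr(\overline{\cF})$ and a convex combination $M=\sum t_iM_i$ with $M_i\in\cQ$, the inequality $c=\ell(M)=\sum t_i\ell(M_i)\le c$ forces all $M_i$ into $\overline{\cF}$, and extremality in $\overline{\cF}$ finishes. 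Your route is more elementary and self-contained (only the definition of a face and affinity of $\ell$ are used), at the cost of treating the case $\overline{\cF}=\cQ$ separately; the paper's route is shorter on the page but leans on the cited structural facts about faces of polytopes. Both are sound.
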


\begin{proof}
1) By \cite[Proposition 1.13]{bruns2009polytopes}, the set of closed faces of $\cQ$ (resp. $\overline{\cF}$) is the set of extreme sets of $\cQ$ (resp. $\overline{\cF}$) and thus the set of extreme points of $\overline{\cF}$ is the set of zero-dimensional faces of $\overline{\cF}$. That is the set of zero-dimensional faces of $\cQ$, that are contained in $\overline{\cF}$,  according to \cite[Proposition 1.8]{bruns2009polytopes}. Thus $\Extr(\overline{\cF})=\Extr(\cQ)\cap \overline{\cF}$. 

2) Since the number of faces of $\cQ$ is finite (by \cite[Theorem 1.10]{bruns2009polytopes}), we may assume that $J$ is finite. Then by  \cite[Theorem 1.10]{bruns2009polytopes},  $\bigcap_{j\in J} \overline{\cF_j}$ is a face of $\cQ$.   Then, by  statement 1),  we have $\Extr(\bigcap_{j\in J} \overline{\cF_j})=\Extr(\cQ)\cap \bigcap_{j\in J}\overline{\cF_j})=\bigcap_{j\in J} (\Extr(\cQ)\cap \overline{\cF_j})=\bigcap_{j\in J}\Extr(\overline{\cF_j})$. 
\end{proof}

\subsection{Vectorial weight polytope}\label{ss_vectorial_Weyl_polytope} 

\subsubsection{Vectorial root system}

Let $\A$ be a finite dimensional real vector space and denote by $\A^*:=\mathrm{Hom}_\mathbb{R}(\mathbb{A},\mathbb{R})$ its dual space. Let $\Psi\subset \A^*$\index{P@$\Psi$} be a reduced finite root system in $\A$, in the sense of \cite[VI, \S1]{bourbaki1981elements}. In particular, $\Psi$ is a finite subset of $\A^*$ generating $\A^*$. Elements of $\Psi$ are called \textbf{roots} and to every $\alpha\in \Psi$ is associated a unique \textbf{coroot} $\alpha^\vee$. We have $\alpha(\alpha^\vee)=2$ and the map $\A^*\rightarrow \A^*$ defined by $\beta\mapsto \beta-\beta(\alpha^\vee)\alpha$ stabilizes $\Psi$. One defines $s_\alpha:\A\rightarrow \A$ by $s_\alpha(x)=x-\alpha(x)\alpha^\vee$ for $x\in \A$.  The set $\Psi^\vee=\{\alpha^\vee\mid \alpha\in \Psi\}\subset \A$ is a root system in $\A^*$ and for every $\alpha\in \Psi$, $\alpha(\Psi^\vee)\subset \Z$ and $s_\alpha$ stabilizes $\Psi^\vee$.

Let $\cH_\Psi^0=\cH^0=\{\alpha^{-1}(\{0\})\mid \alpha\in \Psi\}$\index{H@$\cH^0_\Psi$, $\cH^0$}. This is a locally finite hyperplane arrangement of $\A$. The elements of $\cH^0$ are called \textbf{walls} or \textbf{vectorial walls}.  For $x,y\in \A$, we write $x\sim_{\cH^0} y$ if for every $H\in \cH^0$, either ($x,y\in H$) or ($x$ and $y$ are strictly on the same side of $H$). This is an equivalence relation on $\A$. Its classes are called the \textbf{faces} of $(\A,\cH^0)$ or simply the vectorial faces of $\A$. We denote by $\sF^0=\sF(\cH^0)$\index{f@$\sF^0$} the set of faces of $(\A,\cH^0)$. If $E_1,E_2$ are two relatively open (i.e open in their support)   convex subsets of $\A$, we say that $E_1$ \textbf{dominates} $E_2$ if $\overline{E_1}\supset E_2$. This equips the set of relatively open convex subsets of $\A$ with an order. By restriction, this equips $\sF^0$ with an order called the  \textbf{dominance order}.
 
A \textbf{vectorial chamber} (or \textbf{open vectorial chamber}) of $(\A,\cH^0)$ is a connected component of $\A\setminus \bigcup_{H\in \cH^0} H$. The chambers are actually the faces which are not contained in an element of $\cH^0$ or equivalently, the  maximal faces for the dominance order. A \textbf{wall} of a vectorial chamber $C^v$ is an element $H$ of $\cH$ such that $H\cap \overline{C^v}$ is a closed facet of $\overline{C^v}$.

Let $C^v$ be a vectorial chamber of $\Psi$. The \textbf{associated basis of $\Psi$}\index{s@$\Sigma_{C^v}$} is the set $\Sigma_{C^v}$ of $\alpha\in \Psi$ such that $\alpha^{-1}(\{0\})$ is a wall of $C^v$ and $\alpha(C^v)>0$.  Then by \cite[VI. 1.5 Théorème 2]{bourbaki1981elements}, $\Sigma_{C^v}$ is a \textbf{basis} of $\Psi$, which means that $(\alpha)_{\alpha\in \Sigma_{C^v}}$ is a basis of $\A^*$ and every element of $\Psi$ can be written as $\sum_{\alpha\in \Sigma_{C^v}} n_\alpha \alpha$, where $(n_\alpha)\in (\epsilon\Z_{\geq 0})^{\Sigma_{C^v}}$, for some $\epsilon\in \{-,+\}$. We set $\Psi_{\epsilon,C^v}=\Psi\cap \bigoplus_{\alpha\in \Sigma_{C^v}} \epsilon\Z_{\geq 0}\alpha$, for $\epsilon\in \{-,+\}$.

The following proposition follows from \cite[\S V 3.9 Proposition 5]{bourbaki1981elements}.

\begin{Proposition}\label{p_bourbaki_vectorial_faces}

Let $C^v$ be a vectorial chamber of $(\A,\cH^0)$. Let $\Sigma_{C^v}$ be the associated basis of $\Psi$. Let $(\qp_{\alpha})_{\alpha\in \Sigma_{C^v}} \in  \A^{\Sigma_{C^v}}$ be the dual basis of $\Sigma_{C^v}$. Then \begin{enumerate}
    \item $C^v=\bigoplus_{\alpha\in \Sigma_{C^v}}\R_{>0}\qp_\alpha$.

    \item The faces of $C^v$ are exactly the $\bigoplus_{\alpha\in E}\R_{>0} \qp_\alpha$ such that $E$ is a subset of $\Sigma_{C^v}$, where, by convention, $\bigoplus_{\alpha\in \emptyset} \R_{>0}\qp_\alpha=\{0\}$.
\end{enumerate}
    
\end{Proposition}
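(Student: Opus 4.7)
The plan is to recognize the dual basis $(\qp_\alpha)_{\alpha\in\Sigma_{C^v}}$ as providing linear coordinates on $\A$ in which both statements become direct. Since $\Sigma_{C^v}$ is a basis of $\A^*$, every $x\in\A$ has the unique expansion $x=\sum_{\alpha\in\Sigma_{C^v}}\alpha(x)\,\qp_\alpha$, so $x\mapsto(\alpha(x))_{\alpha\in\Sigma_{C^v}}$ is a linear isomorphism $\A\xrightarrow{\sim}\R^{\Sigma_{C^v}}$. The key ingredient, which I will call the \emph{sign principle}, is the basis property: any $\beta\in\Psi$ can be written $\beta=\sum_{\alpha\in\Sigma_{C^v}}n_\alpha\alpha$ with the integers $n_\alpha$ all of one sign and not all zero. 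Hence for every $\beta\in\Psi$ the sign (and, when applicable, the vanishing) of $\beta(x)$ is entirely controlled by the signs of the coordinates $\alpha(x)$, $\alpha\in\Sigma_{C^v}$.

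For (1), consider the open orthant $\Omega=\bigoplus_{\alpha\in\Sigma_{C^v}}\R_{>0}\qp_\alpha=\{x\in\A\mid \alpha(x)>0,\,\forall\alpha\in\Sigma_{C^v}\}$. It is convex, hence connected. By the sign principle, for $x\in\Omega$ one has $\beta(x)\neq 0$ for every $\beta\in\Psi$, so $\Omega\subset\A\setminus\bigcup_{H\in\cH^0}H$. By the very definition of $\Sigma_{C^v}$ we have $\alpha(C^v)>0$ for every $\alpha\in\Sigma_{C^v}$, hence $C^v\subset\Omega$. Being a connected subset of $\A\setminus\bigcup_{H\in\cH^0}H$ that meets $C^v$, the set $\Omega$ is contained in a unique chamber, which must be $C^v$. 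Therefore $C^v=\Omega$.

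For (2), a face of $C^v$ is an equivalence class for $\sim_{\cH^0}$ contained in $\overline{C^v}=\{x\mid\alpha(x)\geq 0,\,\forall\alpha\in\Sigma_{C^v}\}$. For $x\in\overline{C^v}$ and $\beta=\sum_\alpha n_\alpha\alpha\in\Psi$ with, say, all $n_\alpha\geq 0$, one has $\beta(x)=0$ if and only if $\alpha(x)=0$ for every $\alpha\in\Sigma_{C^v}$ with $n_\alpha>0$ (and symmetrically for $\Psi_{-,C^v}$); in particular, the vanishing set $\{\beta\in\Psi\mid \beta(x)=0\}$ is a function of $Z(x):=\{\alpha\in\Sigma_{C^v}\mid\alpha(x)=0\}$. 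Moreover, since $x,y\in\overline{C^v}$ already lie on the same side of each wall, $x\sim_{\cH^0}y$ reduces to $\{\beta\mid\beta(x)=0\}=\{\beta\mid\beta(y)=0\}$, which by the previous remark is equivalent to $Z(x)=Z(y)$. Letting $E=\Sigma_{C^v}\setminus Z$ index the faces by the set of simple roots that are \emph{positive}, one obtains the bijection $E\leftrightarrow \bigoplus_{\alpha\in E}\R_{>0}\qp_\alpha$ asserted in the statement, with the convention $\bigoplus_{\alpha\in\emptyset}\R_{>0}\qp_\alpha=\{0\}$ handled by the case $E=\emptyset$, $Z=\Sigma_{C^v}$.

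The single non-trivial input is the sign principle, i.e.\ the constant-sign decomposition of every root in the basis $\Sigma_{C^v}$; everything else is a direct verification in dual-basis coordinates. This is why the excerpt simply quotes \cite[\S V 3.9 Proposition 5]{bourbaki1981elements}, which packages this argument for general finite reflection groups. The only subtlety to watch in writing the argument carefully is to ensure that the equivalence relation $\sim_{\cH^0}$ is detected solely by the vanishing of elements of $\Sigma_{C^v}$ (rather than of all of $\Psi$), and this is precisely what the sign principle grants.
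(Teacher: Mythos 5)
Your argument is correct, but it is genuinely different in spirit from the paper's treatment: the paper does not prove this proposition at all, it simply quotes \cite[V \S 3.9 Proposition 5]{bourbaki1981elements}, whereas you give a self-contained derivation whose only external input is the constant-sign decomposition of roots in the basis $\Sigma_{C^v}$ (Bourbaki VI \S 1.5, which the paper already invokes when defining bases). What your route buys is transparency: part (1) follows from the fact that the open orthant $\Omega=\{x\mid \alpha(x)>0,\ \forall \alpha\in\Sigma_{C^v}\}$ is a connected subset of the wall complement containing $C^v$, hence equals $C^v$; part (2) follows from reading off, in dual-basis coordinates, which walls vanish on a point of $\overline{C^v}$. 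One small point you should make explicit to close the argument for (2): after showing that for $x,y\in\overline{C^v}$ one has $x\sim_{\cH^0}y$ if and only if $Z(x)=Z(y)$, you still need to know that an equivalence class meeting $\overline{C^v}$ is entirely contained in $\overline{C^v}$ (otherwise you have only identified the traces of faces on $\overline{C^v}$, not the faces themselves). This is immediate with the same tools: if $x\in\overline{C^v}$ and $y\sim_{\cH^0}x$, then for each $\alpha\in\Sigma_{C^v}$ either $\alpha(x)=0=\alpha(y)$ or $\alpha(x)>0$ and $\alpha(y)>0$, so $y\in\overline{C^v}$; together with the evident nonemptiness of each set $\bigoplus_{\alpha\in E}\R_{>0}\qp_\alpha$, this completes the classification. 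With that sentence added, your proof is a clean elementary substitute for the Bourbaki citation.
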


We choose a vectorial chamber $C^v_f=C^v_{f,\Psi}$\index{c@$C^v_f$, $C^v_{f,\Psi}$} of $\A$, that we call the \textbf{fundamental vectorial chamber}. Let $\Sigma=\Sigma_{C^v_f}$\index{s@$\Sigma$} 

The \textbf{vectorial Weyl group}  of $\Psi$ is $W^v_\Psi=\langle s_\alpha\mid \alpha\in \Psi\rangle\subset \mathrm{GL}(\A)$\index{w@$W^v_\Psi$}. It is a finite group and $(W^v_\Psi,\{s_\alpha\mid \alpha\in \Sigma\})$ is a Coxeter system (see \cite[1.1]{bjorner2005combinatorics} for a definition). By \cite[V \S 3]{bourbaki1981elements}, $W^v_\Psi$ acts simply transitively on the set of vectorial chambers and $\overline{C^v}$ is a fundamental domain for the action of $W^v_\Psi$ on $\A$, for any vectorial chamber $C^v$. By \cite[VI 1.1 Proposition 3]{bourbaki1981elements}, there exists a $W^v_\Psi$-invariant scalar product $\langle  \cdot\mid \cdot\rangle$\index{z@$\langle  \cdot\mid \cdot\rangle$} on $\A$. We equip $\A$ with this scalar product and regard $\A$ as a Euclidean space. An element $x$ of $\A$ is \textbf{regular} (with respect to $\cH^0$) if it belongs to an open vectorial chamber of $\A$. Actually, $x$ is regular if and only if $w\mapsto w.x$ is injective on $W^v_\Psi$. If $F^v\in \sF(\cH^0)$ is dominated by $C^v_f$, then its fixator in $W^v_\Psi$ is 
\begin{equation}\label{fixator}
    W^v_{F^v}=\langle s_\alpha\mid \alpha\in \Sigma,\alpha(F^v)=\{0\}\rangle \index{w@$W^v_{F^v}$},
\end{equation} according to \cite[V \S 3.3]{bourbaki1981elements}.

We sometimes assume that $\Psi$ is \textbf{irreducible.} This means that $\Psi$ cannot be written as a direct sum of roots systems, see \cite[VI \S 1.2]{bourbaki1981elements}. When $\Psi$ is reducible, we   can write \begin{equation}\label{e_decomposition_Phi}
    \A=\bigoplus_{i=1}^\ell\A_i, \Psi=\bigsqcup_{i=1}^\ell \Psi_i
\end{equation}  where $\ell \in \Z_{\geq 2}$ and $\Psi_i$ is an irreducible root system in $\A_i$ for $i\in \llbracket 1,\ell\rrbracket$. We regard $\Psi_i$ as a subset of $\Psi$ by setting $\alpha(\bigoplus_{j\in \llbracket 1,\ell\rrbracket \setminus \{i\}}\A_j)=\{0\}$ for $\alpha\in \Psi_i$.  We denote by $\cH_i^0$ the hyperplane arrangement of $(\A_i,\Psi_i)$, for $i\in \llbracket 1,k\rrbracket$. Then every face $F^v$ of $\sF^0$ can be written as $F^v=F_1^v\times \ldots \times F_\ell^v$, where $F^v_i\in \sF^0( \mathcal{H}_i^0)$, for $i\in \llbracket 1,\ell\rrbracket$.

\paragraph{Cartan matrices}
Let $A=A(\Psi)=(\alpha(\beta^\vee))_{\alpha,\beta\in \Sigma}$. This is the \textbf{Cartan matrix} of $\Psi$. Up to permutation of its coefficients, it is well-defined, independently of the choice of the basis $\Sigma$ of $\Psi$, by \cite[VI, 1.5]{bourbaki1981elements}.  The Cartan matrix characterizes the root system up to isomorphism. 

A matrix of the form $A(\Phi)$, for some   root system $\Phi$ is called a \textbf{Cartan matrix}.

The Cartan matrix $A$ is called \textbf{decomposable} if there exists a  decomposition $\Sigma=\Sigma'\sqcup \Sigma''$, with $\Sigma'$ and $\Sigma''$ non-empty, such that $\alpha(\beta^\vee)=\beta(\alpha^\vee)=0$ , for every $(\alpha,\beta)\in \Sigma'\times \Sigma''$. A Cartan matrix which is not decomposable is called \textbf{indecomposable.} Then $\Psi$ is irreducible if and only if $A$ is indecomposable, and under the notation of Eq. \eqref{e_decomposition_Phi}, $A$ is the block diagonal matrix whose blocks are the $A(\Psi_i)$, for $i\in \llbracket 1,k\rrbracket$. One can use the Cartan matrices to classify finite roots systems, see \cite[Chapter 4]{kac1994infinite} for example (where the Cartan matrices are the generalized Cartan matrices of finite type).

\subsubsection{Vectorial weight polytope}

\begin{Definition}\label{d_vectorial_Weyl_polytope}
 We choose a point $\bb\in C^v_f$ and we set $\overline{\cP^0}=\overline{\cP_{\bb,\Psi}^0}=\overline{\cP^0_\bb}=\conv(W^v_\Psi.\bb)$\index{p@$\overline{\cP^0}$, $\overline{\cP^0_\Psi}$}. We also set $\cP ^0=\In(\overline{\cP^0})$\index{p@$\cP^0$}. Then $\cP^0$ (resp. $\overline{\cP^0}$) is the open (resp. closed) \textbf{vectorial Weyl polytope associated with $\Psi$ and $\bb$.} 
\end{Definition} 
 
The closed vectorial Weyl polytope is compact. Note that by   Lemma \ref{l_convex_hull_non-empty_interior}  below,  $\cP^0$ is non-empty. This polytope naturally appears in different contexts. It is called the \textbf{weight polytope} and it appears for example  in \cite{vinberg1991certain} and \cite{besson2023weight}. Its intersection with $\overline{C^v_f}$ is studied in \cite{burrull2023dominant}.
   
 \begin{figure}[h]
 \centering
 \includegraphics[scale=0.2]{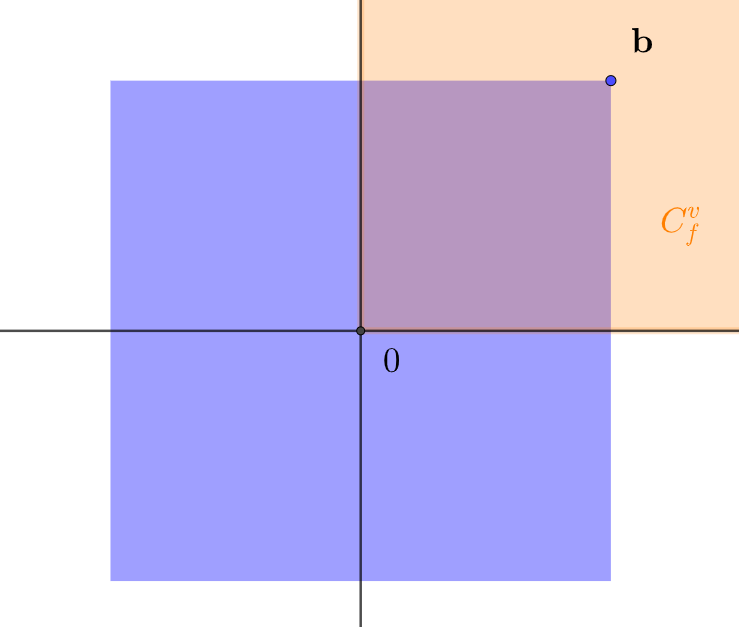}
 \caption{Polytope $\cP^0$ for the root system $\mathrm{A}_1\times \mathrm{A}_1$}
\end{figure}

 \begin{figure}[h]
 \centering
 \includegraphics[scale=0.2]{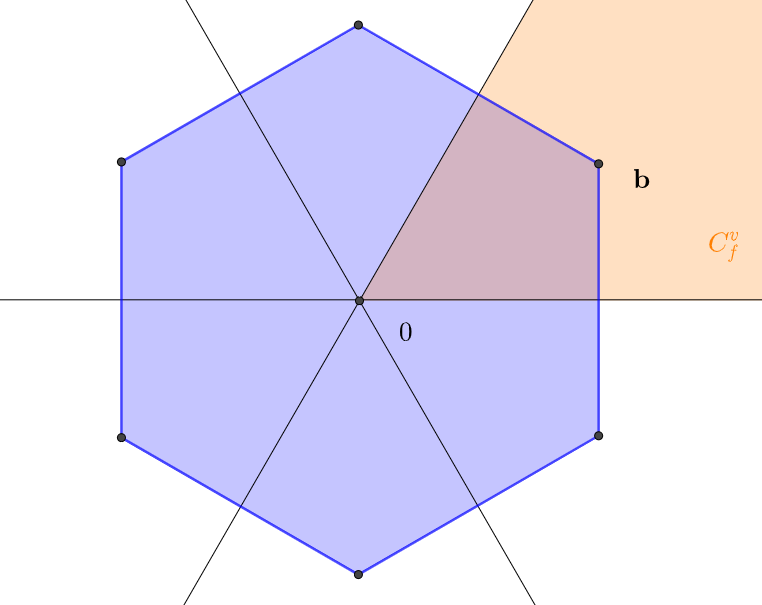}
 \caption{Polytope $\cP^0$ for the root system $\mathrm{A}_2$}
\end{figure}
    
  \begin{figure}[h]
 \centering
 \includegraphics[scale=0.2]{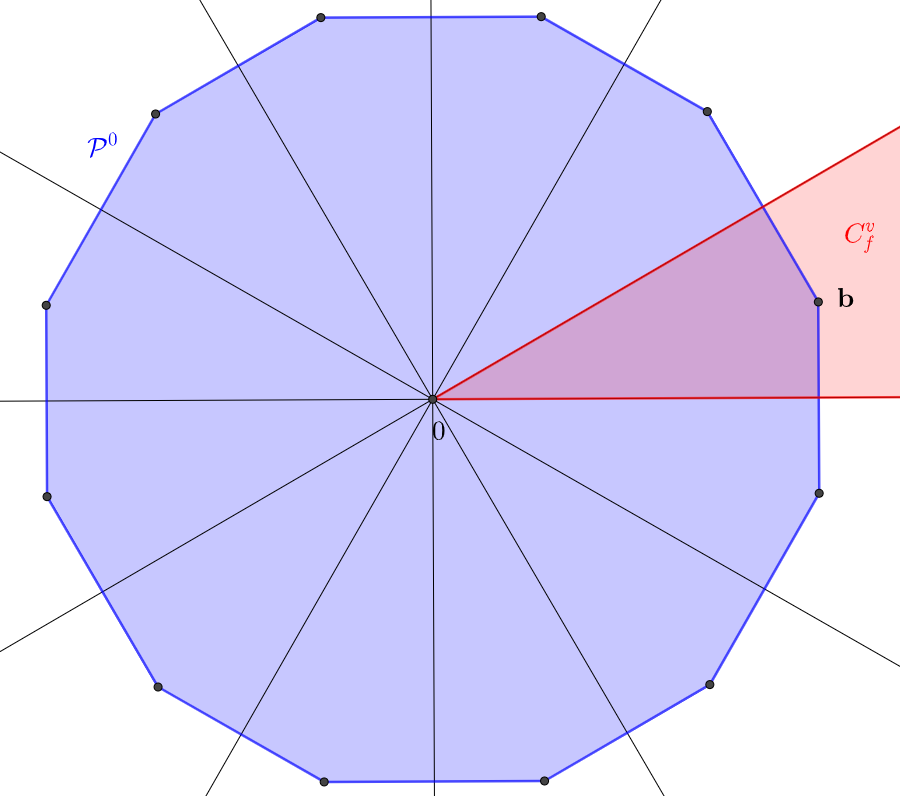}
 \caption{Polytope $\cP^0$ for the root system $\mathrm{G}_2$}
\end{figure}
 
 \begin{Lemma}\label{l_convex_hull_non-empty_interior}
\begin{enumerate}
    \item Assume that $\Psi$ is irreducible. Then for every $x\in \A\setminus\{0\}$, $\conv(W^v_\Psi.x)$ has non-empty interior. 

    \item $\cP^0$ is non-empty.
\end{enumerate} 
 \end{Lemma}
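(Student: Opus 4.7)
The two parts rest on a common geometric principle: $\conv(W^v_\Psi.x)$ has non-empty interior in $\A$ if and only if its affine hull is all of $\A$.

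For part (1), I would first observe that the barycenter $\bar{x} := \frac{1}{|W^v_\Psi|}\sum_{w \in W^v_\Psi} w.x$ is $W^v_\Psi$-invariant. But $\A$ contains no nonzero $W^v_\Psi$-fixed vector: indeed, any $v$ satisfying $s_\alpha(v)=v$ for every $\alpha \in \Psi$ must satisfy $\alpha(v)=0$ for every $\alpha \in \Psi$, and since $\Psi$ generates $\A^*$ this forces $v=0$. Hence $0 = \bar{x} \in \conv(W^v_\Psi.x)$, so the affine hull of $W^v_\Psi.x$ coincides with the linear span $V := \vect(W^v_\Psi.x)$. The subspace $V$ is $W^v_\Psi$-stable and nonzero (since $x \in V$ and $x \neq 0$). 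By irreducibility of $\Psi$, the reflection representation of $W^v_\Psi$ on $\A$ is irreducible (a standard consequence of Bourbaki VI \S1), hence $V = \A$. Thus $\conv(W^v_\Psi.x)$ has full affine dimension and therefore non-empty interior.

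For part (2), I would reduce to (1) via the decomposition of equation~(2.1). Write $\A = \bigoplus_{i=1}^\ell \A_i$ and $\Psi = \bigsqcup_{i=1}^\ell \Psi_i$ with each $\Psi_i$ irreducible in $\A_i$; correspondingly $W^v_\Psi = \prod_i W^v_{\Psi_i}$ and $C^v_f = \prod_i C^v_{f,\Psi_i}$. Decompose $\bb = \sum_i \bb_i$ with $\bb_i \in C^v_{f,\Psi_i}$. Since every vectorial chamber lies in the complement of its walls and $0$ belongs to every wall, $\bb_i \neq 0$ for each $i$. For $\alpha \in \Psi_i$ the reflection $s_\alpha$ fixes $\A_j$ pointwise when $j \neq i$, so the orbit factors as $W^v_\Psi.\bb = \prod_i W^v_{\Psi_i}.\bb_i$. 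The elementary identity $\conv(E_1 \times \cdots \times E_\ell) = \conv(E_1) \times \cdots \times \conv(E_\ell)$ (valid for finite subsets, by multiplying the convex coefficients) then gives
\[
\conv(W^v_\Psi.\bb) \;=\; \prod_{i=1}^\ell \conv(W^v_{\Psi_i}.\bb_i).
\]
Applying (1) to each irreducible factor $\Psi_i$ with the nonzero vector $\bb_i$, each factor $\conv(W^v_{\Psi_i}.\bb_i)$ has non-empty interior in $\A_i$, and hence the product has non-empty interior in $\A = \bigoplus_i \A_i$.

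The only subtle step is invoking the irreducibility of the reflection representation from that of the root system; this is classical, and can be proved by hand: any nonzero $W^v_\Psi$-stable subspace $V \subset \A$ must contain some coroot $\alpha^\vee$ (otherwise $s_\alpha|_V = \Id$ would force $\alpha|_V = 0$ for every $\alpha \in \Psi$, and then $V = 0$), and when $\Psi$ is irreducible the $W^v_\Psi$-orbit of any coroot spans $\A$, so $V = \A$.
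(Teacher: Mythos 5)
Your proof is correct and follows essentially the same route as the paper: part (1) is the observation that the span/support of $W^v_\Psi.x$ is $W^v_\Psi$-invariant and hence all of $\A$ by irreducibility (the paper cites \cite[VI \S 1.2 Corollaire]{bourbaki1981elements} where you spell out the barycenter-at-$0$ step and the irreducibility of the reflection representation), and part (2) is the same product decomposition $\cP^0_{\bb,\Psi}=\prod_i \cP^0_{\bb_i,\Psi_i}$ used in the paper. No substantive difference.
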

 
 \begin{proof}
The support of $W^v_\Psi.x$ is $W^v_\Psi$-invariant and thus by \cite[VI  \S 1.2 Corollaire]{bourbaki1981elements}, this support is $\A$, which proves 1). 

By 1), if $\Psi$ is irreducible, then $\cP^0$ is non-empty. Assume now that $\Psi$ is reducible and keep the notation of Eq . \eqref{e_decomposition_Phi}. Write $\bb=(\bb_1,\ldots,\bb_\ell)$. Then $\cP^0_{\bb,\Psi}=\cP_{\bb_1,\Psi_1}^0\times \ldots \times \cP_{\bb_\ell,\Psi_\ell}^0$ and thus $\cP^0_{\bb,\Psi}$ is non-empty. 
 \end{proof}

\subsection{Description of the faces of the vectorial weight polytope}\label{ss_faces_vectorial_Weyl_polytope}

\subsubsection{Faces of $\cP^0$ meeting $C^v_f$}

The main goal of this subsection is to prove Lemma \ref{p_facets_P}, which describes the faces the of $\mathcal{P}^0$ meeting $C^v_f$.

For $J\subset \Sigma$, one defines the \textbf{parabolic subgroup} $W^v_{J}$ by $W^v_J=W^v_{J,\Psi}=\langle s_\alpha\mid \alpha\in J\rangle\subset W^v_{\Psi}$. Set $Q^\vee_+=Q^\vee_{\R,+}=\bigoplus_{\alpha\in \Sigma} \R_{\geq 0}\alpha^\vee$,   $Q^\vee_{J,+}=\bigoplus_{\alpha\in J} \R_{\geq 0} \alpha^\vee$.

We equip $W^v_\Psi$ with the Bruhat order (see \cite[Chapter 2]{bjorner2005combinatorics} for a definition).

\begin{Lemma}\label{l_Bruhat_order_Qvee_order}
Let $w\in W^v_{\Psi}$ and $\alpha\in \Sigma$. Then $ ws_\alpha<w$ for the Bruhat order on  $(W^v_{\Psi},\{s_\alpha\mid \alpha\in \Sigma\})$ if and only if $w.\alpha\in \Psi_{-}$.
\end{Lemma}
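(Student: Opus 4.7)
The plan is to combine two standard facts from Coxeter theory: that the Bruhat order behaves predictably under multiplication by a simple reflection, and that the change in length when multiplying by a simple reflection $s_\alpha$ is controlled by the sign of $w(\alpha)$. Both are classical and can be quoted directly from \cite{bjorner2005combinatorics}.

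First, I would recall the general fact that for any Coxeter system $(W,S)$, any $w\in W$ and any $s\in S$, one has the equivalence
\[
ws < w \text{ in the Bruhat order} \quad\Longleftrightarrow\quad \ell(ws) < \ell(w) \quad\Longleftrightarrow\quad \ell(ws)=\ell(w)-1,
\]
which is immediate from the definition of the Bruhat order in terms of subwords of reduced expressions (see \cite[Chapter 2]{bjorner2005combinatorics}). Applied to our setting with $S=\{s_\alpha\mid \alpha\in\Sigma\}$, this reduces the statement to showing that $\ell(ws_\alpha)<\ell(w)$ if and only if $w.\alpha\in\Psi_-$.

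Next, I would invoke the classical length-inversion formula for finite Weyl groups: for any $w\in W^v_\Psi$,
\[
\ell(w)=\#\{\beta\in \Psi_+ \mid w.\beta\in \Psi_-\}.
\]
From this formula, a direct case-by-case analysis on whether $w.\alpha\in\Psi_+$ or $w.\alpha\in\Psi_-$, using that $s_\alpha$ permutes $\Psi_+\setminus\{\alpha\}$ and sends $\alpha$ to $-\alpha$ (this is \cite[VI \S 1.6 Corollaire 1]{bourbaki1981elements}), yields
\[
\ell(ws_\alpha)=\begin{cases}\ell(w)+1 & \text{if } w.\alpha\in \Psi_+, \\ \ell(w)-1 & \text{if } w.\alpha\in \Psi_-.\end{cases}
\]
This is exactly the statement I need, and combining it with the first step completes the proof.

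There is no real obstacle here; the only point requiring care is convention (right versus left action, and whether one works with $w.\alpha$ or $w^{-1}.\alpha$), which I would check matches the paper's definition of the action of $W^v_\Psi$ on $\A^*$ via $(w.\alpha)(x)=\alpha(w^{-1}.x)$. Once conventions are fixed, the argument is essentially a citation of two lines from \cite{bjorner2005combinatorics} and \cite{bourbaki1981elements}.
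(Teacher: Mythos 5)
Your proof is correct and follows essentially the same route as the paper, which simply cites \cite[VI, \S 1 Proposition 17 (ii)]{bourbaki1981elements} (the criterion $\ell(ws_\alpha)=\ell(w)\pm 1$ according to the sign of $w.\alpha$) together with \cite[Corollary 1.4.4]{bjorner2005combinatorics} (reducing the Bruhat comparison $ws_\alpha<w$ to the length comparison). The only difference is that you re-derive the length--sign criterion from the inversion-counting formula instead of quoting it, which is fine; your remark about checking the convention for the contragredient action of $W^v_\Psi$ on $\A^*$ is the right point to verify and causes no problem here.
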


\begin{proof}
This follows from  \cite[VI, \S 1 Proposition 17 (ii)]{bourbaki1981elements} and \cite[Corollary 1.4.4]{bjorner2005combinatorics}. 
\end{proof}

\begin{Lemma}\label{l_link_parabolic_subgroups_regular_coweights}
Let  $w\in W^v_{\Psi}$ and $J\subset \Sigma$. The following assertions are equivalent:
\begin{enumerate}
    \item[(i)]  we have $\bb-w.\bb\in \bigoplus_{\alpha\in J} \R \alpha^\vee$,
    \item[(ii)] we have $\bb-w.\bb\in Q^\vee_{J,+}$,
    \item[(iii)] we have $w\in W^v_{J}$.
\end{enumerate}
\end{Lemma}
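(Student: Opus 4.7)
My plan is to prove the cycle $(\mathrm{iii}) \Rightarrow (\mathrm{ii}) \Rightarrow (\mathrm{i}) \Rightarrow (\mathrm{iii})$. The implication $(\mathrm{ii}) \Rightarrow (\mathrm{i})$ is immediate, since $Q^\vee_{J,+}$ is by definition contained in $\bigoplus_{\alpha \in J} \R \alpha^\vee$.

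For $(\mathrm{iii}) \Rightarrow (\mathrm{ii})$, I would argue by induction on the length $\ell(w)$ in $W^v_\Psi$. The base case $w=e$ is clear. For the inductive step, I use that the parabolic $W^v_J$ is itself a Coxeter group on generators $\{s_\alpha \mid \alpha \in J\}$ whose length function coincides with the restriction of $\ell$; consequently, whenever $w \in W^v_J$ with $\ell(w) \geq 1$, one can find $\alpha \in J$ such that $\ell(ws_\alpha) < \ell(w)$. Set $v = ws_\alpha \in W^v_J$. By Lemma~\ref{l_Bruhat_order_Qvee_order} one has $w.\alpha \in \Psi_-$, and since $w \in W^v_J$ and $\alpha \in J$, the root $w.\alpha$ also belongs to the subsystem $\Psi_J := \Psi \cap \bigoplus_{\beta \in J}\R\beta$; hence $-w.\alpha$ is a positive root of $\Psi_J$, which forces $-w.\alpha^\vee \in Q^\vee_{J,+}$. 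Using $s_\alpha.\bb = \bb - \alpha(\bb)\alpha^\vee$ together with $v.\alpha^\vee = -w.\alpha^\vee$, a short computation yields
\[
\bb - w.\bb \;=\; (\bb - v.\bb) + \alpha(\bb)\cdot(-w.\alpha^\vee),
\]
and both summands lie in $Q^\vee_{J,+}$: the first by the induction hypothesis applied to $v$, the second because $\alpha(\bb) > 0$ as $\bb \in C^v_f$.

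For $(\mathrm{i}) \Rightarrow (\mathrm{iii})$, I would fix a reduced expression $w = s_{\alpha_1}\cdots s_{\alpha_k}$ and first establish by induction on $k$ the standard identity
\[
\bb - w.\bb \;=\; \sum_{i=1}^k \alpha_i(\bb)\,\gamma_i^\vee, \qquad \gamma_i := s_{\alpha_1}\cdots s_{\alpha_{i-1}}.\alpha_i.
\]
Classical reflection-group theory ensures each $\gamma_i$ is a positive root, so every $\gamma_i^\vee$ has non-negative coefficients in the basis $\{\beta^\vee \mid \beta \in \Sigma\}$. Under hypothesis $(\mathrm{i})$ the $\beta^\vee$-coefficient of $\bb - w.\bb$ vanishes for every $\beta \in \Sigma \setminus J$, and since $\alpha_i(\bb) > 0$ for all $i$, this forces each $\gamma_i^\vee$ to lie in $Q^\vee_{J,+}$, equivalently $\gamma_i \in \Psi_J$. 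A further induction on $i$ then yields $\alpha_i \in J$: the base case is $\alpha_1 = \gamma_1 \in \Psi_J \cap \Sigma = J$, and once $\alpha_1,\ldots,\alpha_{i-1}$ lie in $J$, the element $s_{\alpha_1}\cdots s_{\alpha_{i-1}} \in W^v_J$ stabilizes $\bigoplus_{\beta \in J}\R\beta$, so $\alpha_i = (s_{\alpha_1}\cdots s_{\alpha_{i-1}})^{-1}.\gamma_i$ belongs to $\bigoplus_{\beta \in J}\R\beta \cap \Sigma = J$. Hence $w \in W^v_J$. The main technical point I anticipate is the reduced-expression formula together with the positivity of the $\gamma_i$; both are classical facts for finite Weyl groups, proved by a short induction relying on the $W^v_\Psi$-equivariance $w.\alpha^\vee = (w.\alpha)^\vee$ of the correspondence $\alpha \leftrightarrow \alpha^\vee$.
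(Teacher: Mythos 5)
Your proof is correct, but it takes a somewhat different route from the paper's. For (iii)$\Rightarrow$(ii) the paper simply invokes Bourbaki (VI, \S 1, Proposition 18), whereas you reprove it by induction on length inside the parabolic $W^v_J$, using that the length function of $W^v_J$ is the restriction of $\ell_\Psi$, the identity $\bb-w.\bb=(\bb-v.\bb)+\alpha(\bb)\,(-w.\alpha^\vee)$ with $v=ws_\alpha$, and the fact that $-w.\alpha$ is a positive root of the subsystem $\Psi\cap\bigoplus_{\beta\in J}\R\beta$, so that $-w.\alpha^\vee\in Q^\vee_{J,+}$; this is a correct, self-contained substitute for the citation. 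For (i)$\Rightarrow$(iii) the paper argues by contraposition: it takes a reduced word for $w\notin W^v_J$, isolates the first letter $s_{\alpha_j}$ with $\alpha_j\notin J$, and shows, via Lemma~\ref{l_Bruhat_order_Qvee_order} and the regularity of $\bb$, that the $\alpha_j^\vee$-coordinate of $\bb-w.\bb$ is nonzero, so (i) fails. You instead argue directly, expanding $\bb-w.\bb=\sum_i\alpha_i(\bb)\,\gamma_i^\vee$ over the whole reduced word, using the positivity of all the inversion roots $\gamma_i$ (which also follows from Lemma~\ref{l_Bruhat_order_Qvee_order} applied to the prefixes) and the standard fact that the coroot of a positive root has nonnegative coordinates in the simple coroots, then forcing every $\gamma_i$ to lie in the span of $J$ and, by a second induction, every letter $\alpha_i$ to lie in $J$. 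The ingredients are the same in both treatments (reduced expressions, regularity of $\bb$, the sign of $w.\alpha$); the paper's contrapositive only has to control a single coefficient, while your direct version is slightly longer but yields the marginally stronger conclusion that every reduced word of $w$ has all its letters in $J$, at the cost of invoking the classical inversion-set and coroot-positivity facts.
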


\begin{proof}
By \cite[Chapitre VI, §1 Proposition 18]{bourbaki1981elements}, if $w\in    W^v_{J}$, then  $\bb-w.\bb\in Q^\vee_{J,+} \subset \bb-w.\bb\in \bigoplus_{\alpha\in J} \R \alpha^\vee$. Hence (iii) implies (ii), which itself implies (i).

It remains to prove that (i) implies (iii). To do so, take $w\in W^v_{\Psi}$ and assume that $w\in W^v_{\Psi}\setminus W^v_{J}$. Write $w=s_{\alpha_k}\ldots s_{\alpha_1}$, with $k=\ell_\Psi(w)$, $\alpha_1,\ldots,\alpha_k\in \Sigma$, where $\ell_\Psi$ denotes the length of the Coxeter system  $(W^v_{\Psi},\{s_\alpha\mid \alpha\in \Sigma\})$. Then there exists $j\in \llbracket 1,k\rrbracket $ such that $\alpha_j\notin J$. Take $j$ minimal with this property.
Then $s_{\alpha_j}.s_{\alpha_{j-1}}\ldots s_{\alpha_1}.\bb=s_{\alpha_{j-1}}\ldots s_{\alpha_{1}}.\bb-\alpha_{j}(s_{\alpha_{j-1}}\ldots s_{\alpha_1}.\bb).\alpha_{j}^\vee$.  As $\bb$ is regular,  Lemma~\ref{l_Bruhat_order_Qvee_order} implies $s_{\alpha_j}.s_{\alpha_{j-1}}\ldots s_{\alpha_1}.\bb\in    \bb+\bigoplus_{\alpha\in J} \R\alpha^\vee-\R_{>0} \alpha_{j}^\vee$.
By an induction using Lemma~\ref{l_Bruhat_order_Qvee_order}, we deduce \[w.\bb\in \bb+(\bigoplus_{\alpha\in J} \R\alpha^\vee-\R_{>0}\alpha_{j}^\vee)-\bigoplus_{\alpha\in \Sigma}\R_{\geq 0} \alpha^\vee.\]
In particular, $w.\bb-\bb\notin \bigoplus_{\alpha\in J} \R \alpha^\vee$, which completes the proof of the lemma.
\end{proof}

\begin{Lemma}\label{l_neighborhood_lambda_P}
Let $\eta=\frac{1}{|\Sigma|}\min \{\alpha(\bb)\mid \alpha\in \Sigma\}$. Then for every $(t_\alpha)\in ]0,\eta[^{\Sigma}$, $\bb-\sum_{\alpha\in \Sigma} t_\alpha \alpha^\vee \in\cP^0$.
\end{Lemma}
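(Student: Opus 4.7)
The plan is to exhibit the point $p := \bb-\sum_{\alpha\in \Sigma} t_\alpha \alpha^\vee$ as a \emph{strict} convex combination of the $|\Sigma|+1$ vertices $\bb$ and $s_\alpha.\bb$ ($\alpha\in\Sigma$) of $\overline{\cP^0}$, and then invoke a simplex argument to conclude that $p$ lies in $\cP^0 = \Int(\overline{\cP^0})$.

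First, I would use the identity $s_\alpha.\bb = \bb - \alpha(\bb)\alpha^\vee$ (valid for any $\alpha\in\Psi$) to rewrite
\[
p \;=\; \Bigl(1-\sum_{\alpha\in\Sigma}\frac{t_\alpha}{\alpha(\bb)}\Bigr)\bb \;+\; \sum_{\alpha\in\Sigma}\frac{t_\alpha}{\alpha(\bb)}\,(s_\alpha.\bb).
\]
Since $\bb\in C^v_f$, we have $\alpha(\bb)>0$ for every $\alpha\in\Sigma$, so each coefficient $t_\alpha/\alpha(\bb)$ is strictly positive. For the coefficient of $\bb$, the choice $\eta=\tfrac{1}{|\Sigma|}\min_{\beta\in\Sigma}\beta(\bb)$ gives, for each $\alpha$,
\[
\frac{t_\alpha}{\alpha(\bb)} \;<\; \frac{\eta}{\alpha(\bb)}\;\leq\;\frac{1}{|\Sigma|},
\]
whence $\sum_{\alpha\in\Sigma}\frac{t_\alpha}{\alpha(\bb)} < 1$ and the coefficient of $\bb$ is strictly positive as well. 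Thus $p$ is a strict convex combination of $\bb$ and the $s_\alpha.\bb$.

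Next, I would check that the $|\Sigma|+1$ points $\{\bb\}\cup\{s_\alpha.\bb\mid \alpha\in\Sigma\}$ are affinely independent. The differences $\bb - s_\alpha.\bb = \alpha(\bb)\alpha^\vee$ for $\alpha\in\Sigma$ are nonzero scalar multiples of the coroots $\alpha^\vee$; since $\Sigma$ is a basis of $\A^*$, the associated family $(\alpha^\vee)_{\alpha\in\Sigma}$ is a basis of $\A$ (the Cartan matrix $(\alpha(\beta^\vee))_{\alpha,\beta\in\Sigma}$ is invertible). In particular these $|\Sigma|=\dim\A$ difference vectors are linearly independent, giving the affine independence.

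Consequently, the simplex $S:=\conv(\{\bb\}\cup\{s_\alpha.\bb\mid\alpha\in\Sigma\})$ is a full-dimensional (non-degenerate) simplex in $\A$, and any strict convex combination of its vertices lies in its interior. Hence $p\in\Int(S)$. Since every vertex of $S$ belongs to $W^v_\Psi.\bb$, we have $S\subset \conv(W^v_\Psi.\bb)=\overline{\cP^0}$, so $\Int(S)\subset \Int(\overline{\cP^0})=\cP^0$, which yields $p\in\cP^0$ and completes the proof. No single step is a real obstacle here; the only mildly delicate point is the affine independence, which ultimately reduces to the standard nondegeneracy of the Cartan matrix.
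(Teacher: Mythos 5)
Your proof is correct and follows essentially the same route as the paper: both write $\bb-\sum_{\alpha\in\Sigma}t_\alpha\alpha^\vee$ as a strict convex combination of $\bb$ and points toward the $s_\alpha.\bb$, using that $(\alpha^\vee)_{\alpha\in\Sigma}$ is a basis of $\A$. The only (minor) difference is the last step: the paper notes that the whole parametrized family is an open subset of $\overline{\cP^0}$, hence lies in $\cP^0$, whereas you place the single point in the interior of the full-dimensional simplex $\conv\left(\{\bb\}\cup\{s_\alpha.\bb\mid\alpha\in\Sigma\}\right)\subset\overline{\cP^0}$; both conclusions are valid.
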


\begin{proof} Let $m=\min \{\alpha(\bb)\mid \alpha\in \Sigma\}$. Let $\alpha\in \Sigma$. Then $\bb-m\alpha^\vee\in [\bb,s_\alpha.\bb]\subset\cP^0$. Let  $(t_\alpha)\in ]0,\eta[^{\Sigma}$. For $\alpha\in \Sigma$, set $t_\alpha'=\frac{1}{m}t_\alpha\in ]0,\frac{1}{|\Sigma|}[$. Set $t'_0=1-\sum_{\alpha\in \Sigma} t_\alpha'\in ]0,1[$. Then \[t_0'\bb +\sum_{\alpha\in \Sigma} t_\alpha' (\bb-m\alpha^\vee)=\bb -\sum_{\alpha\in \Sigma} t_\alpha'm \alpha^\vee=\bb-\sum_{\alpha\in \Sigma} t_\alpha\alpha^\vee \in\overline{\cP^0}.\]
As $(\alpha^\vee)_{\alpha\in \Sigma}$ is a basis of $\A$, the map $(\R_{>0})^{\Sigma}\rightarrow \A$ defined by $(n_\alpha)_{\alpha\in \Sigma} \mapsto \sum_{\alpha\in \Sigma} n_\alpha\alpha^\vee$ is open, and the lemma follows.
\end{proof}

Let $\CC_\Psi=\bigoplus_{\alpha\in \Sigma}\R_{>0}\alpha^\vee$\index{c@$\CC_\Psi$}. By \cite[Chapitre VI, §1 Proposition 18]{bourbaki1981elements}, we have:

\begin{Lemma}\label{l_inclusion_cP_CC}
We have $\cP^0\subset \bb-\CC_\Psi$.
\end{Lemma}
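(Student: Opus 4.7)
The plan is to establish the inclusion in two steps: first prove the weak (closed-cone) inclusion $\overline{\cP^0}\subset \bb-Q^\vee_+$, where $Q^\vee_+=\bigoplus_{\alpha\in \Sigma}\R_{\geq 0}\alpha^\vee$, and then upgrade it to the strict (open-cone) form $\cP^0\subset \bb-\CC_\Psi$ by a coordinate-wise perturbation argument using the openness of $\cP^0$.

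For the first step, I would apply Lemma~\ref{l_link_parabolic_subgroups_regular_coweights} with $J=\Sigma$ (equivalently, invoke \cite[Chapitre VI, §1 Proposition 18]{bourbaki1981elements} directly) to obtain $\bb-w.\bb\in Q^\vee_+$ for every $w\in W^v_\Psi$. Since $\bb-Q^\vee_+$ is a closed convex subset of $\A$ containing the finite set $W^v_\Psi.\bb$, we get $\overline{\cP^0}=\conv(W^v_\Psi.\bb)\subset \bb-Q^\vee_+$.

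For the second step, let $x\in \cP^0$. Since the Cartan matrix of $\Psi$ is nonsingular, the family $(\alpha^\vee)_{\alpha\in \Sigma}$ is a basis of $\A$, so we may write uniquely $\bb-x=\sum_{\alpha\in \Sigma}c_\alpha \alpha^\vee$ with $c_\alpha\in \R$. Because $x$ lies in the interior of $\overline{\cP^0}$, for each $\alpha\in \Sigma$ there exists $\epsilon_\alpha>0$ such that $x+\epsilon_\alpha \alpha^\vee\in \overline{\cP^0}$. The first step applied to this point gives
\[\bb-(x+\epsilon_\alpha\alpha^\vee)=\sum_{\beta\in \Sigma}c_\beta\beta^\vee -\epsilon_\alpha\alpha^\vee\in Q^\vee_+,\]
and comparing the $\alpha$-coordinate in the basis $(\beta^\vee)_{\beta\in \Sigma}$ yields $c_\alpha\geq \epsilon_\alpha>0$. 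As this holds for every $\alpha\in \Sigma$, we conclude $\bb-x\in \CC_\Psi$, i.e., $x\in \bb-\CC_\Psi$.

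The only genuinely substantive step is the bootstrapping from the closed cone to the open one: the weak inclusion is immediate from Bourbaki and convexity, and the perturbation trick is what turns non-strict inequalities on the boundary into strict inequalities on the interior. I do not anticipate any real obstacle beyond making sure the coordinate comparison is valid, which is guaranteed by the invertibility of the Cartan matrix.
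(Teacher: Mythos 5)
Your proof is correct and follows essentially the same route as the paper, which simply records the lemma as a consequence of \cite[Chapitre VI, \S 1 Proposition 18]{bourbaki1981elements}: the vertices $w.\bb$ lie in $\bb-Q^\vee_+$, convexity gives $\overline{\cP^0}\subset \bb-Q^\vee_+$, and openness of $\cP^0$ inside this simplicial cone forces the strict inclusion. Your coordinatewise perturbation is just an explicit way of saying that an open subset of the closed cone lies in its interior $\bb-\CC_\Psi$, so there is no gap.
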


For $\beta\in \Sigma$, set $W^v_\beta=W^v_{\Sigma\setminus\{\beta\},\Psi}$. Then $W^v_\beta$ is the fixator of the one dimensional face $\overline{C^v_f}\cap \bigcap_{\alpha\in \Sigma\setminus \{\beta\}} \ker(\alpha)$, according to \eqref{fixator}.

\begin{Lemma}\label{l_frontier_cP}
 Let $w\in W^v_\Psi$. Then $[w.\bb,\bb]\subset \Fr(\cP^0)$ if and only if $w\in \bigcup_{\beta\in \Sigma} W_\beta^v$.
\end{Lemma}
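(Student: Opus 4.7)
The plan is to characterize when the segment $[w.\bb,\bb]$ lies in the frontier by combining the ``one-sided'' inclusion $\cP^0 \subset \bb-\CC_\Psi$ of Lemma~\ref{l_inclusion_cP_CC} with the ``interior'' statement of Lemma~\ref{l_neighborhood_lambda_P}, using Lemma~\ref{l_link_parabolic_subgroups_regular_coweights} to translate group-theoretic membership in $W^v_\beta$ into the statement that the $\beta^\vee$-coefficient of $\bb - w.\bb$ vanishes. Note first that since $\bb$ and $w.\bb$ are both in $\overline{\cP^0}$ which is convex, the whole segment lies in $\overline{\cP^0}$; so the question is exactly whether some point of $(w.\bb, \bb)$ escapes into $\cP^0$.

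For the direction $(\Leftarrow)$, suppose $w \in W^v_\beta = W^v_{\Sigma\setminus\{\beta\},\Psi}$. By Lemma~\ref{l_link_parabolic_subgroups_regular_coweights} (applied with $J = \Sigma\setminus\{\beta\}$), $\bb - w.\bb$ lies in $\bigoplus_{\alpha\in\Sigma\setminus\{\beta\}}\R\alpha^\vee$, hence its $\beta^\vee$-coefficient is zero. Every point of $[w.\bb,\bb]$ has the form $\bb - t(\bb - w.\bb)$ for $t\in[0,1]$, so each such point is of the form $\bb - \sum_{\alpha\in\Sigma}c_\alpha\alpha^\vee$ with $c_\beta = 0$. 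By Lemma~\ref{l_inclusion_cP_CC}, $\cP^0 \subset \bb - \CC_\Psi = \bb - \bigoplus_{\alpha\in\Sigma}\R_{>0}\alpha^\vee$, so no point with $c_\beta = 0$ belongs to $\cP^0$. Combined with $[w.\bb,\bb]\subset\overline{\cP^0}$, this gives $[w.\bb,\bb] \subset \overline{\cP^0}\setminus\cP^0 = \Fr(\cP^0)$.

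For the direction $(\Rightarrow)$, I argue contrapositively: assume $w \notin \bigcup_{\beta\in\Sigma}W^v_\beta$ and show that $[w.\bb,\bb]$ meets $\cP^0$. Applying Lemma~\ref{l_link_parabolic_subgroups_regular_coweights} with $J = \Sigma$ gives $\bb - w.\bb \in Q^\vee_+ = \bigoplus_{\alpha\in\Sigma}\R_{\geq 0}\alpha^\vee$; write $\bb - w.\bb = \sum_{\alpha\in\Sigma}c_\alpha\alpha^\vee$ with $c_\alpha \geq 0$. For each $\beta\in\Sigma$, since $w \notin W^v_\beta = W^v_{\Sigma\setminus\{\beta\},\Psi}$, Lemma~\ref{l_link_parabolic_subgroups_regular_coweights} applied with $J = \Sigma\setminus\{\beta\}$ shows $\bb - w.\bb \notin \bigoplus_{\alpha\in\Sigma\setminus\{\beta\}}\R\alpha^\vee$, so $c_\beta \neq 0$. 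Thus every $c_\alpha$ is strictly positive. Let $\eta$ be as in Lemma~\ref{l_neighborhood_lambda_P} and pick any $t\in\left]0,\min_{\alpha\in\Sigma}\eta/c_\alpha\right[$. Then $tc_\alpha\in]0,\eta[$ for every $\alpha\in\Sigma$, so by Lemma~\ref{l_neighborhood_lambda_P},
\[\bb - t(\bb - w.\bb) = \bb - \sum_{\alpha\in\Sigma}(tc_\alpha)\alpha^\vee \in \cP^0.\]
This is a point of the segment $(w.\bb, \bb)$ lying in $\cP^0$, so $[w.\bb, \bb] \not\subset \Fr(\cP^0)$, completing the argument.

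No step looks especially delicate: the main conceptual observation is just that positivity of all coefficients in the $(\alpha^\vee)_{\alpha\in\Sigma}$ expansion of $\bb - w.\bb$ is equivalent, via Lemma~\ref{l_link_parabolic_subgroups_regular_coweights}, to $w$ avoiding every maximal proper standard parabolic $W^v_\beta$, while Lemmas~\ref{l_inclusion_cP_CC} and~\ref{l_neighborhood_lambda_P} precisely say that this positivity characterizes being interior to $\cP^0$ along the ray from $\bb$.
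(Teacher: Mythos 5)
Your proof is correct and follows essentially the same route as the paper: both directions rest on Lemma~\ref{l_link_parabolic_subgroups_regular_coweights} to translate membership in the parabolics $W^v_\beta$ into vanishing or strict positivity of the coefficients of $\bb-w.\bb$ in the basis $(\alpha^\vee)_{\alpha\in\Sigma}$, combined with Lemma~\ref{l_inclusion_cP_CC} for the frontier inclusion and Lemma~\ref{l_neighborhood_lambda_P} to produce an interior point. The only cosmetic difference is that the paper additionally invokes convexity to conclude that the whole open segment $]\bb,w.\bb[$ lies in $\cP^0$, whereas you exhibit a single interior point, which already suffices.
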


\begin{proof}
Let $w\in W^v_\Psi\setminus \bigcup_{\beta\in \Sigma} W_\beta^v$. Since $w\in W^v_\Psi$, Lemma~\ref{l_link_parabolic_subgroups_regular_coweights} shows that $u:=w.\bb-\bb \in \bigoplus_{\alpha\in \Sigma} \R_{\geq 0} \alpha^\vee$. By using again Lemma~\ref{l_link_parabolic_subgroups_regular_coweights}, since $w\not\in \bigcup_{\beta\in \Sigma} W_\beta^v$, we get that $u\in \bigoplus_{\alpha\in \Sigma} \R_{<0} \alpha^\vee$. By Lemma~\ref{l_neighborhood_lambda_P}, $\bb+tu\in \cP^0$ for $t\in [0,1]$ small enough. We have $w.\bb=\bb+u$. As $\cP^0$ is convex, either $]\bb,\bb+u[\subset \Fr(\cP^0)$ or $]\bb,\bb+u[\subset\cP^0$ and thus $]\bb,\bb+u[=]\bb,w.\bb[\subset \cP^0$.

Let $w\in \bigcup_{\beta\in \Sigma} W^v_\beta$. Then by Lemma~\ref{l_link_parabolic_subgroups_regular_coweights} and Lemma~\ref{l_inclusion_cP_CC}, $[w.\bb,\bb]\subset \A\setminus (\bb-\CC_\Psi)\subset \A\setminus \cP^0$, which proves that $[w.\bb,\bb]\subset \Fr(\cP^0)$.
\end{proof}

For $J\subset \Sigma$, set $\A_J=\bigcap_{\alpha\in J} \ker(\alpha)$. 

\begin{Lemma}\label{l_scalar_product_coroots_fundamental_weights}
Let $i,j\in I$. Then $\langle \qp_i\mid\alpha_j^\vee\rangle=\frac{1}{2}\delta_{i,j}|\alpha_i^\vee|^2$. In particular, for $J\subset \Sigma$, we have $\A_J^\perp=\bigoplus_{\alpha\in J} \R\alpha^\vee$.
\end{Lemma}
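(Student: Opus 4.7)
The plan is to derive from the $W^v_\Psi$-invariance of the scalar product the key identity
\[
\langle x\mid \alpha^\vee\rangle = \tfrac{1}{2}|\alpha^\vee|^2\,\alpha(x)
\]
for every $\alpha\in\Psi$ and every $x\in\A$, from which both assertions follow immediately.

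To obtain this identity, I would first observe that the reflection $s_\alpha(x)=x-\alpha(x)\alpha^\vee$ fixes $\ker(\alpha)$ pointwise and sends $\alpha^\vee$ to $-\alpha^\vee$, so that $\A=\ker(\alpha)\oplus\R\alpha^\vee$ is its eigenspace decomposition. Since $\langle\cdot\mid\cdot\rangle$ is $W^v_\Psi$-invariant and $s_\alpha\in W^v_\Psi$, the two eigenspaces must be orthogonal. Hence the linear form $\langle\cdot\mid\alpha^\vee\rangle$ vanishes on $\ker(\alpha)$, and therefore must be a scalar multiple of $\alpha$, say $\langle\cdot\mid\alpha^\vee\rangle=c_\alpha\,\alpha$. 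Evaluating at $\alpha^\vee$ and using $\alpha(\alpha^\vee)=2$ yields $|\alpha^\vee|^2=2c_\alpha$, which gives the claimed constant.

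Applying this identity with $x=\qp_i$ and $\alpha=\alpha_j$ and using that $(\qp_i)_{i\in I}$ is the dual basis of $(\alpha_j)_{j\in I}$ (so $\alpha_j(\qp_i)=\delta_{i,j}$) gives
\[
\langle \qp_i\mid \alpha_j^\vee\rangle = \tfrac{1}{2}|\alpha_j^\vee|^2\,\delta_{i,j} = \tfrac{1}{2}\delta_{i,j}|\alpha_i^\vee|^2,
\]
which is the first assertion.

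For the second assertion, fix $J\subset\Sigma$. By the identity above (or the orthogonality of the eigenspaces of $s_\alpha$), each $\alpha^\vee$ with $\alpha\in J$ is orthogonal to $\ker(\alpha)\supset \A_J$, so $\bigoplus_{\alpha\in J}\R\alpha^\vee\subset \A_J^\perp$. For the reverse inclusion, note that since $(\qp_\alpha)_{\alpha\in\Sigma}$ is a basis of $\A$ dual to $\Sigma$, we have $\A_J=\bigoplus_{\alpha\in\Sigma\setminus J}\R\qp_\alpha$, hence $\dim\A_J=|\Sigma|-|J|$ and $\dim\A_J^\perp=|J|$. On the other hand the family $(\alpha^\vee)_{\alpha\in\Sigma}$ is a basis of $\A$ (since $\Psi^\vee$ spans $\A$ and $|\Sigma|=\dim\A$), so $\bigoplus_{\alpha\in J}\R\alpha^\vee$ has dimension $|J|$. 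The two subspaces of $\A_J^\perp$ have the same dimension, so they coincide. No serious obstacle arises; the only subtle point is to remember to use the $W^v_\Psi$-invariance of the scalar product to get the orthogonality of $\ker(\alpha)$ and $\R\alpha^\vee$.
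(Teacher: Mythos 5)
Your proof is correct and rests on the same idea as the paper's: invariance of the scalar product under the simple reflections $s_{\alpha_j}$ --- your eigenspace-orthogonality argument and the evaluation of the constant $c_\alpha$ at $\alpha^\vee$ are exactly the paper's two computations $\langle \qp_i\mid\alpha_j^\vee\rangle=\langle s_{\alpha_j}.\qp_i\mid s_{\alpha_j}.\alpha_j^\vee\rangle$, just packaged as the general identity $\langle x\mid\alpha^\vee\rangle=\tfrac12|\alpha^\vee|^2\alpha(x)$. The dimension count you supply for the ``in particular'' part (which the paper leaves implicit) is fine; the only loose point is your parenthetical justification that $(\alpha^\vee)_{\alpha\in\Sigma}$ is a basis of $\A$ --- spanning of $\A$ by $\Psi^\vee$ together with $|\Sigma|=\dim\A$ does not by itself give independence of this subfamily, but independence follows immediately from the first assertion (pair a dependency relation with the $\qp_i$) or from the standard fact that $\Sigma^\vee$ is a basis of $\Psi^\vee$.
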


\begin{proof}
We have $\langle \qp_i\mid\alpha_i^\vee\rangle=\langle s_{\alpha_i}.\qp_i \mid s_{\alpha_i}.\alpha_i^\vee\rangle=\langle \qp_i-\alpha_i^\vee\mid-\alpha_i^\vee\rangle$, so $2\langle \qp_i\mid\alpha_i^\vee\rangle=|\alpha_i^\vee|^2$. If $j\neq i$, we have $\langle \qp_i\mid\alpha_j^\vee\rangle=\langle s_{\alpha_j}.\qp_i\mid s_{\alpha_j}.\alpha_j^\vee\rangle=\langle \qp_i\mid-\alpha_j^\vee\rangle$, and hence $\langle \qp_i\mid\alpha_j^\vee\rangle=0$. 
\end{proof}

Let $\beta\in \Sigma$. Set $\FC_\beta=\Int_r(\conv(W_\beta^v.\bb))$. Denote by $H_\beta$ the affine hyperplane orthogonal to $\bigcap_{\alpha\in \Sigma\setminus\{\beta\}} \ker(\alpha)$ and containing $\bb$. 

\begin{Lemma}\label{p_facets_P}
\begin{enumerate}
\item The facets of $\cP^0$ are the $w.\FC_\beta$ for $w\in W^v_{\Psi}$ and $\beta\in \Sigma$.

\item For $\beta\in \Sigma$, the hyperplane containing $\FC_\beta$ is $H_\beta=\bb+\bigoplus_{\alpha\in \Sigma\setminus\{\beta\}}\R\alpha^\vee$.
\end{enumerate}
\end{Lemma}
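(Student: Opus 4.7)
The plan is to deduce (1) from (2), with the key ingredients being Lemma~\ref{l_link_parabolic_subgroups_regular_coweights} (which identifies Weyl subgroups via the location of $\bb-w.\bb$), Lemma~\ref{l_scalar_product_coroots_fundamental_weights} (for orthogonal decomposition along fundamental weights), Lemma~\ref{l_convex_hull_non-empty_interior} (to upgrade inclusions into equalities of affine spans), and Lemmas~\ref{l_extreme_points_P}--\ref{l_extreme_points_faces_polytope} (to reduce questions about faces to their extreme points).

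For (2), the inclusion $\overline{\FC_\beta}\subset H_\beta$ follows at once from Lemma~\ref{l_link_parabolic_subgroups_regular_coweights} applied with $J=\Sigma\setminus\{\beta\}$. It remains to check that the affine span of $\overline{\FC_\beta}$ has dimension $|\Sigma|-1$. Set $V_\beta=\bigoplus_{\alpha\in\Sigma\setminus\{\beta\}}\R\alpha^\vee$; then Lemma~\ref{l_scalar_product_coroots_fundamental_weights} gives $V_\beta^\perp=\R\qp_\beta$ and $s_\alpha(\qp_\beta)=\qp_\beta$ for every $\alpha\in\Sigma\setminus\{\beta\}$. Writing $\bb=u+v$ with $u\in V_\beta$ and $v\in\R\qp_\beta$, the orbit $W^v_\beta.\bb$ equals $v+W^v_\beta.u$. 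Since $\alpha(u)=\alpha(\bb)>0$ for every $\alpha\in\Sigma\setminus\{\beta\}$, the point $u$ is regular for the sub-root-system spanned by $\Sigma\setminus\{\beta\}$, so Lemma~\ref{l_convex_hull_non-empty_interior} applied inside $V_\beta$ gives $\dim\conv(W^v_\beta.u)=|\Sigma|-1$, and the claim follows.

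For (1), the $W^v_\Psi$-invariance of $\overline{\cP^0}$ makes its set of facets $W^v_\Psi$-stable. By Lemma~\ref{l_extreme_points_P} every extreme point of $\overline{\cP^0}$ is of the form $w.\bb$, and by Lemma~\ref{l_extreme_points_faces_polytope}(1) every closed facet $\overline{F}$ contains such a point; translating by $w^{-1}$ it thus suffices to describe the facets through $\bb$. That each $\overline{\FC_\beta}$ is such a facet follows from showing $H_\beta$ is a support hyperplane: for any $w\in W^v_\Psi$, Lemma~\ref{l_link_parabolic_subgroups_regular_coweights} gives $\bb-w.\bb\in Q^\vee_+$, and combining this with Lemma~\ref{l_scalar_product_coroots_fundamental_weights} yields $\langle\qp_\beta\mid\bb-w.\bb\rangle\geq 0$, with equality iff $\bb-w.\bb\in V_\beta$, iff $w\in W^v_\beta$. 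Hence by Lemma~\ref{l_extreme_points_faces_polytope}(1) the extreme points of $H_\beta\cap\overline{\cP^0}$ are exactly $W^v_\beta.\bb$, so $H_\beta\cap\overline{\cP^0}=\overline{\FC_\beta}$, which is a facet by (2).

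For the converse, if $\overline{F}$ is a facet through $\bb$ with outward normal $n$, plugging $s_\alpha.\bb=\bb-\alpha(\bb)\alpha^\vee$ for $\alpha\in\Sigma$ into the support inequality yields $\langle n\mid\alpha^\vee\rangle\geq 0$. Let $J=\{\alpha\in\Sigma:\langle n\mid\alpha^\vee\rangle=0\}$. Expanding $\bb-w.\bb=\sum_{\alpha\in\Sigma}n_{\alpha,w}\alpha^\vee$ with $n_{\alpha,w}\geq 0$ (Lemma~\ref{l_link_parabolic_subgroups_regular_coweights}), an extreme point $w.\bb$ lies in $\overline{F}$ iff $n_{\alpha,w}=0$ for all $\alpha\notin J$, iff $w\in W^v_J$. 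Rerunning the dimension computation of (2) with $J$ in place of $\Sigma\setminus\{\beta\}$ shows $\dim\overline{F}=|J|$; the facet condition then forces $|J|=|\Sigma|-1$, so $J=\Sigma\setminus\{\beta\}$ for some $\beta\in\Sigma$ and $\overline{F}=\overline{\FC_\beta}$. The main obstacle is this dimension step: establishing that $\conv(W^v_J.\bb)$ has the expected dimension $|J|$ requires both the orthogonal decomposition of $\bb$ along the perpendicular to $\mathrm{span}\{\alpha^\vee:\alpha\in J\}$ and the regularity of its projection for the sub-root-system generated by $J$.
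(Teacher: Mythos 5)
Your proof is correct, but it takes a genuinely different route from the paper for the hard inclusion. The paper, after the same reduction to facets through $\bb$ (via Lemmas~\ref{l_extreme_points_P} and \ref{l_extreme_points_faces_polytope} and $W^v_\Psi$-invariance), writes points of the facet as $\bb-\sum_{\alpha\in\Sigma}n_\alpha\alpha^\vee$, picks a point $x_0$ of maximal support, and uses the local statement of Lemma~\ref{l_neighborhood_lambda_P} together with Lemma~\ref{l_frontier_cP} to find the missing simple root $\beta$ and to show that $\overline{\cP^0}\cap H_\beta$ is a closed facet spanned by the explicit points $\{\bb\}\cup\{s_\alpha.\bb\mid\alpha\in\Sigma\setminus\{\beta\}\}$. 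You instead argue through the outward normal $n$ of a support hyperplane through $\bb$: the inequalities $\langle n\mid\alpha^\vee\rangle\geq 0$ from $s_\alpha.\bb\in\overline{\cP^0}$, the set $J$ where equality holds, and Lemma~\ref{l_link_parabolic_subgroups_regular_coweights} identify the extreme points on the facet as exactly $W^v_J.\bb$, and a dimension count forces $|J|=|\Sigma|-1$; dually, taking $n=\qp_\beta$ shows directly that $H_\beta$ is a support hyperplane, bypassing Lemmas~\ref{l_neighborhood_lambda_P} and \ref{l_frontier_cP} entirely. Your method buys a sharper intermediate statement (faces through $\bb$ correspond to parabolic orbits $W^v_J.\bb$, anticipating Proposition~\ref{p_description_faces_P0}), at the cost of the dimension step: note that invoking Lemma~\ref{l_convex_hull_non-empty_interior} for $\conv(W^v_J.u_J)$ inside $V_J$ is not a literal application of that lemma, which concerns the ambient $\Psi$; you should cite the fact that the roots vanishing on $\bigoplus_{i\notin J}\R\qp_i$ form a root system in $V_J$ with Weyl group $W^v_J$ (Lemma~\ref{l_root_system_Fv} together with Eq.~\eqref{fixator}), and observe that regularity of $u_J$ guarantees a nonzero component in each irreducible factor. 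Alternatively, the paper's simpler spanning argument ($\bb$ together with the $s_\alpha.\bb$, $\alpha\in J$, affinely span $\bb+\bigoplus_{\alpha\in J}\R\alpha^\vee$) would replace your dimension computation in both (2) and the converse of (1). These are presentational refinements, not gaps.
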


\begin{proof}
As $\cP^0$ is $W^v_\Psi$-invariant, $\Fr(\cP^0)$ is $W^v_\Psi$-invariant.  Let $\FC$ be a facet of $\cP^0$. Let $E$ be the set of extreme points of $\overline{\FC}$. By  Lemma~\ref{l_extreme_points_P} and Lemma~\ref{l_extreme_points_faces_polytope}, $E\subset  W^v_\Psi.\bb$ and thus replacing $\FC$ by $w.\FC$ for some $w\in W^v_\Psi$ if needed, we may assume $\bb\in \overline{\FC}$.

Let $x\in \overline{\FC}$. Then by Lemma~\ref{l_inclusion_cP_CC}, we can write $x=\bb-\sum_{\alpha\in \Sigma} n_\alpha \alpha^\vee$, where $(n_\alpha)\in (\R_{\geq 0})^{\Sigma}$. Set \[\supp(x)=\{\alpha\in \Sigma\mid n_\alpha\neq 0\}\subset \Sigma.\] Then for $x,y\in \overline{\FC}$, we have $\frac{1}{2}(x+y)\in \overline{\FC}$ and $\supp(\frac{1}{2}(x+y))=\supp(x)\cup \supp(y)$. Therefore there exists $x_0\in \overline{\FC}$ which has the largest
support, i.e., $\supp(x)\subseteq \supp(x_0)$ for every $x\in \overline{\FC}$.

We have $[x_0,\bb]\subset \Fr(\cP^0)$. By Lemma~\ref{l_neighborhood_lambda_P} we deduce the existence of $\beta\in \Sigma$ such that $t\bb+(1-t)x_0\in \bb+\bigoplus_{\alpha\in \Sigma\setminus\{\beta\}} \R \alpha^\vee$ for $t\in [0,1]$  large enough. But then $x_0\in \bb+\bigoplus_{\alpha\in \Sigma\setminus\{\beta\}} \R \alpha^\vee$. Therefore $\supp(x_0)\subset \Sigma\setminus \{\beta\}$ and $\supp(x)\subset \Sigma\setminus\{\beta\}$ for every $x\in \FC$. In particular, for every extreme point $Q$ of $\overline{\FC}$, we have $\supp(Q)\subset \Sigma\setminus \{\beta\}$. By Lemma~\ref{l_link_parabolic_subgroups_regular_coweights}, every extreme point of $\overline{\FC}$ is contained in $W_\beta^v.\bb$. By Krein-Milman theorem we deduce $\overline{\FC}\subset \conv(w.\bb\mid w\in W_\beta^v)= \overline{\FC_\beta}$.

As $\overline{\cP^0}\cap H_\beta$ is a $W_\beta^v$-stable set containing $\bb$, $\overline{\cP^0}\cap H_\beta\supset \FC_\beta$. Moreover, $\overline{\cP^0}\cap H_\beta$ contains $\{\bb\}\cup\{\bb-\alpha(\bb)\alpha^\vee\mid \alpha\in \Sigma\setminus\{\beta\}\}$. As the affine subspace spanned by this set is $H_\beta$, and as $\overline{\FC_\beta}\subset \Fr(\cP^0)$ (by Lemma~\ref{l_frontier_cP}), $\overline{\cP^0}\cap H_\beta$  is a closed facet of $\overline{\cP^0}$. By what we proved above we deduce that \[\overline{\cP^0}\cap H_\beta=\conv(W_\beta^v.\bb).\]

Thus, Statement (1) follows. Moreover, Statement (2) follows from Lemma~\ref{l_scalar_product_coroots_fundamental_weights}.
\end{proof}

\subsubsection{Arbitrary faces of $\cP^0$}
\begin{Definition}\label{d_b_faces}

Let $C^v$ be a vectorial chamber of $(\A,\cH_\Psi^0)$.
Since $\overline{C^v_f}$ is a fundamental domain for the action of $W^v_\Psi$ on $\A$, there exists $w\in W^v_{\Psi}$ be such that $C^v=w.C^v_f$.
Then, we write:
$$\bb_{C^v}=w.\bb.$$\index{b@$\bb_{C^v}$}
Note that $\bb_{C^v}$ does not depend on the choose of $w$, since any other $w'$ as above has the form $w'=ww_0$, where $w_0$ stabilizes $C^v_f$.

If $F^v\in \sF^0$, we choose a vectorial chamber $C^v$ dominating $F^v$, and we set:
$$\cF_{F^v}=\In_r\left(\conv(W^v_{F^v}.\bb_{C^v})\right).$$\index{f@$\cF_{F^v}$}
This does not depend on the choice of $C^v$, since $W^v_{F^v}$ acts transitively on the set of vectorial chambers dominating $F^v$, according to \cite[Chapitre V, §4 Proposition 5]{bourbaki1981elements}.
\end{Definition}

The next two lemmas will be given without proof. They can be settled in a very similar way to their affine counterparts, which will be settled later, in Lemmas~\ref{l_union_alcoves_containing_face} and \ref{l_barycenter_orbit}).

\begin{Lemma}\label{l_union_chambers_dominating_face}
Let $F^v$ be a face of $(\A,\cH^0_\Psi)$ and let $E$ be the \textbf{star} of $F^v$, i.e., the  union of the faces of $(\A,\cH^0_\Psi)$ dominating $F^v$. Let $C^v$ be a chamber of $(\A,\cH^0_\Psi)$ dominating $F^v$ and $\Sigma_{C^v}$ be the basis of $\Psi$ associated with $C^v$. Let $E'$ be the union of the faces of $C^v$ dominating $F^v$. Let $\Sigma_{F^v}=\{\alpha\in \Sigma_{C^v}\mid \alpha(F^v)=\{0\}\}$.  For $\alpha \in \Sigma_{C^v}$, set $\mathring{D}_\alpha=\alpha^{-1}(\R_{>0})$. Then \[E=W^v_{F^v}.E'=\bigcap_{w\in W^v_{F^v},\alpha\in \Sigma_{C^v}\setminus \Sigma_{F^v}} w.\mathring{D}_\alpha.\] In particular, $E$ is open and convex.
\end{Lemma}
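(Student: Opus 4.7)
The plan is to prove the two equalities in succession; openness and convexity of $E$ will then follow immediately from the description as an intersection of open half-spaces. For $E = W^v_{F^v}.E'$, the key facts are that the chambers of $(\A, \cH^0)$ dominating $F^v$ form a single $W^v_{F^v}$-orbit, namely $\{w.C^v : w \in W^v_{F^v}\}$, and that $W^v_{F^v}$ fixes $F^v$ pointwise. Concretely, every face dominating $F^v$ lies in the closure of some chamber also dominating $F^v$; applying $w^{-1}$ for a suitable $w \in W^v_{F^v}$ brings that chamber onto $C^v$ while preserving the domination relation with $F^v$, so the translated face lies in $E'$. The inclusion $E \subset \Omega := \bigcap_{w,\alpha} w.\mathring{D}_\alpha$ is similarly quick: for $\alpha \in \Sigma_{C^v}\setminus \Sigma_{F^v}$ one has $\alpha > 0$ on $F^v$ (non-negativity from $F^v \subset \overline{C^v}$, strictness from $\alpha \notin \Sigma_{F^v}$), hence $\alpha > 0$ on every face dominating $F^v$ by a closure argument, and $W^v_{F^v}$-invariance of $E$ extends this to $\alpha(w^{-1}.y) > 0$ for every $y \in E$ and $w \in W^v_{F^v}$.

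The harder inclusion $\Omega \subset E$ is the heart of the proof. I note first that $\Omega$ is open, convex (intersection of open half-spaces), and $W^v_{F^v}$-invariant. The strategy is to decompose $\A = V_{F^v} \oplus V_{F^v}^\perp$ orthogonally, where $V_{F^v} := \bigcap_{\alpha \in \Sigma_{F^v}}\ker\alpha$ is the linear span of $F^v$ and $V_{F^v}^\perp = \bigoplus_{\alpha \in \Sigma_{F^v}}\R\alpha^\vee$ by Lemma~\ref{l_scalar_product_coroots_fundamental_weights}. For $y \in \Omega$, the $W^v_{F^v}$-barycenter $\bar{y} := |W^v_{F^v}|^{-1}\sum_{w} w.y$ coincides with the orthogonal projection of $y$ onto $V_{F^v}$; it lies in $\Omega \cap V_{F^v}$ by convexity and invariance, and the $\Omega$-condition with $w = 1$ forces $\alpha(\bar{y}) > 0$ for every $\alpha \in \Sigma_{C^v}\setminus \Sigma_{F^v}$, so $\bar{y} \in F^v$ by Proposition~\ref{p_bourbaki_vectorial_faces}. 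Writing $y = \bar{y} + y_2$ with $y_2 \in V_{F^v}^\perp$, and noting that $W^v_{F^v}$ acts on $V_{F^v}^\perp$ as the Weyl group of the root subsystem $\Psi_{F^v} := \{\alpha \in \Psi : \alpha(F^v) = \{0\}\}$, one can choose $w_0 \in W^v_{F^v}$ such that $(w_0.\beta)(y_2) \geq 0$ for every $\beta \in \Sigma_{F^v}$. A direct check of the inequalities defining $\overline{w_0.C^v}$ then yields $y \in \overline{w_0.C^v}$: for $\beta \in \Sigma_{F^v}$ the root $w_0.\beta$ lies in $\Psi_{F^v}$ and vanishes on $V_{F^v}$, giving $(w_0.\beta)(y) = (w_0.\beta)(y_2) \geq 0$; for $\beta \in \Sigma_{C^v}\setminus \Sigma_{F^v}$ the strict positivity $(w_0.\beta)(y) > 0$ is a direct instance of the $\Omega$-condition.

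Thus $\Omega \subset \bigcup_{w \in W^v_{F^v}} \overline{w.C^v}$, and the openness of $\Omega$ upgrades this to $\Omega \subset \In\bigl(\bigcup_{w}\overline{w.C^v}\bigr)$. The last step identifies this interior with $E$: invoking the standard hyperplane-arrangement identity expressing $\overline{G^v}$ as the intersection of the closures of all chambers $C^{v,*}$ satisfying $G^v \subset \overline{C^{v,*}}$, a face $G^v$ lies in the interior if and only if every such $C^{v,*}$ equals $w.C^v$ for some $w \in W^v_{F^v}$, equivalently if and only if $F^v \subset \overline{G^v}$, i.e. $G^v$ dominates $F^v$. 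Hence $\Omega = E$ and the lemma follows. The main obstacle is the passage from $y \in \Omega$ to $y \in \overline{w_0.C^v}$: it crucially exploits both the fact that $W^v_{F^v}$ fixes $V_{F^v}$ pointwise (so the $V_{F^v}$-part $\bar{y} \in F^v$ handles the $\beta \in \Sigma_{C^v}\setminus\Sigma_{F^v}$ inequalities automatically) and the fact that $W^v_{F^v}$ acts as a full Weyl group on $V_{F^v}^\perp$ (so the chamber structure can be chosen to accommodate $y_2$).
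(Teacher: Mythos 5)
Your proof is correct, and its core mechanism is the same one the paper uses for the affine counterpart (this lemma is stated without proof and referred to the proof of Lemma~\ref{l_union_alcoves_containing_face}): decompose $\A=\vect(F^v)\oplus\vect(F^v)^\perp$, use that $W^v_{F^v}$ fixes the first factor pointwise and acts on the second as the Weyl group of $\Psi_{F^v}$ (Lemma~\ref{l_root_system_Fv}), and choose $w_0\in W^v_{F^v}$ making the values of $\Sigma_{F^v}$ nonnegative — exactly the content of the paper's Lemma~\ref{l_positivity_result_affine_root_systems}. You differ in the two flanking steps, in both cases legitimately. For $E\subset\Omega$ you argue by constancy of the sign of each root on a face together with $W^v_{F^v}$-invariance of $E$; the paper instead proves the corresponding inclusion by a Bruhat-length computation showing that $w.\underline{\alpha}$ is a positive root equal to $\underline{\alpha}$ plus a nonnegative combination of the simple roots vanishing on the face. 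Your argument is shorter and equally valid. For the reverse inclusion, note that once you have verified $(w_0.\beta)(y)\geq 0$ for $\beta\in\Sigma_{F^v}$ and $(w_0.\beta)(y)>0$ for $\beta\in\Sigma_{C^v}\setminus\Sigma_{F^v}$, you could stop: by Proposition~\ref{p_bourbaki_vectorial_faces} these inequalities say precisely that $w_0^{-1}.y$ lies in a face of $C^v$ dominating $F^v$, i.e.\ $w_0^{-1}.y\in E'$, hence $y\in W^v_{F^v}.E'=E$ — this is how the paper concludes. Your detour through $\In\bigl(\bigcup_{w}\overline{w.C^v}\bigr)$ and the identity expressing $\overline{G^v}$ as the intersection of the closed chambers dominating $G^v$ is sound but unnecessary, and it is the tersest spot in your write-up: one must still argue that every chamber dominating the face of a point of the interior is among the $w.C^v$ (via chamber points arbitrarily close to that point), and justify the closure identity itself. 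Two small facts you use implicitly and should record: the only $W^v_{F^v}$-fixed vector of $\vect(F^v)^\perp$ is $0$ (so the orbit barycenter is indeed the orthogonal projection onto $\vect(F^v)=\bigcap_{\alpha\in\Sigma_{F^v}}\ker\alpha$, whose orthogonal is $\bigoplus_{\alpha\in\Sigma_{F^v}}\R\alpha^\vee$ by Lemma~\ref{l_scalar_product_coroots_fundamental_weights}), and the existence of $w_0$ rests on the fact that $\{x\in\vect(F^v)^\perp\mid \beta(x)\geq 0,\ \forall\beta\in\Sigma_{F^v}\}$ contains a closed chamber of $\Psi_{F^v}$, as in the proof of Lemma~\ref{l_positivity_result_affine_root_systems}.
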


\begin{Lemma}\label{l_barycenter_orbit_vectoriel}
Let $F^v,F_1^v$ be  faces of $(\A,\cH^0_\Psi)$ such that $F_1^v$ dominates $F^v$. Let $x\in F_1^v$ and $y=\frac{1}{|W^v_{F^v}|}\sum_{w\in W^v_{F^v}} w.x$. Then $y\in F^v$ and $y$ is the orthogonal projection of $x$ on $\vect(F^v)$.
\end{Lemma}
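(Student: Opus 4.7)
The plan is to derive both conclusions from the identity $y = \pi_V(x)$, where $V = \vect(F^v)$ and $\pi_V$ denotes the orthogonal projection onto $V$. Up to applying an element of $W^v_\Psi$, I may assume $F^v \subset \overline{C^v_f}$, so that by \eqref{fixator} the group $W^v_{F^v}$ is generated by the simple reflections $\{s_\alpha \mid \alpha \in \Sigma,\ \alpha(F^v) = \{0\}\}$. The first step is to show that the fixed point set of $W^v_{F^v}$ in $\A$ is exactly $V$. The inclusion ``$V \subset $ fixed set'' is clear, since each such $s_\alpha$ fixes $\ker(\alpha) \supset V$ pointwise; for the reverse, I would expand a fixed vector in the dual basis $(\qp_\beta)_{\beta \in \Sigma}$ of Proposition~\ref{p_bourbaki_vectorial_faces} and use $s_\alpha.x = x - \alpha(x)\alpha^\vee$ to force the coefficients of those $\qp_\beta$ with $\beta(F^v) = \{0\}$ to vanish.

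Next, decompose $x = x_V + x_{V^\perp}$ along $\A = V \oplus V^\perp$. Since the scalar product is $W^v_\Psi$-invariant and each $w \in W^v_{F^v}$ fixes $V$ pointwise, $w$ preserves $V^\perp$, and $w.x = x_V + w.x_{V^\perp}$. Averaging over $W^v_{F^v}$ yields $y = x_V + z$ with $z \in V^\perp$ fixed by $W^v_{F^v}$; by the previous step the only such vector is $V \cap V^\perp = \{0\}$, so $z = 0$ and $y = \pi_V(x)$. This already proves the projection assertion.

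The remaining and subtler task is showing $y \in F^v$, not merely $y \in V$. The plan here is to combine the above with Lemma~\ref{l_union_chambers_dominating_face}, which tells us that the star $E$ of $F^v$ is $W^v_{F^v}$-stable and convex. Since $F_1^v$ dominates $F^v$ we have $x \in E$, whence $W^v_{F^v}.x \subset E$ and the barycenter $y \in E$ by convexity. So $y$ belongs to some open face $F_2^v$ dominating $F^v$. Now $y$ is fixed by $W^v_{F^v}$, and by the standard Coxeter-theoretic fact that the stabilizer in $W^v_\Psi$ of any point of an open face $F_2^v$ is exactly $W^v_{F_2^v}$, we obtain $W^v_{F^v} \subset W^v_{F_2^v}$. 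The reverse inclusion $W^v_{F_2^v} \subset W^v_{F^v}$ follows from $F^v \subset \overline{F_2^v}$. Equality of these fixators yields equality of their fixed subspaces, so $\vect(F^v) = \vect(F_2^v)$; two open faces with the same support, one contained in the closure of the other, must then coincide.

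I expect the identity $y = \pi_V(x)$ to fall out cleanly from the averaging argument, so the real obstacle is the last step: upgrading ``$y \in V$'' to ``$y \in F^v$'' genuinely requires both the convexity of the star from Lemma~\ref{l_union_chambers_dominating_face} and the identification of the stabilizer of a point with the fixator of the open face containing it.
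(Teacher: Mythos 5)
Your argument is correct, and it is worth noting how it relates to the proof the paper actually gives (the authors prove the affine analogue, Lemma~\ref{l_barycenter_orbit}, and declare the vectorial case identical). For the assertion $y\in F^v$ you follow exactly the paper's route: $x$ lies in the star of $F^v$, which is convex and $W^v_{F^v}$-stable by Lemma~\ref{l_union_chambers_dominating_face}, so the barycenter $y$ lies in some face $F_2^v$ dominating $F^v$; then one compares fixators. The only divergence is the last step: the paper concludes $F^v=F_2^v$ purely Coxeter-theoretically (the vectorial analogue of Lemma~\ref{l_comparison_fixators}: equal parabolic fixators force equal faces), whereas you pass through fixed subspaces, using $\mathrm{Fix}(W^v_{F_2^v})=\vect(F_2^v)$ (valid for any face by conjugating your Step~1 into $\overline{C^v_f}$) and the small topological observation that two disjoint-or-equal relatively open faces with the same support, one contained in the closure of the other, must coincide — both of these deserve the one-line justifications just indicated, but they are sound. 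For the projection assertion your route is genuinely different and arguably cleaner: you identify $\mathrm{Fix}(W^v_{F^v})$ with $V=\vect(F^v)$ via the dual basis $(\qp_\beta)$ and then split $x=x_V+x_{V^\perp}$, so that averaging kills the $V^\perp$-component because the averaged vector is a $W^v_{F^v}$-fixed element of $V^\perp$, giving $y=\pi_V(x)$ at once; the paper instead shows directly that $\langle x-y\mid y-z\rangle=0$ for all $z\in F^v$ by the invariance of the scalar product and a reindexing of the sum. What your version buys is that the projection identity comes out before (and independently of) knowing $y\in F^v$, and it makes explicit the useful fact $\mathrm{Fix}(W^v_{F^v})=\vect(F^v)$; what the paper's version buys is that it needs no orthogonal decomposition and transports verbatim to the affine setting, which is why the authors wrote it that way.
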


\begin{Lemma}\label{l_root_system_Fv}
Let $F^v$ be a face of $(\A,\cH^0_\Psi)$ which is not a chamber. Then $\Phi_{F^v}=\{\alpha\in \Phi\mid \alpha(F^v)=\{0\}\}$ is a root system in $\vect(F^v)^\perp$. Its Weyl group is $W^v_{F^v}=\langle s_{\alpha}\mid \alpha\in \Phi_{F^v}\rangle$.
\end{Lemma}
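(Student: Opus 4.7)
The plan is to pick a vectorial chamber $C^v$ dominating $F^v$ (via Lemma~\ref{l_union_chambers_dominating_face}) with associated basis $\Sigma_{C^v}$ and dual basis $(\qp_\alpha)_{\alpha \in \Sigma_{C^v}}$, and use it to describe $\vect(F^v)$ and its orthogonal explicitly. By Proposition~\ref{p_bourbaki_vectorial_faces} there is $E \subsetneq \Sigma_{C^v}$ with $F^v = \bigoplus_{\alpha \in E} \R_{>0} \qp_\alpha$, so $\vect(F^v) = \bigoplus_{\alpha \in E} \R \qp_\alpha$. Setting $J = \Sigma_{C^v} \setminus E$ (nonempty since $F^v$ is not a chamber), one reads off $\vect(F^v) = \bigcap_{\alpha \in J} \ker(\alpha)$, and Lemma~\ref{l_scalar_product_coroots_fundamental_weights} gives $\vect(F^v)^\perp = \bigoplus_{\alpha \in J} \R \alpha^\vee$. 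In particular $J \subset \Psi_{F^v}$.

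I would then verify the root system axioms for $\Psi_{F^v}$ viewed inside $\vect(F^v)^\perp$. Each $\alpha \in \Psi_{F^v}$ vanishes on $\vect(F^v)$, hence its restriction to $\vect(F^v)^\perp$ is nonzero and factors $\alpha$ through the orthogonal splitting $\A = \vect(F^v) \oplus \vect(F^v)^\perp$; this embeds $\Psi_{F^v}$ into the dual of $\vect(F^v)^\perp$. The simple roots in $J$ already restrict to a basis of that dual (they form the dual basis of $(\alpha^\vee)_{\alpha \in J}$), so $\Psi_{F^v}$ spans $(\vect(F^v)^\perp)^*$. For stability under reflections: for $\alpha \in \Psi_{F^v}$, the formula $s_\alpha(\beta) = \beta - \beta(\alpha^\vee)\alpha$ shows $s_\alpha(\Psi_{F^v}) \subset \Psi_{F^v}$, since the right-hand side still vanishes on $\vect(F^v)$. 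Integrality $\alpha(\beta^\vee) \in \Z$ for $\alpha, \beta \in \Psi_{F^v}$ is inherited from $\Psi$. Finally, the coroot $\beta^\vee$ of $\beta \in \Psi_{F^v}$ lies in $\vect(F^v)^\perp$: indeed, relative to the Weyl-invariant scalar product $\beta^\vee$ is proportional to the vector representing $\beta$, which is orthogonal to $\ker(\beta) \supset \vect(F^v)$.

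For the Weyl group I would establish the two inclusions. One direction is direct: for any $\alpha \in \Psi_{F^v}$, the reflection $s_\alpha$ fixes $\ker(\alpha)$ pointwise, and $F^v \subset \vect(F^v) \subset \ker(\alpha)$, so $s_\alpha \in W^v_{F^v}$; thus $\langle s_\alpha \mid \alpha \in \Psi_{F^v}\rangle \subset W^v_{F^v}$. For the reverse inclusion, note that after conjugating by some $w \in W^v_\Psi$ with $C^v = w.C^v_f$ (so that $F^v$ is dominated by $w.C^v_f$), the formula \eqref{fixator} gives $W^v_{F^v} = \langle s_\alpha \mid \alpha \in \Sigma_{C^v},\ \alpha(F^v) = \{0\}\rangle = \langle s_\alpha \mid \alpha \in J\rangle$, and $J \subset \Psi_{F^v}$ yields $W^v_{F^v} \subset \langle s_\alpha \mid \alpha \in \Psi_{F^v}\rangle$.

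The only slightly delicate point is applying \eqref{fixator}, which is stated for faces dominated by $C^v_f$, to the chamber $C^v$ dominating $F^v$; this is handled by the $W^v_\Psi$-equivariance of the whole setup. Everything else is a routine verification against the Bourbaki axioms, once the description $\vect(F^v)^\perp = \bigoplus_{\alpha \in J} \R \alpha^\vee$ is in hand.
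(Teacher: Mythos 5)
Your argument is correct and essentially matches the paper's proof, which likewise reduces to a chamber dominating $F^v$ (the paper just replaces $C^v_f$ rather than invoking $W^v_\Psi$-equivariance), identifies $\vect(F^v)$ and $\vect(F^v)^\perp=\bigoplus_{\alpha\in J}\R\alpha^\vee$ via Proposition~\ref{p_bourbaki_vectorial_faces} and Lemma~\ref{l_scalar_product_coroots_fundamental_weights}, and then appeals to Bourbaki's definition; your writeup merely makes the axiom verification and the Weyl-group identity via \eqref{fixator} explicit, which the paper leaves implicit. One cosmetic slip: the restrictions of the roots in $J$ are not the dual basis of $(\alpha^\vee)_{\alpha\in J}$ (their pairing matrix is the Cartan matrix, not the identity), but they still form a basis of $(\vect(F^v)^\perp)^*$ since they are linearly independent in $\A^*$ and vanish on $\vect(F^v)$, so the spanning claim you need is unaffected.
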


\begin{proof}
Up to changing the choice of $C^v_f$ if needed, we may assume that $C^v_f$ dominates $F^v$. Let $\Sigma_{F^v}=\Sigma\cap \Phi_{F^v}$. We have $\vect(F^v)\subset \bigcap_{\alpha\in \Sigma_{F^v}}\ker(\alpha)$. As $\Sigma$ is a basis of $\A^*$ and using a simple dimension argument based on Proposition \ref{p_bourbaki_vectorial_faces}, we deduce $\vect(F^v)=\bigcap_{\alpha\in \Sigma_{F^v}}\ker(\alpha)$. By Lemma~\ref{l_scalar_product_coroots_fundamental_weights}, $\alpha^\vee\in \vect(F^v)^\perp$ for every $\alpha\in \Sigma_{F^v}$.  Therefore $\Phi_{F^v}|_{\vect(F^v)^\perp}$ satisfies \cite[VI \S 1.1 Définition 1]{bourbaki1981elements}.
\end{proof}

We equip $\sF^0$ with the dominance order $\preceq$: for $F_1^v,F_2^v\in \sF^0$, we have $F_1^v\preceq F_2^v$ if and only if $F_1^v\subset \overline{F_2^v}$.

A less precise version of the following proposition already appears in \cite[Proposition 3.2]{vinberg1991certain}. 

\begin{Proposition}\label{p_description_faces_P0} (see Figure~\ref{f_Faces_P0})
\begin{enumerate}

\item  The assignment $f:F^v\mapsto \FC_{F^v}$ defines  an isomorphism of ordered spaces between   $(\mathscr{F}^0,\succeq)$ and the set of faces of  $(\cP^0,\subset)$.

\item Let $(F_i^v)_{i\in \llbracket 1,k\rrbracket}$ be a finite family of faces of $(\A,\cH_\Psi^0)$. Then if $\sum_{i=1}^k F_i^v\in \sF^0$ (or equivalently if there exists $F^v\in \sF^0$ dominating $\sum_{i=1}^k F_i^v$), we have $\bigcap_{i=1}^k \overline{\cF_{F_i^v}}=\overline{\cF_{\sum_{i=1}^k F_i^v}}$. Otherwise, we have $\bigcap_{i=1}^k \overline{\cF_{F_i^v}}=\emptyset$. 

\item  For  $F^v\in \sF^0$,  the  set of extreme points of $\overline{\cF_{F^v}}$ is $W^v_{F^v}.\bb_{C^v}$, for any chamber  $C^v$ dominating $F^v$. The direction of $ \supp(\cF_{F^v})$ is $(F^v)^\perp$ and in particular, $\dim(\vect(\cF_{F^v}))=\mathrm{codim}_{\A}(\vect(F^v))$. 

\item Let $\cF$ be a face of $\cP^0$ and let $y$ be the barycenter of the extreme points of $\overline{\cF}$. Then $\cF=\cF_{F^v}$, where $F^v$  is the unique face of $(\A,\cH^0_\Psi)$ containing $y$, the point $y$ is the orthogonal projection of $\cF$ on $F^v$ and $\overline{\cF_{F^v}}\cap F^v=\{y\}$. Moreover, $\overline{\cF_{F^v}}$ is contained in the star of $F^v$. 

\end{enumerate}

\end{Proposition}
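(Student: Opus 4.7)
The strategy is to reduce everything to extreme-point combinatorics inside the orbit $W^v_\Psi.\bb$, using Lemma~\ref{p_facets_P} as a base case and the standard fact that every face of a polytope is an intersection of facets (Theorem 1.10 of \cite{bruns2009polytopes}).

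I would first show that $\cF_{F^v}$ is always a face of $\cP^0$ and identify it concretely. By $W^v_\Psi$-equivariance it suffices to treat $F^v$ dominated by $C^v_f$, so that $F^v = \bigoplus_{\alpha \in E}\R_{>0}\qp_\alpha$ for some $E \subset \Sigma$ and $W^v_{F^v} = W^v_{\Sigma \setminus E}$ by \eqref{fixator}. The aim is to establish $\overline{\cF_{F^v}} = \bigcap_{\beta \in E}\overline{\FC_\beta}$. One inclusion follows from Lemma~\ref{l_link_parabolic_subgroups_regular_coweights}: $W^v_{F^v}.\bb - \bb \subset \bigoplus_{\alpha \in \Sigma \setminus E}\R\alpha^\vee$, which by Lemma~\ref{p_facets_P}(2) is precisely the direction of $\bigcap_{\beta \in E} H_\beta$. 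The reverse inclusion comes from Lemma~\ref{l_extreme_points_faces_polytope}(2) and the regularity of $\bb$: combined with the standard parabolic intersection identity $W^v_{J_1} \cap W^v_{J_2} = W^v_{J_1 \cap J_2}$, one obtains $\Extr(\bigcap_{\beta \in E}\overline{\FC_\beta}) = (\bigcap_{\beta \in E}W^v_{\Sigma \setminus \{\beta\}}).\bb = W^v_{F^v}.\bb = \Extr(\overline{\cF_{F^v}})$.

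For Part (2), the same extreme-point argument applies. If the $(F_i^v)$ admit a common dominating chamber $C^v$, the description $F_i^v = \bigoplus_{\alpha \in J_i}\R_{>0}\qp_\alpha$ relative to $\Sigma_{C^v}$ gives $\sum_i F_i^v = \bigoplus_{\alpha \in \bigcup_i J_i}\R_{>0}\qp_\alpha \in \sF^0$, together with $W^v_{\sum_i F_i^v} = W^v_{\bigcap_i(\Sigma_{C^v}\setminus J_i)} = \bigcap_i W^v_{F_i^v}$, so the extreme points of both sides coincide. If no common dominating chamber exists, a nonempty intersection would, by Lemmas~\ref{l_extreme_points_P} and~\ref{l_extreme_points_faces_polytope}, admit an extreme point $\bb_{C^v}$ with $C^v$ dominating every $F_i^v$, a contradiction. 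Part (1) then follows: well-definedness and injectivity use extreme points plus regularity of $\bb$; surjectivity uses Theorem 1.10 of \cite{bruns2009polytopes} and Lemma~\ref{p_facets_P} (the facets are $\cF_{G^v}$ for one-dimensional $G^v$) together with Part (2); order reversal follows from the elementary identity $F_1^v \succeq F_2^v \Leftrightarrow F_1^v + F_2^v = F_1^v$ (immediate from the $\qp_\alpha$-description of faces in $\overline{C^v_f}$) together with Part (2).

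For Part (3), the extreme points are built into the definition. For the direction of $\supp(\cF_{F^v})$, I would decompose $\bb = \bb' + \bb''$ orthogonally with $\bb' \in \vect(F^v)^\perp$ and $\bb'' \in \vect(F^v)$. Since $W^v_{F^v}$ fixes $\vect(F^v)$ pointwise and stabilizes $\vect(F^v)^\perp$, one has $W^v_{F^v}.\bb = \bb'' + W^v_{F^v}.\bb'$. Regularity of $\bb$ for $\Psi$ forces regularity of $\bb'$ for the subsystem $\Phi_{F^v} \subset (\vect(F^v)^\perp)^*$ of Lemma~\ref{l_root_system_Fv}, since $\alpha(\bb') = \alpha(\bb) \neq 0$ for every $\alpha \in \Phi_{F^v}$. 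Applying Lemma~\ref{l_convex_hull_non-empty_interior} to $\Phi_{F^v}$ in $\vect(F^v)^\perp$ then shows that $\conv(W^v_{F^v}.\bb')$ has nonempty relative interior in $\vect(F^v)^\perp$, whence $\supp(\cF_{F^v}) = \bb'' + \vect(F^v)^\perp$, giving the direction $(F^v)^\perp$ and the codimension formula. For Part (4), Parts (1) and (3) supply the unique $F^v$ with $\cF = \cF_{F^v}$; by Lemma~\ref{l_barycenter_orbit_vectoriel} the barycenter $y$ of $W^v_{F^v}.\bb_{C^v}$ is the orthogonal projection of $\bb_{C^v}$ on $\vect(F^v)$ and lies in $F^v$, so $F^v$ is the unique face containing $y$; then $\supp(\cF_{F^v}) = y + \vect(F^v)^\perp$ forces $\overline{\cF_{F^v}} \cap F^v = \{y\}$ and identifies $y$ as the orthogonal projection of $\cF$ on $\vect(F^v)$. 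Finally, each extreme point $w.\bb_{C^v}$ with $w \in W^v_{F^v}$ lies in $w.C^v$, which dominates $F^v$ (as $w$ fixes $F^v$) and thus lies in the star; since the star is open and convex by Lemma~\ref{l_union_chambers_dominating_face}, the convex hull $\overline{\cF_{F^v}} = \conv(W^v_{F^v}.\bb_{C^v})$ stays inside it. I expect the most delicate step to be Part (2), specifically ruling out a nonempty intersection in the absence of a common dominating chamber, where the injectivity coming from the regularity of $\bb$ plays a decisive role.
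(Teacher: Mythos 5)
Your proposal is correct and follows essentially the same route as the paper's proof: reduction to Lemma~\ref{p_facets_P} by $W^v_\Psi$-equivariance, identification of $\overline{\cF_{F^v}}$ as an intersection of closed facets via the extreme-point lemmas~\ref{l_extreme_points_P} and \ref{l_extreme_points_faces_polytope}, regularity of $\bb$ and Krein--Milman, then Lemmas~\ref{l_barycenter_orbit_vectoriel}, \ref{l_root_system_Fv}, \ref{l_convex_hull_non-empty_interior} and \ref{l_union_chambers_dominating_face} for parts (3) and (4). The only noticeable deviation is the empty-intersection case of part (2), which you settle by extracting a common extreme point $\bb_{C^v}$ (using that the chamber attached to $\bb_{C^v}$ is unique) instead of the paper's argument via the star of each $F_i^v$; both are valid and of comparable length.
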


  \begin{figure}[h]
 \centering
 \includegraphics[scale=0.35]{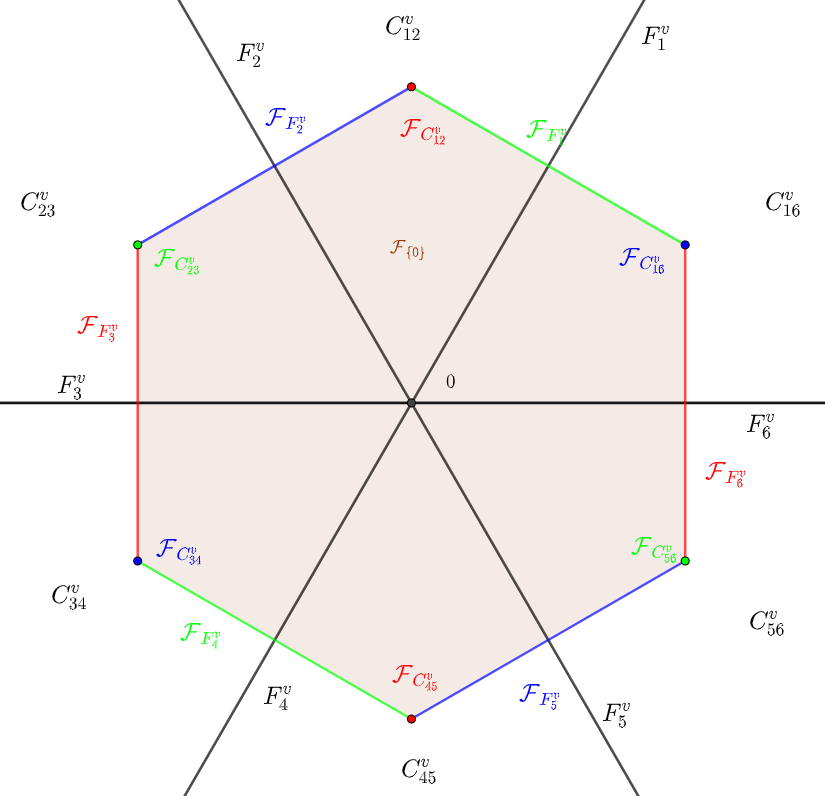}
 \caption{Illustration of Proposition~\ref{p_description_faces_P0} in the case of $\mathrm{A}_2$}
 \label{f_Faces_P0}
\end{figure}

\begin{proof}
\textit{Step 1: We first prove that the facets of $\cP^0$ are the $\cF_{F^v}$, where the $F^v$ are the one-dimensional faces of $(\A,\cH^0_\Psi)$.}
 To do so, let $\beta\in \Sigma$. Set $F^v_\beta=\beta^{-1}(\R_{>0})\cap \bigcap_{\alpha\in \Sigma\setminus\{\beta\}} \ker(\alpha).$ Then the fixator of $F^v_\beta$ in $W^v_{\Psi}$ is $W^v_\beta$ and thus $\cF_{F^v_\beta}=\cF_\beta$.  If $w\in W^v_{\Psi}$, then \[w.\overline{\cF_{F^v_\beta}}=\conv(wW^v_{F^v_\beta}.\bb)=\conv(wW^v_{F^v_\beta} w^{-1}.w.\bb)=\conv(W^v_{w.F^v_\beta}w.\bb)=\overline{\cF_{w.F^v_\beta}}.\] By Lemma~\ref{p_facets_P}, the facets of $\cP^0$ are the $w.\cF_\beta$ such that $w\in W^v_{\Psi}$ and $\beta\in \Sigma$ so, by Proposition~\ref{p_bourbaki_vectorial_faces}, the facets of $\cP^0$ are the $\cF_{F^v}$, where the $F^v$ are the one-dimensional faces of $(\A,\cH^0_\Psi)$. \\

\textit{Step 2: We now prove the second assertion of 2): if $(F_i^v)_{i\in \llbracket 1,k\rrbracket}$ is a family of faces of $(\A,\cH_\Psi^0)$ such that $\overline{\cF}:=\bigcap_{i=1}^k \overline{\cF_{F_i^v}}\neq\emptyset$, then $\sum_{i=1}^k F_i^v\in \sF^0$. } Indeed, take $x\in \overline{\cF}$. Let $F_x^v$ be the face of $(\A,\cH^0_\Psi)$ containing $x$. Then by Lemma~\ref{l_union_chambers_dominating_face}, $F_x^v$ dominates $\bigcup_{i=1}^k F_i^v$. Therefore any chamber dominating $F^v_x$ dominates $\bigcup_{i=1}^k F_i^v$. \\

\textit{Step 3: We now prove that the map $f:F^v\mapsto \FC_{F^v}$ is well-defined.} To do so, let $F^v$ be a face of $(\A,\cH^0_\Psi)$ and $F^v_1,\ldots, F^v_k$ be the one dimensional faces of $F^v$. Then $\overline{\cF}:=\bigcap_{i=1}^k \overline{\cF_{F_i^v}}$ is a closed face of $\cP^0$ by \cite[Theorem 1.10 and Proposition 1.8]{bruns2009polytopes} and it is non-empty because it contains $\overline{\cF_{F^v}}$. In particular, by Step 2, there exists a vectorial chamber $C^v$ dominating $\bigcup_{i=1}^k F_i^v$, and by Proposition~\ref{p_bourbaki_vectorial_faces}, we get $F^v=\sum_{i=1}^k F^v_i$. We then have: \begin{align*}
\Extr(\overline{\cF})&=\bigcap_{i=1}^k \Extr(\overline{\cF_{F^v_i}}) &\text{ by Lemma~\ref{l_extreme_points_faces_polytope}} \\
&=\bigcap_{i=1}^k W^v_{F_i^v}.\bb_{C^v}  &\text{ by Lemma~\ref{l_extreme_points_P} }\\
&= \left(\bigcap_{i=1}^k W^v_{F_i^v}\right).\bb_{C^v}  &\text{ since }\bb_{C^v}\text{ is regular}\\
&= W^v_{F^v}.\bb_{C^v}.
\end{align*} 
By Krein-Milman Theorem, we deduce $\overline{\cF}=\overline{\cF_{F^v}}$. Hence $\cF_{F^v}=\In_r(\overline{\cF_{F^v}})$ is a face of $\cP^0$.\\

\textit{Step 4: We now prove that the map $f:F^v\mapsto \FC_{F^v}$ of statement 1) is surjective.} Indeed, let $\cF$ be a proper face of $\cP^0$. Then by \cite[Theorem 1.10]{bruns2009polytopes}, there exist $k\in \Z_{\geq 1}$, one dimensional vectorial faces $F_1^v,\ldots,F_k^v$ of $(\A,\cH^0_\Psi)$ such that $\overline{\cF}=\bigcap_{i=1}^k \overline{\cF_{F_i^v}}$ and there exists a vectorial chamber $C^v$ dominating $\bigcup_{i=1}^k F_i^v$. By the same computations as in Step 3, we have $\overline{\cF}=\overline{\cF_{F^v}}$ with $F^v:=\sum_{i=1}^k F^v_i$. Hence $\cF=\In_r(\overline{\cF})=\In_r(\overline{\cF_{F^v}})=\cF_{F^v}$.\\

\textit{Step 5: We prove that $f$ is non-decreasing.} Let $F_1^v,F_2^v\in \sF^0$ be such that $F_1^v\subset \overline{F_2^v}$. Then as $W^v_\Psi$ acts by homeomorphisms on $\A$, we have  $W_{F_1^v}^v\supset W_{F_2^v}^v$ and any chamber dominating $F_2^v$ dominates $F_1^v$. Therefore $\overline{\cF_{F_1^v}}\supset \overline{\cF_{F_2^v}}$. \\

\textit{Step 6: We prove that, if $F_1^v,F_2^v\in \sF^0$ are such that $\overline{\cF_{F_1^v}}\supset \overline{\cF_{F_2^v}}$, then $F_2^v$ dominates $F_1^v$.} Indeed, by Step 2, there exists a vectorial chamber $C^v$ dominating $F_1^v$ and $F_2^v$. By Lemma~\ref{l_extreme_points_faces_polytope}, we have $\Extr(\overline{\cF_{F_1^v}})=W_{F_1^v}.\bb_{C^v}\supset \Extr(\overline{\cF_{F_2^v}})=W_{F_2^v}.\bb_{C^v}$. As $\bb_{C^v}$ is regular, we deduce $W^v_{F_1^v}\supset W^v_{F_2^v}$. This implies $F_1^v\subset \overline{F_2^v}$ by \cite[V \S 3.3 Proposition 1]{bourbaki1981elements}, Proposition~\ref{p_bourbaki_vectorial_faces} and \cite[Proposition 2.4.1]{bjorner2005combinatorics}.\\

\textit{Step 7: We prove assertion 1).} We have seen in Steps 4, 5 and 6 that $f$ is surjective and non-decreasing, and that if $f(F_1^v) \supset f(F_2^v)$, then $F_2^v$ dominates $F_1^v$. This implies that $f$ is an isomorphism of ordered sets.\\

\textit{Step 8: We finish the proof of 2).} Let $(F_i^v)_{i\in \llbracket 1,k\rrbracket}$ be a family of faces of $(\A,\cH_\Psi^0)$ such that $F^v:=\sum_{i=1}^k F_i^v\in \sF^0$. Exactly the same arguments as in Step 3 show that $\overline{\cF_{F^v}}=\bigcap_{i=1}^k \overline{\cF_{F_i^v}}$.\\

\textit{Step 9: We prove 3) and 4).} Indeed, let now $\cF$ be a face of $\cP^0$ and $F^v\in \sF^0$ be such that $\cF=\cF_{F^v}$. Then we proved in Lemma \ref{l_extreme_points_P} that $\Extr(\overline{\cF})=W^v_{F^v}.\bb_{C^v}$, where $C^v$ is a vectorial chamber dominating $F^v$. Let $y=\frac{1}{|W^v_{F^v}|}\sum_{w\in W^v_{F^v}}w.\bb_{C^v}$. Then $y$ is the barycenter of $\Extr(\overline{\cF_{F^v}})$.
By Lemma~\ref{l_barycenter_orbit_vectoriel},  it belongs to $F^v$. 

Let $F^v$ be a face of $(\A,\cH^0_\Psi)$ and $C^v$ be a chamber dominating $F^v$. Let $x$ be the orthogonal projection of $\bb_{C^v}$ on $\vect(F^v)$. Then $x\in F^v$ by Lemma~\ref{l_barycenter_orbit_vectoriel}. We have $W^v_{F^v}.\bb_{C^v}=x+W^v_{F^v}.(\bb_{C^v}-x)$. Therefore the direction of $\supp(\conv(W^v_{F^v}.\bb_{C^v}))$ is the vector space spanned by $W^v_{F^v}.(\bb_{C^v}-x)$.  As $\bb_{C^v}$ is regular, $|W^v_{F^v}.\bb_{C^v}|=|W^v_{F^v}|=|W^v_{F^v}.(\bb_{C^v}-x)|$. Therefore $\bb_{C^v}-x$ is a regular element of $(F^v)^\perp$ (for the root system $\Phi_{F^v}$ defined in  Lemma~\ref{l_root_system_Fv}). Using Lemma~\ref{l_convex_hull_non-empty_interior},  we deduce that $W^v_{F^v}.(\bb_C-x)$ spans  $(F^v)^\perp$. Finally, the fact that $\overline{\cF_{F^v}}$ is contained in the star of $F^v$ is a direct consequence of Lemma \ref{l_union_chambers_dominating_face}.
\end{proof}

\begin{Corollary}\label{c_faces_containing_point}
\begin{enumerate}
\item Let $F^v\in \sF^0$. Then $\cF_{F^v}=\overline{\cF_{F^v}}\setminus \bigcup_{F_2^v\in \sF^0, \overline{F_2^v}\supsetneq \overline{F^v}} \overline{\cF_{F_2^v}}$.

\item Let $x\in \overline{\cP^0}$. Then there exists a unique face $F_1^v$ of $(\A,\cH^0_\Psi)$ such that $x\in \cF_{F_1^v}$. Moreover, $\{F^v\in \sF^0\mid x\in \overline{\cF_{F^v}}\}=\{F^v\in \sF^0\mid F^v\subset \overline{F_1^v}\}$. 
\end{enumerate}
\end{Corollary}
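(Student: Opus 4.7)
The plan is to derive both statements from Proposition~\ref{p_description_faces_P0} together with the classical polyhedral fact that every closed polytope decomposes as the disjoint union of its open faces (see \cite[Chapter 1]{bruns2009polytopes}).

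For part (1), I would apply this decomposition to the polytope $\overline{\cF_{F^v}}$ itself. By Proposition~\ref{p_description_faces_P0}(1), its open faces are exactly the sets $\cF_{F_2^v}$ for $F_2^v \in \sF^0$ with $F^v \subseteq \overline{F_2^v}$, the one corresponding to $F_2^v = F^v$ being $\cF_{F^v}$ itself. Isolating this term yields
$$\overline{\cF_{F^v}} \setminus \cF_{F^v} \;=\; \bigsqcup_{F_2^v :\, \overline{F_2^v}\,\supsetneq\, \overline{F^v}} \cF_{F_2^v}.$$
To obtain the stated identity, I would then replace each $\cF_{F_2^v}$ on the right by $\overline{\cF_{F_2^v}}$: the inclusion ``$\subseteq$'' is immediate, and for ``$\supseteq$'' I would decompose $\overline{\cF_{F_2^v}}$ further into its open faces $\cF_{F_3^v}$ (for $F_3^v$ dominating $F_2^v$) and invoke antisymmetry of the dominance order to check that none of these equals $\cF_{F^v}$, which by disjointness of open faces ensures each such $\cF_{F_3^v}$ lies in $\overline{\cF_{F^v}} \setminus \cF_{F^v}$.

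For part (2), I would apply the same decomposition to $\overline{\cP^0}$, which, by Proposition~\ref{p_description_faces_P0}(1), reads $\overline{\cP^0} = \bigsqcup_{F^v \in \sF^0} \cF_{F^v}$. This directly provides existence and uniqueness of the face $F_1^v$ with $x \in \cF_{F_1^v}$. To characterize the faces $F^v$ satisfying $x \in \overline{\cF_{F^v}}$, I would argue as follows: since $\overline{\cF_{F^v}}$ is the disjoint union of the open faces $\cF_{F'}$ it contains (those with $F^v \subseteq \overline{F'}$), and since distinct open faces of $\cP^0$ are disjoint, the condition $x \in \cF_{F_1^v} \cap \overline{\cF_{F^v}}$ forces $\cF_{F_1^v}$ to be one of these open faces, i.e.\ $\cF_{F_1^v} \subseteq \overline{\cF_{F^v}}$, which via the order isomorphism of Proposition~\ref{p_description_faces_P0}(1) translates to $F^v \subseteq \overline{F_1^v}$. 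The converse implication is immediate.

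The only real bookkeeping obstacle is the translation between open and closed faces in part (1); once the disjoint-union description is in hand, the corollary is essentially a formal consequence of Proposition~\ref{p_description_faces_P0}(1).
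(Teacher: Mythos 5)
Your proof is correct and takes essentially the same approach as the paper: both derive the corollary from the order isomorphism of Proposition~\ref{p_description_faces_P0}(1) together with the standard facts from \cite[Chapter 1]{bruns2009polytopes} that faces of a closed face of $\overline{\cP^0}$ are again faces of $\overline{\cP^0}$ and that a closed polytope is partitioned by its open faces. The only cosmetic difference is that the paper cites the frontier-equals-union-of-facets and unique-open-face statements (Corollary 1.7 and Theorem 1.10 d)) directly, whereas you rederive the same bookkeeping from the disjoint decomposition into open faces.
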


\begin{proof}
1) Let $F^v\in \sF^0$.  Let $\cF_2$ be a face of $\cF_{F^v}$. By \cite[Proposition 1.8]{bruns2009polytopes}, $\cF_2$ is a face of $\cP^0$ and thus by Proposition~\ref{p_description_faces_P0}, there exists $F_2^v\in \sF^0$ such that $\cF_2=\cF_{F_2^v}$ and $F_2^v$ dominates $F^v$. We then conclude with \cite[Corollary 1.7]{bruns2009polytopes}.

2) Let $x\in \overline{\cP^0}$. By \cite[Theorem 1.10 d)]{bruns2009polytopes}, there exists a unique face $\cF$ of $\cP^0$ containing $x$ and $\overline{\cF}$ is the minimal closed face containing $x$. By Proposition~\ref{p_description_faces_P0}, we can write $\cF=\cF_{F_1^v}$ for some face $F_1^v$ of $(\A,\cH^0_\Psi)$ and $E=\{F^v\in \sF^0\mid x\in \overline{\cF_{F^v}}\}=\{F_2^v\in \sF^0\mid \overline{F_2^v}\subset \overline{F_1^v}\}$. 
\end{proof}

\begin{Corollary}\label{c_fibers_pi}
    Let $F^v$ be a face of $(\A,\cH_\Psi^0)$. Let $\pi=\pi_{\overline{\cP^0}}$ and $x\in \cF_{F^v}$. Then $\pi^{-1}(\{x\})=x+\overline{F^v}$. 
\end{Corollary}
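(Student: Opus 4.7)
The plan is to apply Proposition~\ref{p_description_fibers_pi} to the closed polytope $\overline{\cP^0}$, which has non-empty interior by Lemma~\ref{l_convex_hull_non-empty_interior}. The degenerate case $F^v=\{0\}$ is trivial: then $\cF_{F^v}=\cP^0$ is the open interior, so $\pi^{-1}(\{x\})=\{x\}=x+\overline{\{0\}}$. Otherwise $x\in\Fr(\overline{\cP^0})$ and Proposition~\ref{p_description_fibers_pi} gives
\[\pi^{-1}(\{x\})=x+\sum_{\overline{\cF}\in J}\delta_\cF,\]
where $J$ is the set of closed facets of $\overline{\cP^0}$ containing $x$. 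Via the order-reversing bijection of Proposition~\ref{p_description_faces_P0}(1), facets of $\cP^0$ correspond to one-dimensional vectorial faces, and Corollary~\ref{c_faces_containing_point}(2) identifies $J$ with the set of $\overline{\cF_{G^v}}$ where $G^v$ ranges over the one-dimensional faces of $(\A,\cH^0_\Psi)$ with $G^v\preceq F^v$.

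The heart of the proof is to show $\delta_{\cF_{G^v}}=\overline{G^v}$ for each such $G^v$. By Proposition~\ref{p_description_faces_P0}(3), $\supp(\cF_{G^v})$ has direction $\vect(G^v)^\perp$, so $\delta_{\cF_{G^v}}$ lies on the line $\vect(G^v)$. Using that $\overline{\cP^0}$ is $W^v_\Psi$-invariant, we may assume $G^v\subset\overline{C^v_f}$, hence $G^v=\R_{>0}\qp_\alpha$ for some $\alpha\in\Sigma$ by Proposition~\ref{p_bourbaki_vectorial_faces}. Fix $v\in G^v$ and $w\in W^v_\Psi$. The same argument as in the proof of Lemma~\ref{l_link_parabolic_subgroups_regular_coweights}, based on \cite[Chapitre VI, §1 Proposition 18]{bourbaki1981elements}, gives $v-w^{-1}.v\in\bigoplus_{\beta\in\Sigma}\R_{\geq 0}\beta^\vee$; combining this with the regularity of $\bb$ and the $W^v_\Psi$-invariance of $\langle\cdot\mid\cdot\rangle$, we obtain
\[\langle v\mid w.\bb\rangle=\langle w^{-1}.v\mid\bb\rangle\leq\langle v\mid\bb\rangle,\]
with equality iff $w^{-1}\in\Stab(v)=W^v_{G^v}$. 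Passing to convex combinations shows that $\langle v\mid\cdot\rangle$ attains its maximum on $\overline{\cP^0}$ exactly on $\conv(W^v_{G^v}.\bb)=\overline{\cF_{G^v}}$.

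Consequently, for any $a\in\overline{\cF_{G^v}}$ and any $t>0$, $\langle v\mid a+tv\rangle=\langle v\mid a\rangle+t|v|^2$ strictly exceeds the maximum of $\langle v\mid\cdot\rangle$ on $\overline{\cP^0}$, so $a+tv\notin\overline{\cP^0}$. This gives $(a+\R_{\geq 0}v)\cap\overline{\cP^0}=\{a\}$, and by the defining uniqueness of $\delta_{\cF_{G^v}}$ we conclude $\delta_{\cF_{G^v}}=\R_{\geq 0}v=\overline{G^v}$. Finally, writing $F^v=w.\bigoplus_{\beta\in E}\R_{>0}\qp_\beta$ for some $w\in W^v_\Psi$ and $E\subset\Sigma$ via Proposition~\ref{p_bourbaki_vectorial_faces}, the one-dimensional subfaces of $F^v$ are the rays $w.\R_{>0}\qp_\beta$ for $\beta\in E$, and their closures sum (as a Minkowski sum of linearly independent rays) to $\overline{F^v}=w.\bigoplus_{\beta\in E}\R_{\geq 0}\qp_\beta$. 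Substituting into the formula of the first paragraph yields $\pi^{-1}(\{x\})=x+\overline{F^v}$. The only subtle point is fixing the direction of each $\delta_{\cF_{G^v}}$, which the maximum characterization above resolves cleanly.
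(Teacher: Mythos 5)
Your proof is correct, and its overall skeleton coincides with the paper's: reduce to Proposition~\ref{p_description_fibers_pi}, identify the closed facets through $x$ via Corollary~\ref{c_faces_containing_point}(2) as the $\overline{\cF_{G^v}}$ with $G^v$ a one-dimensional face dominated by $F^v$, compute $\delta_{\cF_{G^v}}$, and sum the rays. Where you genuinely diverge is the sign-fixing step $\delta_{\cF_{G^v}}=\overline{G^v}$. The paper disposes of it in two lines: by Proposition~\ref{p_description_faces_P0}(4) one has $G^v\cap\cF_{G^v}=\{z\}$, and since $0\in\cP^0$ lies on $z-G^v$, the inward ray meets the open polytope, forcing the outward ray to be $+\overline{G^v}$. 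You instead reprove a supporting-hyperplane statement from scratch: after a $W^v_\Psi$-equivariance reduction to the dominant chamber, you use \cite[VI, \S 1, Proposition 18]{bourbaki1981elements} (the same ingredient as in Lemma~\ref{l_link_parabolic_subgroups_regular_coweights}) together with the regularity of $\bb$ and the positivity $\langle\beta^\vee\mid\bb\rangle>0$ to show that the linear form $\langle v\mid\cdot\rangle$, $v\in G^v$, attains its maximum on $\overline{\cP^0}$ exactly on $\conv(W^v_{G^v}.\bb)=\overline{\cF_{G^v}}$, whence $(a+\R_{\geq 0}v)\cap\overline{\cP^0}=\{a\}$ for all $a\in\overline{\cF_{G^v}}$. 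This is longer but self-contained (it does not invoke part (4) of Proposition~\ref{p_description_faces_P0}) and yields as a by-product that $H_{G^v}$ is a support hyperplane with outward normal direction $G^v$. You also treat the case $F^v=\{0\}$ (i.e.\ $x\in\cP^0$) separately, which is a careful touch, since Proposition~\ref{p_description_fibers_pi} is stated only for $x\in\Fr(\cQ)$ and the paper's proof passes over this degenerate case silently.
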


\begin{proof}
  Let $G^v$ be  a one dimensional face of $(\A,\cH_\Psi^0)$. Then by Proposition~\ref{p_description_faces_P0}, the direction of $\supp(\cF_{G^v})$ is $(G^v)^\perp$. Hence we have $\delta_{G^v}=\pm\overline{G^v}$, with the notation of subsection~\ref{ss_preliminary_polytopes}. By Proposition~\ref{p_description_faces_P0}, $G^v\cap \cF_{G^v}=\{z\}$, for some $z\in G^v$. Therefore $z-G^v$ contains $0$ and thus it meets $\cP^0$, which proves that $\delta_{G^v}=\overline{G^v}$. 

Now let $k$ be the dimension of the direction of $F^v$ and let  $F_1^v,\ldots,F_k^v$ be the one-dimensional faces of $F^v$, so that $F^v=\sum_{i=1}^k F_i^v$. Then by Corollary~\ref{c_faces_containing_point} 2), the set of closed facets containing $x$ is the set of closed facets $\overline{\cF_{G^v}}$ such that  $\overline{G}^v\subset \overline{F}^v$: this is $\{F_i^v\mid i\in \llbracket 1,k\rrbracket\}$. By Proposition~\ref{p_description_fibers_pi}, $\pi^{-1}(\{x\})=x+\sum_{i=1}^k \overline{F^v_i}=x+\overline{F^v}$.
\end{proof}

\begin{Lemma}\label{l_positivity_roots_faces}
Let $F^v$ be a face of $(\A,\cH_\Psi^0)$ and $\alpha\in \Phi$ be  such that $\alpha(F^v)>0$. Then $\alpha(\overline{\cF_{F^v}})>0$.
\end{Lemma}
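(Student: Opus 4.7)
The plan is to reduce the claim to a statement about the extreme points of $\overline{\cF_{F^v}}$, exploit the explicit description of these extreme points from Proposition~\ref{p_description_faces_P0}(3), and then use a continuity/connectedness argument to transfer positivity from $F^v$ to the chambers of $(\A,\cH^0_\Psi)$ dominating $F^v$.

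First, I would fix a vectorial chamber $C^v$ dominating $F^v$, so that by Proposition~\ref{p_description_faces_P0}(3), the set of extreme points of $\overline{\cF_{F^v}}$ is $W^v_{F^v}.\bb_{C^v}$. Since this is a finite set and $\alpha$ is linear, the claim $\alpha(\overline{\cF_{F^v}})>0$ is equivalent to $\alpha(w.\bb_{C^v})>0$ for every $w\in W^v_{F^v}$: indeed for any $x=\sum_i t_i w_i.\bb_{C^v}\in\overline{\cF_{F^v}}$ with $t_i\geq 0$, $\sum_i t_i=1$, one has $\alpha(x)=\sum_i t_i\alpha(w_i.\bb_{C^v})\geq \min_i\alpha(w_i.\bb_{C^v})$.

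Next, I would show that $\alpha$ is strictly positive on $w.C^v$ for each $w\in W^v_{F^v}$. Since $W^v_{F^v}$ fixes $F^v$ pointwise, we have $F^v=w.F^v\subset \overline{w.C^v}$, so the chamber $w.C^v$ dominates $F^v$. Pick $x\in F^v$, so $\alpha(x)>0$ by hypothesis; by continuity $\alpha>0$ on an open neighborhood $U$ of $x$, and since $x\in\overline{w.C^v}$, the intersection $U\cap w.C^v$ is non-empty. Now $w.C^v$ is an open chamber, hence a connected component of $\A\setminus\bigcup_{H\in\cH^0_\Psi}H$, and in particular it is disjoint from the wall $\alpha^{-1}(\{0\})$. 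By connectedness, $\alpha$ has constant (non-zero) sign on $w.C^v$, and it is positive there. Since $w.\bb_{C^v}\in w.C^v$, we obtain $\alpha(w.\bb_{C^v})>0$.

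Combining these two steps yields the lemma. There is no real obstacle here: the only point requiring attention is that one must argue that $w.C^v$ dominates $F^v$ for every $w\in W^v_{F^v}$, which is immediate from the fact that elements of $W^v_{F^v}$ fix $F^v$ pointwise and hence stabilize the star of $F^v$.
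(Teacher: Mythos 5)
Your proof is correct, but it takes a slightly different route from the paper's. The paper's argument takes an arbitrary point of $\overline{\cF_{F^v}}$, invokes Proposition~\ref{p_description_faces_P0}(4) (namely that $\overline{\cF_{F^v}}$ is contained in the star of $F^v$) to place that point in a face $F_1^v$ of $(\A,\cH^0_\Psi)$ dominating $F^v$, and then concludes from the fact that $\alpha$ has constant sign on $F_1^v$ and is positive on $F^v\subset\overline{F_1^v}$. You instead reduce, by convexity and linearity of $\alpha$, to the extreme points $W^v_{F^v}.\bb_{C^v}$, and verify positivity at each $w.\bb_{C^v}$ by a sign-constancy argument on the chamber $w.C^v$, which dominates $F^v$ because $W^v_{F^v}$ fixes $F^v$ pointwise. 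Both proofs rest on the same principle (constant sign of $\alpha$ on cells of the arrangement whose closure contains $F^v$); yours is more self-contained in that it only needs the description of $\overline{\cF_{F^v}}$ as $\conv(W^v_{F^v}.\bb_{C^v})$ rather than the star-containment statement, at the cost of the extra (easy) reduction to extreme points, whereas the paper's is a one-line consequence of machinery it has already established. One caveat, which applies equally to the paper's own proof and is therefore not a gap on your side: your step ``$w.C^v$ is disjoint from the wall $\alpha^{-1}(\{0\})$'' requires $\alpha^{-1}(\{0\})\in\cH^0_\Psi$, i.e.\ $\alpha\in\Psi$ rather than the literal ``$\alpha\in\Phi$'' of the statement; this is clearly the intended reading (and the one used in the later applications), since the paper's proof likewise asserts that $\alpha$ has constant sign on faces of $(\A,\cH^0_\Psi)$.
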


\begin{proof}
Let $x\in \cF_{F^v}$ and $F_1^v\in \sF^0$ be such that $x\in F_1^v$. Then by Proposition~\ref{p_description_faces_P0}  4), $\overline{F_1^v}\supset F^v$. As $F_1^v$ is a face of $(\A,\cH^0_\Psi)$, we have $\alpha(F_1^v)\in \{\R_{<0},\{0\},\R_{>0}\}$, thus $\alpha(\overline{F_1^v})\in \{\R_{\leq 0},\{0\},\R_{\geq 0}\}$.  As $\alpha(F^v)=\R_{>0}$, we have $\alpha(\overline{F_1^v})=\R_{\geq 0}$ and $\alpha(F_1^v)=\R_{>0}$, which proves the lemma.
\end{proof}

\subsection{Projections restricted to ``parabolic'' affine subspaces}\label{ss_projections_parabolic_subspaces}

In this subsection, we study the restriction of the orthogonal projection  $\pi_{\overline{\cP^0}}$ to certain affine subspaces parallel to the faces of $(\A,\cH^0_\Psi)$. We prove that under some conditions, subsets of these subspaces are stabilized by the projection (see Proposition~\ref{p_projection_preserving_affine_spaces}). The proof is a bit intricate, and one of the main tools consists in introducing a suitable auxiliary set $\overline{\tilde{\cP}}$ contained in the affine subspaces we want to study and such that the orthogonal projections on $\overline{\cP^0}$ and on $\overline{\tilde{\cP}}$ coincide on large subsets of those affine subspaces.

For $F^v$ a one dimensional face of $(\A,\cH_\Psi^0)$, we denote by  $H_{F^v}$ the hyperplane of $\A$ containing $\overline{\cF_{F^v}}$ and by $\DC_{F^v}$ the half-space delimited by $H_{F^v}$ and containing $\cP^0$.  We write $\pi$\index{p@$\pi$} instead of $\pi_{\overline{\cP^0}}$. By Proposition~\ref{p_description_faces_P0} (3), if $z\in \cF_{F^v}$, then we have: \begin{equation}\label{e_notation_half-space}
    \DC_{F^v}=\{y\in \A\mid \langle y-z\mid x\rangle\leq 0,\forall x\in F^v\}.
\end{equation}

\begin{Lemma}\label{l_projection_contained_face}
\begin{enumerate}
    \item Let $x\in \A$ and $F^v\in \sF^0$ be such that $x\in F^v$. Then $\pi(x)\in F^v$ and for every $\tilde{F}^v\in \sF^0$ such that  $\pi(x)\in \overline{\cF_{\tilde{F}^v}}$,  we have $\tilde{F}^v\subset\overline{F^v}$.

    \item Let $F^v\in \sF^0$. Then $\pi^{-1}(F^v)\subset F^v$.
\end{enumerate}    
\end{Lemma}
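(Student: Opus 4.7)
The plan is to deduce both statements, without any explicit use of the $W^v_{F^v}$-equivariance of $\pi$, from two earlier ingredients: the fiber description $\pi^{-1}(\{y\})=y+\overline{F_1^v}$ from Corollary~\ref{c_fibers_pi}, and the ``star'' property from Proposition~\ref{p_description_faces_P0}(4), which says that $\overline{\cF_{F_1^v}}$ is contained in the union of vectorial faces dominating $F_1^v$. The glue between these is the elementary observation that every vectorial face $G^v$ is an open convex cone determined by a fixed sign pattern of the roots, and so satisfies the ``absorption'' identity $G^v+\overline{G^v}\subset G^v$.

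For statement~(1), we would first set $y:=\pi(x)$, invoke Corollary~\ref{c_faces_containing_point}(2) to obtain the unique face $F_1^v$ with $y\in\cF_{F_1^v}$, and then use Corollary~\ref{c_fibers_pi} to write $x=y+v$ with $v\in\overline{F_1^v}$. Letting $G^v$ be the vectorial face containing $y$, Proposition~\ref{p_description_faces_P0}(4) gives $F_1^v\subset\overline{G^v}$, hence $v\in\overline{F_1^v}\subset\overline{G^v}$. The absorption identity then forces $x=y+v\in G^v$, and since vectorial faces partition $\A$, the hypothesis $x\in F^v$ implies $G^v=F^v$; in particular $y\in F^v$. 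The second assertion of~(1) now follows formally from Corollary~\ref{c_faces_containing_point}(2), which identifies $\{\tilde{F}^v:y\in\overline{\cF_{\tilde{F}^v}}\}$ with the set of faces contained in $\overline{F_1^v}$, combined with $F_1^v\subset\overline{G^v}=\overline{F^v}$.

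Statement~(2) follows by running the same argument in reverse: given $z\in\pi^{-1}(F^v)$, we set $y:=\pi(z)\in F^v$, pick $F_1^v$ with $y\in\cF_{F_1^v}$, and use Proposition~\ref{p_description_faces_P0}(4) (now applied with $F^v$ playing the role of $G^v$) to deduce $F_1^v\subset\overline{F^v}$. Corollary~\ref{c_fibers_pi} then yields $z-y\in\overline{F_1^v}\subset\overline{F^v}$, and the absorption identity gives $z=y+(z-y)\in F^v+\overline{F^v}\subset F^v$.

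The only step that really deserves verification is the absorption identity $G^v+\overline{G^v}\subset G^v$, which is an immediate check on the defining root-sign inequalities of $G^v$: each strict inequality stays strict upon adding a non-strict term, and zero-equalities add to zero. Otherwise the proof is essentially bookkeeping; notably, the expected difficulty of showing that $\pi(x)$ lies in the \emph{open} face $F^v$, rather than only in its closure $\overline{F^v}$, dissolves once one invokes the fiber description rather than attempting a direct equivariance or energy-minimization argument.
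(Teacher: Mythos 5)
Your proof is correct and follows essentially the same route as the paper: both rest on the fiber description of Corollary~\ref{c_fibers_pi}, the star containment from Proposition~\ref{p_description_faces_P0}(4) to get $F_1^v\subset\overline{F^v}$, and the cone absorption $F^v+\overline{F^v}\subset F^v$ (used implicitly in the paper), followed by the partition property of vectorial faces. The only cosmetic differences are that you spell out the absorption check and rerun the argument for statement~(2) instead of citing statement~(1), as the paper does.
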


\begin{proof}
1) Let $F_1^v\in \sF^0$ be such that $\pi(x)\in \cF_{F_1^v}$. By Corollary~\ref{c_fibers_pi}, $x-\pi(x)\in \overline{F_1^v}$. Let $F_2^v\in \sF^0$ be such that $\pi(x)\in F_2^v$. By Proposition~\ref{p_description_faces_P0}, we have $\overline{F_2^v}\supset F_1^v$. Therefore $x\in \overline{F_1^v}+F_2^v\subset \overline{F_2^v}+F_2^v\subset F_2^v$. As the vectorial faces of $(\A,\cH^0_\Psi)$ form a partition of $\A$, we deduce $F_2^v=F^v$, which proves that $\pi(x)\in F^v$. Moreover, by Corollary~\ref{c_faces_containing_point}, for every $\tilde{F}^v$ such that $\pi(x)\in \overline{\cF_{\tilde{F}^v}}$, we have $\tilde{F}^v\subset \overline{F_1^v}\subset \overline{F_2^v}=\overline{F^v}$. 

2) Let $x\in F^v\cap \overline{\cP^0}$ and $y\in \pi^{-1}(x)$. Let $F_1^v\in \sF^0$ be such that $y\in F_1^v$. Then by 1), $x\in F_1^v$ and as the faces of $(\A,\cH^0_\Psi)$ form a partition of $\A$, we deduce that $F_1^v=F^v$. 
\end{proof}

We index the basis $\Sigma$ of $\Psi$ associated with $C^v_f$ by a finite set $I$, i.e., we write $\Sigma=\{\alpha_i\mid i\in I\}$, where $|I|=|\Sigma|$.  Let $(\qp_i)\in \A^I$  be  the dual basis of  $(\alpha_i)_{i\in I}$,  i.e., a basis satisfying $\alpha_i(\qp_j)=\delta_{i,j}$ for all $i,j\in I$.
For $i\in I$, we set $$\DC_i=\{x\in \A\mid \langle x-\bb \mid \qp_i\rangle\leq 0\}$$ and $$\DC_i^0=\{x\in \A\mid \langle x\mid\qp_i\rangle\leq 0\}$$ (we have $\DC_i=\DC_{\R_{>0}\qp_i}$  for the notation of Eq. \eqref{e_notation_half-space},  by Lemma~\ref{l_scalar_product_coroots_fundamental_weights}). For $J\subset I$, we set $V_J=\bigoplus_{j\in J} \R\qp_j$\index{v@$V_J$}. 

\begin{Lemma}\label{l_description_intersection_half_spaces}
Let $V$ be a finite dimensional vector space, $k\in \Z_{>0}$ and $f_1,\ldots,f_k\in V^*$ be a free family. For $i\in \llbracket 1,k\rrbracket$, set $E_i=f_i^{-1}(\R_{\geq 0})$ and  $L_i=E_i\cap \bigcap_{j\in \llbracket 1,k\rrbracket \setminus \{i\}} f_j^{-1}(\{0\})$. Then \[\bigcap_{i=1}^k E_i=\conv(\bigcup_{i=1}^k L_i).\]
\end{Lemma}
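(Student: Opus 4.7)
The plan is to prove the two inclusions separately, with the forward inclusion $\conv(\bigcup_{i=1}^k L_i)\subset \bigcap_{i=1}^k E_i$ being essentially immediate and the reverse inclusion requiring an explicit convex decomposition.

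For the easy inclusion, I would simply observe that each $L_i$ is contained in $\bigcap_{j=1}^k E_j$: indeed $L_i\subset E_i$ by definition, and for $j\neq i$ we have $L_i\subset f_j^{-1}(\{0\})\subset f_j^{-1}(\R_{\geq 0})=E_j$. Since $\bigcap_{j=1}^k E_j$ is convex (as an intersection of half-spaces), it contains $\conv(\bigcup_{i=1}^k L_i)$.

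For the reverse inclusion, the idea is to use linear independence of $f_1,\ldots,f_k$ to build a ``dual'' family. Since $f_1,\ldots,f_k\in V^*$ are linearly independent, the linear map $\phi:V\to \R^k$ defined by $\phi(v)=(f_1(v),\ldots,f_k(v))$ is surjective (its transpose sends the dual basis of $\R^k$ to the free family $(f_i)$, hence is injective). So I can pick $e_1,\ldots,e_k\in V$ with $f_i(e_j)=\delta_{i,j}$. Setting $W=\bigcap_{i=1}^k\ker(f_i)$, every $v\in V$ admits the decomposition $v=w_v+\sum_{i=1}^k f_i(v)e_i$ with $w_v:=v-\sum_{i=1}^k f_i(v)e_i\in W$, as is easily checked by applying each $f_j$.

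Now take $x\in \bigcap_{i=1}^k E_i$ and set $a_i=f_i(x)\geq 0$, so that $x=w+\sum_{i=1}^k a_i e_i$ with $w\in W$. I would then write
\[
x=\sum_{i=1}^k \frac{1}{k}\bigl(w+k a_i e_i\bigr),
\]
and verify that for each $i$, the point $y_i:=w+ka_ie_i$ belongs to $L_i$: since $w\in W$ and $f_j(e_i)=\delta_{i,j}$, we have $f_j(y_i)=0$ for $j\neq i$ and $f_i(y_i)=ka_i\geq 0$. Hence $x\in \conv(\bigcup_{i=1}^k L_i)$, concluding the proof.

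There is no real obstacle here; the only conceptual point is to recognize that the linear independence hypothesis is exactly what allows one to construct the family $(e_i)$ dual to $(f_i)$, after which the uniform convex combination with weights $\tfrac{1}{k}$ does the job.
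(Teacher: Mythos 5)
Your proof is correct and follows essentially the same route as the paper: both construct vectors $e_1,\ldots,e_k$ dual to the free family $(f_i)$, decompose $V$ as $\bigoplus_i \R e_i$ plus a complement on which all $f_i$ vanish, and identify each $L_i$ as $\R_{\geq 0}e_i$ plus that complement. The only difference is that you spell out the final convex combination $x=\sum_i \tfrac{1}{k}(w+ka_ie_i)$, which the paper leaves implicit after describing $\bigcap_i E_i$ and the $L_i$ explicitly.
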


\begin{proof}
 Let $(f_i)_{i\in \cB}$ be a basis of $V^*$, where $\cB$ is a set containing $\llbracket 1,k\rrbracket$. For $i\in \cB$, let $e_i\in V$ be such that $f_j(e_i)=\delta_{i,j}$, for $j\in \cB$. Then $\bigcap_{i=1}^k E_i=\bigoplus_{i=1}^k \R_{\geq 0} e_i\oplus V'$, where $V'=\bigoplus_{i\in \cB\setminus \llbracket 1,k\rrbracket} \R e_i$. Let $i\in \llbracket 1,k\rrbracket$. Then $L_i=\R_{\geq 0} e_i\oplus V'$, which proves the result.
\end{proof}

\begin{Lemma}\label{l_Kac_4.3_2}
 Let $J\subset I$. For any $x\in (\bigoplus_{j\in J} \R \alpha_j^\vee)$, we have: \[(\alpha_j(x)\geq 0, \forall j \in J)\Rightarrow x\in \bigoplus_{j\in J} \R_{\geq 0} \alpha_j^\vee.\]
\end{Lemma}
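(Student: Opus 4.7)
The plan is to argue directly using the $W^v_\Psi$-invariant scalar product $\langle\cdot\mid\cdot\rangle$ and a sign analysis. Write uniquely $x=\sum_{j\in J}c_j\alpha_j^\vee$ with $c_j\in\R$, set $J^-=\{j\in J\mid c_j<0\}$, and split $x=x_+-x_-$ with
\[
  x_+=\sum_{j\in J\setminus J^-}c_j\alpha_j^\vee, \qquad x_-=\sum_{j\in J^-}(-c_j)\alpha_j^\vee,
\]
so that both $x_+$ and $x_-$ lie in $\sum_{k\in J}\R_{\geq 0}\alpha_k^\vee$. It suffices to prove that $x_-=0$.

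Two sign identities will be used, both obtained exactly as in the proof of Lemma~\ref{l_scalar_product_coroots_fundamental_weights} from the $W^v_\Psi$-invariance of $\langle\cdot\mid\cdot\rangle$: for any $y\in\A$ and any $\alpha_j\in\Sigma$, one has $\langle\alpha_j^\vee\mid y\rangle=\tfrac{1}{2}\alpha_j(y)\,|\alpha_j^\vee|^2$, and specialising to $y=\alpha_i^\vee$ this reads $\langle\alpha_i^\vee\mid\alpha_j^\vee\rangle=\tfrac{1}{2}\alpha_i(\alpha_j^\vee)\,|\alpha_i^\vee|^2$. Combined with the standard property $\alpha_i(\alpha_j^\vee)\leq 0$ for distinct $\alpha_i,\alpha_j\in\Sigma$ (see \cite[VI, \S1.5]{bourbaki1981elements}), this yields (i) $\langle\alpha_i^\vee\mid\alpha_j^\vee\rangle\leq 0$ for all $i\neq j$ in $J$, and (ii) $\langle\alpha_j^\vee\mid x\rangle\geq 0$ for every $j\in J$ (since $\alpha_j(x)\geq 0$ by hypothesis).

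The conclusion is then obtained by evaluating $\langle x\mid x_-\rangle$ in two different ways. On the one hand, expanding $x_-=\sum_{j\in J^-}(-c_j)\alpha_j^\vee$ and invoking (ii) gives $\langle x\mid x_-\rangle\geq 0$, since all coefficients $-c_j$ are positive. On the other hand, $\langle x\mid x_-\rangle=\langle x_+\mid x_-\rangle-|x_-|^2$, and each term in the double sum expressing $\langle x_+\mid x_-\rangle$ pairs a $j\in J^-$ with an $i\in J\setminus J^-$ (so $i\neq j$) with nonnegative scalar coefficient $c_i(-c_j)\geq 0$; hence $\langle x_+\mid x_-\rangle\leq 0$ by (i). Combining these two inequalities forces $|x_-|^2\leq \langle x_+\mid x_-\rangle\leq 0$, so $x_-=0$. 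The main thing that could go wrong is a sign slip; once (i) and (ii) are in place, the argument reduces to a short quadratic-form computation.
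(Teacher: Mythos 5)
Your argument is correct, but it follows a genuinely different route from the paper. You run the classical ``obtuse cone'' argument: using the $W^v_\Psi$-invariant scalar product you derive $\langle \alpha_j^\vee\mid y\rangle=\tfrac12\alpha_j(y)|\alpha_j^\vee|^2$, deduce $\langle\alpha_i^\vee\mid\alpha_j^\vee\rangle\leq 0$ for $i\neq j$ (from $\alpha_i(\alpha_j^\vee)\leq 0$ for distinct simple roots) and $\langle \alpha_j^\vee\mid x\rangle\geq 0$ from the hypothesis, and then the two evaluations of $\langle x\mid x_-\rangle$ force $|x_-|^2\leq \langle x_+\mid x_-\rangle\leq 0$, hence $x_-=0$ and $x=x_+$ lies in the cone (and since the $\alpha_j^\vee$, $j\in J$, are linearly independent, the coefficients themselves are nonnegative). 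The paper instead splits $J$ into blocks $J_1,\ldots,J_k$ on which the Cartan submatrix $A_{J_i}$ is indecomposable and invokes Kac's Theorem 4.3 (Fin), applied to the transpose of $A_{J_i}$, which asserts precisely that $A v\geq 0$ implies $v\geq 0$ for finite-type indecomposable matrices. Your proof is more self-contained and elementary: it only uses ingredients already established in the paper (the invariant scalar product as in Lemma~\ref{l_scalar_product_coroots_fundamental_weights}) plus the standard fact $\alpha_i(\alpha_j^\vee)\leq 0$ for $i\neq j$, and it makes transparent that the key input is positive definiteness of the form, i.e.\ finiteness of the root system; the paper's proof outsources that input to the classification of finite-type generalized Cartan matrices, which is a formulation that adapts more directly to settings stated purely in terms of Cartan matrices. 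The only cosmetic quibbles are the index swap in your specialisation of the identity (harmless, by symmetry of the scalar product) and the imprecise Bourbaki reference for $\alpha_i(\alpha_j^\vee)\leq 0$; neither affects the validity of the argument.
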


\begin{proof}
For $K\subset J$, denote by $A_K$ the Cartan matrix $A_K=(\alpha_i(\alpha_j^\vee))_{i,j\in K}$.  Write $J=J_1\sqcup \ldots \sqcup J_k$, with $A_{J_i}$ indecomposable  for every $i\in \llbracket 1,k\rrbracket$.  Let $x\in \bigoplus_{j\in J}\R \alpha_j^\vee$. Write $x=\sum_{i=1}^k x_i$, with $x_i\in \bigoplus_{\ell\in J_i}\R \alpha_\ell^\vee$, for $i\in \llbracket 1,k\rrbracket$. Assume that $\alpha_j(x)\geq 0$ for all $j\in J$. Let $i\in \llbracket 1,k\rrbracket$. Then $\alpha_j(x)=\alpha_j(x_i)$ for all $j\in J_i$. By  \cite[Theorem 4.3 (Fin)]{kac1994infinite} applied to the transpose of $A_{J_i}$, we have $x_i\in \bigoplus_{j\in J_i}\R_{\geq 0} \alpha_j^\vee$, and the lemma follows.
\end{proof}

\color{black}

For $i\in I$, we set $\cF_i=\cF_{\R_{>0}\qp_i}$, with the notation of Definition~\ref{d_b_faces}. The $\cF_i$, for $i\in I$, are the open facets of $\cP^0$ meeting $C^v_f$.

\begin{Lemma}\label{l_intersection_half_spaces_0}
Let $J\subset I$. Then  we have  \[  V_J\cap \bigcap_{j\in J} \DC_j^0\subset \bigoplus_{i\in I}\R_{\leq 0}\alpha_i^\vee\subset \bigcap_{i\in I} \DC_i^0.\] \end{Lemma}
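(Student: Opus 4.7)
The plan is to work in the coroot basis $(\alpha_i^\vee)_{i\in I}$ of $\A$: any $x\in\A$ can be written uniquely as $x=\sum_{i\in I}c_i\alpha_i^\vee$. The second inclusion is immediate from Lemma~\ref{l_scalar_product_coroots_fundamental_weights}: if all $c_i\leq 0$, then for each $j\in I$ we have $\langle x\mid \qp_j\rangle=\tfrac{1}{2}c_j|\alpha_j^\vee|^2\leq 0$, so $x\in \DC_j^0$.

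For the first inclusion, take $x\in V_J\cap\bigcap_{j\in J}\DC_j^0$, and write $x=\sum_{i\in I}c_i\alpha_i^\vee$. Since $V_J=\bigoplus_{j\in J}\R\qp_j$, the relations $\alpha_k(\qp_j)=\delta_{kj}$ show that $x\in V_J$ is equivalent to $\alpha_k(x)=0$ for all $k\in I\setminus J$. Moreover, Lemma~\ref{l_scalar_product_coroots_fundamental_weights} gives $\langle x\mid \qp_j\rangle=\tfrac{1}{2}c_j|\alpha_j^\vee|^2$, so the hypothesis $x\in \DC_j^0$ for $j\in J$ amounts to $c_j\leq 0$ for every $j\in J$.

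It remains to show $c_i\leq 0$ for $i\in I\setminus J$, which I address by introducing the auxiliary vector $y:=\sum_{i\in I\setminus J}c_i\alpha_i^\vee\in\bigoplus_{i\in I\setminus J}\R\alpha_i^\vee$. For any $k\in I\setminus J$, the identity $\alpha_k(x)=0$ rearranges to $\alpha_k(y)=-\sum_{j\in J}c_j\,\alpha_k(\alpha_j^\vee)$. Since $k\neq j$ for every $j\in J$, the standard non-positivity of off-diagonal Cartan matrix entries yields $\alpha_k(\alpha_j^\vee)\leq 0$; combined with $c_j\leq 0$, this gives $\alpha_k(y)\leq 0$, i.e.\ $\alpha_k(-y)\geq 0$. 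Applying Lemma~\ref{l_Kac_4.3_2} to $-y$ with $I\setminus J$ in the role of $J$, we conclude $-y\in\bigoplus_{i\in I\setminus J}\R_{\geq 0}\alpha_i^\vee$, so $c_i\leq 0$ for $i\in I\setminus J$. The only slightly delicate point is arranging the sign conventions so that Lemma~\ref{l_Kac_4.3_2} applies to $-y$ rather than to $y$ directly; the key combinatorial input is the non-positivity of the off-diagonal entries of the Cartan matrix, which converts the linear equality $\alpha_k(x)=0$ into the sign inequality required by the lemma.
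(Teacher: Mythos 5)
Your proof is correct, but it follows a genuinely different route from the paper's. You argue coordinate-wise in the coroot basis: the conditions $x\in \DC_j^0$, $j\in J$, translate via Lemma~\ref{l_scalar_product_coroots_fundamental_weights} into $c_j\leq 0$ for $j\in J$, and the membership $x\in V_J$ into $\alpha_k(x)=0$ for $k\in I\setminus J$; you then transfer these equalities into the sign inequalities $\alpha_k(-y)\geq 0$ on the partial sum $y=\sum_{i\in I\setminus J}c_i\alpha_i^\vee$ using the non-positivity of the off-diagonal Cartan entries $\alpha_k(\alpha_j^\vee)\leq 0$ ($k\neq j$), and invoke Lemma~\ref{l_Kac_4.3_2} for the index set $I\setminus J$ to conclude $c_i\leq 0$ for $i\in I\setminus J$. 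The paper instead decomposes the cone $V_J\cap\bigcap_{j\in J}\DC_j^0$ into extreme rays via Lemma~\ref{l_description_intersection_half_spaces}, identifies each ray generator $x_j$ by solving $\alpha_k(x_j)=-\delta_{j,k}$ inside $\bigoplus_{k\in (I\setminus J)\cup\{j\}}\R\alpha_k^\vee$, and applies Lemma~\ref{l_Kac_4.3_2} to each subsystem indexed by $(I\setminus J)\cup\{j\}$. Your approach is more direct and dispenses with the extreme-ray lemma altogether, at the modest cost of an extra (standard, but not stated in the paper) input, namely that off-diagonal entries of the Cartan matrix are $\leq 0$; the paper's route avoids that fact but needs the polyhedral decomposition and a separate application of the Kac lemma for each $j\in J$. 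Both hinge on the same two ingredients, Lemma~\ref{l_scalar_product_coroots_fundamental_weights} and Lemma~\ref{l_Kac_4.3_2}, and your sign bookkeeping (applying the latter to $-y$) is handled correctly, including the trivial case $J=\emptyset$.
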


\begin{proof}
If $J$ is empty, the result is clear so we assume $J$ non-empty. For $j\in J$, denote by $H_j^0$ the hyperplane delimiting $\DC_j^0$. We have: \[H_j^0=\{x\in \A\mid \langle x\mid \qp_j\rangle=0\}\] and by Lemma~\ref{l_scalar_product_coroots_fundamental_weights},  $H_j^0=\bigoplus_{k\in I\setminus \{j\}}\R\alpha_k^\vee$. Set   $L_j=V_J\cap\bigcap_{k\in J\setminus \{j\}}H_k^0\cap \DC_j^0$. As  $(\qp_k)_{k\in J}$ is  a free family of $\A$, the family $(x\mapsto \langle x\mid\qp_k\rangle)_{k\in J}$ is a free family of $V_J^*$ and thus by Lemma~\ref{l_description_intersection_half_spaces}, \begin{equation}\label{e_descrption_polytope_lines}
 V_J\cap \bigcap_{j\in J} \DC_j^0=\conv(L_j\mid j\in J).
 \end{equation}

 Let $j\in J$. Let $V'_j=\bigoplus_{k\in (I\setminus J)\cup \{j\}} \R\alpha_k^\vee$.  As $(\alpha_i(\alpha_{i'}^\vee))_{i,i'\in (I\setminus J)\cup\{j\}}$ is a Cartan matrix (by \cite[Lemma 4.4]{kac1994infinite} for example), it is invertible and  $(\alpha_k|_{V'_{j}})_{k\in (I\setminus J)\cup \{j\}}$ is a free family of $(V_j')^*$. Therefore there exists $x_j\in V_j'$ such that $\alpha_k(x_j)=-\delta_{j,k}$, for $k\in (I\setminus J)\cup \{j\}$. By Lemma~\ref{l_Kac_4.3_2}, we have $x_j\in -\bigoplus_{k\in (I\setminus J)\cup\{j\}}\R_{\geq 0} \alpha_k^\vee$.  By Lemma~\ref{l_scalar_product_coroots_fundamental_weights}, we deduce $\langle \qp_j\mid x_j \rangle\leq 0$ and $x_j\in H_k^0$ for all $k\in J\setminus \{j\}$.  Moreover, $x_j\in V_J$ and thus $x_j\in L_j$.  For dimension reasons, $L_j=\R_{\geq 0} x_j$ and hence by Eq. \eqref{e_descrption_polytope_lines}, we have $V_J\cap \bigcap_{j\in J}\DC_j^0\subset \bigoplus_{i\in I}\R_{\leq 0}\alpha_i^\vee$. We conclude with Lemma~\ref{l_scalar_product_coroots_fundamental_weights}.
\end{proof}

As in \S \ref{ss_preliminary_polytopes}, we write $\bigwedge_{j\in J} \cF_j=\mathrm{Int}_r(\bigcap_{j \in J} \overline{\cF}_j)$, for $J\subset I$.
By Proposition~\ref{p_description_faces_P0}, we have: \begin{equation}\label{e_intersection_standard_faces}
    \bigwedge_{j\in J} \cF_j=\cF_{\sum_{j\in J}\R_{>0}\qp_j}.
\end{equation}

\begin{Lemma}\label{l_faces_affine_subspace}
Let $J\subset I$. Let $V_J=\bigoplus_{j\in J}\R\qp_j$. Let $z\in\bigwedge_{j\in J} \cF_j$. Then for all $i\in I\setminus J$, $\overline{\cP^0}\cap (z+V_J)\subset \mathring{\DC_i}$.  
\end{Lemma}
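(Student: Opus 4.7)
The plan is to combine Corollary~\ref{c_faces_containing_point} (to place $z$ strictly inside $\DC_i$ when $i\notin J$) with Lemma~\ref{l_intersection_half_spaces_0} (to control the displacement $x-z$). First, by Equation~\eqref{e_intersection_standard_faces} I would identify $\bigwedge_{j\in J}\cF_j=\cF_{F^v}$, where $F^v=\sum_{j\in J}\R_{>0}\qp_j$, so that $z\in\cF_{F^v}$. For each $j\in J$, the inclusion $z\in\overline{\cF_j}$ gives $\langle z-\bb\mid\qp_j\rangle=0$ by Proposition~\ref{p_description_faces_P0}(3) and the definition of $H_j$.

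Next, I would observe that for $i\in I\setminus J$ one has $z\in \mathring{\DC}_i$. Indeed, if $z$ lay on $H_i$ then $z\in\overline{\cF_i}$, and Corollary~\ref{c_faces_containing_point}(2) would force $\R_{>0}\qp_i\subset\overline{F^v}=\sum_{j\in J}\R_{\geq 0}\qp_j$; but $(\qp_k)_{k\in I}$ is a basis of $\A$, so this is impossible. Since $z\in\overline{\cP^0}\subset\DC_i$, strict inequality $\langle z-\bb\mid\qp_i\rangle<0$ follows.

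Now fix $x\in\overline{\cP^0}\cap(z+V_J)$ and set $y=x-z\in V_J$. For each $j\in J$,
\[
\langle y\mid\qp_j\rangle=\langle x-\bb\mid\qp_j\rangle-\langle z-\bb\mid\qp_j\rangle=\langle x-\bb\mid\qp_j\rangle\leq 0,
\]
since $x\in\overline{\cP^0}\subset\DC_j$. Thus $y\in V_J\cap\bigcap_{j\in J}\DC_j^0$, and Lemma~\ref{l_intersection_half_spaces_0} gives $y\in\bigoplus_{k\in I}\R_{\leq 0}\alpha_k^\vee$. Writing $y=\sum_{k\in I}c_k\alpha_k^\vee$ with $c_k\leq 0$ and applying Lemma~\ref{l_scalar_product_coroots_fundamental_weights} yields $\langle y\mid\qp_i\rangle=\tfrac{1}{2}c_i|\alpha_i^\vee|^2\leq 0$ for every $i\in I\setminus J$.

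Combining these two inequalities,
\[
\langle x-\bb\mid\qp_i\rangle=\langle z-\bb\mid\qp_i\rangle+\langle y\mid\qp_i\rangle<0,
\]
so $x\in\mathring{\DC}_i$, as required. The only slightly delicate step is the strict inequality at $z$: this is where one must invoke the combinatorial description of the faces given by Corollary~\ref{c_faces_containing_point} together with the linear independence of the fundamental weights, rather than any purely metric argument.
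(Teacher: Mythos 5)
Your proposal is correct and follows essentially the same route as the paper: you place $z$ strictly inside $\DC_i$ for $i\notin J$ via the face description (Corollary~\ref{c_faces_containing_point} together with Eq.~\eqref{e_intersection_standard_faces}, which is exactly the paper's Eq.~\eqref{e_containing_z}), write $x=z+v$ with $v\in V_J\cap\bigcap_{j\in J}\DC_j^0$ using $z\in\overline{\cF_j}\subset H_j$, and invoke Lemma~\ref{l_intersection_half_spaces_0} (with Lemma~\ref{l_scalar_product_coroots_fundamental_weights}) to get $\langle v\mid\qp_i\rangle\leq 0$, concluding $x\in\mathring{\DC_i}+\DC_i^0=\mathring{\DC_i}$. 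The only cosmetic difference is that your strict-inequality step implicitly uses $H_i\cap\overline{\cP^0}=\overline{\cF_i}$ (established in the proof of Lemma~\ref{p_facets_P}), which is the same fact the paper relies on.
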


\begin{proof}
We have $\overline{\cP^0}\subset \bigcap_{i\in I} \DC_i\subset \bigcap_{j\in J}\DC_j$.  By Corollary~\ref{c_faces_containing_point} and Eq. \eqref{e_intersection_standard_faces}, we have: \begin{equation}\label{e_containing_z}
    \{i\in I\mid z\in \overline{\cF_i}\}=J.
\end{equation}

By Eq. \eqref{e_notation_half-space} we deduce $\bigcap_{j\in J} \DC_j=z+\bigcap_{j\in J} \DC_j^0$. Let $x\in (z+V_J)\cap \overline{\cP^0}$. Then we can write  $x=z+v$, with $v\in V_J$. We have $x\in \bigcap_{j\in J}\DC_j$ and thus 
$v\in V_J\cap \bigcap_{j\in J}\DC_j^0$. Let $i\in I\setminus J$. By Lemma~\ref{l_intersection_half_spaces_0}, we have $v\in \DC_i^0$. By Eq. \eqref{e_containing_z}, $z\in \mathring{\DC_i}$ and hence $x=z+v\in \mathring{\DC_i}+\DC_i^0=\mathring{\DC_i}$.
\end{proof}

Let $J\subset I$ be non-empty.  Let $z \in \bigwedge_{j\in J} \cF_j$ and  $\overline{\tilde{\cP}}=\overline{\tilde{\cP}_J(z)}=(z+V_J)\cap\bigcap_{j\in J} \DC_j$.  For $j\in J$ define $f_j\in (V_J)^*$ by $f_j(x)=\langle x\mid \qp_j\rangle$, for $x\in V_J$. Then $(f_j)_{j\in J}$ is free. Let $(a_j)_{j\in J}\subset V_J$ be a dual set of $(f_j)_{j\in J}$, i.e., a free set such that $f_i(a_j)=\delta_{i,j}$, for all $i,j\in J$. Then $\overline{\tilde{\cP}}=z-\sum_{j\in J} \R_{\geq 0} a_j$. For $J'\subset J$, set $\overline{\tilde{\cF}_{J'}}=z-\sum_{j\in J'} \R_{\geq 0} a_j$. The closed faces of $\overline{\tilde{\cP}}$ are the $\overline{\tilde{\cF}_{J'}}$, for $J'\subset J$. In particular, the closed facets of $\overline{\tilde{\cP}}$ are the $\overline{\tilde{\cF}_j}$, $j\in J$, where $\tilde{\cF}_j=\tilde{\cF}_{J\setminus \{j\}}$, for $j\in J$. Let $j\in J$. 
The  support  of $\tilde{\cF}_j$ is $\tilde{H}_j=\{x\in (z+V_J)\mid \langle x-z\mid\qp_j\rangle= 0\}=H_j\cap (z+V_J),$ where $H_j$ is the frontier of $\DC_j$.    For $x\in \overline{\tilde{\cF}_j}$, we have $(x+\R_{\geq 0} \qp_j)\cap \overline{\tilde{\cF_j}}=\{x\}$ and $\qp_j$ is orthogonal to the direction of $\overline{\tilde{\cF}_j}$, so that $\R_{\geq 0}\qp_j=\delta_{\tilde{\cF}_j}$ for the notation of subsection~\ref{ss_preliminary_polytopes} applied to  the polyhedron $\overline{\tilde{\cP}}\subset z+V_J$. Therefore by Proposition~\ref{p_description_fibers_pi}, we have: 
\begin{equation}\label{e_fibers_pi_Ptilde}
\forall J'\subset J, \forall a\in \tilde{\cF}_{J'}, \pi_{\overline{\tilde{\cP}}}^{-1}(\{a\})=a+\bigoplus_{j\in J\setminus (J')} \R_{\geq 0}\qp_j,
\end{equation}
where $\pi_{\overline{\tilde{\cP}}}:z+V_J\rightarrow \overline{\tilde{\cP}}$ is the orthogonal projection.

\begin{Lemma}\label{l_equality_polytopes_chamber}
Let $J\subset I$. Let $z\in\bigwedge_{j\in J} \cF_j\cap \overline{C^v_f}$. Let $\overline{\tilde{\cP}}=(z+V_J)\cap \bigcap_{j\in J} \DC_j$. Then $\overline{\tilde{\cP}}\cap \overline{C^v_f}=\overline{\cP^0}\cap (z+V_J)\cap \overline{C^v_f}$.
\end{Lemma}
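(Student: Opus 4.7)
My plan splits the equality into two inclusions. The inclusion $\overline{\cP^0}\cap(z+V_J)\cap\overline{C^v_f}\subset\overline{\tilde{\cP}}\cap\overline{C^v_f}$ is immediate: any point $y$ in the left-hand side lies in $z+V_J$ and satisfies $y\in\overline{\cP^0}\subset\bigcap_{i\in I}\DC_i\subset\bigcap_{j\in J}\DC_j$, hence $y\in\overline{\tilde{\cP}}$.

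For the reverse inclusion, I would take $x\in\overline{\tilde{\cP}}\cap\overline{C^v_f}$, decompose $x=z+v$ with $v\in V_J$, and proceed in two steps. First, I would show that $x\in\DC_i$ for every $i\in I$. For $i\in J$ this is built into the definition of $\overline{\tilde{\cP}}$. For $i\in I\setminus J$, Corollary~\ref{c_faces_containing_point} combined with Equation~\eqref{e_intersection_standard_faces} shows that $z\in\overline{\cF_i}$ iff $i\in J$, hence $z\in\mathring{\DC_i}$ whenever $i\notin J$; moreover, the inclusion $z\in\overline{\cF_j}\subset H_j$ for $j\in J$ (Lemma~\ref{p_facets_P}(2)) forces $\langle v\mid\qp_j\rangle\leq 0$, so $v\in V_J\cap\bigcap_{j\in J}\DC_j^0$. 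Then Lemma~\ref{l_intersection_half_spaces_0} gives $v\in\DC_i^0$, and adding yields $x=z+v\in\mathring{\DC_i}\subset\DC_i$.

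Second, I would deduce $x\in\overline{\cP^0}$. By Lemma~\ref{p_facets_P} together with the full-dimensionality of $\overline{\cP^0}$ (Lemma~\ref{l_convex_hull_non-empty_interior}), we have $\overline{\cP^0}=\bigcap_{w\in W^v_\Psi,\,\beta\in\Sigma} w.\DC_\beta$. By $W^v_\Psi$-invariance of the scalar product, the constraint $x\in w.\DC_\beta$ rewrites as $\langle x\mid w.\qp_\beta\rangle\leq\langle\bb\mid\qp_\beta\rangle$. Applying \cite[Chap.~VI, \S1, Prop.~18]{bourbaki1981elements} (already invoked in the proof of Lemma~\ref{l_link_parabolic_subgroups_regular_coweights}) to the dominant coweight $\qp_\beta$, one obtains $\qp_\beta-w.\qp_\beta\in Q^\vee_+=\bigoplus_{\alpha\in\Sigma}\R_{\geq 0}\alpha^\vee$; since $x\in\overline{C^v_f}$, Lemma~\ref{l_scalar_product_coroots_fundamental_weights} guarantees $\langle x\mid\alpha^\vee\rangle\geq 0$ for every simple coroot, hence $\langle x\mid w.\qp_\beta\rangle\leq\langle x\mid\qp_\beta\rangle$. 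Combined with the first step applied at $\beta$ (which yields $\langle x\mid\qp_\beta\rangle\leq\langle\bb\mid\qp_\beta\rangle$), we conclude $\langle x\mid w.\qp_\beta\rangle\leq\langle\bb\mid\qp_\beta\rangle$, as required.

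The main obstacle will be the second step: reducing the full family of rotated facet constraints $w.\DC_\beta$ to the finitely many fundamental constraints $\DC_\beta$, $\beta\in\Sigma$. This reduction rests on the Weyl-invariance inequality $\langle x\mid w.y\rangle\leq\langle x\mid y\rangle$ for $x,y\in\overline{C^v_f}$, itself a consequence of Bourbaki's property $y-w.y\in Q^\vee_+$ for dominant $y$ together with Lemma~\ref{l_scalar_product_coroots_fundamental_weights}.
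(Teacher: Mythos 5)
Your proof is correct, but the decisive step is argued quite differently from the paper. For the nontrivial inclusion, the paper runs a boundary-crossing argument: assuming a point $x\in \overline{\tilde{\cP}}\cap \overline{C^v_f}\setminus \overline{\cP^0}$, it follows the segment $[x,z]$ to the first point of $\overline{\cP^0}$, identifies (via Corollary~\ref{c_faces_containing_point} and Lemma~\ref{l_faces_affine_subspace}) the facets through that point as a subfamily of the $\cF_j$, $j\in J$, and uses the local description of Lemma~\ref{l_local_description_polytope} to push slightly further into $\overline{\cP^0}$, a contradiction. You instead prove the stronger, purely inequality-theoretic statement $\overline{C^v_f}\cap\bigcap_{i\in I}\DC_i\subset\overline{\cP^0}$: your Step~1 is essentially the computation inside the paper's proof of Lemma~\ref{l_faces_affine_subspace} (which indeed only uses membership in $(z+V_J)\cap\bigcap_{j\in J}\DC_j$, so it applies verbatim to $\overline{\tilde{\cP}}$), and your Step~2 reduces the rotated facet constraints $w.\DC_\beta$ to the fundamental ones via $\qp_\beta-w.\qp_\beta\in Q^\vee_+$ (Bourbaki VI, \S1, Prop.~18, applied to the closed chamber) and $\langle x\mid\alpha^\vee\rangle=\tfrac{1}{2}\alpha(x)|\alpha^\vee|^2\geq 0$ for $x\in\overline{C^v_f}$ (expand $x$ in the basis $(\qp_i)$ and use Lemma~\ref{l_scalar_product_coroots_fundamental_weights}). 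Your route buys a genuinely stronger intermediate result — the classical half-space description of the weight polytope on the dominant chamber, $\overline{\cP^0}\cap\overline{C^v_f}=\overline{C^v_f}\cap\bigcap_{i\in I}\DC_i$, in the spirit of \cite{burrull2023dominant} — and avoids any topological/limit argument; the paper's route stays within the local machinery it has already set up and does not need the H-representation $\overline{\cP^0}=\bigcap_{w\in W^v_\Psi,\beta\in\Sigma}w.\DC_\beta$ nor the dominance property for $\qp_\beta$. Two small points you should make explicit: the identity $\overline{\cP^0}=\bigcap_{w,\beta}w.\DC_\beta$ requires the standard fact that a full-dimensional polytope is the intersection of its facet-defining half-spaces (available from \cite{bruns2009polytopes}, with the facets given by Lemma~\ref{p_facets_P}), and the rewriting of $x\in w.\DC_\beta$ as $\langle x\mid w.\qp_\beta\rangle\leq\langle\bb\mid\qp_\beta\rangle$ uses the $W^v_\Psi$-invariance of the chosen scalar product.
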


\begin{proof}
We have  $\overline{\cP^0}\subset \bigcap_{i\in I} \DC_i\subset \bigcap_{i\in J} \DC_i$, and hence  $\overline{\tilde{\cP}}\cap \overline{C^v_f}\supset \overline{\cP^0}\cap (z+V_J)\cap \overline{C^v_f}$. Suppose that $\overline{\tilde{\cP}}\cap \overline{C^v_f}\supsetneq \overline{\cP^0}\cap (z+V_J)\cap \overline{C^v_f}$. Let $x\in (\overline{\tilde{\cP}}\cap \overline{C^v_f})\setminus \overline{\cP^0}$. Let $\gamma:[0,1]\rightarrow [x,z]$ be the affine parametrization such that $\gamma(0)=x$ and $\gamma(1)=z$. Let $t=\min\{s\in [0,1]\mid \gamma(s)\in \overline{\cP^0}\}$. Then $t>0$ and $\gamma(t)\in \Fr(\cP^0)\cap \overline{C^v_f}$.

 Let $F^v\in \sF^0$ be such that $\gamma(t)\in \cF_{F^v}$.  By Lemma~\ref{l_projection_contained_face}, $F^v\subset \overline{C^v_f}$. Let  $J'=\{i\in I\mid \qp_i\in \overline{F^v}\}$. Then by Proposition~\ref{p_bourbaki_vectorial_faces}, we have $F^v=\bigoplus_{j\in J'} \R_{>0}\qp_j$ and by Proposition~\ref{p_description_faces_P0}, we have $\cF_{F^v}=\bigwedge_{j\in J'}\cF_j=\Int_r(\bigcap_{j\in J'}\overline{\cF}_j)$. By Lemma~\ref{l_faces_affine_subspace}, we have $\gamma(t)\notin \overline{\cF_{j}}$ for all $j\in I\setminus J$ and thus  $J'\subset J$.

By Corollary~\ref{c_faces_containing_point}, the set of facets $\cF_{F_1^v}$ such that $\gamma(t)\in \overline{\cF_{F_1^v}}$ is
\[\{\cF_{F_1^v}\mid F_1^v\subset \overline{F^v}\}=\{\cF_{\R_{>0}\qp_j}\mid j\in J'\}=\{\cF_j\mid j\in J'\}.\] 
Therefore by Lemma~\ref{l_local_description_polytope}, there exists $r>0$ such that $B(\gamma(t),r)\cap \overline{\cP^0}=B(\gamma(t),r)\cap \bigcap_{j\in J'}\DC_j$.  By assumption, $z\in \bigcap_{j\in J'}\DC_j$ and thus $\gamma(t')\in \bigcap_{j\in J'}\DC_j$, for all $t'\in [0,1]$. This proves that for $t'\in [0,t]$ close enough to $t$, $\gamma(t')\in \overline{\cP^0}$, which contradicts our assumption.
\end{proof}

\begin{Lemma}\label{l_preservation_faces_projection}
Let $J\subset I$. Let $z\in \overline{C^v_f}\cap \bigwedge_{j\in J} \cF_j$. Let $\overline{\tilde{\cP}}=\overline{\tilde{\cP^0_J}(z)}=(z+V_J)\cap \bigcap_{j\in J} \DC_j$. Let $\pi_{\overline{\tilde{\cP}}}:z+V_J\rightarrow \overline{\tilde{\cP}}$ be the orthogonal projection on $\overline{\tilde{\cP}}$.  Then for all $F_1^v\in \sF^0$ such that $F_1^v\subset \overline{C^v_f}$, we have $\pi_{\overline{\tilde{\cP}}}\left((z+V_J)\cap F_1^v\right)\subset F_1^v$.  
\end{Lemma}

\begin{proof}
 Let $a\in \Fr(\tilde{\cP})$,  where we regard $\tilde{\cP}^0$ as a subset of $z+V_J$. In the notation of Eq. \eqref{e_fibers_pi_Ptilde}, let us write:
 \[J_1=\{j\in J\mid a\in \overline{\tilde{\cF_j}}\}.\] We first assume $a\notin \overline{C^v_f}$ and we prove $\pi_{\overline{\tilde{\cP}}}^{-1}(\{a\})\cap \overline{C^v_f}=\emptyset$. We  have $]a,z]\cap \overline{C^v_f}=[b,z]$, for some $b\in \overline{C^v_f}$. If for all $i\in I$ such that $\alpha_i(a)<0$, we had $\alpha_i(b)>0$, then we would have $b-\epsilon (b-a)\in \overline{C^v_f}$ for $\epsilon\in \R_{>0}$ small enough. Therefore there exists $i\in I$ such that $\alpha_i(a)<0$ and $\alpha_i(b)=0$.   We have $b\in \overline{C^v_f}\cap \overline{\tilde{\cP}}$ and thus $b\in \overline{\cP^0}$, by Lemma~\ref{l_equality_polytopes_chamber}. Let $j\in J_1$. Then $b\in \overline{\tilde{\cF_j}}\subset H_j$ , since $z\in \bigwedge_{j\in J} \cF_j$.. Therefore $b\in H_j\cap \overline{\cP^0}=\overline{\cF_j}=\overline{\cF}_{\R_{>0}\qp_j}$. Using Lemma~\ref{l_positivity_roots_faces}, we deduce $\alpha_j(b)>0$ for all $j \in J_1$. In particular, $i\notin J_1$. By Eq. \eqref{e_fibers_pi_Ptilde}, $\pi_{\overline{\tilde{\cP}}}^{-1}(\{a\})=a+\sum_{j\in J_1} \R_{\geq 0} \qp_j$ and thus $\alpha_i(\pi^{-1}_{\overline{\tilde{\cP}}}(\{a\}))=\{\alpha_i(a)\}\subset \R_{<0}$. Consequently $\pi_{\overline{\tilde{\cP}}}^{-1}(\{a\})\cap \overline{C^v_f}=\emptyset$. Therefore \[\pi_{\overline{\tilde{\cP}}}(\overline{C^v_f}\cap (z+V_J))\subset \overline{C^v_f}.\]

Let now $a\in \Fr(\overline{\tilde{\cP}})\cap \overline{C^v_f}$.  Then by Lemma~\ref{l_equality_polytopes_chamber}, $\{j\in I\mid a\in \overline{\cF_j}\}\supset J_1$. Therefore by Eq. \eqref{e_fibers_pi_Ptilde} and Corollary~\ref{c_fibers_pi}, we have $\pi_{\overline{\tilde{\cP}}}^{-1}(\{a\})=a+\sum_{j\in J_1} \R_{\geq 0} \qp_j\subset \pi_{\overline{\cP^0}}^{-1}(\{a\})$. Using Lemma~\ref{l_projection_contained_face}, we deduce $\pi_{\overline{\tilde{\cP}}}^{-1}(\{a\})\subset F^v$, if $F^v$ is the face of $(\A,\cH_\Psi^0)$ containing $a$. Consequently, if  $x\in \overline{C^v_f}\cap (z+V_J)$, then $x$ and $\pi_{\overline{\tilde{\cP}}}(x)$ belong to the same face of $(\A,\cH_\Psi^0)$. 
\end{proof}

\begin{Lemma}\label{l_Fv(z,J)}
Let $z\in \A$ and $J\subset I$. Set $V_J=\bigoplus_{j\in J} \R \qp_j$. Write $z=\sum_{i\in I} z_i \qp_i$, with $(z_i)\in \R^I$.  Then:
$$(z+V_J)\cap \overline{C^v_f}=\left\{ \begin{aligned}&\emptyset &\text{ if } \exists i\in I\setminus J\mid z_i<0\\
&\sum_{i\in I\setminus J} z_i\qp_i+\sum_{j\in J}\R_{\geq 0}\qp_j &\text{otherwise}\end{aligned}\right.$$

If $z_i\geq 0$ for all $i\in I\setminus J$, set $F^v(z,J)=\sum_{i\in I\setminus J\mid z_i>0}\R_{>0} \qp_i+\sum_{j\in J}\R_{>0} \qp_j$. Then: 
\begin{align*}
\overline{F^v(z,J)}&\supset (z+V_J)\cap \overline{C^v_f}, \\(z+V_J)\cap \overline{F^v(z,J)}&=\sum_{i\in I\setminus J}z_i\qp_i+\sum_{j\in J}\R_{\geq 0}\qp_j=(z+V_J)\cap \overline{C^v_f},\\
(z+V_J)\cap F^v(z,J)&=\sum_{i\in I\setminus J}z_i\qp_i+\sum_{j\in J}\R_{> 0}\qp_j.
\end{align*}
\end{Lemma}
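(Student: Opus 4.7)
The overall plan is to use Proposition~\ref{p_bourbaki_vectorial_faces} to rewrite $\overline{C^v_f}$ and $F^v(z,J)$ in the dual basis coordinate system, after which all of the equalities reduce to elementary conditions on coefficients. Specifically, Proposition~\ref{p_bourbaki_vectorial_faces} gives $\overline{C^v_f} = \bigoplus_{i \in I} \R_{\geq 0} \qp_i$, and every element of $z+V_J$ admits the unique expansion $x = \sum_{i \in I \setminus J} z_i \qp_i + \sum_{j \in J} (z_j+t_j) \qp_j$ with $(t_j)_{j \in J} \in \R^J$.

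First I would handle the dichotomy for $(z+V_J) \cap \overline{C^v_f}$. Since $(\qp_i)_{i \in I}$ is a basis of $\A$, the condition $x \in \overline{C^v_f}$ is equivalent to the scalar conditions $z_i \geq 0$ for every $i \in I \setminus J$ together with $z_j + t_j \geq 0$ for every $j \in J$. The first family of conditions does not involve the parameters $t_j$, so the intersection is either empty (when some $z_i$ with $i \in I \setminus J$ is negative) or, letting $s_j = z_j + t_j$ range freely over $\R_{\geq 0}$, equal to $\sum_{i \in I \setminus J} z_i \qp_i + \sum_{j \in J} \R_{\geq 0} \qp_j$, as claimed.

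For the second half, assume $z_i \geq 0$ for all $i \in I \setminus J$ and set $E = \{i \in I \setminus J : z_i > 0\} \cup J \subset I$. Then Proposition~\ref{p_bourbaki_vectorial_faces} identifies $F^v(z,J) = \bigoplus_{i \in E} \R_{>0} \qp_i$ as a face of $(\A, \cH^0_\Psi)$, with closure $\overline{F^v(z,J)} = \bigoplus_{i \in E} \R_{\geq 0} \qp_i$. An element $\sum_{i \in I} u_i \qp_i$ lies in $\overline{F^v(z,J)}$ (resp.\ in $F^v(z,J)$) if and only if $u_i = 0$ for $i \notin E$ and $u_i \geq 0$ (resp.\ $u_i > 0$) for $i \in E$. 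Unpacking this criterion for $x \in z+V_J$: indices $i \in I \setminus J$ with $z_i = 0$ lie outside $E$ but automatically have $u_i = z_i = 0$; indices $i \in I \setminus J$ with $z_i > 0$ lie in $E$ and automatically have $u_i > 0$; so only the conditions at the indices $j \in J$ remain, namely $z_j + t_j \geq 0$ or $> 0$. Letting $s_j = z_j + t_j$ range over $\R_{\geq 0}$ or $\R_{>0}$ respectively yields the formulas for $(z+V_J) \cap \overline{F^v(z,J)}$ and $(z+V_J) \cap F^v(z,J)$; comparing with the previous step gives $(z+V_J) \cap \overline{F^v(z,J)} = (z+V_J) \cap \overline{C^v_f}$, from which the inclusion $\overline{F^v(z,J)} \supset (z+V_J) \cap \overline{C^v_f}$ is immediate.

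There is no real obstacle here: the whole lemma is a coordinate computation in the basis $(\qp_i)_{i \in I}$. The only point that requires care is the bookkeeping around indices $i \in I \setminus J$ with $z_i = 0$, which must be excluded from $E$ so that $F^v(z,J)$ is genuinely a face in the sense of Proposition~\ref{p_bourbaki_vectorial_faces}, but whose coordinates nonetheless satisfy the vanishing constraints for $i \notin E$ for free; keeping this separation straight is essentially the only thing that needs to be watched.
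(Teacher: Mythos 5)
Your proof is correct, and it is exactly the argument the paper has in mind: the paper states Lemma~\ref{l_Fv(z,J)} without proof, treating it as the immediate coordinate computation in the dual basis $(\qp_i)_{i\in I}$ that you carry out, using $\overline{C^v_f}=\bigoplus_{i\in I}\R_{\geq 0}\qp_i$ from Proposition~\ref{p_bourbaki_vectorial_faces}. Your bookkeeping of the indices $i\in I\setminus J$ with $z_i=0$ (excluded from $F^v(z,J)$ but with the vanishing condition automatically satisfied on $z+V_J$) is precisely the one point needing care, and you handle it correctly.
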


\begin{Lemma}\label{l_projection_preserving_affine_space}
Let $J\subset I$. Let $z\in \overline{C^v_f}\cap \bigwedge_{j\in J} \cF_j$. Then for all $y\in (z+V_J)\cap \overline{C^v_f}$, we have $\pi(y)\in (z+V_J)$. 
\end{Lemma}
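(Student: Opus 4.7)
The plan is to compare $\pi = \pi_{\overline{\cP^0}}$ with the orthogonal projection $\pi_{\overline{\tilde{\cP}}}$ onto the auxiliary polytope $\overline{\tilde{\cP}} = \overline{\tilde{\cP^0_J}(z)}$ introduced just before Lemma~\ref{l_equality_polytopes_chamber}. Since $\overline{\tilde{\cP}} \subset z+V_J$, it will suffice to prove $\pi(y) = \pi_{\overline{\tilde{\cP}}}(y)$.

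If $y$ already lies in $\overline{\tilde{\cP}}$, then Lemma~\ref{l_equality_polytopes_chamber} places $y$ in $\overline{\cP^0}$, so $\pi(y) = y \in z+V_J$; hence I may assume $y \notin \overline{\tilde{\cP}}$. Setting $\tilde{y} := \pi_{\overline{\tilde{\cP}}}(y)$, Lemma~\ref{l_preservation_faces_projection} places $\tilde{y}$ in the same face of $(\A,\cH^0_\Psi)$ as $y$, hence in $\overline{C^v_f}$, and Lemma~\ref{l_equality_polytopes_chamber} then yields $\tilde{y} \in \overline{\cP^0}$. Because $y \notin \overline{\tilde{\cP}}$, the point $\tilde{y}$ lies on the relative boundary of $\overline{\tilde{\cP}}$ in $z+V_J$, so the set $J_1 = \{j \in J \mid \tilde{y} \in \overline{\tilde{\cF_j}}\}$ is non-empty and Eq.~\eqref{e_fibers_pi_Ptilde} gives a decomposition $y - \tilde{y} = \sum_{j\in J_1} t_j \qp_j$ with $t_j \geq 0$.

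The crux will be to verify $\pi(y) = \tilde{y}$ via characterization~\eqref{e_characterization_projection}. For each $j \in J_1$, $\tilde{y} \in \overline{\tilde{\cF_j}} \subset H_j$ combined with $\tilde{y} \in \overline{\cP^0}$ yields $\tilde{y} \in \overline{\cP^0} \cap H_j = \overline{\cF_j}$. Using that $\tilde{y} \in H_j$, the defining inequality $\langle w - \bb \mid \qp_j \rangle \leq 0$ for $\DC_j$ is equivalent to $\langle w - \tilde{y} \mid \qp_j \rangle \leq 0$, so the inclusion $\overline{\cP^0} \subset \DC_j$ gives $\langle x - \tilde{y} \mid \qp_j \rangle \leq 0$ for every $x \in \overline{\cP^0}$ and every $j \in J_1$. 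Combining these with the nonnegative coefficients $t_j$ yields
\[\langle y - \tilde{y} \mid x - \tilde{y} \rangle = \sum_{j \in J_1} t_j \langle \qp_j \mid x - \tilde{y} \rangle \leq 0\]
for all $x \in \overline{\cP^0}$, so \eqref{e_characterization_projection} gives $\pi(y) = \tilde{y} \in z+V_J$.

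The main obstacle is the identification $\tilde{y} \in \overline{\cF_j}$ for $j \in J_1$: it requires transferring facet information from the auxiliary polytope $\overline{\tilde{\cP}}$ (whose facets are cut out by the hyperplanes $H_j \cap (z+V_J)$) back to $\overline{\cP^0}$ (whose facets meeting $\overline{C^v_f}$ are cut out by the $H_j$). This transfer rests on the fact that $\tilde{y}$ remains in $\overline{C^v_f}$, so that Lemma~\ref{l_equality_polytopes_chamber} lets me promote $\tilde{y} \in \overline{\tilde{\cP}} \cap H_j$ to $\tilde{y} \in \overline{\cP^0} \cap H_j$. Without the combined use of Lemmas~\ref{l_equality_polytopes_chamber} and~\ref{l_preservation_faces_projection}, this step would fail.
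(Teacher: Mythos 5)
Your proof is correct, and it lives inside the same framework as the paper's — the auxiliary polyhedron $\overline{\tilde{\cP}}=(z+V_J)\cap\bigcap_{j\in J}\DC_j$, together with Lemma~\ref{l_preservation_faces_projection} and Lemma~\ref{l_equality_polytopes_chamber} to place $\tilde y=\pi_{\overline{\tilde{\cP}}}(y)$ in $\overline{C^v_f}\cap\overline{\cP^0}$ — but the way you then identify $\pi(y)$ with $\pi_{\overline{\tilde{\cP}}}(y)$ is genuinely different and more direct. The paper first restricts to $x\in F^v(z,J)\cap(z+V_J)$, thickens a neighbourhood $U\subset F^v$ of $\pi_{\overline{\tilde{\cP}}}(x)$ by the orthogonal complement $V^J=\bigoplus_{i\in I\setminus J}\R\alpha_i^\vee$, shows that $d(x,\cdot)|_{\overline{\cP^0}}$ has a local minimum at $\pi_{\overline{\tilde{\cP}}}(x)$, upgrades it to a global one by convexity, and finally reaches a general $y\in(z+V_J)\cap\overline{C^v_f}$ by density of $F^v\cap(z+V_J)$ (Lemma~\ref{l_Fv(z,J)}) and continuity of $\pi$. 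You instead verify the obtuse-angle criterion \eqref{e_characterization_projection} directly: the fibre formula \eqref{e_fibers_pi_Ptilde} gives $y-\tilde y=\sum_{j\in J_1}t_j\qp_j$ with $t_j\geq 0$, and since $\tilde y\in\overline{\tilde{\cF}_j}\subset H_j$ (it is precisely the hypothesis $z\in\bigwedge_{j\in J}\cF_j$ that puts $\overline{\tilde{\cF}_j}$ inside $H_j$, exactly as in the paper's proof of Lemma~\ref{l_preservation_faces_projection}) while $\overline{\cP^0}\subset\DC_j$, you get $\langle \qp_j\mid x-\tilde y\rangle\leq 0$ for all $x\in\overline{\cP^0}$ and $j\in J_1$, hence $\pi(y)=\tilde y$ for every $y$ at once, with no recourse to $F^v(z,J)$, to $V^J$, or to any density/limit step. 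What the paper's route buys is that it only invokes convexity of the distance function on top of Lemma~\ref{l_preservation_faces_projection}; what yours buys is a shorter, pointwise argument. One cosmetic remark: the intermediate identification $\tilde y\in\overline{\cP^0}\cap H_j=\overline{\cF_j}$ is not actually needed, since $\tilde y\in H_j$ alone makes $\langle x-\bb\mid\qp_j\rangle\leq 0$ equivalent to $\langle x-\tilde y\mid\qp_j\rangle\leq 0$, which is all your computation uses.
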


\begin{proof}
Set $\overline{\tilde{\cP}}=(z+V_J)\cap \bigcap_{j\in J} \DC_j$.  Let $F^v=F^v(z,J)$, with the notation of Lemma~\ref{l_Fv(z,J)}. Let $x\in F^v\cap (z+V_J)$. By Lemma~\ref{l_preservation_faces_projection}, we have $\pi_{\overline{\tilde{\cP}}}(x)\in F^v$. By Lemma~\ref{l_Fv(z,J)}, we can choose an open neighbourhood $U$ of $\pi_{\tilde\cP^0}(x)$ in  $z+V_J$ such that $U\subset F^v$. Then for all $y\in U\cap \overline{\tilde{\cP}}$, $d(x,y)\geq d(x,\pi_{\overline{\tilde{\cP}}}(x))$. Let $V^J=\bigoplus_{i\in I\setminus J} \R \alpha_i^\vee$. Then $V^J$ is orthogonal to $V_J$ by Lemma~\ref{l_scalar_product_coroots_fundamental_weights} and  $U+V^J$ is an open subset of $\A$. 

Let $y\in (U+V^J)\cap \overline{\cP^0}$. Write $y=u+v$, with  $u\in U$ and $v\in V^J$.   Then  $\langle u-\bb\mid \qp_j\rangle=\langle u+v-\bb\mid \qp_j\rangle=\langle y-\bb\mid \qp_j\rangle\leq 0$, for all $j\in J$ and thus $u\in U\cap \overline{\tilde{\cP}}$. Moreover, \[d(x,y)^2=|y-x|^2=|u+v-x|^2=|u-x|^2+|v|^2\geq |u-x|^2\geq d(x,\pi_{\overline{\tilde{\cP}}}(x))^2.\]

Therefore the restriction to $\overline{\cP^0}$ of $y\mapsto d(x,y)$ admits a local minimum at $\pi_{\overline{\tilde{\cP}}}(x)$. As this map is convex (by \cite[B 1.3 (c)]{hiriart2012fundamentals} for example), this minimum is global and thus $\pi_{\overline{\tilde{\cP}}}(x)=\pi_{\overline{\cP^0}}(x)$. Thus we proved: \[\forall x\in F^v\cap (z+V_J),\pi(x)\in (z+V_J).\]

Let now $x\in \overline{C^v_f}\cap (z+V_J).$ Then, by Lemma~\ref{l_Fv(z,J)}, there exists $(x_n)\in \left((z+V_J)\cap F^v\right)^\N$ such that $x_n\rightarrow x$. Then $\pi(x_n)\rightarrow \pi(x)$ (by \cite[A. (3.1.6)]{hiriart2012fundamentals}. As $\pi(x_n)\in z+V_J$ for all $n\in \N$, we deduce $\pi(x)\in z+V_J$, which completes the proof of the lemma.
\end{proof}

\begin{Proposition}\label{p_projection_preserving_affine_spaces}
 Let $F^v$ be a face of $(\A,\cH^0_\Psi)$ and $z\in \cF_{F^v}$. Let $F_1^v$ be a  face dominating $z$. In particular, $F_1^v$ dominates $F^v$ by Proposition~\ref{p_description_faces_P0}. Let $\pi$ be the orthogonal projection on $\overline{\cP^0}$. Then: \begin{enumerate}
\item $\pi\left((z+\mathrm{vect}(F^v))\cap \overline{F_1^v}\right)\subset (z+\mathrm{vect}(F^v))\cap \overline{F_1^v}.$

\item Let $y\in (z+\mathrm{vect}(F^v))\cap \overline{F_1^v}$ and let $G^v\in \sF^0$ be such that $\pi(y)\in \cF_{G^v}$. Then $G^v\subset \overline{F^v}$.
\end{enumerate}
\end{Proposition}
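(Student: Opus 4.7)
The plan is to reduce to the case $F_1^v\subset\overline{C^v_f}$ using $W^v_\Psi$-equivariance, after which Lemma~\ref{l_projection_preserving_affine_space} directly gives part~(1) and Lemma~\ref{l_faces_affine_subspace} combined with Corollary~\ref{c_faces_containing_point} gives part~(2). Both $\overline{\cP^0}=\conv(W^v_\Psi.\bb)$ and the scalar product are $W^v_\Psi$-invariant, so $\pi(w.x)=w.\pi(x)$ for every $w\in W^v_\Psi$. Moreover, using the identities $\bb_{w.C^v}=w.\bb_{C^v}$ and $W^v_{w.F^v}=wW^v_{F^v}w^{-1}$, Definition~\ref{d_b_faces} yields $w.\cF_{F^v}=\cF_{w.F^v}$. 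Since $W^v_\Psi$ acts transitively on the vectorial chambers, I can pick $w\in W^v_\Psi$ with $w.F_1^v\subset\overline{C^v_f}$ and replace $(F^v,F_1^v,z)$ by $(w.F^v,w.F_1^v,w.z)$. Proposition~\ref{p_bourbaki_vectorial_faces} then lets me write $F^v=\bigoplus_{j\in J}\R_{>0}\qp_j$ and $F_1^v=\bigoplus_{j\in J_1}\R_{>0}\qp_j$ for some $J\subset J_1\subset I$, so that $\vect(F^v)=V_J$. By Eq.~\eqref{e_intersection_standard_faces}, $\cF_{F^v}=\bigwedge_{j\in J}\cF_j$, hence $z\in\bigwedge_{j\in J}\cF_j$; and since $z\in\overline{F_1^v}\subset\overline{C^v_f}$, the hypotheses of Lemma~\ref{l_projection_preserving_affine_space} are met.

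For part~(1), Lemma~\ref{l_projection_preserving_affine_space} gives $\pi(y)\in z+V_J=z+\vect(F^v)$ for every $y\in(z+V_J)\cap\overline{C^v_f}$, and in particular for every $y\in(z+\vect(F^v))\cap\overline{F_1^v}$. The complementary inclusion $\pi(y)\in\overline{F_1^v}$ follows from Lemma~\ref{l_projection_contained_face}(1): if $F_y$ denotes the open face of $(\A,\cH^0_\Psi)$ containing $y$, then $F_y\subset\overline{F_1^v}$ because $y\in\overline{F_1^v}$, and the lemma gives $\pi(y)\in F_y\subset\overline{F_1^v}$.

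For part~(2), the key ingredient is Lemma~\ref{l_faces_affine_subspace}, which provides $\overline{\cP^0}\cap(z+V_J)\subset\mathring{\DC_i}$ for every $i\in I\setminus J$. Since $\pi(y)\in\overline{\cP^0}\cap(z+V_J)$ by part~(1), we get $\pi(y)\in\mathring{\DC_i}$. But by Lemma~\ref{p_facets_P}(2), $\overline{\cF_i}$ lies on the boundary hyperplane $H_i=\Fr(\DC_i)$, disjoint from $\mathring{\DC_i}$, so $\pi(y)\notin\overline{\cF_i}$ for every $i\in I\setminus J$. On the other hand, $\pi(y)\in\overline{F_1^v}\subset\overline{C^v_f}$ together with the star property in Proposition~\ref{p_description_faces_P0}(4) forces $G^v$ to be itself a face of $\overline{C^v_f}$, i.e.\ $G^v=\bigoplus_{k\in K}\R_{>0}\qp_k$ for some $K\subset J_1$. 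Corollary~\ref{c_faces_containing_point}(2) identifies $K$ as $\{j\in I\mid\pi(y)\in\overline{\cF_j}\}$, which we just proved is contained in $J$; hence $G^v\subset\bigoplus_{j\in J}\R_{\geq 0}\qp_j=\overline{F^v}$.

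The main technical effort is already absorbed in the preceding Lemma~\ref{l_projection_preserving_affine_space} (via its auxiliary polyhedron $\overline{\tilde{\cP}}$); what remains here is largely $W^v_\Psi$-equivariance bookkeeping. The one genuinely geometric step is the use of Lemma~\ref{l_faces_affine_subspace} in part~(2): the open half-space condition $\pi(y)\in\mathring{\DC_i}$ for $i\notin J$ is precisely what upgrades the a priori inclusion $G^v\subset\overline{F_1^v}$ (immediate from Lemma~\ref{l_projection_contained_face}(1) and Corollary~\ref{c_faces_containing_point}(2)) to the sharper $G^v\subset\overline{F^v}$.
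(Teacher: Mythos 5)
Your proof is correct and follows essentially the same route as the paper: reduce to $F_1^v\subset\overline{C^v_f}$ (you via $W^v_\Psi$-equivariance, the paper by changing the choice of $C^v_f$), then apply Lemma~\ref{l_projection_preserving_affine_space} together with Lemma~\ref{l_projection_contained_face} for part (1), and Lemma~\ref{l_faces_affine_subspace} with Corollary~\ref{c_faces_containing_point} for part (2). Your use of the star property from Proposition~\ref{p_description_faces_P0}(4) to place $G^v$ inside $\overline{C^v_f}$ is a cosmetic variant of the paper's appeal to Lemma~\ref{l_projection_contained_face}, and the unneeded claim $K\subset J_1$ is harmless since only $K\subset J$ is used.
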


\begin{proof}
1) Up to changing the choice of the fundamental chamber $C^v_f$, we may assume $F_1^v\subset \overline{C^v_f}$. Let $J=\{i\in I\mid \qp_i\in \overline{F^v}\}$. Then by Proposition~\ref{p_bourbaki_vectorial_faces}, $F^v=\bigoplus_{j\in J}\R_{>0}\qp_j$ and by Proposition~\ref{p_description_faces_P0}, $z\in \bigwedge_{j\in J}\cF_j$.  We have $V_J=\bigoplus_{j\in J}\R\qp_j$ and the result follows from Lemma~\ref{l_projection_preserving_affine_space} and Lemma~\ref{l_projection_contained_face}.

2) Let $y\in(z+\mathrm{vect}(F^v))\cap \overline{F_1^v}$. Let $G^v\in \sF^0$ be such that $\pi(y)\in \cF_{G^v}$.  By  Lemma~\ref{l_projection_contained_face}, $G^v\subset \overline{F_1^v}$. In particular, $G^v\subset \overline{C^v_f}$, and hence, by Proposition~\ref{p_bourbaki_vectorial_faces}, $G^v=\sum_{j\in I\mid \qp_j\in \overline{G^v}} \R_{>0}\qp_j$. Let now $i\in I\setminus J$. By  1) and Lemma~\ref{l_faces_affine_subspace}, $\pi(y)\notin \overline{\cF_{\R_{>0}\qp_i}}$. Using Corollary~\ref{c_faces_containing_point} we deduce that $\R_{>0}\qp_i\not\subset \overline{G^v}$. This being true for all $i\in I\setminus J$, we deduce that $G^v\subset \sum_{j\in J}\R_{\geq 0} \qp_j=\overline{F^v}$.  
\end{proof}

 \section{Tessellation of $\A$ by the affine weight polytopes}\label{s_tessellation_A}

Let $\A$ be a finite dimensional vector space and let $\Phi$ be a finite root system in $\A$. In this section, we study polytopes associated with the affine poly-simplicial complex defined from $\Phi$. 

In subsection~\ref{ss_affine_apartment}, we recall and prove general facts on the   affine poly-simplicial complex associated with $\Phi$. 

In subsection~\ref{ss_affine_weight_polytopes}, we define and study the faces of the weight polytopes.

In subsection~\ref{ss_tessellation_A}, we prove that the weight polytopes tessellate $\A$.

 \subsection{Affine apartment}\label{ss_affine_apartment}
 
The main reference for this section is \cite[V]{bourbaki1981elements}. Let $\Phi$ be a (finite) root system in  $\A$.  Let  $\xi:\A\rightarrow \R$ be the constant map equal to $1$ and $\Phi^{\mathrm{aff}}=\Phi+\Z\xi$\index{p@$\Phi^{\mathrm{aff}}$}. Then by \cite[(2.1) Proposition]{macdonald1971affine}, $\Phi^{\mathrm{aff}}$ is \textbf{an affine root system} in the sense of \cite[2]{macdonald1971affine}. Let $\cH=\cH_\Phi=\{\alpha^{-1}(\{k\})\mid \alpha \in \Phi,\ k\in \Z\}$ which equals $\{\underline{\alpha}^{-1}(\{0\})\mid \underline{\alpha}\in \Phi^{\mathrm{aff}}\}$\index{h@$\cH$}. This is a locally finite hyperplane arrangement of $\A$, which means that  every bounded subset $E$ of $\A$ meets finitely many hyperplanes. An element of $\cH$ is called a \textbf{wall} or an \textbf{affine wall.} For $x,y\in \A$, we write $x\sim_\cH y$ if for every $H\in \cH$, either ($x,y\in H$) or ($x$ and $y$ are strictly on the same side of $H$). This is an equivalence relation on $\A$. Its classes are called the \textbf{faces} of $(\A,\cH)$ or simply the faces of $\A$. A \textbf{vertex} of $\A$ is an element $x$ of $\A$ such that $\{x\}$ is a face of $(\A,\cH)$. If $F$ is a face of $(\A,\cH)$, then:\begin{equation}\label{e_Property_faces_roots}
\forall \alpha\in \Phi, \exists n\in \Z\mid \alpha(F)\subset ]n,n+1[\text{ or } \alpha(F)=\{n\}.
\end{equation}

An \textbf{alcove} (or \textbf{open alcove}) of $(\A,\cH)$ is a connected component of $\A\setminus \bigcup_{H\in \cH} H$. The alcoves are actually the faces which are not contained in an element of $\cH$. A \textbf{closed alcove} is a set of the form $\overline{C}$, for some alcove $C$ of $\A$. If $\overline{C}$ is a closed alcove of $\A$, its set of vertices, denoted $\ve(\overline{C})$, is the set of vertices of $\A$ which belong to $\overline{C}$.

We denote by $\sF=\sF(\cH)$\index{f@$\sF=\sF(\cH)$} the set of faces of $(\A,\cH)$. If $F\in \sF$, we denote by $\sF_F=\sF_F(\cH)$ the set of faces dominating $F$ (i.e., the set of faces whose closure contains $F$). When $F=\{\lambda\}$, for some vertex $\lambda$, we write $\sF_\lambda(\cH)$ instead of $\sF_{\{\lambda\}}(\cH)$\index{F@$\sF_F=\sF_F(\cH)$, $\sF_\lambda=\sF_\lambda(\cH)$}.
We denote by $\Alc(\cH)$\index{a@$\Alc(\cH)$} the set of alcoves of $(\A,\cH)$ and if $F\in \sF(\cH)$, we denote by $\Alc_F(\cH)$\index{a@$\Alc(\cH),\Alc_F(\cH)$} the set of alcoves of $(\A,\cH)$ dominating $F$. The \textbf{affine poly-simplicial complex associated with $\Phi$} is $\sF(\cH)$. When $\Phi$ is irreducible, it is a simplicial complex.

 \begin{figure}[h]
 \centering
 \includegraphics[scale=0.2]{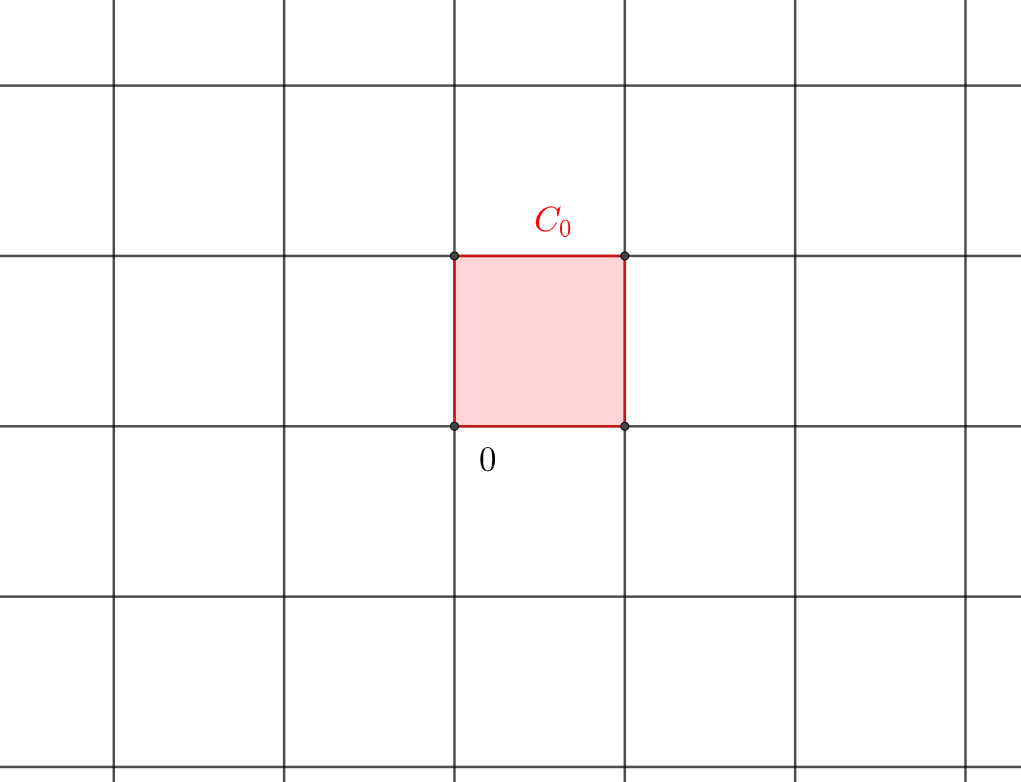}
 \caption{Affine appartment of type $\mathrm{A}_1\times \mathrm{A}_1$. The alcoves are the squares.}
\end{figure}

 \begin{figure}[h]
 \centering
 \includegraphics[scale=0.2]{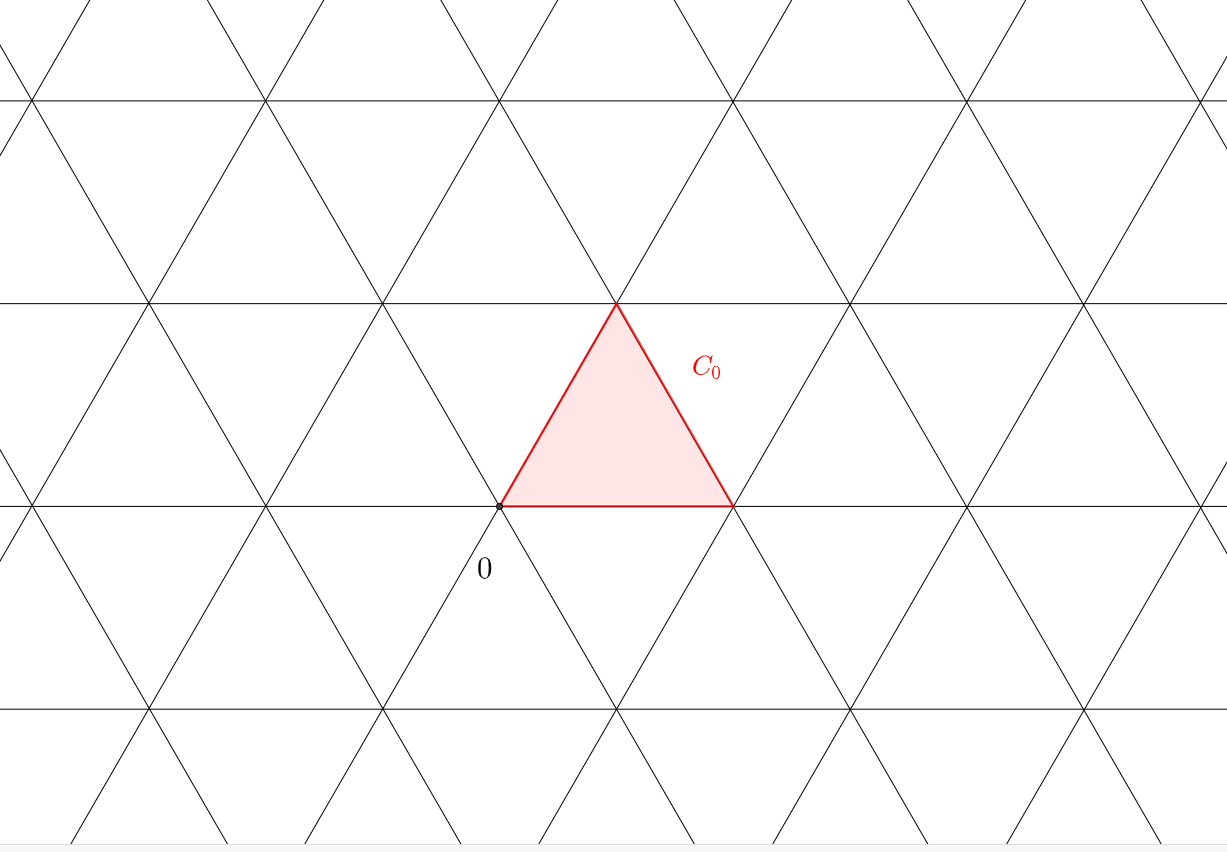}
 \caption{Affine appartment of type $\mathrm{A}_2$. The alcoves are the equilateral triangles.}
\end{figure}

 \begin{figure}[h]
 \centering
 \includegraphics[scale=0.2]{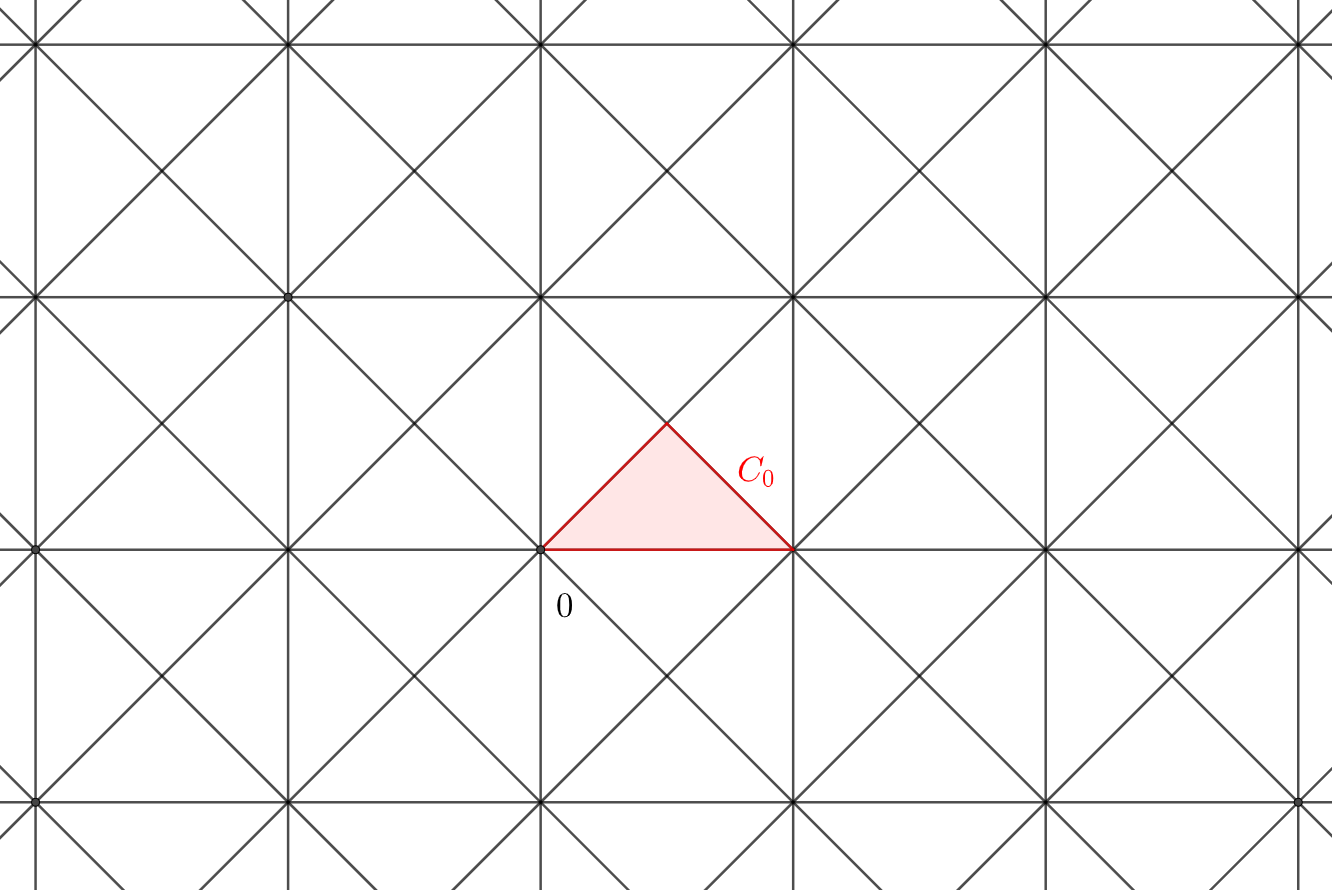}
 \caption{Affine appartment of type $\mathrm{B}_2$. The alcoves are the right triangles.}
\end{figure}

 \begin{figure}[h]
 \centering
 \includegraphics[scale=0.2]{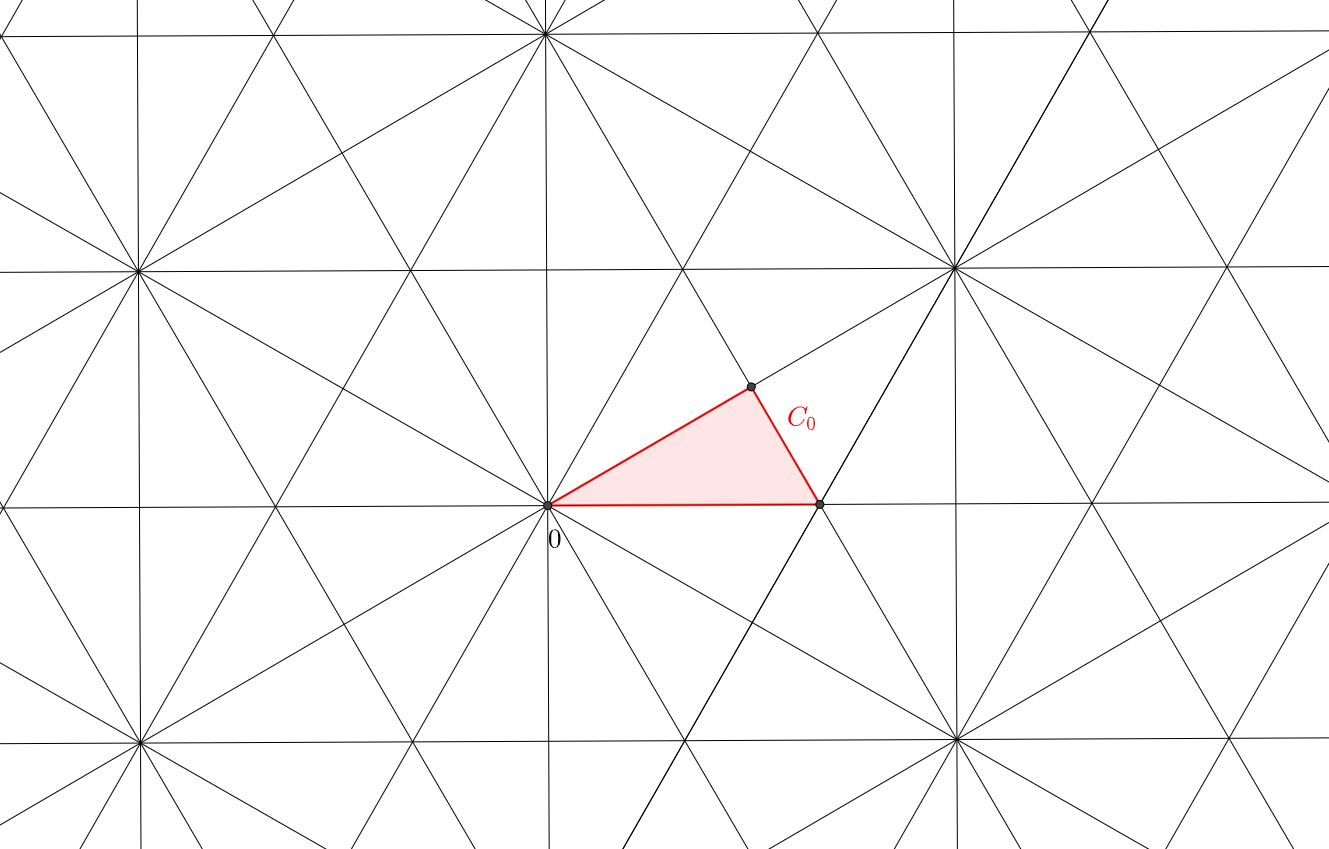}
 \caption{Affine appartment of type $\mathrm{G}_2$. The alcoves are the right triangles.}
\end{figure}

\subsubsection{Enclosure and closed faces of $(\A,\cH)$}

\begin{Definition}
A \textbf{half-apartment} of $\A$ is a closed half-space of $\A$ delimited by an element of $\cH$. If $\alpha\in \Phi$ and $k\in \Z$, one sets $D_{\alpha,k}=\{x\in \A\mid \alpha(x)+k\geq 0\}$\index{d@$D_{\alpha,k}$}

Let $\Omega\subset \A$. Its \textbf{enclosure} $\cl(\Omega)$\index{c@$\cl$} is the intersection of all the half-apartments containing $\Omega$.

If $\Omega$ is a subset of $\A$ and $\lambda\in \A$, then the \textbf{germ of $\Omega$ at $\lambda$} is the set $\germ_\lambda(\Omega)$\index{g@$\germ$} of subsets of $\A$ of the form $\Omega\cap E$ such that $E$ is an open neighbourhood of $\lambda$. A subset $\Omega'$ of $\A$ is said to \textbf{contain} the germ $\germ_\lambda(\Omega)$ if it contains $\Omega\cap E$ for some open neighbourhood $E$ of $\lambda$. The enclosure of $\germ_\lambda(\Omega)$ is the intersection of the enclosures of the $\Omega\cap E$ when $E$ describes all the open neighbourhoods of $\lambda$.
    
\end{Definition}

A \textbf{closed face of $(\A,\cH)$} is a set of the form $\overline{F}$, for some $F\in \sF(\cH)$.

\begin{Lemma}\label{l_enclosure_faces}
\begin{enumerate}
    \item Let $F\in \sF$. For $\underline{\alpha}\in \Phi^{\mathrm{aff}}$, set $E_{\underline{\alpha}}=\{0\}$ if $\underline{\alpha}(F)=\{0\}$, $E_{\underline{\alpha}}=\R_{>0}$ if $\underline{\alpha}(F)>0$ and $E_{\underline{\alpha}}=\R_{<0}$ if $\underline{\alpha}(F)<0$. Then $F=\bigcap_{\underline{\alpha}\in \Phi^{\mathrm{aff}}}\underline{\alpha}^{-1}(E_{\underline{\alpha}})$ and $\overline{F}=\bigcap_{\underline{\alpha}\in \Phi^{\mathrm{aff}}}\underline{\alpha}^{-1}(\overline{E_{\underline{\alpha}}})$. In particular, $\overline{F}$ is enclosed.

    \item Let $F\in \sF$ and $x\in F$. Then $\cl(x)=\overline{F}$. 
\end{enumerate}
\end{Lemma}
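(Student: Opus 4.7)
The plan is to deduce both parts essentially from the definition of faces as equivalence classes for $\sim_\cH$, combined with the sign property \eqref{e_Property_faces_roots}. The latter guarantees that for each $\underline{\alpha}\in\Phi^{\mathrm{aff}}$, the sign of $\underline{\alpha}$ is constant on $F$ (so the sets $E_{\underline{\alpha}}$ are unambiguously defined), and that every wall in $\cH$ has the form $\underline{\alpha}^{-1}(\{0\})$ with $\underline{\alpha}\in\Phi^{\mathrm{aff}}$. For part~1, I first note that the inclusion $F\subset \bigcap_{\underline{\alpha}}\underline{\alpha}^{-1}(E_{\underline{\alpha}})$ is immediate from the definition of the $E_{\underline{\alpha}}$. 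For the converse, I would fix $x\in F$ and take any $y$ in the right-hand side; for every $H=\underline{\alpha}^{-1}(\{0\})\in\cH$, the points $x$ and $y$ both have $\underline{\alpha}$-value in the same set among $\{0\}$, $\R_{>0}$, $\R_{<0}$, so they lie on $H$ together or strictly on the same side of $H$. Hence $y\sim_{\cH}x$ and $y\in F$.

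For the closure identity, the right-hand side $\bigcap_{\underline{\alpha}}\underline{\alpha}^{-1}(\overline{E_{\underline{\alpha}}})$ is closed and contains $F$, giving the inclusion $\overline{F}\subset \bigcap_{\underline{\alpha}}\underline{\alpha}^{-1}(\overline{E_{\underline{\alpha}}})$. For the reverse inclusion, I would use a standard segment argument: fix $x\in F$ and $y$ in the right-hand side, and consider $z_t=(1-t)x+ty$ for $t\in[0,1[$. For each $\underline{\alpha}\in\Phi^{\mathrm{aff}}$, the value $\underline{\alpha}(z_t)=(1-t)\underline{\alpha}(x)+t\underline{\alpha}(y)$ is $0$ if $E_{\underline{\alpha}}=\{0\}$ and strictly keeps the sign of $\underline{\alpha}(x)$ otherwise (since $\underline{\alpha}(y)$ is at worst zero and $\underline{\alpha}(x)$ is strictly signed). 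Thus $z_t\in F$ for all $t<1$, and letting $t\to 1$ gives $y\in\overline{F}$. The "enclosed" statement is then immediate: each $\underline{\alpha}^{-1}(\overline{E_{\underline{\alpha}}})$ is either a half-apartment (when $\overline{E_{\underline{\alpha}}}$ is a closed half-line) or the affine wall $\underline{\alpha}^{-1}(\{0\})$, which is itself an intersection of two half-apartments.

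For part~2, let $F$ be the unique face of $(\A,\cH)$ containing $x$. Since $\overline{F}$ is enclosed (by part~1) and contains $x$, the minimality of $\cl(x)$ yields $\cl(x)\subset\overline{F}$. For the reverse inclusion, I would take any half-apartment $D=D_{\alpha,k}$ containing $x$ and show $\overline{F}\subset D$: setting $\underline{\alpha}=\alpha+k\xi\in\Phi^{\mathrm{aff}}$, one has $\underline{\alpha}(x)\geq 0$, so $E_{\underline{\alpha}}\in\{\{0\},\R_{>0}\}$ and $\overline{E_{\underline{\alpha}}}\subset\R_{\geq 0}$. By part~1, every $y\in\overline{F}$ satisfies $\underline{\alpha}(y)\in\overline{E_{\underline{\alpha}}}\subset\R_{\geq 0}$, i.e.\ $y\in D$. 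Intersecting over all such $D$ gives $\overline{F}\subset\cl(x)$.

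I do not foresee any substantial obstacle here; the entire lemma is a direct bookkeeping exercise with affine roots, closed half-apartments, and the defining equivalence relation on faces, and the only step requiring a little care is the segment argument needed to recover $\overline{F}$ from the closed sign conditions.
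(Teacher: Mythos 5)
Your proof is correct and takes essentially the same route as the paper's: the converse inclusions for $F$ and $\overline{F}$ via the equivalence relation and the segment argument, the observation that $\underline{\alpha}^{-1}(\{0\})$ is an intersection of two half-apartments, and the two-sided comparison of half-apartments containing $x$ with those cutting out $\overline{F}$ to get $\cl(x)=\overline{F}$. No gaps to report.
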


\begin{proof}
  1)  The first equality follows from the definition of faces. As the elements of $\Phi^{\mathrm{aff}}$ are continuous maps from $\A$ to $\R$, we have $\overline{F}\subset\bigcap_{\underline{\alpha}\in \Phi^{\mathrm{aff}}}\underline{\alpha}^{-1}(\overline{E_{\underline{\alpha}}})$. Let $x\in F$ and $y\in \bigcap_{\underline{\alpha}\in \Phi^{\mathrm{aff}}}\underline{\alpha}^{-1}(\overline{E_{\underline{\alpha}}})$. Then for $z\in ]x,y[$ and $\underline{\alpha}\in \Phi^{\mathrm{aff}}$, we have $\underline{\alpha}(z)\in \overline{E_{\underline{\alpha}}}+E_{\underline{\alpha}}=E_{\underline{\alpha}}$ and hence $[x,y[\subset F$, which proves that $y\in \overline{F}$ and concludes the proof of the second equality. The fact that $\overline{F}$ is enclosed then follows from the fact that if $\underline{\alpha}\in \Phi^{\mathrm{aff}}$, then $\underline{\alpha}^{-1}(\{0\})=\underline{\alpha}^{-1}(\R_{\geq 0})\cap \underline{\alpha}^{-1}(\R_{\leq 0})$.

  We now prove 2). We have $\cl(x)\subset \cl(F)=\overline{F}$. By definition of the faces, every half-apartment containing $x$ contains $F$ and thus it contains $\overline{F}$. Therefore $\cl(x)\supset\overline{F}$, which completes the proof of the lemma.
\end{proof}

\begin{Proposition}\label{p_characterization_faces}
    Let $E$ be a non-empty subset of $\A$. Then $E$ is a closed face of $(\A,\cH)$  if and only if the following two conditions are satisfied: \begin{enumerate}
        \item $E$ is enclosed,

        \item for every $\underline{\alpha}\in \Phi^{\mathrm{aff}}$, we have either $\underline{\alpha}(E)\subset \R_{\geq 0}$ or $\underline{\alpha}(E)\subset \R_{\leq 0}$. 
    \end{enumerate}
\end{Proposition}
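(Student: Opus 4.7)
The plan is to prove the two directions separately, with the interesting work occurring in the reverse implication. The forward direction is essentially a restatement of Lemma~\ref{l_enclosure_faces}: if $E = \overline{F}$ for some face $F$, then $E$ is enclosed by part~(1) of that lemma, and condition~(2) follows because each $\underline{\alpha} \in \Phi^{\mathrm{aff}}$ has constant sign on $F$ (by the definition of a face via $\sim_\cH$), and hence has the same non-strict sign on $\overline{F}$ by continuity.

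For the reverse direction, suppose $E$ satisfies (1) and (2). Since $E$ is enclosed, it is an intersection of closed half-spaces, hence convex; being non-empty, it admits a relative interior point $x \in \Int_r(E)$. Let $F$ be the unique face of $(\A,\cH)$ containing $x$. The plan is to show $\overline{F} = E$.

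The key step is to verify that the sign pattern of $\underline{\alpha}(x)$ matches that of the whole of $E$, in the sense that: if $\underline{\alpha}(E) = \{0\}$ then $\underline{\alpha}(x) = 0$ (trivial); and if there exists $y \in E$ with $\underline{\alpha}(y) > 0$, then necessarily $\underline{\alpha}(x) > 0$. For the second assertion, the relative interior condition provides some $s > 0$ with $z_s := x + s(x-y) \in E$, whence $\underline{\alpha}(z_s) = (1+s)\underline{\alpha}(x) - s\,\underline{\alpha}(y)$, and if $\underline{\alpha}(x) = 0$ this would be strictly negative, contradicting condition~(2). The symmetric case $\underline{\alpha}(y) < 0$ is identical.

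Armed with this, Lemma~\ref{l_enclosure_faces}(2) yields $\overline{F} = \cl(x)$. Since $E$ is enclosed and contains $x$, every half-apartment containing $E$ contains $x$, so $E \supset \cl(x) = \overline{F}$. Conversely, by Lemma~\ref{l_enclosure_faces}(1) one has $\overline{F} = \bigcap_{\underline{\alpha} \in \Phi^{\mathrm{aff}}} \underline{\alpha}^{-1}(\overline{E_{\underline{\alpha}}})$, where $E_{\underline{\alpha}}$ encodes the sign of $\underline{\alpha}$ on $F$; by the key step this sign coincides with the sign of $\underline{\alpha}$ on all of $E$ provided by (2), so $E \subset \underline{\alpha}^{-1}(\overline{E_{\underline{\alpha}}})$ for every $\underline{\alpha}$, and thus $E \subset \overline{F}$. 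The only real subtlety is the relative-interior sign computation; the remainder is bookkeeping with Lemma~\ref{l_enclosure_faces} and the definition of the enclosure.
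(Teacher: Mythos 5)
Your proof is correct and takes essentially the same route as the paper's: both directions rest on Lemma~\ref{l_enclosure_faces}, the convexity of the enclosed set $E$, and a sign argument at a relative-interior point. The only difference is in packaging — where the paper shows that all of $\Int_r(E)$ lies in a single face by comparing the signs of $\underline{\alpha}$ on two relative neighbourhoods, you propagate the sign at one point $x\in\Int_r(E)$ to all of $E$ by pushing slightly past $x$ along the line through $y$, and then sandwich $E$ between $\cl(x)=\overline{F}$ and $\bigcap_{\underline{\alpha}\in\Phi^{\mathrm{aff}}}\underline{\alpha}^{-1}(\overline{E_{\underline{\alpha}}})=\overline{F}$ — but the substance is the same.
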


\begin{proof}
    Let $F$ be a face of $\sF(\cH)$. Then by Lemma~\ref{l_enclosure_faces}, $\overline{F}$ is enclosed. By definition of the faces, if  $\underline{\alpha}\in \Phi^{\mathrm{aff}}$, we have $\underline{\alpha}(F)\subset \R_{>0}$, $\underline{\alpha}(F)=\{0\}$ or $\underline{\alpha}(F)\subset\R_{<0}$. As $\underline{\alpha}(\overline{F})\subset \overline{\underline{\alpha}(F)}$ we deduce that $\overline{F}$ satisfies 1. and 2.

    Let now $E$ be a non-empty subset satisfying 1. and 2. Then $E$ is convex and closed. Let $F=\In_r(E)$. Then we have $E=\overline{F}$.  Let $S$ be the support of $E$ and $x,y\in F$. Let $\underline{\alpha}\in \Phi^{\mathrm{aff}}$. If $\underline{\alpha}$ is constant on $S$, then $\underline{\alpha}(x)=\underline{\alpha}(y)$. Assume now that $\underline{\alpha}$ is non-constant on $S$. Let $U_x, U_y$ be open subsets of $S$ containing $x$ and $y$ respectively. Then  by assumption $\underline{\alpha}(U_x)\underline{\alpha}(U_y)\geq 0$ and hence $\underline{\alpha}(x)\underline{\alpha}(y)>0$. Consequently, $x \sim y$, where $\sim$ is the equivalence relation that we used to define the faces of $(\A,\cH)$. Consequently there exists $F_1\in \sF$ such that $F\subset F_1$. 

    Using 1., we write $\overline{F}=\bigcap_{\underline{\alpha}\in \cE}\underline{\alpha}^{-1}(\R_{\geq 0})$, where $\cE\subset \Phi^{\mathrm{aff}}$. Let $x\in F$. Let $\underline{\alpha}\in \cE$. Then $\underline{\alpha}(x)\geq 0$ and as $\underline{\alpha}$ has constant sign on $F_1$, we have $\underline{\alpha}(F_1)\geq 0$. Consequently $F_1\subset \overline{F}$ and thus $\overline{F_1}\subset \overline{F}$.  As $F\subset F_1$, we deduce $\overline{F}=\overline{F_1}=E$, and $E$ is a closed face of $(\A,\cH)$.
    \end{proof}
\color{black}

\begin{Corollary}\label{c_intersection_faces}
    Let $\overline{F_1}, \overline{F_2}$ be two closed faces of $(\A,\cH)$. Then either $\overline{F_1}\cap \overline{F_2}$ is empty or $\overline{F_1}\cap \overline{F_2}$ is a closed face of $(\A,\cH)$. 
\end{Corollary}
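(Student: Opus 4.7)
The plan is to verify the two conditions of Proposition~\ref{p_characterization_faces} for the set $E := \overline{F_1} \cap \overline{F_2}$, assuming it is non-empty. This reduces the statement to a direct check that both the enclosure property and the sign-consistency property are stable under intersection.

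First I would observe that by Lemma~\ref{l_enclosure_faces}(1) each $\overline{F_i}$ is enclosed, i.e.\ is an intersection of half-apartments. Since the intersection of two intersections of half-apartments is again an intersection of half-apartments, $E$ is enclosed, so condition~(1) of Proposition~\ref{p_characterization_faces} is satisfied.

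Next, for condition~(2), let $\underline{\alpha} \in \Phi^{\mathrm{aff}}$. Since $\overline{F_1}$ is a closed face, Proposition~\ref{p_characterization_faces} applied to $\overline{F_1}$ tells us that either $\underline{\alpha}(\overline{F_1}) \subset \R_{\geq 0}$ or $\underline{\alpha}(\overline{F_1}) \subset \R_{\leq 0}$. As $E \subset \overline{F_1}$, the same sign property is inherited by $\underline{\alpha}(E)$. Thus condition~(2) holds.

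Combining both, $E$ satisfies the hypotheses of Proposition~\ref{p_characterization_faces}, so $E$ is a closed face of $(\A, \cH)$. There is no serious obstacle here; the proof is essentially immediate once the characterization is in hand. The only (minor) thing to be careful about is the non-emptiness assumption, which is exactly the reason the statement allows the alternative ``either empty, or a closed face''.
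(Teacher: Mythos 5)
Your proof is correct and is exactly the argument the paper intends: the corollary is stated as an immediate consequence of Proposition~\ref{p_characterization_faces}, since enclosedness is stable under intersection (an intersection of intersections of half-apartments is again one, hence equals its own enclosure) and the sign condition is inherited by any subset. Nothing further is needed.
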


Let $C\in \Alc(\cH)$. A \textbf{wall of $C$} is a hyperplane $H$ such that  $\supp(\overline{C}\cap H)=H$. A \textbf{panel of $C$} is an element of $\sF$ contained in $\overline{C}$ and whose support is a wall of $C$.

One sets \[\Sigma^{\mathrm{aff}}_C:=\{\underline{\alpha}\in\Phi^{\mathrm{aff}}\mid \underline{\alpha}(C)>0,\underline{\alpha}^{-1}(\{0\})\text{ is a wall of }C\}.\]\index{s@$\Sigma^{\mathrm{aff}}_C$}

Assume that $\Phi$ is irreducible. Then $\Phi^{\mathrm{aff}}$ is an irreducible affine root system by \cite[(4.7)]{macdonald1971affine} and $\Sigma^{\mathrm{aff}}_C$
is a basis of $\Phi^{\mathrm{aff}}$ in the sense of \cite[4.6]{macdonald1971affine}. In particular, \begin{equation}\label{e_macdonald_4.6}
    \Phi^{\mathrm{aff}}\subset \left(\sum_{\underline{\alpha}\in \Sigma^{\mathrm{aff}}_C}\Z_{\geq 0}\underline{\alpha}\cup \sum_{\underline{\alpha}\in \Sigma^{\mathrm{aff}}_C}(-\Z_{\geq 0})\underline{\alpha}\right),
\end{equation}

by \cite[(4.6) Proposition]{macdonald1971affine}.

\begin{Lemma}\label{l_description_faces_intersections}
\begin{enumerate}
    \item Let $F\in \sF(\cH)$. Then there exists $C\in \Alc(\cH)$ such that $\overline{F}\subset\overline{C}$.  Let $\cM_F=\{\underline{\alpha}\in \Sigma^{\mathrm{aff}}_C\mid \underline{\alpha}(F)>0\}$. Then \begin{equation}\label{e_F_explicit}
       \overline{F}= \overline{C}\cap \bigcap_{\underline{\alpha}\in \Sigma^{\mathrm{aff}}_C\setminus \cM_F}\underline{\alpha}^{-1}(\{0\})=  \bigcap_{\underline{\alpha}\in \cM_F}\underline{\alpha}^{-1}(\R_{\geq 0})\cap \bigcap_{\underline{\alpha}\in \Sigma^{\mathrm{aff}}_C\setminus \cM_F}\underline{\alpha}^{-1}(\{0\}).
    \end{equation}

    \item Let $C\in \Alc(\cH)$. Let $\underline{\alpha}\in \Sigma^{\mathrm{aff}}_C$. Then $\overline{C}\cap \underline{\alpha}^{-1}(\{0\})$ is a closed panel of $C$ and all the closed panels of $C$ are of this form.

    \item Let $C\in \Alc(\cH)$. Then every   closed  face of $(\A,\cH)$ strictly contained in $\overline{C}$ can be written as an intersection of closed panels of $\overline{C}$.

\end{enumerate}

\end{Lemma}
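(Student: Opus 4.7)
The strategy is to prove (1) first and then deduce (2) and (3). Since the key tool will be the basis property \eqref{e_macdonald_4.6} of $\Sigma^{\mathrm{aff}}_C$, I will work in the irreducible case and reduce the general case to it via the decomposition \eqref{e_decomposition_Phi}: alcoves of $\cH$ factor as products of alcoves of each $\cH_i$, the set $\Sigma^{\mathrm{aff}}_C$ splits as a disjoint union of the corresponding $\Sigma^{\mathrm{aff}}_{C_i}$, and \eqref{e_macdonald_4.6} applies componentwise.

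For the existence of an alcove $C$ with $\overline{F}\subset \overline{C}$ in (1), I use local finiteness of $\cH$: pick $x\in F$, choose a ball $B(x,\epsilon)$ meeting only the hyperplanes of $\cH$ passing through $x$, and take $y\in B(x,\epsilon)$ lying in no hyperplane; let $C$ be the alcove containing $y$. Then for every $\underline{\alpha}\in\Phi^{\mathrm{aff}}$, either $\underline{\alpha}(x)=0$ (hence $\underline{\alpha}(F)=\{0\}$) or $\underline{\alpha}(y)$ has the same sign as $\underline{\alpha}(x)$, forcing $\overline{F}\subset\overline{C}$. The two right-hand sides of \eqref{e_F_explicit} agree because $\overline{C}=\bigcap_{\underline{\alpha}\in\Sigma^{\mathrm{aff}}_C}\underline{\alpha}^{-1}(\R_{\geq 0})$. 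The inclusion $\overline{F}\subset \overline{C}\cap\bigcap_{\underline{\alpha}\in\Sigma^{\mathrm{aff}}_C\setminus\cM_F}\underline{\alpha}^{-1}(\{0\})$ is clear: for $\underline{\alpha}\in\Sigma^{\mathrm{aff}}_C$ one has $\underline{\alpha}(F)\geq 0$, so $\underline{\alpha}\notin\cM_F$ forces $\underline{\alpha}(F)=\{0\}$, hence $\underline{\alpha}(\overline{F})=\{0\}$. For the converse, by Lemma~\ref{l_enclosure_faces}(1) it suffices to show $\underline{\alpha}(y)\in\overline{E_{\underline{\alpha}}}$ for every $\underline{\alpha}\in\Phi^{\mathrm{aff}}$ and every $y$ in the right-hand side; when $\underline{\alpha}(F)\neq\{0\}$ this is immediate since $y\in\overline{C}$ and $\underline{\alpha}$ takes the same sign on $F$ and $C$. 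The delicate case is $\underline{\alpha}(F)=\{0\}$: by \eqref{e_macdonald_4.6}, write $\underline{\alpha}=\sum_{\underline{\beta}\in\Sigma^{\mathrm{aff}}_C}n_{\underline{\beta}}\underline{\beta}$ with all $n_{\underline{\beta}}$ of a common sign, say non-negative; evaluating at a point of $F$ yields $0=\sum n_{\underline{\beta}}\underline{\beta}(F)$ with non-negative summands, so $n_{\underline{\beta}}>0$ forces $\underline{\beta}\in\Sigma^{\mathrm{aff}}_C\setminus\cM_F$ and hence $\underline{\beta}(y)=0$, from which $\underline{\alpha}(y)=0$ follows.

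For (2), given $\underline{\alpha}\in\Sigma^{\mathrm{aff}}_C$, the set $F:=\underline{\alpha}^{-1}(\{0\})\cap\bigcap_{\underline{\beta}\in\Sigma^{\mathrm{aff}}_C\setminus\{\underline{\alpha}\}}\underline{\beta}^{-1}(\R_{>0})$ is a face of $(\A,\cH)$ (check sign conditions on every affine root via \eqref{e_macdonald_4.6}) with $\cM_F=\Sigma^{\mathrm{aff}}_C\setminus\{\underline{\alpha}\}$, so (1) gives $\overline{F}=\overline{C}\cap\underline{\alpha}^{-1}(\{0\})$ with support the wall $\underline{\alpha}^{-1}(\{0\})$; conversely any wall $H$ of $C$ lies in $\cH$, hence equals $\underline{\alpha}^{-1}(\{0\})$ for some $\underline{\alpha}\in\Phi^{\mathrm{aff}}$ that can be chosen in $\Sigma^{\mathrm{aff}}_C$ after negation if needed. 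Part (3) is then immediate: for $\overline{F}\subsetneq\overline{C}$, the set $\Sigma^{\mathrm{aff}}_C\setminus\cM_F$ is non-empty and \eqref{e_F_explicit} reads $\overline{F}=\bigcap_{\underline{\alpha}\in\Sigma^{\mathrm{aff}}_C\setminus\cM_F}\bigl(\overline{C}\cap\underline{\alpha}^{-1}(\{0\})\bigr)$, an intersection of closed panels by (2). The main obstacle is the delicate direction in (1), where arbitrary affine roots vanishing on $F$ must be controlled by the finite basis $\Sigma^{\mathrm{aff}}_C$; this is exactly where \eqref{e_macdonald_4.6}, and hence the reduction to the irreducible case, intervenes.
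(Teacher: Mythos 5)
Your proposal is correct and follows essentially the same route as the paper: reduction of the reducible case to the irreducible one via \eqref{e_decomposition_Phi}, use of Lemma~\ref{l_enclosure_faces} together with the expansion \eqref{e_macdonald_4.6} in the basis $\Sigma^{\mathrm{aff}}_C$ to replace the intersection over all of $\Phi^{\mathrm{aff}}$ by one over the simple affine roots, and deduction of (2) and (3) from (1). One small caveat: the identity $\overline{C}=\bigcap_{\underline{\alpha}\in\Sigma^{\mathrm{aff}}_C}\underline{\alpha}^{-1}(\R_{\geq 0})$, which you invoke to equate the two right-hand sides of \eqref{e_F_explicit}, is itself the $F=C$ instance of that equality, so it should be justified by the same expansion argument (or the converse inclusion should be run directly on the second right-hand side) rather than taken for granted.
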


\begin{proof}

  1)  Let $x\in F$. Then by \cite[V \S 1.4 Proposition 6]{bourbaki1981elements}, there exists $C\in \Alc(\cH)$ such that $x\in \overline{C}$. But then by Lemma~\ref{l_enclosure_faces}, $\cl(x)=\overline{F}\subset \overline{C}=\cl(C)$.  Assume for the moment that $\Phi$ is irreducible. With the same notation as in Lemma~\ref{l_enclosure_faces}, we have \begin{equation}\label{e_description_F_1}
      \overline{F}=\bigcap_{\underline{\beta}\in \Phi^{\mathrm{aff}}}\underline{\beta}^{-1}(E_{\underline{\beta}}).
  \end{equation}

  Let $\underline{\beta}\in \Phi^{\mathrm{aff}}$. Then by Eq. \eqref{e_macdonald_4.6}, we can write $\underline{\beta}=\epsilon\sum_{\underline{\alpha}\in \Sigma^{\mathrm{aff}}_C}n_{\underline{\alpha}} \underline{\alpha}$, with $\epsilon\in \{-,+\}$ and $(n_{\underline{\alpha}})\in (\Z_{\geq 0})^{\Sigma_C^{\mathrm{aff}}}$. We have $E_{\underline{\beta}}=\{0\}$ if and only if $n_{\underline{\alpha}}=0$, for all $\underline{\alpha}\in \cM_F$. Otherwise $E_{\underline{\beta}}=\epsilon \R_{\geq 0}$. In particular $\underline{\beta}^{-1}(E_{\underline{\beta}})\supset \bigcap_{\underline{\alpha}\in \Sigma^{\mathrm{aff}}_C}\underline{\alpha}^{-1}(E_{\underline{\alpha}})$. Therefore Eq.~\eqref{e_description_F_1} becomes 
  \begin{equation}\label{e_description_F_2}
      \overline{F}=\bigcap_{\underline{\alpha}\in \Sigma^{\mathrm{aff}}_C}\underline{\alpha}^{-1}(E_{\underline{\alpha}})=\bigcap_{\underline{\alpha}\in \cM_F}\underline{\alpha}^{-1}(\R_{\geq 0})\cap \bigcap_{\underline{\alpha}\in \Sigma^{\mathrm{aff}}_C\setminus \cM_F}\underline{\alpha}^{-1}(\{0\}).
  \end{equation}

When $F=C$, we have $\cM_C=\Sigma^{\mathrm{aff}}_C$, which proves Eq.~\eqref{e_F_explicit}, when $\Phi$ is irreducible.

 We now assume that $\Phi$ is reducible. Using the notation of Eq. \eqref{e_decomposition_Phi}, with $\Phi$ instead of $\Psi$, we write $\A=\bigoplus_{i=1}^\ell \A_i$ and $\Phi=\bigsqcup_{i=1}^\ell\Phi_i$, where for each $i\in \llbracket 1,\ell\rrbracket$, $\Phi_i$ is an irreducible root system in $\A_i$. If $i\in \llbracket 1,\ell\rrbracket$ and $\underline{\alpha}\in \Phi_i^{\mathrm{aff}}$, $\underline{\alpha}:\A_i\rightarrow \R$, we denote by $\tilde{\underline{\alpha}}$ the extension of $\underline{\alpha}$ to $\A$: $\tilde{\underline{\alpha}}((x_1,\ldots,x_i,\ldots,x_\ell))=\underline{\alpha}(x_i)$, for $(x_1,\ldots,x_\ell)\in \A$. Write $C=C_1\times\ldots \times C_\ell$ and $F=F_1\times \ldots \times F_\ell$. Then we have $\Sigma^{\mathrm{aff}}_C=\bigsqcup_{i=1}^\ell \{\tilde{\underline{\alpha}}\mid \underline{\alpha}\in \Sigma^{\mathrm{aff}}_{C_i}\}$ and $\cM_F=\bigsqcup_{i=1}^\ell \{\tilde{\underline{\alpha}}\mid \underline{\alpha}\in \cM_{F_i}\}$. For $i\in \llbracket 1,\ell\rrbracket$, we have:\[\bigcap_{\underline{\alpha}\in \Sigma^{\mathrm{aff}}_{C_i}\setminus \cM_{F_i}}\tilde{\underline{\alpha}}^{-1}(\{0\})\\
     =\A_1\times\ldots \times \A_{i-1}\times \bigcap_{\underline{\alpha}\in \Sigma^{\mathrm{aff}}_{C_i}\setminus \cM_{F_i}}{\underline{\alpha}}^{-1}(\{0\})\times \A_{i+1}\times \ldots \times \A_{\ell}.\]
  We thus deduce Eq.~\eqref{e_F_explicit} from the irreducible case. This proves statement 1.

2) By Proposition~\ref{p_characterization_faces}, if $\underline{\alpha}\in \Sigma^{\mathrm{aff}}_C$, then $\underline{\alpha}^{-1}(\{0\})\cap \overline{C}=\underline{\alpha}^{-1}(\R_{\geq 0})\cap \underline{\alpha}^{-1}(\R_{\leq 0})\cap \overline{C}$ is a closed face of $(\A,\cH)$. It is a panel by definition of $\Sigma^{\mathrm{aff}}_C$. By 1. and for dimension reasons, all the closed panels are of this form, which proves statement 2.

3) Let $F\in \sF$ be such that $F\subset \overline{C}$. With the same notation as above, we have $\overline{F}=\bigcap_{\underline{\alpha}\in \Sigma^{\mathrm{aff}}_C\setminus \cM_F} (\overline{C}\cap \underline{\alpha}^{-1}(\{0\}))$.

\end{proof}

\color{black}

Let $C\in \Alc(\cH)$. Then by Lemma~\ref{l_description_faces_intersections} (applied with $F=C$, $\cM_C=\Sigma^{\mathrm{aff}}_C$), $\overline{C}$ is a finite  intersection of half-apartments. Therefore it is a polytope. Moreover the closed panels of $C$ are exactly the facets of $\overline{C}$, when we regard it as a polytope, in the sense of \cite[Definition 1.3]{bruns2009polytopes}. By \cite[Theorem 1.10 a),b)]{bruns2009polytopes}, Lemma~\ref{l_description_faces_intersections} and Corollary~\ref{c_intersection_faces}, we deduce that the faces of $\overline{C}$ regarded as a polytope are exactly the closed faces of $(\A,\cH)$ which are contained in $\overline{C}$ (except the empty set, that we excluded from the set of faces).

\begin{Lemma}\label{l_enclosure_subset_alcove}
Let $C$ be an alcove of $(\A,\cH)$. Let $E\subset \overline{C}$ be non-empty. Then $\{F_1\in \sF\mid E\subset \overline{F_1}\}$ admits a minimal element $F$ (for the dominance order). We have $F\subset \overline{C}$, $\cl(E)=\overline{F}$ and if moreover, $E$ is convex, then $E$ meets $F$.    
\end{Lemma}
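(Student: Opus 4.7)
My plan is to construct $F$ as the smallest element of $\{F_1\in\sF\mid E\subset \overline{F_1}\}$ by taking intersections, then identify its closure with the enclosure of $E$. First, I would argue that it suffices to minimize over faces contained in $\overline{C}$: if $F_1\in\sF$ satisfies $E\subset\overline{F_1}$, then $E\subset\overline{F_1}\cap\overline{C}$, and by Corollary~\ref{c_intersection_faces} this intersection equals $\overline{F_1'}$ for some $F_1'\in\sF$ contained in $\overline{C}$ and dominated by $F_1$. The set $S'$ of such faces in $\overline{C}$ is non-empty (it contains $C$), finite (since $\overline{C}$ is a polytope with finitely many closed faces), and closed under taking intersections of closures (again by Corollary~\ref{c_intersection_faces}). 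Hence the intersection of all its elements is again in $S'$ and provides a face $F\subset\overline{C}$ that is a minimum of the whole set for the dominance order.

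Next, for $\cl(E)=\overline{F}$, the inclusion $\cl(E)\subset\overline{F}$ is immediate since $\overline{F}$ is enclosed (Lemma~\ref{l_enclosure_faces}) and contains $E$. For the reverse, I would show that $\cl(E)$ is itself a closed face: it is enclosed by construction, and since $\overline{C}$ is a closed face, every affine root $\underline{\alpha}\in\Phi^{\mathrm{aff}}$ has constant sign on $\overline{C}\supset E$ (Proposition~\ref{p_characterization_faces}), so the corresponding half-apartment $\underline{\alpha}^{-1}(\R_{\geq 0})$ or $\underline{\alpha}^{-1}(\R_{\leq 0})$ contains $E$, hence $\cl(E)$. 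Proposition~\ref{p_characterization_faces} then gives $\cl(E)=\overline{F'}$ for some $F'\in\sF$, and minimality of $F$ yields $F\subset\overline{F'}=\cl(E)$, hence the equality.

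The main obstacle, I expect, is the last claim that $E\cap F\neq\emptyset$ when $E$ is convex. The plan is to pick any $x$ in the relative interior of $E$ (non-empty as $E$ is convex and non-empty) and show that the face $F_x\in\sF$ containing $x$ equals $F$. The key observation is that for any $\underline{\alpha}\in\Phi^{\mathrm{aff}}$, $\underline{\alpha}$ has constant sign on $E$ (since $E\subset\overline{C}$ and $\underline{\alpha}$ has constant sign on $\overline{C}$), so $\underline{\alpha}(x)=0$ combined with $x\in\In_r(E)$ forces $\underline{\alpha}$ to vanish on $\supp(E)$ by linearity, and therefore on $\cl(E)=\overline{F}$; conversely, if $\underline{\alpha}(F)=\{0\}$ then $\underline{\alpha}(\overline{F})=\{0\}$ contains $\underline{\alpha}(x)$. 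Thus the sets of affine roots vanishing on $F_x$ and on $F$ coincide, and since both $F, F_x\subset\overline{C}$ inherit the same non-zero signs from $\overline{C}$, Proposition~\ref{p_characterization_faces} forces $F_x=F$, giving $x\in E\cap F$.
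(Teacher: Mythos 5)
Your argument is correct, and on the point that matters most — that a convex $E$ meets $F$ — it takes a genuinely different route from the paper. For the first two claims the two proofs essentially coincide: the paper observes directly via Proposition~\ref{p_characterization_faces} that $\cl(E)$ is a closed face $\overline{F}$ (the sign condition holds because $E\subset\overline{C}$), and minimality is then automatic since any $\overline{F_1}\supset E$ is enclosed and hence contains $\cl(E)=\overline{F}$; your preliminary construction of $F$ by intersecting the finitely many closed faces of $\overline{C}$ containing $E$ (via Corollary~\ref{c_intersection_faces}) is valid but redundant, being subsumed by your own second step. The real divergence is the convex case: the paper considers the family of faces contained in $\overline{F}$ that meet $E$, uses midpoints of points of $E$ together with the explicit description \eqref{e_F_explicit} to show this family is directed, takes a maximal element $F_{\max}$, proves $E\subset\overline{F_{\max}}$ and concludes $F=F_{\max}$ by minimality. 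You instead pick $x\in\In_r(E)$ (non-empty by convexity) and identify the face $F_x$ containing $x$ with $F$ by matching the sign of every affine root: if $\underline{\alpha}(x)=0$, the constant weak sign of $\underline{\alpha}$ on $E\subset\overline{C}$ together with $x$ being a relative interior point forces $\underline{\alpha}$ to vanish on $\supp(E)$, hence on $\cl(E)=\overline{F}$ (the wall $\underline{\alpha}^{-1}(\{0\})$ is an intersection of two half-apartments containing $E$); conversely $\underline{\alpha}(F)=\{0\}$ gives $\underline{\alpha}(x)=0$ since $x\in E\subset\overline{F}$; and the non-zero signs on $F$ and $F_x$ agree because both faces lie in $\overline{C}$. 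This is a clean and more direct argument; the only cosmetic remark is that the final identification $F_x=F$ from equal sign data is most directly justified by Lemma~\ref{l_enclosure_faces}(1) (or the definition of $\sim_\cH$) rather than by Proposition~\ref{p_characterization_faces}, which characterizes which sets are closed faces but does not by itself state that the sign vector determines the face. Both proofs use convexity, but differently: the paper through midpoints of points of $E$, you through non-emptiness of the relative interior.
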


\begin{proof}
By Proposition~\ref{p_characterization_faces}, $\cl(E)$ is a closed face of $(\A,\cH)$: there exists $F\in \sF$ such that $\cl(E)=\overline{F}$. Moreover if $F_1\in \sF$ contains $E$, then $\overline{F_1}\supset\cl(E)=\overline{F}$.

Assume now that $E$ is convex. Let $\cE=\{F_1\in \sF\mid \overline{F_1}\subset \overline{F}, F_1\cap E\neq \emptyset\}$. Let $F_1,F_2\in \cE$, $x_1\in F_1$ and $x_2\in F_2$. Let $x_3=\frac{1}{2}(x_1+x_2)\in E\subset \overline{F}$. Let $F_3\in \sF$ be such that $x_3\in F_3$. Then $F_3\in \cE$ since $x_3\in\overline{F}$.

Moreover, $]x_1,x_2[\cap \overline{F_3}\supset \{x_3\}\neq \emptyset$. Let $C$ be an alcove dominating $F$. By Eq.~\eqref{e_F_explicit}, we have $\overline{F_3}=\bigcap_{\underline{\alpha}\in \cM_{F_3}}\underline{\alpha}^{-1}(\R_{\geq 0})\cap \bigcap_{\underline{\alpha}\in \Sigma^{\mathrm{aff}}_C\setminus \cM_{F_3}}\underline{\alpha}^{-1}(\{0\})$, with the notation of Lemma~\ref{l_description_faces_intersections}. Then if $\underline{\alpha}\in \Sigma^{\mathrm{aff}}_C\setminus \cM_{F_3}$, we have $\underline{\alpha}(x_3)=0$ and $\underline{\alpha}(x_1),\underline{\alpha}(x_2)\geq 0$ so that $\underline{\alpha}(x_1)=\underline{\alpha}(x_2)=0$. This proves that $[x_1,x_2]\subset \overline{F_3}$. Moreover, $\overline{F_3}$  is enclosed and thus by Lemma~\ref{l_enclosure_faces}, $\overline{F_3}\supset \overline{F_1}\cup \overline{F_2}$. Hence $\cE$ admits a maximal element $F_{\max}$ for the dominance order.

Let $x\in E$ and $F_x\in \sF(\cH)$ be such that $x\in F_x$. Then $F_x$ belongs to $\cE$ and thus $x\in F_x\subset \overline{F_{\max}}$. Hence $E\subset \overline{F_{\max}}$. By definition of $\overline{F}$, this implies that $\overline{F}\subset\overline{F_{\max}}$ and hence $\overline{F}=\overline{F_{\max}}$. Therefore $F$ meets $E$.
\end{proof}

\subsubsection{Properties of the affine Weyl group}

There exists a scalar product $\langle\cdot\mid \cdot\rangle$ on $\A$ which is invariant under the vectorial Weyl group $W^v_\Phi$ of $\Phi$. We equip $\A$ with such a scalar product. If $H\in \cH$, we denote by $s_H$\index{s@$s_H$} the orthogonal reflection of $\A$ fixing $H$.  Let $C\in \Alc(\cH)$. A \textbf{wall} of $C$ is an element of $\cH$ such that $\supp(\overline{C}\cap H)=H$. The \textbf{affine Weyl group of $(\A,\cH)$} is the group $W^{\mathrm{aff}}$\index{w@$W^{\mathrm{aff}}$} of affine maps on $\A$ generated by $s_H$, for $H\in \cH$. If $F\in \sF(\cH)$, we denote by $W^{\mathrm{aff}}_F$\index{w@$W^{\mathrm{aff}}_F$} the fixator of $F$ in $W^{\mathrm{aff}}$. Gathering the results of \cite[V \S 3.2, 3.3]{bourbaki1981elements}, we have the following theorem. We write it for the affine Weyl group and the faces of $(\A,\cH)$, but it is also true for the vectorial Weyl group and the vectorial faces, by the same reference. We refer to \cite{bjorner2005combinatorics} for the definition of Coxeter groups.

\begin{Theorem}\label{t_Weyl_group_Coxeter}
    \begin{enumerate}
        \item Let $C\in \Alc(\cH)$ and $S_C$ be the set of orthogonal reflections with respect to the walls of  $C$. Then $(W^{\mathrm{aff}},S_C)$ is a Coxeter group.

        \item Let $C\in \Alc(\cH)$. Then $\overline{C}$ is a fundamental domain for the action of $W^{\mathrm{aff}}$ on $\A$ and for every alcove $C'\in \Alc(\cH)$, there exists a unique $w\in W^{\mathrm{aff}}$ such that $w.C=C'$.

        \item Let $F\in \sF$ and $C\in \Alc_F(\cH)$. Then $W^{\mathrm{aff}}_F$ is the parabolic subgroup of $(W^{\mathrm{aff}},S_C)$ generated by the reflections with respect to the walls of $C$ containing $F$. Moreover, if $w\in W^{\mathrm{aff}}$,
        the following statements are equivalent: 
        \begin{enumerate}
        \item[a.] $w\in W_F^{\mathrm{aff}}$,
        \item[b.] $w$ pointwise fixes $\overline{F}$,
        \item[c.] $w.F$ meets $F$,
        \item[d.] $w$ fixes a point of $F$.
 \end{enumerate}
    \end{enumerate}
\end{Theorem}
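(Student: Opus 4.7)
My plan is to reduce the entire statement to Bourbaki's general theory of reflection groups acting on affine Euclidean spaces, specifically \cite[V \S 3.2 and \S 3.3]{bourbaki1981elements}, as the authors themselves indicate. The first step will therefore be a verification that our setting fits into that framework: $\A$ is a finite-dimensional Euclidean space (for the $W^v_\Phi$-invariant scalar product), the family $\cH$ is locally finite (this is clear since $\Phi$ is finite and the integer translates $\alpha^{-1}(\{k\})$ form a discrete family of parallel hyperplanes for each $\alpha$), $W^{\mathrm{aff}}$ is generated by orthogonal reflections with respect to the elements of $\cH$, and $W^{\mathrm{aff}}$ stabilizes $\cH$ as a whole (this follows from the fact that each $s_H$ permutes the elements of $\cH$, which can be checked root by root). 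The chambers in Bourbaki's sense are then exactly the alcoves in $\Alc(\cH)$, and the walls of an alcove $C$ coincide with the walls introduced in subsubsection on enclosure.

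For part (1), I would then invoke \cite[V \S 3.2, Théorème 1]{bourbaki1981elements}, which directly asserts that $(W^{\mathrm{aff}}, S_C)$ is a Coxeter system. For part (2), the fundamental domain statement and the simply transitive action on alcoves are \cite[V \S 3.2, Théorème 2]{bourbaki1981elements} (the uniqueness of $w$ sending $C$ to $C'$ is simple transitivity). For part (3), the description of $W^{\mathrm{aff}}_F$ as the standard parabolic subgroup generated by the reflections in the walls of $C$ containing $F$ is \cite[V \S 3.3, Proposition 1]{bourbaki1981elements}, which also identifies this group with the fixator of $F$ in $W^{\mathrm{aff}}$. The equivalences (a)--(d) then need a small separate argument: (a) $\Leftrightarrow$ (b) is the observation that $W^{\mathrm{aff}}_F$ consists of affine isometries and thus fixes the closed convex hull of $F$, which is $\overline{F}$; (b) $\Rightarrow$ (d) $\Rightarrow$ (c) is trivial; and (c) $\Rightarrow$ (a) uses Theorem~\ref{t_Weyl_group_Coxeter}(2) applied to an alcove $C$ dominating $F$: if $w.F$ meets $F$, choose $C$ dominating $F$; then $w.C$ also dominates $w.F$ (which meets $F$), so $w.C$ and $C$ both have $w.F \cap F$ in their closure; a short argument using that $\overline{C}$ is a fundamental domain together with the fact that points of $F$ lie in only one face-orbit forces $w.C = C$, hence $w \in W^{\mathrm{aff}}_C$; then since $w$ fixes a point of $F$ and belongs to the Coxeter group $(W^{\mathrm{aff}}, S_C)$, the standard Coxeter-theoretic fact that point stabilizers for the action on chambers are generated by the reflections in walls through that point yields $w \in W^{\mathrm{aff}}_F$.

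The main obstacle, such as it is, lies not in any new mathematics but in translating between our notational conventions (faces of $(\A, \cH)$, dominance, $\Sigma^{\mathrm{aff}}_C$) and Bourbaki's; in particular one has to check that our definition of ``wall of $C$'' as a hyperplane $H \in \cH$ with $\supp(\overline{C} \cap H) = H$ coincides with Bourbaki's, and that our faces are the same as Bourbaki's facets of the chamber decomposition. Once this dictionary is fixed, everything is a direct citation, and the proof is essentially a few lines long.
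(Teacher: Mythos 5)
Your overall strategy coincides with the paper's: Theorem~\ref{t_Weyl_group_Coxeter} is given there with no written proof at all, only the remark that it is obtained by ``gathering the results of [Bourbaki, V \S 3.2, 3.3]'', so reducing parts (1), (2) and the parabolic description in (3) to Bourbaki's Th\'eor\`eme 1, Th\'eor\`eme 2 and \S 3.3 is exactly what the authors intend, and your preliminary check that $(\A,\cH,W^{\mathrm{aff}})$ fits Bourbaki's framework is the right dictionary to set up.

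The one place where you supply an argument of your own, the implication (c) $\Rightarrow$ (a), is however flawed as written: from ``$w.F$ meets $F$'' you cannot conclude $w.C=C$ for an alcove $C$ dominating $F$. If $F$ is a panel of $C$ and $w=s_H$ is the reflection in the wall $H$ containing $F$, then $w.F=F$ meets $F$ but $w.C$ is the alcove on the other side of $H$; note moreover that $w.C=C$ would force $w=\Id$ by the simple transitivity in part (2), which is not the conclusion you want, and your subsequent sentence assumes ``$w$ fixes a point of $F$'' before it has been established. The correct short argument uses part (2) differently: if $x\in F$ and $w.x\in F$, then $x$ and $w.x$ both lie in $\overline{C}$ and in the same $W^{\mathrm{aff}}$-orbit, so the fundamental-domain property gives $w.x=x$; this yields (c) $\Rightarrow$ (d) directly, and (d) $\Rightarrow$ (a) then follows from Bourbaki's result that the fixator of a point is generated by the reflections in the walls through that point, combined with the observation that any $H\in\cH$ containing a point of $F$ must contain all of $\overline{F}$ (by the definition of faces as equivalence classes for $\sim_\cH$). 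With this repair, your proposal is a faithful expansion of the paper's citation.
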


The lemma below also has a vectorial version.

\begin{Lemma}\label{l_transitive_actions_element_fixing_face}
Let $F\in \sF(\cH)$ and $C\in \Alc_F(\cH)$. Then for every $F'\in \sF(\cH)$ such that $F'$ dominates $F$, there exists $w\in W^{\mathrm{aff}}_F$ such that $w.F'\subset \overline{C}$. In particular, $W^{\mathrm{aff}}_F$  acts transitively on $\Alc_F(\cH)$.

Equivalently, if $E$ is the union of the faces dominating $F$ and if $E'=E\cap \overline{C}$, then $E'$ is a fundamental domain for the action of  $W_F^{\mathrm{aff}}$ on $E$.      
\end{Lemma}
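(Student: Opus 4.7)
The plan is to first prove the transitivity assertion on alcoves (which is really the heart of the lemma), then bootstrap to arbitrary faces $F'$ dominating $F$, and finally unwind the fundamental-domain reformulation.

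\textbf{Step 1: $W^{\mathrm{aff}}_F$ acts transitively on $\Alc_F(\cH)$.} Given $C'\in \Alc_F(\cH)$, I would join $C$ and $C'$ by a minimal gallery $C=C_0,C_1,\ldots,C_n=C'$ (existence is standard: take a segment between interior points in general position and list the alcoves it crosses, then delete repetitions using the strong exchange property). Let $H_i$ be the wall supporting the common panel of $C_{i-1}$ and $C_i$. By the standard property of minimal galleries, $H_1,\ldots,H_n$ are pairwise distinct and are exactly the walls strictly separating $C$ from $C'$. Since $F\subset \overline{C}\cap \overline{C'}$, the face $F$ lies in both closed half-spaces delimited by each $H_i$; by connectedness (or equivalently by \eqref{e_Property_faces_roots}), $F$ cannot straddle any wall, so $F\subset H_i$ for every $i$. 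Thus $s_{H_i}$ pointwise fixes $F$ and belongs to $W^{\mathrm{aff}}_F$ by Theorem~\ref{t_Weyl_group_Coxeter}(3). Since adjacency via $H_i$ forces $s_{H_i}.C_{i-1}=C_i$, the element $u:=s_{H_n}s_{H_{n-1}}\cdots s_{H_1}$ sends $C$ to $C'$, so $w:=u^{-1}\in W^{\mathrm{aff}}_F$ sends $C'$ to $C$.

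\textbf{Step 2: arbitrary $F'$ dominating $F$.} Given $F'\in \sF(\cH)$ with $F\subset \overline{F'}$, I would pick any $C'\in \Alc_{F'}(\cH)$. Since $F\subset \overline{F'}\subset \overline{C'}$, the alcove $C'$ lies in $\Alc_F(\cH)$. Step~1 supplies $w\in W^{\mathrm{aff}}_F$ with $w.C'=C$, and then $w.F'\subset w.\overline{C'}=\overline{C}$, as required.

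\textbf{Step 3: the fundamental-domain reformulation.} Unfolding the definitions, $E=\bigsqcup_{F'\in \sF_F(\cH)}F'$ and $E'=E\cap \overline{C}=\bigsqcup_{F'\in \sF_F(\cH),\ F'\subset \overline{C}}F'$. The existence side of the fundamental-domain property follows from Step~2: for $x\in E$ the face $F_x$ containing $x$ dominates $F$, so some $w\in W^{\mathrm{aff}}_F$ sends $F_x$ into $\overline{C}$; as $w$ fixes $F$, $w.F_x$ still dominates $F$, hence $w.x\in E'$. Uniqueness is inherited from Theorem~\ref{t_Weyl_group_Coxeter}(2), since $W^{\mathrm{aff}}_F\subset W^{\mathrm{aff}}$ and $E'\subset \overline{C}$.

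\textbf{Main obstacle.} The technical crux is the minimal-gallery input, specifically the statement that the walls crossed by a minimal gallery from $C$ to $C'$ are exactly those separating the two alcoves. This is genuinely a theorem about Coxeter complexes rather than a definitional observation, and the paper has so far stayed very close to Bourbaki's setup without explicitly introducing galleries. I would either invoke it as a standard fact (citing \cite[Chap.~V]{bourbaki1981elements} or \cite{bjorner2005combinatorics}), or replace the gallery argument by an induction on the (finite) number of walls separating $C$ from $C'$: if $C\neq C'$, pick a wall $H$ adjacent to $C$ that separates $C$ from $C'$; show that $H$ must contain $F$ (same connectedness argument as above), apply $s_H\in W^{\mathrm{aff}}_F$ to $C$, and induct. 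This avoids introducing gallery terminology and is probably the cleanest route given the material already developed.
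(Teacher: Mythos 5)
Your argument is correct, but your Step 1 takes a genuinely different route from the paper. For the transitivity of $W^{\mathrm{aff}}_F$ on $\Alc_F(\cH)$, the paper avoids galleries and separating walls altogether: given $C'\in\Alc_F(\cH)$, write $C'=w.C$ with $w\in W^{\mathrm{aff}}$ (simple transitivity, Theorem~\ref{t_Weyl_group_Coxeter}(2)); then $w^{-1}.F\subset\overline{C}$ and $F\subset\overline{C}$, so for $x\in w^{-1}.F$ both $x$ and $w.x$ lie in $\overline{C}$, and the fundamental-domain property of $\overline{C}$ forces $w.x=x$; hence $w$ fixes $w^{-1}.F=F$ pointwise and $w\in W^{\mathrm{aff}}_F$. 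This is shorter and uses only statements already quoted in Theorem~\ref{t_Weyl_group_Coxeter}, whereas your approach must import (or reprove, via your induction on the number of separating walls) the standard Coxeter-complex fact that a minimal gallery from $C$ to $C'$ crosses exactly the walls separating them --- a fact available in \cite[V]{bourbaki1981elements} or \cite{bjorner2005combinatorics} but not developed in the paper. What your route buys is extra geometric information (every wall crossed contains $F$, and an explicit word $s_{H_n}\cdots s_{H_1}$ in reflections of $W^{\mathrm{aff}}_F$ carrying $C$ to $C'$); what the paper's route buys is economy of means. Your Steps 2 and 3 (reduction from a face $F'$ to an alcove dominating it, and the fundamental-domain reformulation with uniqueness inherited from $\overline{C}$) coincide with the paper's treatment, with Step 3 even slightly more explicit than the paper, which leaves the reformulation as an ``equivalently''.
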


\begin{proof}
    Let $C'$ be an alcove of $(\A,\cH)$ dominating $F$. Write $C'=w.C$, with $w\in W^{\mathrm{aff}}$. Then $w^{-1}.F\subset \overline{C}$, so if $x\in w^{-1}.F\subset \overline{C}$, we have  $w.x\in \overline{C}$, which proves that $w.x=x$ by Theorem~\ref{t_Weyl_group_Coxeter}. Therefore $w$ fixes $w^{-1}.F$, which proves that $w$ fixes $F$. Now if $F_1$ is a face of $(\A,\cH)$ dominating $F$ and $C_1$ is an alcove dominating $F_1$, we can write $C_1=w_1.C$, with $w_1\in W^{\mathrm{aff}}$ fixing $F$. Then $w_1^{-1}.F_1\subset \overline{C}$, which proves the lemma. 
\end{proof}

\begin{Lemma}\label{l_comparison_fixators}
Let $C\in \Alc(\cH)$ and $F_1,F_2\in \sF(\cH)$ be such that $F_1\cup F_2\subset \overline{C}$. Then $W_{F_1}^{\mathrm{aff}}\subset W_{F_2}^{\mathrm{aff}}$ (resp. $W_{F_1}^{\mathrm{aff}}=W^{\mathrm{aff}}_{F_2}$) if and only if $F_2\subset \overline{F_1}$ (resp. $F_1=F_2$).
\end{Lemma}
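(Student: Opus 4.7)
My plan is the following. The forward direction ``$F_2\subset\overline{F_1}$ implies $W_{F_1}^{\mathrm{aff}}\subset W_{F_2}^{\mathrm{aff}}$'' is immediate from the equivalence (a)$\iff$(b) in Theorem~\ref{t_Weyl_group_Coxeter}(3): any $w\in W_{F_1}^{\mathrm{aff}}$ pointwise fixes $\overline{F_1}\supset F_2$, hence fixes $F_2$ and therefore belongs to $W_{F_2}^{\mathrm{aff}}$.

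For the converse, my idea is to reconstruct $\overline{F_i}$ from the pointwise fix set of $W_{F_i}^{\mathrm{aff}}$. By Theorem~\ref{t_Weyl_group_Coxeter}(3), $W_{F_i}^{\mathrm{aff}}$ is generated by the reflections $s_H$, where $H$ ranges over the walls of $C$ containing $F_i$. Since the set of points fixed by a group is the intersection of the fix sets of any generating family, I obtain
\[
\mathrm{Fix}(W_{F_i}^{\mathrm{aff}})=V_i:=\bigcap_{\substack{H\text{ wall of }C\\ F_i\subset H}}H=\bigcap_{\underline{\alpha}\in\Sigma^{\mathrm{aff}}_C\setminus\cM_{F_i}}\underline{\alpha}^{-1}(\{0\}),
\]
using the notation of Lemma~\ref{l_description_faces_intersections}, where the second equality uses the bijection $\underline{\alpha}\mapsto\underline{\alpha}^{-1}(\{0\})$ between $\Sigma^{\mathrm{aff}}_C$ and the walls of $C$ together with the fact that such a wall contains $F_i$ iff $\underline{\alpha}\notin\cM_{F_i}$. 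Equation~\eqref{e_F_explicit} then reads $\overline{F_i}=V_i\cap\overline{C}$. Consequently, if $W_{F_1}^{\mathrm{aff}}\subset W_{F_2}^{\mathrm{aff}}$, then $V_1=\mathrm{Fix}(W_{F_1}^{\mathrm{aff}})\supset\mathrm{Fix}(W_{F_2}^{\mathrm{aff}})=V_2$, and intersecting with $\overline{C}$ gives $\overline{F_1}\supset\overline{F_2}\supset F_2$, as desired.

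The equality case will then follow formally: $W_{F_1}^{\mathrm{aff}}=W_{F_2}^{\mathrm{aff}}$ yields both $F_2\subset\overline{F_1}$ and $F_1\subset\overline{F_2}$, hence $\overline{F_1}=\overline{F_2}$, and taking relative interiors (recall that a face of $(\A,\cH)$ is relatively open in its support, and two distinct faces are disjoint) gives $F_1=F_2$. I do not foresee any significant obstacle: the only subtle point is the identification of $\mathrm{Fix}(W_{F_i}^{\mathrm{aff}})$ with the intersection of the hyperplanes fixed by the generating reflections, which is a direct consequence of the general fact that $\mathrm{Fix}(\langle S\rangle)=\bigcap_{s\in S}\mathrm{Fix}(s)$ for any generating set $S$ of a group acting on a set.
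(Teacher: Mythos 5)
Your proof is correct, and the forward direction coincides with the paper's (there the authors invoke $W^{\mathrm{aff}}_{F_i}=W^{\mathrm{aff}}_{\overline{F_i}}$, which is the same observation as your use of (a)$\iff$(b) in Theorem~\ref{t_Weyl_group_Coxeter}(3)). For the converse, however, you take a genuinely different route. The paper stays inside Coxeter combinatorics: writing $W^{\mathrm{aff}}_{F_i}=\langle s_{\underline{\alpha}}\mid \underline{\alpha}\in\Sigma^{\mathrm{aff}}_C\setminus\cM_{F_i}\rangle$, it invokes the standard fact from \cite{bjorner2005combinatorics} that an inclusion of standard parabolic subgroups forces an inclusion of the corresponding generating subsets of $S_C$, and then converts $\Sigma^{\mathrm{aff}}_C\setminus\cM_{F_1}\subset\Sigma^{\mathrm{aff}}_C\setminus\cM_{F_2}$ into $\overline{F_2}\subset\overline{F_1}$ via Lemma~\ref{l_description_faces_intersections}. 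You instead recover $\overline{F_i}$ geometrically as $\mathrm{Fix}(W^{\mathrm{aff}}_{F_i})\cap\overline{C}$, using only the elementary identity $\mathrm{Fix}(\langle S\rangle)=\bigcap_{s\in S}\mathrm{Fix}(s)$, the generation statement of Theorem~\ref{t_Weyl_group_Coxeter}(3), and Eq.~\eqref{e_F_explicit}; monotonicity of fixed-point sets under inclusion of subgroups then does the work that the parabolic-subgroup combinatorics does in the paper. Both arguments ultimately rest on the same two ingredients (Theorem~\ref{t_Weyl_group_Coxeter}(3) and Lemma~\ref{l_description_faces_intersections}), so they are of comparable length, but yours avoids the external citation and is more self-contained and geometric, while the paper's version also records explicitly that the generating subset $\Sigma^{\mathrm{aff}}_C\setminus\cM_F$ is determined by $W^{\mathrm{aff}}_F$. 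Two minor points to be aware of: Eq.~\eqref{e_F_explicit} is stated for some alcove dominating $F$, but its proof (and the paper's own use of it here) gives it for any such alcove, in particular for the given $C$; and your last step $\overline{F_1}=\overline{F_2}\Rightarrow F_1=F_2$ uses that a face is a relatively open convex set, hence equals the relative interior of its closure, which is consistent with the paper's conventions (the paper's own proof glosses over this same step).
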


\begin{proof}
By Theorem~\ref{t_Weyl_group_Coxeter}, if $S_C$ denotes the set of orthogonal reflections with respect to the walls of $C$, then $(W^{\mathrm{aff}},S_C)$ is a Coxeter group. Let $i\in \{1,2\}$. Set $\cM_{F_i}=\{\underline{\alpha}\in \Sigma^{\mathrm{aff}}_C\mid \underline{\alpha}(F_i)>0\}$. We have $W^{\mathrm{aff}}_{F_i}=\langle s_{\underline{\alpha}}\mid \underline{\alpha}\in \Sigma^{\mathrm{aff}}_C\setminus\cM_{F_i}\rangle$, by Theorem~\ref{t_Weyl_group_Coxeter}, where $s_{\underline{\alpha}}$ denotes the reflection of $W^{\mathrm{aff}}$ fixing $\underline{\alpha}^{-1}(\{0\})$, for $\underline{\alpha}\in \Phi^{\mathrm{aff}}$. By \cite{bjorner2005combinatorics}, if   $W^{\mathrm{aff}}_{F_1}\subset W^{\mathrm{aff}}_{F_2}$ (resp. $W^{\mathrm{aff}}_{F_1}=W^{\mathrm{aff}}_{F_2}$), then we have $\Sigma^{\mathrm{aff}}_C\setminus \cM_{F_1}\subset \Sigma^{\mathrm{aff}}_C\setminus \cM_{F_2}$ (resp. $\Sigma^{\mathrm{aff}}_C\setminus \cM_{F_1}=\Sigma^{\mathrm{aff}}_C\setminus \cM_{F_2}$).
 Using Lemma~\ref{l_description_faces_intersections} we deduce that $\overline{F_2}\subset \overline{F_1}$ (resp. $\overline{F_1}=\overline{F_2}$).

 Conversely, as $W^{\mathrm{aff}}$ acts by homeomorphisms on $\A$, we have $W^{\mathrm{aff}}_{F_i}=W^{\mathrm{aff}}_{\overline{F_i}}$ if $i\in \{1,2\}$ and thus if $F_2\subset \overline{F_1}$ (resp. $\overline{F_1}=\overline{F_2}$), we have $W^{\mathrm{aff}}_{F_2}\supset W_{F_1}^{\mathrm{aff}}$ (resp. $W^{\mathrm{aff}}_{F_1}=W^{\mathrm{aff}}_{F_2}$).
\end{proof}

\subsubsection{Properties of the faces and link between $\sF(\cH)$ and $\sF(\cH^0_\lambda)$}

For $x\in \A$, we set $\Phi_x=\{\alpha\in \Phi\mid \alpha(x)\in \Z\}$.\index{p@$\Phi_x$}

\begin{Lemma}\label{l_characterization_vertices_roots}
Let $x\in \A$. Then $x\in \ve(\A)$ if and only if $\bigcap_{\alpha\in \Phi_x}\ker(\alpha)=\{0\}$. 
\end{Lemma}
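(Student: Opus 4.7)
The plan is to translate the face condition at $x$ into a linear-algebraic condition on $\Phi_x$. Since $\{x\}$ is a face precisely when the equivalence class of $x$ under $\sim_\cH$ is the singleton $\{x\}$, the strategy is to describe $y \sim_\cH x$ concretely in terms of the roots $\alpha\in \Phi$ and then match this against the vanishing condition $\bigcap_{\alpha \in \Phi_x}\ker(\alpha)=\{0\}$.

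First I would unpack $\sim_\cH$ via a case split on whether $\alpha(x)\in \Z$. For $\alpha \in \Phi_x$, compatibility of $x$ and $y$ with the wall $\alpha^{-1}(\{\alpha(x)\})$ forces $\alpha(y)=\alpha(x)$, and this equality automatically implies compatibility with every other integer translate $\alpha^{-1}(\{k\})$. For $\alpha \in \Phi\setminus \Phi_x$, writing $\alpha(x)\in ]n_\alpha,n_\alpha+1[$ with $n_\alpha\in \Z$, the compatibility with the two bounding walls $\alpha^{-1}(\{n_\alpha\})$ and $\alpha^{-1}(\{n_\alpha+1\})$ is equivalent to $\alpha(y)\in ]n_\alpha,n_\alpha+1[$, which in turn yields compatibility with all the remaining walls $\alpha^{-1}(\{k\})$. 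Consequently, $y\sim_\cH x$ is equivalent to the conjunction of the two conditions
\[
\text{(a) } y-x\in \bigcap_{\alpha\in \Phi_x}\ker(\alpha), \qquad \text{(b) } \alpha(y)\in \,]n_\alpha,n_\alpha+1[ \text{ for every } \alpha\in \Phi\setminus \Phi_x.
\]

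The backward implication is then immediate: if $\bigcap_{\alpha\in \Phi_x}\ker(\alpha)=\{0\}$, then (a) alone forces $y=x$, so the equivalence class of $x$ is $\{x\}$ and $x\in \ve(\A)$. For the forward implication I would argue by contrapositive: given a nonzero $v$ in the intersection, the point $y_\eta:=x+\eta v$ satisfies (a) for every $\eta\in \R$, and since $\Phi\setminus \Phi_x$ is finite and each $\alpha(x)$ strictly lies inside its open interval $]n_\alpha,n_\alpha+1[$, one can pick $\eta>0$ small enough so that (b) also holds. This produces a point $y_\eta\neq x$ equivalent to $x$, so $\{x\}$ is not a face.

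The only step requiring any care is the initial unpacking of $\sim_\cH$, in particular the observation that the two bounding walls $\alpha^{-1}(\{n_\alpha\})$ and $\alpha^{-1}(\{n_\alpha+1\})$ suffice to pin down the non-integer case; once this is established, the two implications are essentially immediate.
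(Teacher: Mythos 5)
Your proof is correct and follows essentially the same route as the paper: one direction by perturbing $x$ along a nonzero vector of $\bigcap_{\alpha\in\Phi_x}\ker(\alpha)$ by a small $\eta>0$ so that the finitely many non-integral values $\alpha(x)$ stay in their open unit intervals, and the other by noting that any $y\neq x$ is separated from $x$ by some wall $\alpha^{-1}(\{\alpha(x)\})$ with $\alpha\in\Phi_x$. Your explicit unpacking of $\sim_\cH$ into conditions (a) and (b) is just a slightly more detailed bookkeeping of what the paper's proof uses implicitly.
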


\begin{proof}
Let $E=\bigcap_{\alpha\in \Phi_x}\ker(\alpha)$. Let $y\in E$. Then for $t\in ]0,1[$ small enough, for every $\alpha\in \Phi$, if $\alpha(x)\notin \Z$, then $\alpha(x+ty)\in ]\lfloor \alpha(x)\rfloor,\lfloor \alpha(x)\rfloor+1[$ and thus $x\sim_\cH x+ty$. Therefore if $\{x\}$ is a face of $( \A,\cH)$, then $E=\{0\}$. Conversely, assume  $E=\{0\}$. Take $y\in \A\setminus \{x\}$. Then there exists $\alpha\in \Phi_x$ such that $\alpha(y)\neq \alpha(x)$.  Then $y\notin \alpha^{-1}(\{\alpha(x)\})$ and as $  \alpha^{-1}(\{\alpha(x)\})\in \cH$, $y\not\sim_H x$. Thus the class of $x$ is $\{x\}$. 
\end{proof}

Let $\lambda\in \ve(\A)$. Then by \cite[Chapitre VI, §1 Proposition 4]{bourbaki1981elements} and Lemma~\ref{l_characterization_vertices_roots},  $\Phi_\lambda$ is a root system in $\A^*$, with dual root system $\Phi_\lambda^\vee:=\{\alpha^\vee\mid \alpha\in \Phi_\lambda\}$. For $\alpha\in \Phi_\lambda$ and $x\in \R^*$, if $x\alpha\in \Phi_\lambda$, then $x\alpha\in \Phi$ and thus $x\in \{-1,1\}$, since $\Phi$ is reduced. Therefore $\Phi_\lambda$ is  a reduced root system in $\A^*$.   If $\alpha\in \Phi$, we denote by $s_\alpha\in W^v$ the associated reflection.

 Let $\cH_\lambda^0=\{\alpha^{-1 }(\{0\})\mid \alpha\in \Phi_\lambda\}$\index{h@$\cH_\lambda^0$}. The connected components of $\A \setminus \bigcup_{H \in \cH_\lambda^0} H$ are the (vectorial) chambers of $(\A,\Phi_\lambda)$. If $\lambda\in \ve(\A)$, then we denote by $\sF_\lambda(\cH)$ the set of faces of $(\A,\cH)$ dominating $\lambda$ and we denote by $\sF(\cH^0_\lambda)$ the set of faces of $(\A,\cH_\lambda^0)$.

\begin{Lemma}\label{l_open_segments_faces}
Let $\lambda\in \ve(\A)$ and $F_1,F_2\in \sF_\lambda(\cH)$. We assume that there exists a neighbourhood $\Omega$ of $\lambda$ such that $F_1\cap \Omega\subset \overline{F_2}$. Then $F_1\subset \overline{F_2}$. 
\end{Lemma}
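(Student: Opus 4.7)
The approach is to apply the sign-based description of faces given in Lemma~\ref{l_enclosure_faces}. For $i \in \{1,2\}$ and $\underline{\alpha} \in \Phi^{\mathrm{aff}}$, introduce $E^i_{\underline{\alpha}} \in \{\{0\}, \R_{>0}, \R_{<0}\}$ as in that lemma, so that $F_i = \bigcap_{\underline{\alpha}} \underline{\alpha}^{-1}(E^i_{\underline{\alpha}})$ and $\overline{F_i} = \bigcap_{\underline{\alpha}} \underline{\alpha}^{-1}(\overline{E^i_{\underline{\alpha}}})$. The conclusion $F_1 \subset \overline{F_2}$ is then equivalent to the pointwise inclusion $E^1_{\underline{\alpha}} \subset \overline{E^2_{\underline{\alpha}}}$ for every $\underline{\alpha} \in \Phi^{\mathrm{aff}}$, and this is what I will aim to prove.

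First I would observe that since $\lambda \in \overline{F_1}$, for any $x \in F_1$ the half-open segment $]\lambda, x]$ is contained in $F_1$. This is checked root by root: if $\underline{\alpha}(F_1) = \{0\}$ then $\underline{\alpha}(\lambda) = 0$ by continuity, and any convex combination of $\lambda$ and $x$ still vanishes; if $\underline{\alpha}(F_1) \subset \R_{>0}$ then $\underline{\alpha}(\lambda) \geq 0$ and $\underline{\alpha}(x) > 0$, so the convex combination is strictly positive as soon as the weight of $x$ is positive; the case $\R_{<0}$ is symmetric. Choosing now any $x \in F_1$ and any $t \in ]0,1]$ small enough so that $y := (1-t)\lambda + tx$ lies in $\Omega$, the hypothesis yields $y \in F_1 \cap \Omega \subset \overline{F_2}$, hence $\underline{\alpha}(y) \in E^1_{\underline{\alpha}} \cap \overline{E^2_{\underline{\alpha}}}$ for every $\underline{\alpha}$.

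It then remains to perform a short case analysis on $E^1_{\underline{\alpha}}$. If $E^1_{\underline{\alpha}} = \{0\}$, the inclusion $\{0\} \subset \overline{E^2_{\underline{\alpha}}}$ is automatic since $0 \in \overline{E^2_{\underline{\alpha}}}$ in all three possible shapes. If $E^1_{\underline{\alpha}} = \R_{>0}$, the existence of the point $\underline{\alpha}(y)$ in $\overline{E^2_{\underline{\alpha}}} \cap \R_{>0}$ rules out both $\overline{E^2_{\underline{\alpha}}} = \{0\}$ and $\overline{E^2_{\underline{\alpha}}} = \R_{\leq 0}$, forcing $\overline{E^2_{\underline{\alpha}}} = \R_{\geq 0} \supset E^1_{\underline{\alpha}}$; the case $E^1_{\underline{\alpha}} = \R_{<0}$ is symmetric. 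Intersecting over all $\underline{\alpha}$ then gives $F_1 \subset \overline{F_2}$. No serious obstruction is expected: the argument is a short combinatorial and continuity check once the sign description is set up. Morally, this lemma records the standard fact that for faces dominating a vertex $\lambda$, the germ at $\lambda$ determines the face, since the face is encoded by the signs of the $\underline{\alpha}$'s, which are already visible on any neighbourhood of $\lambda$ inside the face.
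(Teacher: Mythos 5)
Your proof is correct and follows essentially the same route as the paper: the paper takes a point $x\in F_1\cap \Omega$ and notes that every half-apartment containing $F_2$ contains $\cl(x)=\overline{F_1}$ (and that $\overline{F_2}$ is enclosed), which is exactly your sign-by-sign case analysis phrased in the enclosure language of Lemma~\ref{l_enclosure_faces}. Your only addition is the explicit segment argument producing a point of $F_1\cap\Omega$, which the paper treats as immediate from $\lambda\in\overline{F_1}$; this is harmless and correct.
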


\begin{proof}
Let $x\in \Omega\cap F_1$. Let $\alpha\in \Phi$ and $k\in \Z$ be such that $D_{\alpha,k}\supset F_2$. Then $D_{\alpha,k}\supset \overline{F_2}\ni x$ and thus $D_{\alpha,k}\supset  \cl(x)=\overline{F_1}$. As $\overline{F_2}$ is enclosed, we deduce that $\overline{F_2}\supset F_1$. 
\end{proof}

\begin{Proposition}\label{p_bijection_affine_faces_vectorial_faces_vertices}
Let  $\lambda\in \ve(\A)$. Equip $\sF_\lambda(\cH)$ and $\sF(\cH^0_\lambda)$ with the dominance order. Then the map $\Theta:\sF_\lambda(\cH)\rightarrow \sF(\cH^0_\lambda)$ defined by  $\Theta(F)= F^v(F):=\R_{>0}(F-\lambda)$, for $F\in \sF_\lambda(\cH)$ is an isomorphism of partially ordered sets. Its inverse is the map $\Xi:\sF(\cH^0_\lambda)\rightarrow \sF_\lambda(\cH)$ which associates to an element $F^v$ of $\sF(\cH^0_\lambda)$  the unique face of $\sF(\cH)$ containing $\germ_\lambda(\lambda+F^v)$. We have $\overline{\Xi(F^v)}=\cl(\germ_\lambda(\lambda+F^v))$ and $\Xi(F^v)\subset \lambda+F^v$.
\end{Proposition}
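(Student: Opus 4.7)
The plan is to exploit the local picture near $\lambda$. By local finiteness of $\cH$, there is an open neighbourhood $\Omega$ of $\lambda$ meeting only walls of $\cH$ that contain $\lambda$. Now an affine wall $\alpha^{-1}(\{k\})$ contains $\lambda$ exactly when $\alpha(\lambda)=k\in\Z$, i.e.\ when $\alpha\in\Phi_\lambda$, and in that case $\alpha^{-1}(\{k\})=\lambda+\alpha^{-1}(\{0\})$ with $\alpha^{-1}(\{0\})\in\cH^0_\lambda$. Consequently the faces of $(\A,\cH)$ meeting $\Omega$ are exactly the traces $(\lambda+F^v)\cap\Omega$ for $F^v\in\sF(\cH^0_\lambda)$; in particular each $F\in\sF_\lambda(\cH)$ satisfies $F\cap\Omega=(\lambda+F^v)\cap\Omega$ for a unique vectorial face $F^v$.

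To show $\Theta$ is well-defined, take $F\in\sF_\lambda(\cH)$. Since $F$ equals the relative interior of the convex set $\overline{F}$ and $\lambda\in\overline{F}$, for every $x\in F$ the segment $]\lambda,x]$ lies in $F$. Hence $\R_{>0}(F-\lambda)=\R_{>0}\bigl((F\cap\Omega)-\lambda\bigr)$, which by the local picture equals the cone from $0$ through $F^v\cap(\Omega-\lambda)$, i.e.\ $F^v$ itself. So $\Theta(F)=F^v\in\sF(\cH^0_\lambda)$. Conversely, for $F^v\in\sF(\cH^0_\lambda)$, pick $x\in F^v$ and $t>0$ small enough that $\lambda+tx\in\Omega$, and set $\Xi(F^v)$ to be the unique face of $(\A,\cH)$ containing $\lambda+tx$. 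By the local picture $\Xi(F^v)\cap\Omega=(\lambda+F^v)\cap\Omega$, hence $\lambda\in\overline{\Xi(F^v)}$, $\Xi(F^v)\in\sF_\lambda(\cH)$ and $\Theta(\Xi(F^v))=F^v$. The independence of $\Xi(F^v)$ from the choice of $x$, and the relation $\Xi\circ\Theta=\mathrm{Id}$, both follow from Lemma~\ref{l_open_segments_faces}, which shows that a face in $\sF_\lambda(\cH)$ is determined by its germ at $\lambda$.

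Order preservation goes in both directions via the same local reduction: if $F_1\subset\overline{F_2}$, then passing to a small neighbourhood of $\lambda$ and taking positive cones yields $\Theta(F_1)\subset\overline{\Theta(F_2)}$; conversely, if $\Theta(F_1)\subset\overline{\Theta(F_2)}$, then $F_1\cap\Omega\subset\overline{F_2}$, and Lemma~\ref{l_open_segments_faces} upgrades this to $F_1\subset\overline{F_2}$. For the last assertions: if $F:=\Xi(F^v)$ and $x\in F$, then $x-\lambda\in\R_{>0}(F-\lambda)=F^v$, giving $\Xi(F^v)\subset\lambda+F^v$. For $\overline{\Xi(F^v)}=\cl(\germ_\lambda(\lambda+F^v))$, pick $x\in\Xi(F^v)$, so that $\cl(x)=\overline{\Xi(F^v)}$ by Lemma~\ref{l_enclosure_faces}(2); for every sufficiently small open neighbourhood $E$ of $\lambda$ one has $x\in(\lambda+F^v)\cap E\subset\overline{\Xi(F^v)}$ (the latter being enclosed), and taking enclosures yields both inclusions.

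The main obstacle is making the local correspondence between $\cH$ and $\lambda+\cH^0_\lambda$ near $\lambda$ rigorous and then using it systematically, in particular to see that affine faces dominating $\lambda$ are genuinely determined by their germs at $\lambda$; this is exactly the content of Lemma~\ref{l_open_segments_faces}, combined with local finiteness. Once that local dictionary is in place the bijection, its inverse, the order-preservation and the two extra identities all drop out formally.
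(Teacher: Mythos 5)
Your argument is correct and takes essentially the same route as the paper: the explicit convex neighbourhood $\Omega_\lambda$ in the paper's proof plays exactly the role of your $\Omega$ (a neighbourhood of $\lambda$ meeting only the walls through $\lambda$, so that the trace of $\sF(\cH)$ on it coincides with that of $\lambda+\cH^0_\lambda$), and the paper likewise invokes Lemma~\ref{l_open_segments_faces} for the order statements and Lemma~\ref{l_enclosure_faces} for the enclosure identity. The only slip is a quantifier in your last step: a fixed $x\in\Xi(F^v)$ cannot lie in every small neighbourhood $E$ of $\lambda$; instead, for each $E$ pick $x_E\in(\lambda+F^v)\cap E\cap\Omega\subset\Xi(F^v)$ and use $\cl(x_E)=\overline{\Xi(F^v)}$ together with the enclosedness of $\overline{\Xi(F^v)}$ to get both inclusions, which is exactly how the paper concludes.
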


\begin{proof}
Set \[\Omega=\Omega_\lambda:=\bigcap_{\alpha\in \Phi_\lambda}\alpha^{-1}\left(]\alpha(\lambda)-1,\alpha(\lambda)+1[\right)\cap \bigcap_{\alpha\in \Phi\setminus \Phi_\lambda}\alpha^{-1}\left(]\lfloor \alpha(\lambda)\rfloor,\lceil \alpha(\lambda)\rceil[\right).\] Then $\Omega$ is a convex open set containing $\lambda$.

Let $F\in \sF_\lambda(\cH)$ and $x_1,x_2\in \R_{>0}(F-\lambda)$. Write $x_1=t_1(f_1-\lambda)$, $x_2=t_2(f_2-\lambda)$, with $t_1,t_2\in \R_{>0}$ and $f_1,f_2\in F$. Let $\alpha\in \Phi_\lambda$. Then the sign of $\alpha-\alpha(\lambda)$ (which can take the values ``$<0$'', ``$=0$'' or ``$>0$'') is constant on $\{f_1,f_2\}$ and thus $\alpha(x_1)$ and $\alpha(x_2)$ have the same sign. As this is true for every $\alpha\in \Phi_\lambda$, we deduce the existence of $F^v\in \sF(\cH^0_\lambda)$ such that $\Theta(F)\subset F^v$.

Let $x_1\in F^v$ and $f\in F$. Set $x_2=f-\lambda\in \Theta(F)\subset F^v$. Let $t_1\in \R_{>0}$ be such that $f_1:=x_1/t_1+\lambda\in \Omega$. Let $\alpha\in \Phi\setminus \Phi_\lambda$. 
By choice of $\Omega$, we have $\alpha(f_1)\in ]\lfloor \alpha(\lambda)\rfloor, \lceil \alpha(\lambda)\rceil[$. By Eq. \eqref{e_Property_faces_roots}, we have $\alpha(F)\subset ]\lfloor \alpha(\lambda)\rfloor, \lceil \alpha(\lambda)\rceil [$ and thus $\alpha-k$ has constant sign on $\{f_1,f\}$, for every $k\in \Z$. Let $\alpha\in \Phi_\lambda$. We have $\alpha(F^v)\in \{\R_{<0},\{0\},\R_{>0}\}$. If $\alpha(F^v)=\{0\}$, then $\alpha(f_1)=\alpha(\lambda)$ and $\alpha(x_2)=0$, so $\alpha(f)=\alpha(\lambda)$. Therefore $\alpha+k$ has constant sign on $\{f_1,f\}$, for every $k\in \Z$. Assume now $\alpha(F)=\R_{>0}$. Then $\alpha(f_1)>\alpha(\lambda)$ and thus $\alpha(f_1)\in ]\alpha(\lambda),\alpha(\lambda)+1[$. We have $\alpha(x_2)>0$, thus $\alpha(f)>\alpha(\lambda)$. By Eq . \eqref{e_Property_faces_roots}, $\alpha(f)\in ]\alpha(\lambda),\alpha(\lambda)+1[$. Therefore $\alpha+k$ has constant sign on $\{f,f_1\}$, for every $k\in \Z$. With the same reasoning when $\alpha(F^v)=\R_{<0}$, we deduce that $f$ and $f_1$ belong to the same face and thus $f_1\in F$. Consequently $x_1=t_1(f_1-\lambda)\in \Theta(F)$. Therefore we proved $\Theta(F)=F^v\in \sF(\cH^0_\lambda)$ and thus $\Theta$ is well-defined.

Let now $F^v\in \sF(\cH^0_\lambda)$ and $f_1,f_2\in \Omega\cap (\lambda+F^v)$. Let $\alpha\in \Phi$. Let $x_1=f_1-\lambda$, $x_2=f_2-\lambda\in F^v$. If $\alpha\notin \Phi_\lambda$, then $\alpha(f_1),\alpha(f_2)\in ]\lfloor \alpha(\lambda)\rfloor,\lceil \alpha(\lambda) \rceil[$. Therefore $\alpha+k$ has constant sign on $\{f_1,f_2\}$, for every $k\in \Z$. Let now $\alpha\in \Phi_\lambda$. If $\alpha(F^v)=\{0\}$, then $\alpha(f_1)=\alpha(f_2)=\alpha(\lambda)$ and thus $\alpha+k$ has constant sign on $\{f_1,f_2\}$ for every $k\in \Z$.  If $\alpha(F^v)=\R_{>0}$, then $\alpha(x_1),\alpha(x_2)>0$ and thus $\alpha(f_1),\alpha(f_2)\in ]\alpha(\lambda),\alpha(\lambda)+1[$, by definition of $\Omega$. With the same reasoning when $\alpha(F^v)=\R_{<0}$, we deduce that $f_1$ and $f_2$ belong the same face $F$ of $\sF_\lambda(\cH)$. Then $F$ is the unique face of $\sF_\lambda(\cH)$ containing $(\lambda+F^v)\cap \Omega\supset \germ_\lambda(\lambda+F^v)$. Consequently, $\Xi(F^v)=F$ is well-defined and we have \begin{equation}\label{e_inclusion_face_Theta}
\Omega\cap (\lambda+F^v)\subset F=\Xi(F^v). 
\end{equation} Moreover, for $x\in F$, we have $\cl(x)=\overline{F}$ and thus $\cl(\germ_\lambda(\lambda+F^v))=\overline{F}$.

Let $F\in \sF_\lambda(\cH)$ and $\tilde{F}=\Xi\circ \Theta(F)$. Then $\tilde{F}\supset \Omega\cap ]\lambda,f[$ for $f\in F$. Therefore $\tilde{F}\cap F$ is non-empty and hence $F=\tilde{F}$,  which proves that $\Xi\circ \Theta=\Id$. 

Let $F^v\in \sF(\cH^0_\lambda)$. Set $\tilde{F}^v=\Theta\circ \Xi(F^v)$. Let $f\in F^v$ be such that $\lambda+f\in \Omega$. Then $\lambda+f\in\Xi(F^v)$ by Eq. \eqref{e_inclusion_face_Theta}. Therefore $\lambda+f-\lambda=f\in \tilde{F}^v$ and hence $\tilde{F}^v\cap F^v\neq \emptyset$. Consequently $\tilde{F}^v=F^v$ and $\Theta\circ \Xi=\Id$, which proves that  $\Theta$ is the inverse of $\Xi$. 

Let $F_1,F_2\in \sF_\lambda(\cH)$ be such that $\overline{F_2}\supset F_1$. Then $\overline{\R_{>0}(F_2-\lambda)}=\R_{\geq 0}(\overline{F_2}-\lambda)\supset \R_{>0}(F_1-\lambda)$ and thus $\Theta$ is increasing. 

Let $F_1^v,F_2^v\in \sF(\cH_\lambda^0)$ be such that $\overline{F^v_2}\supset F^v_1$. We have \[\overline{\Xi(F_2^v)}\supset (\lambda+\overline{F_2^v})\cap \Omega\supset (\lambda+F_1^v)\cap \Omega \supset \Xi(F_1^v)\cap \Omega,\] by Eq. \eqref{e_inclusion_face_Theta}. Using Lemma~\ref{l_open_segments_faces}, we deduce that  $\Xi(F_1^v)\subset \overline{\Xi(F_2^v)}$, which proves that $\Xi$ is increasing. Thus $\Theta$ is an isomorphism. 

Using Eq. \eqref{e_inclusion_face_Theta} and the assetion following it, we get the last claim of the proposition. 
\end{proof}

\begin{Remark}
Note that for dimension reasons, the bijection above sends alcoves on chambers.
\end{Remark}

\subsubsection{Star of a face}

In the sequel, we write $W_x^{\mathrm{aff}}=W_{\lbrace x \rbrace}^{\mathrm{aff}}$, for the fixator of $x \in \A$ in $W^{\mathrm{aff}}$.

\begin{Lemma}\label{l_fixator_subset_alcove}
Let $C$ be an alcove of $(\A,\cH)$ and $J\subset C$ be a non-empty set. Let $F$ be the smallest face of $C$ dominating $J$ (this face is well-defined by Lemma~\ref{l_enclosure_subset_alcove}). Then $\bigcap_{x\in J} W^{\mathrm{aff}}_x=W^{\mathrm{aff}}_F$. 
\end{Lemma}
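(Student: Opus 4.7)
The plan is to split the equality into two inclusions and reduce everything to the single-point case of Theorem~\ref{t_Weyl_group_Coxeter}(3), which identifies $W^{\mathrm{aff}}_x$ with $W^{\mathrm{aff}}_{F_x}$, where $F_x$ denotes the face of $(\A,\cH)$ containing $x$. The inclusion $W^{\mathrm{aff}}_F \subset \bigcap_{x \in J} W^{\mathrm{aff}}_x$ is the easy one: for any $x \in J \subset \overline{F}$, Lemma~\ref{l_enclosure_faces} gives $\overline{F_x} = \cl(x) \subset \cl(\overline{F}) = \overline{F}$, so $F_x \subset \overline{F}$; since $F$ and $F_x$ both lie in $\overline{C}$, Lemma~\ref{l_comparison_fixators} yields $W^{\mathrm{aff}}_F \subset W^{\mathrm{aff}}_{F_x} = W^{\mathrm{aff}}_x$, and intersecting over $x \in J$ gives the claim.

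The reverse inclusion is the substantive step. I will take $w \in \bigcap_{x \in J} W^{\mathrm{aff}}_x$; since $w$ is an affine map pointwise fixing $J$, it pointwise fixes the convex hull $E := \conv(J)$. By Theorem~\ref{t_Weyl_group_Coxeter}(3)(d) it suffices to exhibit a single point of the open face $F$ that $w$ fixes. The tool for this is the convex case of Lemma~\ref{l_enclosure_subset_alcove}, applied to $E \subset \overline{C}$: since every closed face of $(\A,\cH)$ is convex, the condition $\overline{F_1} \supset J$ is equivalent to $\overline{F_1} \supset E$, so the minimal face dominating $E$ is exactly the face $F$ from the statement. Lemma~\ref{l_enclosure_subset_alcove} then produces $y \in E \cap F$; since $w$ fixes $E$, we get $w(y) = y$, and hence $w \in W^{\mathrm{aff}}_F$ by Theorem~\ref{t_Weyl_group_Coxeter}(3)(d).

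The main obstacle, and the only place where non-formal content enters, is the passage from ``$w$ fixes $J$'' to ``$w$ fixes a point of the \emph{open} face $F$''. A priori the fixed-point set of $w$ inside $\overline{F}$ might sit entirely in the relative boundary of $\overline{F}$, in which case no Theorem~\ref{t_Weyl_group_Coxeter}(3)(d)-witness in $F$ is available; this is a genuine obstruction, since an element of $W^{\mathrm{aff}}$ can fix a proper closed subface of $\overline{F}$ without fixing $F$ itself. Ruling this out is exactly the purpose of the convex-case assertion in Lemma~\ref{l_enclosure_subset_alcove}, and is why the argument must be run on $\conv(J)$ rather than on $J$ directly.
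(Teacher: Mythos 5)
Your proof is correct and follows essentially the same route as the paper's: the easy inclusion from the fact that $W^{\mathrm{aff}}_F$ pointwise fixes $\overline{F}\supset J$ (you phrase it via $F_x$ and Lemma~\ref{l_comparison_fixators}, which amounts to the same thing), and for the reverse inclusion the observation that an affine map fixing $J$ fixes $\conv(J)$ pointwise, combined with the convex case of Lemma~\ref{l_enclosure_subset_alcove} to produce a point of $\conv(J)\cap F$, and then Theorem~\ref{t_Weyl_group_Coxeter}(3). Your explicit remark that the minimal face dominating $J$ coincides with the minimal face dominating $\conv(J)$ (since closed faces are convex) is a useful clarification that the paper leaves implicit.
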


\begin{proof}
  For every $x\in J$, $W_F^{\mathrm{aff}}=W^{\mathrm{aff}}_{\overline{F}}$ fixes $x$ and thus $W^{\mathrm{aff}}_F\subset \bigcap_{x\in J} W^{\mathrm{aff}}_x$. Let $w\in \bigcap_{x\in J} W^{\mathrm{aff}}_x$. By Lemma~\ref{l_enclosure_subset_alcove}, there exists $y\in \conv(J)\cap F$. Then $w$ fixes $y$. By Theorem~\ref{t_Weyl_group_Coxeter}, $W^{\mathrm{aff}}_y=W^{\mathrm{aff}}_F$, which proves that $w\in W^{\mathrm{aff}}_F$ and completes the proof of the lemma.  
\end{proof}

\begin{Lemma}\label{l_positivity_result_affine_root_systems}
Assume that $\Phi$ is irreducible. Let $F$ be a face of $(\A,\cH)$ and let $C$ be an alcove dominating $F$. Let $\Sigma^{\mathrm{aff}}_C$ be the basis associated with $C$ and set $\cM_F=\{\underline{\alpha}\in \Sigma^{\mathrm{aff}}_C\mid \underline{\alpha}(F)>0\}$. Let $x\in \A$. Then there exists $w\in W^{\mathrm{aff}}_F$ such that $\underline{\alpha}(w.x)\geq 0$ for every $\underline{\alpha}\in \Sigma^{\mathrm{aff}}_C\setminus\cM_F$.     
\end{Lemma}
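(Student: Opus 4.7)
The plan is to use a standard distance-minimisation argument inside the orbit $W^{\mathrm{aff}}_F.x$. First I would identify the generators of $W^{\mathrm{aff}}_F$: by Theorem~\ref{t_Weyl_group_Coxeter}(3) together with the description in Lemma~\ref{l_description_faces_intersections}(2) of the panels of $C$, the parabolic subgroup $W^{\mathrm{aff}}_F$ is generated exactly by the reflections $s_{\underline{\alpha}}$ for $\underline{\alpha}\in \Sigma^{\mathrm{aff}}_C\setminus \cM_F$ (these being the walls of $C$ that contain $F$). In particular, each such $s_{\underline{\alpha}}$ lies in $W^{\mathrm{aff}}_F$. I would also note that $W^{\mathrm{aff}}_F$ is finite: for any $y\in F$ we have $W^{\mathrm{aff}}_F\subset W^{\mathrm{aff}}_y$ by Theorem~\ref{t_Weyl_group_Coxeter}(3), and $W^{\mathrm{aff}}_y$ is finite, being a discrete subgroup of the compact group of isometries of $\A$ fixing $y$.

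Next, I would pick an arbitrary reference point $x_C\in C$ and choose $x_0\in W^{\mathrm{aff}}_F.x$ minimising the Euclidean distance $d(\cdot,x_C)$ over this finite orbit. Writing $x_0=w.x$ with $w\in W^{\mathrm{aff}}_F$, the claim is that $w$ has the desired property.

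To prove it, suppose for contradiction that $\underline{\alpha}(x_0)<0$ for some $\underline{\alpha}\in \Sigma^{\mathrm{aff}}_C\setminus \cM_F$. Since $\underline{\alpha}(C)>0$, the points $x_0$ and $x_C$ lie strictly on opposite sides of the hyperplane $H=\underline{\alpha}^{-1}(\{0\})$. Decomposing $\A$ as the orthogonal sum of $H$ and the line $\R\alpha^\vee$ (where $\alpha$ is the vector part of $\underline{\alpha}$), only the $\alpha^\vee$-component of $x_0$ is flipped by $s_H=s_{\underline{\alpha}}$; since the $\alpha^\vee$-components of $x_0$ and $x_C$ have strictly opposite signs, a one-line computation gives $d(s_{\underline{\alpha}}(x_0),x_C)<d(x_0,x_C)$. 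As $s_{\underline{\alpha}}\in W^{\mathrm{aff}}_F$, this contradicts minimality of $x_0$. Hence $\underline{\alpha}(x_0)\geq 0$ for every $\underline{\alpha}\in \Sigma^{\mathrm{aff}}_C\setminus \cM_F$, proving the lemma.

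I do not expect a serious obstacle: the argument is a textbook reflection-group argument (the chamber cut out by a set of simple reflections is a fundamental domain for the parabolic subgroup they generate), and the only preliminary task is the identification of the generators of $W^{\mathrm{aff}}_F$, which is already built into Theorem~\ref{t_Weyl_group_Coxeter}(3). The irreducibility hypothesis on $\Phi$ plays no role in this argument beyond ensuring that $\Sigma^{\mathrm{aff}}_C$ has been defined.
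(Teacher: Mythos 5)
Your proof is correct, but it follows a genuinely different route from the paper's. The paper localizes at a vertex $\lambda\in \ve(F)$: it passes to the vectorial picture via $F^v=\R_{>0}(F-\lambda)$ (Proposition~\ref{p_bijection_affine_faces_vectorial_faces_vertices}), identifies $W^{\mathrm{aff}}_F$ with the finite Weyl group of the sub-root system $\Phi_{F^v}$ of Lemma~\ref{l_root_system_Fv} through conjugation by the translation $\bt_\lambda$, splits $x-\lambda$ into its components along $\vect(F^v)$ and $\vect(F^v)^\perp$, and invokes transitivity of $W^v_{\Phi_{F^v}}$ on its chambers to make the normal component dominant. You instead run the classical distance-minimization argument directly in the affine picture: minimize $d(\cdot,x_C)$ over the orbit $W^{\mathrm{aff}}_F.x$ for a fixed $x_C\in C$, and observe that if some $\underline{\alpha}\in \Sigma^{\mathrm{aff}}_C\setminus\cM_F$ were negative at the minimizer, reflecting in $\underline{\alpha}^{-1}(\{0\})$ (an element of $W^{\mathrm{aff}}_F$, since that wall contains $F$) would strictly decrease the distance. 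This is sound: the generation statement is exactly Theorem~\ref{t_Weyl_group_Coxeter}(3), and the strict-decrease computation for an orthogonal reflection when the two points lie strictly on opposite sides of the mirror is elementary. Your argument is more self-contained (no appeal to Lemma~\ref{l_root_system_Fv}, Proposition~\ref{p_bijection_affine_faces_vectorial_faces_vertices} or Bourbaki's chamber transitivity) and, as you note, never uses irreducibility, whereas the paper's version exploits and reinforces the affine-to-vectorial dictionary it has already built and reuses elsewhere. The only point you leave slightly informal is the finiteness of $W^{\mathrm{aff}}_F$; your discreteness-in-a-compact-group sketch is acceptable, but it is cleaner inside this paper to note that $W^{\mathrm{aff}}_F$ fixes any $\lambda\in\ve(F)$, hence is contained in $W^{\mathrm{aff}}_\lambda=\bt_\lambda W^v_{\Phi_\lambda}\bt_\lambda^{-1}$, which is finite by Lemma~\ref{l_translation_fixator_0}.
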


\begin{proof}
  If $F$ is an alcove, $\Sigma^{\mathrm{aff}}_C\setminus\cM_F$ is empty and there is nothing to prove. We now assume that $F$ is not an alcove.  Let $\lambda\in \ve(F)$. Let $F^v=\R_{>0}(F-\lambda)$ and $C^v=\R_{>0}(C-\lambda)$. By Proposition~\ref{p_bijection_affine_faces_vectorial_faces_vertices}, $F^v$ and $C^v$ are faces of $(\A,\cH^0_\lambda)$. Let $\Phi_{F^v}=\{\alpha\in \Phi_\lambda\mid \alpha(F^v)=\{0\}\}$ and let $\Phi_{F^v,+}=\{\alpha\in \Phi_{F^v}\mid \alpha(C^v)>0\}$. By Lemma~\ref{l_root_system_Fv}, $\Phi_{F^v}$ is a root system in $(\vect(F^v))^\perp$. By \cite[VI  \S 1.7 Corollaire 1]{bourbaki1981elements}, there exists  a chamber $C_{F^v}^v$ of $(\vect(F^v))^\perp$ such that $\Phi_{F^v,+}=\{\alpha\in \Phi_{F^v}\mid \alpha(C_{F^v}^v)>0\}$.

    We can write $x-\lambda=u+v$, where $u\in \vect(F^v)$ and $v\in \vect(F^v)^\perp$.   Let $W^v_{\Phi_{F^v}}=\langle s_\alpha\mid \alpha\in \Phi_{F^v}\rangle\subset W^v_{\Phi}$ be the vectorial Weyl group of $\Phi_{F^v}$. Then $(\vect(F^v))^\perp$ is the union of the closed vectorial chambers of $\Phi_{F^v}$ and, since  $W^v_{\Phi_{F^v}}$ acts transitively on the set of vectorial chambers of $\Phi_{F^v}$, there exists $w\in W^v_{\Phi_{F^v}}$  such that $\alpha(w.v)\geq 0$, for every $\alpha\in \Phi_{F^v,+}$. We then have $\alpha(w.(x-\lambda))\geq 0$, for all $\alpha\in \Phi_{F^v,+}$, since $W^v_{\Phi_{F^v}}$ pointwise fixes  $\vect(F^v)$.

    We have $t_\lambda W^v_{\Phi_{F^v}} t_{-\lambda}=W^{\mathrm{aff}}_F$, where $t_\mu:\A\rightarrow \A$ is the translation by the vector $\mu$, for $\mu\in \A$. Let $\underline{\alpha}\in \Sigma^{\mathrm{aff}}_C\setminus\cM_F$. Set $\underline{w}=t_{\lambda}wt_{-\lambda
    }\in W^{\mathrm{aff}}_F$. Write $\underline{\alpha}=\alpha+k$, where $k\in \Z$. Then $\underline{\alpha}(\lambda)=0$ and thus $k=-\alpha(\lambda)$. Moreover, $\underline{\alpha}(C)>0$ and thus $\alpha\in \Phi_{F^v,+}$. Then $\underline{\alpha}(\underline{w}.x)=\alpha(\lambda+w.(x-\lambda))-\alpha(\lambda)=\alpha(w.(x-\lambda))\geq 0$. This being true for every $\underline{\alpha}\in \Sigma^{\mathrm{aff}}_C\setminus\cM_F$, the lemma follows.
\end{proof}

\begin{Definition}
    Let $F$ be a face of $(\A,\cH)$. The \textbf{star of $F$} is the union of the faces of $(\A,\cH)$ dominating $F$.
\end{Definition}

\begin{Lemma}\label{l_union_alcoves_containing_face}
Let $F$ be a face of $(\A,\cH)$ and let $E$ be the star of $F$. Let $C$ be an alcove of $(\A,\cH)$ dominating $F$ and $H_1,\ldots,H_k\in \cH$ be the walls of $C$. Up to renumbering, we may assume that $H_1,\ldots,H_{k'}\supset F$ and $H_{k'+1},\ldots,H_{k}$ do not contain $F$, for some $k'\in \llbracket 1,k\rrbracket$. For $i\in \llbracket 1,k\rrbracket$, denote by $\mathring{D}_i$ the open half-space of $\A$ delimited by $H_i$ and containing $C$. Then $E=\bigcap_{w\in W_F^{\mathrm{aff}},i\in \llbracket k'+1,k\rrbracket} w.\mathring{D}_i$. In particular, $E$ is open and convex.
\end{Lemma}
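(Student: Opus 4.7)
The plan is to prove the stated equality by double inclusion and then deduce openness and convexity from finiteness of $W_F^{\mathrm{aff}}$. Label $\underline{\alpha}_i\in\Sigma^{\mathrm{aff}}_C$ so that $H_i=\underline{\alpha}_i^{-1}(\{0\})$ and $\mathring{D}_i=\underline{\alpha}_i^{-1}(\R_{>0})$. With the notation of Lemma~\ref{l_description_faces_intersections}, this gives $\cM_F=\{\underline{\alpha}_{k'+1},\ldots,\underline{\alpha}_k\}$, $\overline{C}=\bigcap_{i=1}^k\underline{\alpha}_i^{-1}(\R_{\geq 0})$, and a face $F'\subset\overline{C}$ dominates $F$ if and only if $\cM_{F'}\supset\cM_F$.

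For the inclusion $E\subset\bigcap_{w,i>k'}w.\mathring{D}_i$, I would first observe that for each $i>k'$ the wall $H_i$ does not contain $F$, so by the equivalence $\sim_{\cH}$ defining the faces, combined with $F\subset\overline{C}\subset D_i$, we get $F\subset\mathring{D}_i$; since $w.F=F$, this yields $F\subset w.\mathring{D}_i$ for every $w\in W_F^{\mathrm{aff}}$. Now fix $F'\in\sF_F(\cH)$, $w\in W_F^{\mathrm{aff}}$ and $i>k'$. As $w.H_i\in\cH$, the face property forces either $F'\subset w.H_i$ or $F'\cap w.H_i=\emptyset$. The first alternative is excluded, since it would give $F\subset\overline{F'}\subset w.H_i$, contradicting $F\subset w.\mathring{D}_i$. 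Since $F\subset w.\mathring{D}_i\cap\overline{F'}$ and $w.\mathring{D}_i$ is open in $\A$, the intersection $F'\cap w.\mathring{D}_i$ is non-empty; combined with $F'\cap w.H_i=\emptyset$ and the convexity of $F'$, this forces $F'\subset w.\mathring{D}_i$.

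For the reverse inclusion, take $x$ in the intersection. By Lemma~\ref{l_positivity_result_affine_root_systems} (applied componentwise via the decomposition~\eqref{e_decomposition_Phi} if $\Phi$ is reducible), there exists $w'\in W_F^{\mathrm{aff}}$ such that $\underline{\alpha}_i(w'.x)\geq 0$ for every $i\leq k'$. Setting $y=w'.x$ and applying the hypothesis with $w=(w')^{-1}$ gives $\underline{\alpha}_i(y)>0$ for every $i>k'$. Hence $y\in\overline{C}$, and the face $F_y$ of $\overline{C}$ containing $y$ satisfies $\cM_{F_y}\supset\cM_F$, so $F_y\in\sF_F(\cH)$ and $y\in E$. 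Since $W_F^{\mathrm{aff}}$ preserves the set of faces dominating $F$, we conclude $x=(w')^{-1}.y\in E$.

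Finally, every element of $W_F^{\mathrm{aff}}$ pointwise fixes $\overline{F}$ by Theorem~\ref{t_Weyl_group_Coxeter}(3), hence fixes any chosen $x\in F$, so $W_F^{\mathrm{aff}}\subset W_x^{\mathrm{aff}}$, which is a finite reflection group. The displayed intersection is therefore a finite intersection of open convex half-spaces, hence open and convex. I expect the main subtlety to be the exchange argument in the reverse inclusion: properly combining the positivity on the indices $i\leq k'$ supplied by Lemma~\ref{l_positivity_result_affine_root_systems} with the positivity on $i>k'$ provided by the hypothesis, and then checking that the face $F_y$ of $\overline{C}$ built from $y$ truly dominates $F$ and not merely sits in its closure.
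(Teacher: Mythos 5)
Your proof is correct, and the forward inclusion follows a genuinely different, more elementary route than the paper's. For $E\subset\bigcap_{w,i>k'}w.\mathring{D}_i$, the paper first reduces to $E'=E\cap\overline{C}$ via Lemma~\ref{l_transitive_actions_element_fixing_face}, establishes the explicit description \eqref{e_description_E'} of $E'$, and then proves $w.\underline{\alpha}(E')>0$ for $w\in W_F^{\mathrm{aff}}$, $\underline{\alpha}\in\cM_F$ by a Coxeter-theoretic computation (length increase, $w.\underline{\alpha}$ positive and equal to $\underline{\alpha}$ plus a nonnegative combination of $\Sigma^{\mathrm{aff}}_C\setminus\cM_F$, with citations to Bj\"orner--Brenti and Macdonald), treating the reducible case by a global product decomposition. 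You argue instead directly with the poly-simplicial structure: the wall $w.H_i\in\cH$ cannot contain a face $F'$ dominating $F$ (it misses $F\subset\overline{F'}$, because $F\subset\mathring{D}_i$ and $w$ fixes $F$), so by the equivalence relation defining faces $F'$ lies strictly on one side of $w.H_i$, and the closure point $F$ forces that side to be $w.\mathring{D}_i$. This buys you a shorter argument that needs no root combinatorics and no irreducibility reduction for that direction, at the price of not producing the description of $E'$ which the paper reuses implicitly. The reverse inclusion is essentially the paper's: both hinge on Lemma~\ref{l_positivity_result_affine_root_systems}; you verify domination via $\cM_{F_y}\supset\cM_F$ and \eqref{e_F_explicit} instead of \eqref{e_description_E'} (when doing so, note that $F_y\subset\overline{C}$, which follows from $\overline{F_y}=\cl(\{y\})\subset\cl(C)=\overline{C}$ by Lemma~\ref{l_enclosure_faces}), and you invoke the lemma componentwise in the reducible case, which is legitimate since $W_F^{\mathrm{aff}}$ and $\Sigma^{\mathrm{aff}}_C$ decompose along \eqref{e_decomposition_Phi}. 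Your closing justification of openness (finiteness of $W_F^{\mathrm{aff}}$ as a subgroup of the fixator of a point of $F$, finite because such an element is determined by its linear part, which lies in the finite group $W^v_\Phi$) is a welcome addition that the paper leaves implicit.
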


\begin{proof}

Let $E'=E\cap \overline{C}$.   Then by Lemma~\ref{l_transitive_actions_element_fixing_face}, $E=W_F.E'$.

We first assume that $\Phi$ is irreducible. Let   $\Sigma^{\mathrm{aff}}_C$ be the basis of $\Phi^{\mathrm{aff}}$ defined by $C$, i.e., $\Sigma^{\mathrm{aff}}_C$ is the set of affine roots $\underline{\alpha}$ such that $\underline{\alpha}^{-1}(\{0\})$ is  a wall of $C$ and $\underline{\alpha}(C)>0$. By Lemma~\ref{l_description_faces_intersections}, we have $C=\{x\in \A\mid \underline{\alpha}(x)>0, \forall \underline{\alpha}\in \Sigma^{\mathrm{aff}}_C\}$. Let $\cM_F=\{\underline{\alpha}\in \Sigma^{\mathrm{aff}}_C\mid \underline{\alpha}(F)>0\}$. If $F'$ is a face of $C$, then by Lemma~\ref{l_description_faces_intersections}, we can write $F'=\{x\in \A\mid \underline{\alpha}(x)\ R_{\underline{\alpha}}\ 0,\forall \underline{\alpha}\in \Sigma^{\mathrm{aff}}_C\}$, where $(R_{\underline{\alpha}})\in \{>,=\}^{\Sigma^{\mathrm{aff}}_C}$. Then $\overline{F'}=\{x\in \A\mid \underline{\alpha}(x)\  \overline{R_{\underline{\alpha}}}\ 0,\forall \underline{\alpha}\in \Sigma^{\mathrm{aff}}_C\}$, where $\overline{R_{\underline{\alpha}}}$ is equal to ``$\geq$'' if $R_{\underline{\alpha}}$ is equal to ``$>$'' and equal to ``$=$'' if $R_{\underline{\alpha}}$ is equal to ``$=$''. Therefore if $\overline{F'}\supset F$, the symbol $R_{\underline{\alpha}}$ is equal to ``$>$'' for every $\underline{\alpha}\in \Sigma^{\mathrm{aff}}_C\setminus\cM_F$.   Therefore: \begin{equation}\label{e_description_E'}
    E'=\{x\in \A\mid \underline{\alpha}(x)\geq 0, \forall \underline{\alpha}\in \Sigma^{\mathrm{aff}}_C\setminus\cM_F, \underline{\alpha}(x)>0, \forall \underline{\alpha}\in \cM_F\}.
\end{equation}

Let us prove that 

\begin{equation}\label{e_explicit_description_residue}
E=\bigcap_{\underline{\alpha}\in W^{\mathrm{aff}}_F.\cM_F} \underline{\alpha}^{-1}(\R_{>0}):=E_1.
\end{equation}

If $s_{\underline{\alpha}}$ denotes the orthogonal reflection with respect to $\underline{\alpha}^{-1}(\{0\})$ for $\underline{\alpha}\in \Phi^{\mathrm{aff}}$, then $(W^{\mathrm{aff}},\{s_{\underline{\alpha}}\mid \underline{\alpha}\in \Sigma^{\mathrm{aff}}_C\})$ is a Coxeter group  and $W^{\mathrm{aff}}_F=\langle s_{\underline{\alpha}}\mid \underline{\alpha}\in \Sigma^{\mathrm{aff}}_C\setminus\cM_F\rangle$, according to Theorem~\ref{t_Weyl_group_Coxeter} (3). Thus by \cite[Corollaries 1.4.8 and 1.4.6]{bjorner2005combinatorics} if $w\in W^{\mathrm{aff}}_F$ and $\underline{\alpha}\in \cM_F$, we have $\ell^{\mathrm{aff}}(ws_{\underline{\alpha}})=\ell^{\mathrm{aff}}(w)+1$, where $\ell^{\mathrm{aff}}$ denotes the length on $(W^{\mathrm{aff}},\{s_{\underline{\alpha}}\mid \underline{\alpha}\in \Sigma^{\mathrm{aff}}_C\})$. By \cite[(2.2.8)]{macdonald2003affine}, we deduce $w.\underline{\alpha}\in \Phi^{\mathrm{aff}}_+$, where $\Phi^{\mathrm{aff}}_+=\{\underline{\gamma}\in \Phi^{\mathrm{aff}}\mid \underline{\gamma}(C)\geq 0\}$. Since 
$w\in \langle s_{\underline{\beta}}\mid \underline{\beta}\in \Sigma^{\mathrm{aff}}_C\setminus\cM_F$, we can write 
$w.\underline{\alpha}=\underline{\alpha}+\sum_{\underline{\beta}\in \Sigma^{\mathrm{aff}}_C\setminus\cM_F} n_{\underline{\beta}}\underline{\beta}$, where the  $n_{\underline{\beta}}$ are non-negative integers (by \cite[(1.4) and 4.6 Proposition]{macdonald1971affine}). Therefore $w.\underline{\alpha}(E')>0$. Consequently $E'\subset E_1$ and since $E_1$ is is $W^{\mathrm{aff}}_F$-invariant, we deduce that $E\subset E_1$.

 Let $x\in E_1$. Then by Lemma~\ref{l_positivity_result_affine_root_systems}, there exists $w\in W_F^{\mathrm{aff}}$ such that $\underline{\alpha}(w.x)\geq 0$ for all $\underline{\alpha}\in \Sigma^{\mathrm{aff}}_C\setminus\cM_F$. By definition of $E_1$, we then have $w.x\in E'$, by Eq. \eqref{e_description_E'} and thus $x\in E$, which proves Eq. \eqref{e_explicit_description_residue}, when $\Phi$ is irreducible. 
 
Assume now that $\Phi$ is reducible. Using the notation of Eq. \eqref{e_decomposition_Phi}, with $\Phi$ instead of $\Psi$, we write $\A=\A_1\times \ldots \times \A_\ell$ and  $\Phi=\Phi_1\sqcup \ldots \sqcup \Phi_\ell$, where $\ell \in \Z_{\geq 2}$ and $\Phi_i$ is an irreducible root system in $\A_i$ for $i\in \llbracket 1,\ell\rrbracket$. Write $C=C_1\times \ldots\times C_\ell$ and $F=F_1\times \ldots \times F_\ell$.  Then with obvious notations, we have: \begin{align*}E&=(W_{F_1}^{\mathrm{aff}}\times \ldots \times W_{F_\ell}^{\mathrm{aff}}).(E'_1\times \ldots \times E_\ell')\\
 &=(W_{F_1}^{\mathrm{aff}}.E_1'\times \ldots  \times  W^{\mathrm{aff}}_{F_\ell}.E_{\ell}') \\&=\bigcap_{i=1}^\ell\bigcap_{\underline{\alpha}\in W^{\mathrm{aff}}_{F_i}. \cM_{F_i}} \underline{\alpha}^{-1}(\R_{>0}),\end{align*} 
 which proves the lemma.

\end{proof}

\begin{Lemma}\label{l_barycenter_orbit}
Let $F,F'$ be  faces of $(\A,\cH)$ such that $F'$ dominates $F$. Let $x\in F'$ and $y=\frac{1}{|W^{\mathrm{aff}}_F|}\sum_{w\in W_F^{\mathrm{aff}}} w.x$. Then $y\in F$ and $y$ is the orthogonal projection of $x$ on the support of $F$. 
\end{Lemma}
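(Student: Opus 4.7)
The plan is to prove the two claims ($y\in F$ and $y$ is the orthogonal projection of $x$ on $\supp(F)$) in separate stages. The first uses the dictionary between face fixators in $W^{\mathrm{aff}}$ and faces of $(\A,\cH)$ provided by Theorem~\ref{t_Weyl_group_Coxeter}(3) and Lemma~\ref{l_comparison_fixators}; the second is a direct orthogonality computation using the $W^v_\Phi$-invariance of $\langle\cdot\mid\cdot\rangle$.

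For the first stage, set $G:=W^{\mathrm{aff}}_F$. This group is finite (for any vertex $\lambda$ of $F$, the proof of Lemma~\ref{l_positivity_result_affine_root_systems} shows that $t_{-\lambda}Gt_\lambda=W^v_{\Phi_{F^v}}\subseteq W^v_\Phi$), so $y$ is well-defined and is $G$-invariant by reindexing the sum. For every $w\in G$, the subset $w(F')$ is again a face of $(\A,\cH)$; since $w$ pointwise fixes $\overline{F}$ by Theorem~\ref{t_Weyl_group_Coxeter}(3)(a)$\Leftrightarrow$(b) and $\overline{F'}\supseteq F$, we have $\overline{w(F')}=w(\overline{F'})\supseteq w(F)=F$, so $w(F')$ dominates $F$ and $w.x$ lies in the star $E$ of $F$. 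By Lemma~\ref{l_union_alcoves_containing_face}, $E$ is convex, hence $y\in E$. Let $F_y\in\sF(\cH)$ be the unique face containing $y$; then $F_y$ dominates $F$. Applying Theorem~\ref{t_Weyl_group_Coxeter}(3)(d)$\Rightarrow$(a) to the face $F_y$ and the fixed point $y\in F_y$ gives $G\subseteq W^{\mathrm{aff}}_{F_y}$. Conversely, picking an alcove $C$ dominating $F_y$ (which also dominates $F$ since $F\subseteq\overline{F_y}\subseteq\overline{C}$), Lemma~\ref{l_comparison_fixators} applied with $F_1=F_y$ and $F_2=F$ yields $W^{\mathrm{aff}}_{F_y}\subseteq W^{\mathrm{aff}}_F=G$. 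Therefore $W^{\mathrm{aff}}_F=W^{\mathrm{aff}}_{F_y}$, and a second application of Lemma~\ref{l_comparison_fixators} forces $F=F_y$, so $y\in F$.

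For the second stage, fix $\lambda\in F$. Every $w\in G$ fixes $\lambda$, so we may write $w(z)=\lambda+w_{\mathrm{lin}}(z-\lambda)$ for $z\in\A$, where $w_{\mathrm{lin}}\in W^v_\Phi$ is the linear part of $w$ and is therefore an orthogonal map for $\langle\cdot\mid\cdot\rangle$. Moreover, $w$ pointwise fixes $\overline{F}$ and hence $\supp(F)$, so $w_{\mathrm{lin}}$ pointwise fixes $V:=\di(\supp(F))$. For every $v\in V$, the orthogonality of $w_{\mathrm{lin}}$ together with $w_{\mathrm{lin}}(v)=v$ yield
\[
\langle w(x)-\lambda\mid v\rangle=\langle w_{\mathrm{lin}}(x-\lambda)\mid v\rangle=\langle x-\lambda\mid w_{\mathrm{lin}}^{-1}(v)\rangle=\langle x-\lambda\mid v\rangle.
\]
Averaging over $w\in G$ gives $\langle y-\lambda\mid v\rangle=\langle x-\lambda\mid v\rangle$, hence $\langle y-x\mid v\rangle=0$ for every $v\in V$. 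Combined with $y\in F\subseteq\supp(F)$, this is exactly the characterization of $y$ as the orthogonal projection of $x$ on $\supp(F)$.

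The main subtlety lies in the first stage: one must combine Theorem~\ref{t_Weyl_group_Coxeter}(3) with the equivalence in Lemma~\ref{l_comparison_fixators} to pin down $F_y=F$ from the equality of fixators, rather than settling for the weaker conclusion $y\in E\cap\supp(F)$ and having to argue separately (as was implicitly done in the proof of Proposition~\ref{p_description_faces_P0}) that this intersection reduces to $F$.
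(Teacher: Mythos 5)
Your proof is correct and follows essentially the same route as the paper: membership $y\in F$ via convexity of the star (Lemma~\ref{l_union_alcoves_containing_face}) together with the fixator comparison of Theorem~\ref{t_Weyl_group_Coxeter}(3) and Lemma~\ref{l_comparison_fixators}, and the projection claim via averaging an invariance identity for the $W^{\mathrm{aff}}_F$-orbit. The only cosmetic difference is that you phrase the orthogonality step with a fixed point $\lambda\in F$ and the direction $\di(\supp(F))$, whereas the paper pairs $x-y$ directly with $y-z$ for $z\in F$; the two computations are equivalent.
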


\begin{proof}
By Lemma~\ref{l_union_alcoves_containing_face}, $y$ belongs to a face $F''$ dominating $F$. We thus have $W^{\mathrm{aff}}_{F''}=W^{\mathrm{aff}}_{\overline{F''}}\subset W^{\mathrm{aff}}_{F}$. Moreover, $W^{\mathrm{aff}}_F$ fixes $y$ and thus by Theorem~\ref{t_Weyl_group_Coxeter}, $W^{\mathrm{aff}}_F$ fixes $F''$. Consequently $W^{\mathrm{aff}}_F=W^{\mathrm{aff}}_{F''}$ and hence $F=F''$, by Lemma~\ref{l_comparison_fixators}. In other words, $y\in F$. 

Let $z\in F$ and set $t=\langle x-y\mid y-z\rangle$. Then for $w\in W^{\mathrm{aff}}F$, we have $t=\langle x-y\mid w^{-1}.(y-z)\rangle=\langle w.x-w.y\mid y-z\rangle=\langle w.x-y\mid y-z\rangle$. Therefore \[|W^{\mathrm{aff}}_F|t=\sum_{w\in W^{\mathrm{aff}}_F} \langle w.x\mid y-z\rangle-|W^{\mathrm{aff}}_F|\langle y\mid y-z\rangle=|W^{\mathrm{aff}}_F|\langle y\mid y-z\rangle-|W^{\mathrm{aff}}_F|\langle y\mid y-z\rangle=0,\] which proves the lemma.
\end{proof}

\begin{Lemma}\label{l_star_vertex_intersected_face}
    Let $F$ be a face of $(\A,\cH)$, $\lambda\in \ve(F)$ and $E$ be the star of $\lambda$. Then $F=E\cap (\lambda+\R_{> 0}(F-\lambda))$.
\end{Lemma}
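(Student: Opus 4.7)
The plan is to exploit the bijection $\Theta:\sF_\lambda(\cH)\to\sF(\cH^0_\lambda)$ of Proposition~\ref{p_bijection_affine_faces_vectorial_faces_vertices}, which sends a face $F'\in\sF_\lambda(\cH)$ to its vectorial shadow at $\lambda$, namely $\R_{>0}(F'-\lambda)$. Writing $F^v:=\Theta(F)$, the identity to prove essentially asserts that inside the star $E$ the only face which meets the translated open cone $\lambda+F^v$ is $F$ itself, and this is a uniqueness statement that will follow immediately from the injectivity of $\Theta$ combined with the fact that vectorial faces partition $\A$.

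For the inclusion $F\subseteq E\cap (\lambda+\R_{>0}(F-\lambda))$, I first observe that $F$ dominates $\lambda$ by hypothesis, so $F\subseteq E$; and for every $f\in F$ the trivial decomposition $f=\lambda+1\cdot (f-\lambda)$ yields $F\subseteq \lambda+\R_{>0}(F-\lambda)$. For the reverse inclusion, let $x\in E\cap (\lambda+\R_{>0}(F-\lambda))$. The case $F=\{\lambda\}$ is trivial (both sides reduce to $\{\lambda\}$), so I assume $F\neq\{\lambda\}$; this forces $\lambda\notin F$, hence $0\notin F-\lambda$, hence $x\neq \lambda$. Since $x\in E$, there is a unique face $F_1\in\sF_\lambda(\cH)$ containing $x$, and $F_1\neq\{\lambda\}$. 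Taking the parameter $t=1$ in the definition of $\Theta(F_1)$ shows $x-\lambda\in \R_{>0}(F_1-\lambda)=\Theta(F_1)$, while by hypothesis $x-\lambda\in \R_{>0}(F-\lambda)=F^v$. Since faces of $(\A,\cH^0_\lambda)$ are pairwise disjoint, the non-empty intersection $\Theta(F_1)\cap F^v$ forces $\Theta(F_1)=F^v$, and bijectivity of $\Theta$ gives $F_1=F$, so $x\in F$.

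The only delicate point I expect is the bookkeeping around the degenerate case $F=\{\lambda\}$ and making sure that $x-\lambda$ actually lands in the open face $\Theta(F_1)$ rather than merely in its closure; both are handled once one observes that $F_1\neq\{\lambda\}$ as soon as $x\neq\lambda$, which is guaranteed by the partition of $\A$ into affine faces. No appeal to Lemmas on stars or to convexity of segments through $\lambda$ is needed: the entire content of the statement is already encoded in the correspondence $\Theta$.
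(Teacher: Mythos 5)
Your argument is correct and is essentially the paper's own proof: both take $x$ in the intersection, let $F_1$ be the (necessarily $\lambda$-dominating) face containing $x$, observe $x-\lambda\in\R_{>0}(F_1-\lambda)\cap\R_{>0}(F-\lambda)$, and conclude $F_1=F$ from the partition of $\A$ by the faces of $(\A,\cH^0_\lambda)$ together with the injectivity of $\Theta$ from Proposition~\ref{p_bijection_affine_faces_vectorial_faces_vertices}. Your separate treatment of $F=\{\lambda\}$ is harmless but not needed, since the same argument applies verbatim with $\R_{>0}(F-\lambda)=\{0\}$.
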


\begin{proof}
 The inclusion $F\subset E\cap (\lambda+\R_{> 0}(F-\lambda))$ is clear.  Let $x\in E\cap (\lambda+\R_{>0} (F-\lambda))$ and let $F_1$ be the face of $(\A,\cH)$ containing $x$. Then $F_1$ dominates $\lambda$ and $x-\lambda\in \R_{>0}(F_1-\lambda)\cap \R_{>0}(F-\lambda)$. As the faces of $(\A,\cH^0_\lambda)$ form a partition of $\A$, we deduce $\R_{>0}(F_1-\lambda)=\R_{>0}(F-\lambda)$  and thus $F_1=F$, by Proposition~\ref{p_bijection_affine_faces_vectorial_faces_vertices}. Therefore $E\cap (\lambda+\R_{> 0}(F-\lambda))\subset F$ and the lemma follows.
\end{proof}

\color{black}

\subsection{Affine weight polytopes}\label{ss_affine_weight_polytopes}
\subsubsection{Definition of affine weight polytopes}

Let $C_0$\index{c@$C_0$} be the fundamental alcove of $\A$, i.e., the alcove of $\A$ based at $0$ and contained in $C^v_f$.
Let us fix $\bb\in C_0$.
Let $C\in  \Alc(\cH)$. Then there exists $w\in W^{\mathrm{aff}}$ such that $C=w.C_0$. We then set $\bb_C=w.\bb\in C$\index{b@$\bb_C$}. The element $w$ is uniquely determined by $C$, since  $W^{\mathrm{aff}}$ acts simply transitively on the set of alcoves of $(\A,\cH)$. Thus $\bb_C$ is well-defined.

\begin{Definition}\label{d_affine_Weyl_polytope}
For $\lambda\in \ve(\A)$, we set $\overline{\cP(\lambda)}=\overline{\cP_{\bb,\Phi}(\lambda)}=\conv(\{\bb_C \mid C\in \Alc_\lambda(\cH)\})$. Then we call  $\overline{\cP(\lambda)}$ the \textbf{closed affine Weyl polytope associated with $\lambda$} (and $\bb$). The \textbf{(open) affine Weyl polytope associated with $\lambda$} is the interior of $\overline{\cP(\lambda)}$. By Lemma~\ref{l_transitive_actions_element_fixing_face}, \begin{equation}\label{e_affine_Weyl_polytope_1}
\overline{\cP(\lambda)}=\conv(W^{\mathrm{aff}}_\lambda.\bb_C)
\end{equation}
for any $C \in \Alc_\lambda(\cH)$.
\end{Definition}

Note that in order to avoid confusions  we use roman letters to denote the faces of $(\A,\cH)$ and curly letters for the faces of the $\overline{\cP(\lambda)}$.

\subsubsection{Description of the faces of the affine weight polytopes}

Let $\lambda\in \ve(\A)$.  We have $W^v_{\Phi_\lambda}=\langle s_\alpha\mid \alpha\in \Phi_\lambda \rangle\subset W^v_\Phi$.  We denote by $\bt_\lambda$ the translation on $\A$ by the vector $\lambda$ (this is a not a element of $W^{\mathrm{aff}}$ in general).

\begin{Lemma}\label{l_translation_fixator_0}
Let $\lambda\in \ve(\A)$. Then $W_\lambda^{\mathrm{aff}}=\bt_\lambda W^v_{\Phi_\lambda} \bt_\lambda^{-1}$. 
\end{Lemma}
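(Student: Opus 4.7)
The plan is to establish the equality by describing a generating set on each side. First, I would show that conjugation by $\bt_\lambda$ sends each vectorial reflection $s_\alpha$ for $\alpha \in \Phi_\lambda$ to an affine reflection in $W^{\mathrm{aff}}$ fixing $\lambda$. A direct computation gives, for $x\in \A$,
\[
\bt_\lambda s_\alpha \bt_\lambda^{-1}(x)=\lambda+s_\alpha(x-\lambda)=x-\bigl(\alpha(x)-\alpha(\lambda)\bigr)\alpha^\vee,
\]
so $\bt_\lambda s_\alpha \bt_\lambda^{-1}$ is the orthogonal reflection across the hyperplane $\alpha^{-1}(\{\alpha(\lambda)\})$. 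Since $\alpha\in\Phi_\lambda$ means $\alpha(\lambda)\in\Z$, this hyperplane lies in $\cH$ and passes through $\lambda$; hence this reflection lies in $W^{\mathrm{aff}}_\lambda$. Taking the group generated by all such reflections gives the inclusion $\bt_\lambda W^v_{\Phi_\lambda}\bt_\lambda^{-1}\subset W^{\mathrm{aff}}_\lambda$.

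For the reverse inclusion, I would use Theorem~\ref{t_Weyl_group_Coxeter}(3) applied to $F=\{\lambda\}$: picking any alcove $C$ with $\lambda\in\overline{C}$, the group $W^{\mathrm{aff}}_\lambda$ is generated by the reflections $s_H$ with $H$ a wall of $C$ containing $\lambda$. In particular, $W^{\mathrm{aff}}_\lambda\subset\langle s_H\mid H\in\cH,\ \lambda\in H\rangle$. Now every hyperplane $H\in\cH$ passing through $\lambda$ is of the form $\alpha^{-1}(\{k\})$ for some $\alpha\in\Phi$ and some $k\in\Z$ with $k=\alpha(\lambda)$; this forces $\alpha\in\Phi_\lambda$ and, by the computation above, $s_H=\bt_\lambda s_\alpha\bt_\lambda^{-1}$. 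Hence $W^{\mathrm{aff}}_\lambda\subset\bt_\lambda W^v_{\Phi_\lambda}\bt_\lambda^{-1}$.

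The two inclusions together give $W^{\mathrm{aff}}_\lambda=\bt_\lambda W^v_{\Phi_\lambda}\bt_\lambda^{-1}$. The only mildly delicate point is the appeal to Theorem~\ref{t_Weyl_group_Coxeter}(3), which guarantees that $W^{\mathrm{aff}}_\lambda$ is already generated by reflections through walls containing $\lambda$ (rather than merely containing such reflections); once that is in hand, the rest is the straightforward translation identity.
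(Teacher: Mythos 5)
Your proof is correct and follows essentially the same route as the paper: the translation identity $\bt_\lambda s_\alpha \bt_\lambda^{-1}=s_{\alpha^{-1}(\{\alpha(\lambda)\})}$ for $\alpha\in\Phi_\lambda$ gives one inclusion, and Theorem~\ref{t_Weyl_group_Coxeter}(3) (generation of $W^{\mathrm{aff}}_\lambda$ by reflections in walls through $\lambda$) gives the other. Your write-up is just slightly more explicit about the computation and about why every wall through $\lambda$ comes from some $\alpha\in\Phi_\lambda$.
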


\begin{proof}
Let $\alpha\in \Phi_\lambda$. Then $\bt_\lambda s_\alpha \bt_{\lambda}^{-1}$ is the orthogonal reflection fixing $\lambda+\alpha^{-1}(\{0\})=\alpha^{-1}(\alpha(\lambda))$. Therefore it belongs to $W^{\mathrm{aff}}_\lambda$. By Theorem~\ref{t_Weyl_group_Coxeter}, $W_\lambda^{\mathrm{aff}}$ is generated by the reflections fixing the walls containing  $\lambda$, which proves the lemma. 
\end{proof} 

Let $\lambda\in \ve(\A)$. Then $\overline{\cP(\lambda)}=\conv(\bt_\lambda W^v_{\Phi_\lambda}.\bt^{-1}.\bb_C)=\lambda+\conv(W^v_{\Phi_\lambda}.(-\lambda+\bb_C))$ for $C \in \Alc_\lambda(\cH)$. Moreover, $\alpha(\bb_C)\notin \Z$, for all $\alpha\in \Phi$ and thus $\alpha(-\lambda+\bb_C)\neq 0$ for all $\alpha\in \Phi_\lambda$. Therefore, with the notation of subsection~\ref{ss_vectorial_Weyl_polytope}, we have:  
\begin{equation}\label{e_affine_Weyl_polytope_2}
\overline{\cP(\lambda)}=\lambda+\overline{\cP^0_{-\lambda+\bb_C,\Phi_\lambda}}.
\end{equation}

\begin{Definition}\label{d_F_F}
Let $\lambda\in \ve(\A)$ and $F$ be a face of $(\A,\cH)$ dominating $\lambda$. We choose an alcove $C$ dominating $F$ and we set  $\cF_F=\Int_r(\conv (W^{\mathrm{aff}}_F.\bb_C))$\index{f@$\cF_F$}. This is well-defined by Lemma~\ref{l_transitive_actions_element_fixing_face}. 
\end{Definition}

Note that for $\lambda\in \ve(\A)$, we have $\cF_{\lambda}=\cP(\lambda)$.

\begin{Lemma}\label{l_inclusion_P(lambda)_union_alcoves}
\begin{enumerate}

\item Let $F$ be a face of $(\A,\cH)$. Then $\overline{\cF_F}$ is contained in the star of $F$. In particular, if $\lambda\in \ve(\A)$, then $\overline{\cP(\lambda)}=\overline{\cF_{\{\lambda\}}}$ is contained in the star of $\lambda$.

\item Let $B$ be a bounded subset of $\A$. Then $\{\lambda\in \ve(\A)\mid \overline{\cP(\lambda)}\cap B\neq \emptyset\}$ is finite. 
\end{enumerate}
\end{Lemma}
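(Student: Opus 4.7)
The plan is to deduce both statements directly from the convexity of the star (Lemma~\ref{l_union_alcoves_containing_face}) and the local finiteness of $\cH$.

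For (1), I would first observe that by Definition~\ref{d_F_F} together with the fact that the convex hull of a finite set is already closed, one has $\overline{\cF_F}=\conv(W^{\mathrm{aff}}_F.\bb_C)$ for any alcove $C$ dominating $F$. Since the star of $F$ is convex by Lemma~\ref{l_union_alcoves_containing_face}, it is therefore enough to check that every point $w.\bb_C$ with $w\in W^{\mathrm{aff}}_F$ lies in that star. By Theorem~\ref{t_Weyl_group_Coxeter}, such a $w$ pointwise fixes $\overline{F}$, so $F=w.F\subset w.\overline{C}=\overline{w.C}$; thus $w.C$ is an alcove dominating $F$, and $w.\bb_C\in w.C$ belongs to the star of $F$. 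Specializing to $F=\{\lambda\}$ yields the inclusion $\overline{\cP(\lambda)}\subset$ star of $\lambda$.

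For (2), I would first upgrade (1) to $\overline{\cP(\lambda)}\subset \bigcup_{C\in \Alc_\lambda(\cH)}\overline{C}$: a point of the star of $\lambda$ lies in some face $F\in\sF_\lambda(\cH)$, which in turn lies in $\overline{C}$ for some alcove $C$, and then $\lambda\in\overline{F}\subset \overline{C}$ forces $C\in\Alc_\lambda(\cH)$. Consequently, whenever $\overline{\cP(\lambda)}\cap B\neq \emptyset$, some closed alcove containing $\lambda$ meets $B$. It remains to check that only finitely many closed alcoves meet a bounded set $B$. Since $\Phi$ generates $\A^*$, choosing a subset of $\Phi$ forming a basis of $\A^*$ and intersecting the corresponding ``slabs'' shows that the fundamental alcove $C_0$ is bounded. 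Because $W^{\mathrm{aff}}$ acts simply transitively on alcoves by isometries of the chosen $W^v_\Phi$-invariant scalar product (Theorem~\ref{t_Weyl_group_Coxeter}), all alcoves have a common diameter and a common positive volume, and a bounded $B$ can be met by only finitely many of them. Each closed alcove, being a polytope, has finitely many vertices, so the set of vertices $\lambda$ with $\overline{\cP(\lambda)}\cap B\neq \emptyset$ is contained in a finite union of finite vertex sets, hence is itself finite.

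The main ``obstacle'' is essentially conceptual: once one sees that (1) is nothing more than the convexity of the star applied to the orbit $W^{\mathrm{aff}}_F.\bb_C$, part (2) reduces to a routine local finiteness count, the only mildly hidden input being that alcoves are bounded, which is built into the hypothesis that $\Phi$ generates $\A^*$. No delicate computation is required.
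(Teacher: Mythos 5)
Your proof is correct and follows essentially the same route as the paper: part (1) is the convexity of the star (Lemma~\ref{l_union_alcoves_containing_face}) applied to the orbit $W^{\mathrm{aff}}_F.\bb_C$, which lies in the star because each $w.\bb_C$ sits in the alcove $w.C$ dominating $F$, and part (2) is the same counting argument covering $B$ by finitely many closed alcoves, each with finitely many vertices. The only difference is cosmetic: you justify explicitly (boundedness of alcoves via slabs plus the isometric, simply transitive $W^{\mathrm{aff}}$-action and a volume bound) the step that only finitely many closed alcoves meet a bounded set, which the paper simply asserts.
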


\begin{proof}
1. Let $E$ be the star of $F$.  We have $\overline{\cF_{F}}=\conv(W^{\mathrm{aff}}_F.\bb_C)$, for any alcove $C$ dominating $F$. As $\bb_C\in E$ and as $E$ is stabilized by $W^{\mathrm{aff}}_F$, we deduce $W^{\mathrm{aff}}_F.\bb_C\subset E$. By Lemma~\ref{l_union_alcoves_containing_face}, $E$ is convex and thus $\overline{\cF_F}\subset E$.

2. As $B$ is bounded, there exists a finite set of closed alcoves $F$ such that $B\subset \bigcup_{\overline{C}\in F} \overline{C}$. Let $F'$ be the set of closed alcoves containing a vertex of an element of $F$. Then by statement (1), if $\lambda\in \ve(\A)$ is such that $B\cap\overline{\cP(\lambda)}$ is non-empty, then $\lambda\in \bigcup_{\overline{C}\in F'}\overline{C}\cap \ve(\A)$, which is finite. This proves  statement (2). 
\end{proof}

For  a polytope $\cQ$ of $\A$, we denote by $\mathrm{Face}(\cQ)$\index{f@$\mathrm{Face(\cQ)}$} its set of faces. We equip $\mathrm{Face}(\cQ)$ with the dominance order $\preceq$.
Recall that, if $E$ is an affine subspace of $\A$, its direction $\di(E)$, is the vector subspace $\{x-x'\mid x,x'\in E\}$ of $\A$. (I prefer to recall the definition since it is far.)

\begin{Proposition}\label{p_description_faces_affine} (see Figure~\ref{f_faces_P_affine})
    Let $\lambda\in \ve(\A)$. Let $\cP^0=\cP(\lambda)-\lambda$.   \begin{enumerate}
        \item The map $\mathrm{Face}(\overline{\cP(\lambda)})\rightarrow \mathrm{Face}(\overline{\cP^0})$ given by $\cF\mapsto \cF-\lambda$ is an isomorphism of partially ordered sets.

        \item Let $C\in \Alc_\lambda(\cH)$ and $C^v=\R_{>0}(C-\lambda)$. Set $\bb_{C^v}^0=\bb_{C}-\lambda$. For each chamber $C_1^v$ of $(\A,\cH^0_\lambda)$, set $\bb_{C_1^v}^0=w.\bb_{C^v}^0$, where $w\in W^v_{\Phi_\lambda}$ satisfies $C_1^v=w.C^v$. For $F^v\in \sF(\cH^0_\lambda)$, set $\cF^0_{F^v}=\Int_r(\conv(W^v_{F^v,\Phi_\lambda}.\bb_{C_1^v}))$, where $C_1^v$ is a chamber of $(\A,\cH^0_\lambda)$ dominating $F^v$. Then for every $F^v\in \sF(\cH^0_\lambda)$, we have  $\cF^0_{F^v}+\lambda=\cF_F$, where $F\in \sF_\lambda(\cH)$ satisfies $F^v=\R_{>0}(F-\lambda)$. 

    \item  The map  $F_1\mapsto \cF_{F_1}$ is an isomorphism between $(\sF_\lambda(\cH),\preceq)$ and $(\mathrm{Face}(\overline{\cP(\lambda)}),\succeq)$. Moreover, $\di(\supp(\cF_{F_1}))=(\di(\supp(F_1)))^\perp$ and  $\dim(\supp(\cF_{F_1}))=\mathrm{codim}_\A(F_1)$ for every $F_1\in \sF_\lambda(\cH)$.

    \end{enumerate} 
\end{Proposition}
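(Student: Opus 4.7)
The plan is to reduce the statement to the vectorial Proposition~\ref{p_description_faces_P0}, applied to the finite root system $\Phi_\lambda$, and transported through the translation $\bt_\lambda$. The key preliminary I would establish is the identification
\[
W^{\mathrm{aff}}_F \;=\; \bt_\lambda\, W^v_{F^v,\Phi_\lambda}\, \bt_\lambda^{-1}
\]
for every $F\in\sF_\lambda(\cH)$, where $F^v=\R_{>0}(F-\lambda)\in\sF(\cH^0_\lambda)$ under the isomorphism of Proposition~\ref{p_bijection_affine_faces_vectorial_faces_vertices}. This follows from Theorem~\ref{t_Weyl_group_Coxeter}(3): a wall $\alpha^{-1}(\{k\})$ contains $F$ iff $\alpha\in\Phi_\lambda$, $k=-\alpha(\lambda)$ and $\alpha(F^v)=\{0\}$, and the reflection through such a wall is $\bt_\lambda s_\alpha \bt_\lambda^{-1}$, so by Lemma~\ref{l_translation_fixator_0} the sets of generators match.

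Assertion (1) is then a general polytope fact: translation by $-\lambda$ is an affine isomorphism of $\A$ sending $\overline{\cP(\lambda)}$ onto $\overline{\cP^0}$, hence induces an order-isomorphism on face lattices. For (2), I would pick an alcove $C$ dominating $F$; by Proposition~\ref{p_bijection_affine_faces_vectorial_faces_vertices} the chamber $C^v=\R_{>0}(C-\lambda)$ dominates $F^v$, so $\bb^0_{C^v}=\bb_C-\lambda$. Since $\alpha(\bb_C)\notin\Z$ for every $\alpha\in\Phi$ while $\alpha(\lambda)\in\Z$ for $\alpha\in\Phi_\lambda$, the vector $\bb_C-\lambda$ is regular for $\Phi_\lambda$, so the setup of Section~\ref{ss_vectorial_Weyl_polytope} applies with this base point and the resulting vectorial weight polytope is exactly $\overline{\cP(\lambda)}-\lambda$. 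Combining the identification above with the definitions yields
\[
\cF_F \;=\; \Int_r\bigl(\conv(W^{\mathrm{aff}}_F.\bb_C)\bigr) \;=\; \lambda + \Int_r\bigl(\conv(W^v_{F^v,\Phi_\lambda}.(\bb_C-\lambda))\bigr) \;=\; \lambda+\cF^0_{F^v}.
\]

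For (3), I would compose three order-bijections: the inverse of $\Theta:(\sF_\lambda(\cH),\preceq)\to(\sF(\cH^0_\lambda),\preceq)$ from Proposition~\ref{p_bijection_affine_faces_vectorial_faces_vertices}, the order-reversing bijection $F^v\mapsto\cF^0_{F^v}$ from Proposition~\ref{p_description_faces_P0}(1) applied to $\Phi_\lambda$ with base point $\bb_C-\lambda$, and the translation bijection $\cF^0\mapsto\lambda+\cF^0$ from (1). By (2) this composition is exactly $F_1\mapsto\cF_{F_1}$, and the order is reversed exactly once, giving the desired isomorphism $(\sF_\lambda(\cH),\preceq)\to(\mathrm{Face}(\overline{\cP(\lambda)}),\succeq)$. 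The direction and dimension assertions then follow from Proposition~\ref{p_description_faces_P0}(3), which gives $\di(\supp(\cF^0_{F^v}))=(F^v)^\perp$: since $\vect(F^v)=\di(\supp(F_1))$ and translation preserves directions, we obtain $\di(\supp(\cF_{F_1}))=(\di(\supp(F_1)))^\perp$, whence $\dim(\supp(\cF_{F_1}))=\mathrm{codim}_\A(F_1)$. The only mildly delicate step is the initial identification of $W^{\mathrm{aff}}_F$ with $\bt_\lambda W^v_{F^v,\Phi_\lambda}\bt_\lambda^{-1}$; once it is in hand, the rest is formal bookkeeping on top of the vectorial result.
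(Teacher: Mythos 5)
Your proposal is correct and takes essentially the same route as the paper's proof: both reduce to Proposition~\ref{p_description_faces_P0} for $\Phi_\lambda$ transported by $\bt_\lambda$, using Proposition~\ref{p_bijection_affine_faces_vectorial_faces_vertices} together with the identification $W^{\mathrm{aff}}_F=\bt_\lambda W^v_{F^v,\Phi_\lambda}\bt_\lambda^{-1}$ coming from Lemma~\ref{l_translation_fixator_0} and Theorem~\ref{t_Weyl_group_Coxeter}, and then obtain statement (3) as the composition of the three order-(anti)isomorphisms. The only detail the paper makes explicit that you leave implicit is the base-point compatibility $\bb_{C_1}=\lambda+\bb^0_{C_1^v}$ for an arbitrary alcove $C_1$ dominating $F$ (obtained by writing $C_1=w_1.C$ with $w_1\in W^{\mathrm{aff}}_\lambda$ and taking linear parts), which is the one-line computation needed to match the statement's normalization of the $\bb^0_{C_1^v}$ with Definition~\ref{d_F_F}.
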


 \begin{figure}[h]
 \centering
 \includegraphics[scale=0.35]{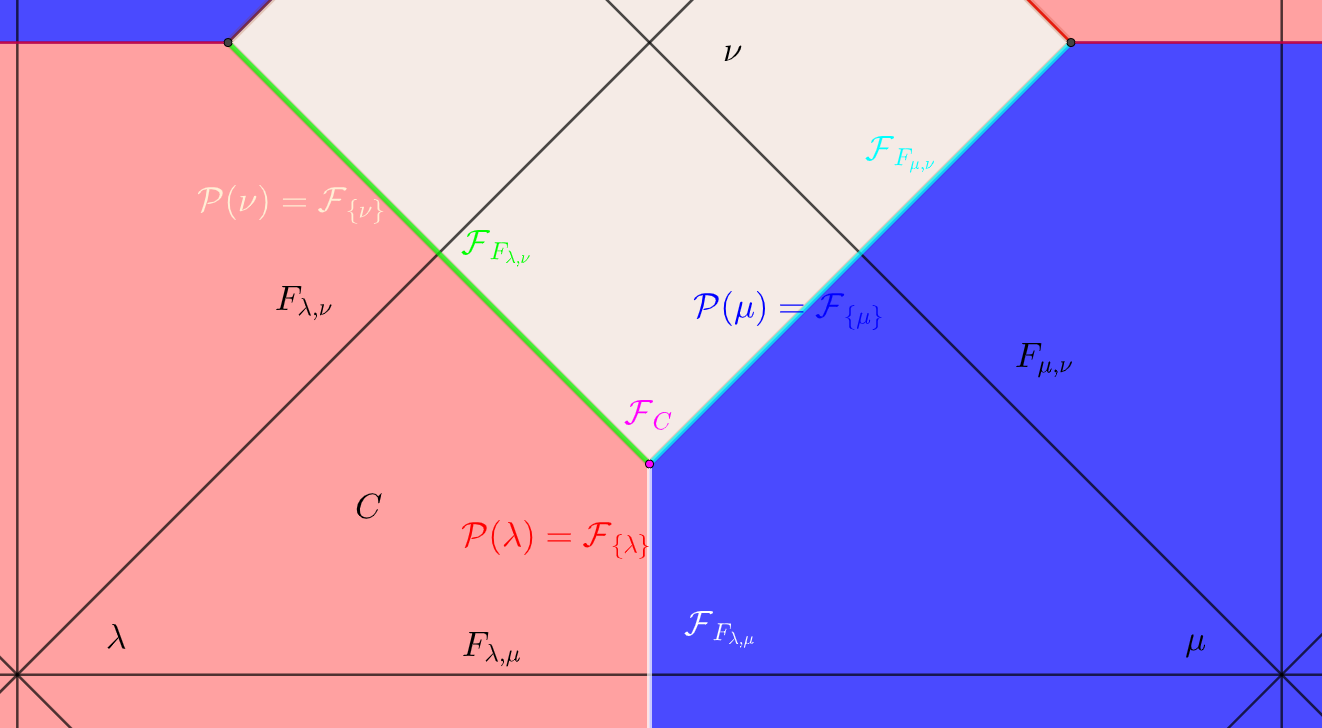}
 \caption{Illustration of Proposition~\ref{p_description_faces_affine}: the case of $\mathrm{B}_2$.}
 \label{f_faces_P_affine}
\end{figure}

\begin{proof}
Statement (1) is clear. 

Now, we prove statement (2). By Proposition~\ref{p_bijection_affine_faces_vectorial_faces_vertices}, $C^v$ is a chamber of $(\A,\cH^0_\lambda)$. By Eq. \eqref{e_affine_Weyl_polytope_2}, we have $\overline{\cP(\lambda)-\lambda}=\overline{\cP^0}=\overline{\cP^0_{\bb^0_{C^v},\Phi_\lambda}}=\conv(W^v_{\Phi_\lambda}.\bb^0_{C^v})$. Let $F^v_1\in \sF(\cH^0_\lambda)$ and let $C_1^v$ be a chamber of $(\A,\cH^0_\lambda)$ dominating $F^v_1$.  Let $\cF=\cF^0_{F_1^v}+\lambda$.   Let $F_1,C_1\in \sF_\lambda(\A)$ be such that $F^v_1=\R_{>0}(F_1-\lambda)$ and $C_1^v=\R_{>0}(C_1-\lambda)$.  

Using Lemma~\ref{l_transitive_actions_element_fixing_face}, we get $C_1=w_1.C$, with $w_1\in W^{\mathrm{aff}}_\lambda$. Then we have: \[ w_1.C-\lambda = C_1- \lambda  =w_1.C-w_1.\lambda  =\vec{w_1}.(C-\lambda),\] where $\vec{w_1}$ is the linear part of $w_1$. By Lemma~\ref{l_translation_fixator_0}, $\vec{w_1}\in W^v_{\Phi_\lambda}$.  Moreover we have:\[\R_{>0} \vec{w_1}.(C-\lambda)=\vec{w_1}.C^v=\R_{>0}(C_1-\lambda)=C_1^v.\] 

Hence: \[\overline{\cF}=\bt_\lambda.\overline{\cF^0_{F_1^v}}=\conv(\bt_\lambda.W^v_{F^v_1,\Phi_\lambda}.\bb^0_{C^v})=\conv\left(W^{\mathrm{aff}}_{\bt_\lambda.F^v_1}.(\lambda+\bb^0_{C^v_1})\right)=\conv(W^{\mathrm{aff}}_{F_1}.(\lambda+\bb_{C^v_1}^0)),\] where $W^v_{F_1^v,\Phi_\lambda}$ is the fixator of $F_1^v$ in $W^v_{\Phi_\lambda}$. Moreover,  $\bb_{C_1}=w_1.\bb_C=w_1.(\lambda+\bb_{C^v}^0)=w_1.\lambda+\vec{w_1}.\bb_{C^v}^0=\lambda+\bb_{C_1^v}^0$, which proves that $\overline{\cF}=\conv(W^{\mathrm{aff}}_{F_1}.\bb_{C_1})=\overline{\cF_{F_1}}$. Therefore $\cF=\Int_r(\overline{\cF_{F_1}})=\cF_{F_1}$, which proves  (2). 

Finally, we prove statement (3). Indeed, by statements (1), (2) and by Propositions~\ref{p_bijection_affine_faces_vectorial_faces_vertices} and \ref{p_description_faces_P0}, the map $F_1\mapsto \cF_{F_1}$ can be decomposed as  \[F_1\mapsto \R_{>0}(F_1-\lambda)\mapsto \cF_{\R_{>0}(F_1-\lambda)}^0\mapsto \cF_{\R_{>0}(F_1-\lambda)}^0+\lambda=\cF_{F_1}\] and then it is an isomorphism between $(\sF_\lambda(\cH),\preceq)$ and $(\mathrm{Face}(\overline{\cP(\lambda)}),\succeq)$. The last assertion follows from Proposition~\ref{p_description_faces_P0} and statement (2).
\end{proof}

\subsubsection{Faces and intersection of affine weight polytopes}

Recall that, for any pair $(\lambda, \mu)\in \A^2$, we write $[\lambda, \mu]=\lbrace t \lambda + (1-t)\mu \mid t \in [0,1] \rbrace$ and $]\lambda, \mu[=\mathrm{Int}_r([\lambda, \mu])=\lbrace t \lambda + (1-t)\mu \mid t \in ]0,1[\rbrace$. 

\begin{Lemma}\label{l_intersection_P(lambda)_P(mu)_adjacent}
Let $\lambda,\mu\in \ve(\A)$ be adjacent. Then $\overline{\cP(\lambda)}\cap \overline{\cP(\mu)}$ is a closed facet of both $\overline{\cP(\lambda)}$ and $\overline{\cP(\mu)}$. More precisely, \[\overline{\cP(\lambda)}\cap \overline{\cP(\mu)}=\conv(W_{[\lambda,\mu]}^{\mathrm{aff}}.\bb_C),\] where $C$ is any alcove dominating $]\lambda,\mu[$.
\end{Lemma}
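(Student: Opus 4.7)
The plan is as follows.  Set $F := \,]\lambda,\mu[$, the open segment joining $\lambda$ and $\mu$.  Adjacency of $\lambda$ and $\mu$ means that $F$ is a $1$-dimensional face of $(\A,\cH)$ dominating both $\{\lambda\}$ and $\{\mu\}$.  Choose any alcove $C \in \Alc_F(\cH)$; by Lemma~\ref{l_transitive_actions_element_fixing_face} the set $\overline{\cF_F} = \conv(W^{\mathrm{aff}}_F.\bb_C)$ does not depend on that choice.  Since $F$ dominates $\lambda$ and $\dim F = 1$, Proposition~\ref{p_description_faces_affine}~(3) identifies $\cF_F$ with a face of $\overline{\cP(\lambda)}$ of support-dimension $\mathrm{codim}_\A(F) = \dim \A - 1$.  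As the same statement applied to the minimal face $\{\lambda\}$ shows that $\overline{\cP(\lambda)}$ is full-dimensional, $\cF_F$ is in fact a closed facet of $\overline{\cP(\lambda)}$.  The symmetric argument with $\mu$ shows that $\cF_F$ is also a closed facet of $\overline{\cP(\mu)}$.  This immediately gives the easy inclusion $\overline{\cF_F} \subset \overline{\cP(\lambda)} \cap \overline{\cP(\mu)}$; only the reverse inclusion remains.

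For the reverse inclusion, I will locate a hyperplane $H$ that supports the common facet $\cF_F$ in both polytopes, and show that $\overline{\cP(\lambda)}$ and $\overline{\cP(\mu)}$ lie in opposite closed half-spaces with respect to $H$.  Via Equation~\eqref{e_affine_Weyl_polytope_2} and Proposition~\ref{p_description_faces_affine}~(2), the facet $\cF_F - \lambda$ of $\overline{\cP^0_{\bb_C - \lambda,\Phi_\lambda}}$ corresponds to the $1$-dimensional vectorial face $F^v := \R_{>0}(F - \lambda) = \R_{>0}(\mu - \lambda)$ of $(\A,\cH^0_\lambda)$.  By Proposition~\ref{p_bourbaki_vectorial_faces}~(2), $F^v = \R_{>0}\qp_\beta$ for some fundamental coweight $\qp_\beta$ of $\Phi_\lambda$ (relative to the basis of $\Phi_\lambda$ associated with the chamber $\R_{>0}(C - \lambda)$), so $\qp_\beta$ is a positive scalar multiple of $\mu - \lambda$.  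Lemma~\ref{p_facets_P}~(2) together with Lemma~\ref{l_scalar_product_coroots_fundamental_weights} then identifies the support of $\cF_F$ inside $\overline{\cP(\lambda)}$ as
\[
  H := \bb_C + \qp_\beta^\perp = \bb_C + (\mu - \lambda)^\perp.
\]
Running the same analysis from $\mu$: the corresponding vectorial face is $\R_{>0}(F - \mu) = \R_{>0}(\lambda - \mu) = \R_{>0}\qp_{\beta'}$ for a fundamental coweight $\qp_{\beta'}$ of $\Phi_\mu$, now a positive multiple of $\lambda - \mu$; the same formula recovers the identical hyperplane $H$ as the support of $\cF_F$ inside $\overline{\cP(\mu)}$.

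To fix the sides, I apply Lemma~\ref{l_inclusion_cP_CC} to $\Psi = \Phi_\lambda$ with base point $\bb_C - \lambda$, obtaining $\overline{\cP(\lambda)} \subset \bb_C - \overline{\CC_{\Phi_\lambda}}$; pairing this inclusion with $\qp_\beta$ via Lemma~\ref{l_scalar_product_coroots_fundamental_weights} (so that each positive coroot $\alpha^\vee$ pairs non-negatively with $\qp_\beta$) translates to
\[
  \overline{\cP(\lambda)} \subset \bigl\{\, y \in \A \bigm| \langle y - \bb_C,\, \mu - \lambda\rangle \leq 0 \,\bigr\}.
\]
The symmetric computation for $\mu$, using that $\qp_{\beta'}$ is a negative scalar multiple of $\qp_\beta$, gives
\[
  \overline{\cP(\mu)} \subset \bigl\{\, y \in \A \bigm| \langle y - \bb_C,\, \mu - \lambda\rangle \geq 0 \,\bigr\}.
\]
Intersecting, $\overline{\cP(\lambda)} \cap \overline{\cP(\mu)} \subset H$, and since $\cF_F$ is the face of $\overline{\cP(\lambda)}$ cut out by the supporting hyperplane $H$, one concludes $\overline{\cP(\lambda)} \cap \overline{\cP(\mu)} \subset \overline{\cP(\lambda)} \cap H = \overline{\cF_F}$, which combined with the first inclusion yields the asserted equality $\overline{\cP(\lambda)} \cap \overline{\cP(\mu)} = \overline{\cF_F} = \conv(W^{\mathrm{aff}}_F.\bb_C)$.

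The main obstacle in the argument is the sign bookkeeping in the second paragraph: one must verify that the fundamental coweights $\qp_\beta$ (attached to $F$ viewed from $\lambda$) and $\qp_{\beta'}$ (attached to $F$ viewed from $\mu$) point in genuinely opposite directions along the line $\supp(F)$, because that is precisely what forces the two containing half-spaces for $\overline{\cP(\lambda)}$ and $\overline{\cP(\mu)}$ to be opposite rather than coinciding.  Once this directional fact is in hand, the remainder of the proof is a direct translation, via Equation~\eqref{e_affine_Weyl_polytope_2}, of the vectorial results already established in Section~\ref{ss_vectorial_Weyl_polytope}.
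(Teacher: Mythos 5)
Your proof is correct, and its skeleton is the same as the paper's: first identify $\overline{\cF_F}=\conv(W^{\mathrm{aff}}_{[\lambda,\mu]}.\bb_C)$ as a closed facet of both polytopes via Proposition~\ref{p_description_faces_affine}, then show the two polytopes lie in opposite closed half-spaces bounded by the supporting hyperplane $H$ of this common facet, so that the intersection is forced into $H$ and hence equals the facet. Where you diverge is in how the sides are fixed: the paper invokes Lemma~\ref{l_barycenter_orbit} to get $\lambda\in\overline{\cP(\lambda)}$, $\mu\in\overline{\cP(\mu)}$ and to place the orthogonal projection of $\bb_C$ inside $]\lambda,\mu[$, and then simply observes that each polytope must lie on the side of its own facet hyperplane containing its defining vertex; you instead derive the two half-space inclusions explicitly from the cone bound of Lemma~\ref{l_inclusion_cP_CC} (applied to $\Phi_\lambda$ and $\Phi_\mu$ at the base point $\bb_C$) paired against the fundamental coweight $\qp_\beta\in\R_{>0}(\mu-\lambda)$ via Lemma~\ref{l_scalar_product_coroots_fundamental_weights}. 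Both mechanisms are legitimate and rest on the vectorial results of Section~\ref{ss_vectorial_Weyl_polytope}; yours is slightly more computational but bypasses the barycenter lemma, while the paper's is shorter once Lemma~\ref{l_barycenter_orbit} is available. Also note that the ``sign bookkeeping'' you single out as the main obstacle is in fact immediate: viewed from $\lambda$ the face is $\R_{>0}(F-\lambda)=\R_{>0}(\mu-\lambda)$ and viewed from $\mu$ it is $\R_{>0}(\lambda-\mu)$, so $\qp_\beta$ and $\qp_{\beta'}$ are opposite in direction by construction, exactly as your Step~2 already records.
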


\begin{proof}
As $\lambda$ and $\mu$ are adjacent, $]\lambda,\mu[$ is a face of some alcove $C$. By Proposition~\ref{p_description_faces_affine}, $\overline{\cF}:=\conv(W_{[\lambda,\mu]}^{\mathrm{aff}}.\bb_C)$ is a closed facet of both $\overline{\cP(\lambda)}$ and $\overline{\cP(\mu)}$. It remains to prove that $\lambda$ and $\mu$ are on opposite sides of  $\cF$. By Lemma~\ref{l_barycenter_orbit}, $\lambda=\frac{1}{|W^{\mathrm{aff}}_\lambda|}\sum_{w\in W^{\mathrm{aff}}_\lambda}w.\lambda\in \overline{\cP(\lambda)}$ and similarly, $\mu\in \overline{\cP(\mu)}$. By Lemma~\ref{l_barycenter_orbit} applied with $F=]\lambda,\mu[$, the orthogonal projection $x$ of $\bb_C$ on the line containing $]\lambda,\mu[$ belongs to $]\lambda,\mu[ \cap \overline{\cF}$. The support of $\cF$ is the hyperplane $H=x+(\R(\mu-\lambda))^\perp$ by Proposition \ref{p_description_faces_affine}(3). Then $\overline{\cP(\lambda)}$ (resp. $\overline{\cP(\mu)}$) is contained in the half-space of $\A$ delimited by $H$ and containing $\lambda$ (resp. $\mu$), which proves the lemma. 
\end{proof}

\begin{Lemma}\label{l_intersection_neighbour_polytopes}
Let $\lambda,\mu\in \ve(\A)$ and assume that there exists an alcove dominating both $\lambda$ and $\mu$. Then $\overline{\cP(\lambda)} \cap \overline{\cP(\mu)}$ is a face of $\overline{\cP(\lambda)}$. Moreover, if $C$ is an alcove dominating $\lambda$ and $\mu$, then $\overline{\cP(\lambda)} \cap \overline{\cP(\mu)}=\conv(W^{\mathrm{aff}}_{[\lambda,\mu]}.\bb_C)=\conv(W^{\mathrm{aff}}_{F}.\bb_C)$, where $F$ is the smallest face dominating $\{\lambda,\mu\}$ (whose existence is provided by Lemma~\ref{l_enclosure_subset_alcove}).

    

\end{Lemma}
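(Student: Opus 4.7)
The plan is in three parts: a group-theoretic identification of the two right-hand sides of the statement, the easy inclusion, and then the delicate reverse inclusion, which I would handle by reducing to Lemma~\ref{l_intersection_P(lambda)_P(mu)_adjacent} through the decomposition of $\Phi$ into irreducible factors.

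Any affine automorphism fixing two distinct points fixes the segment joining them, so $W^{\mathrm{aff}}_{[\lambda,\mu]}=W^{\mathrm{aff}}_\lambda\cap W^{\mathrm{aff}}_\mu$, and Lemma~\ref{l_fixator_subset_alcove} applied to $J=\{\lambda,\mu\}\subset\overline{C}$ identifies this common fixator with $W^{\mathrm{aff}}_F$; this yields the second equality of the statement. Setting $\overline{\cF_F}:=\conv(W^{\mathrm{aff}}_F.\bb_C)$, the face statement of Proposition~\ref{p_description_faces_affine} applied to each of $\overline{\cP(\lambda)}$ and $\overline{\cP(\mu)}$ (which is possible because $F$ dominates both $\lambda$ and $\mu$) shows that $\overline{\cF_F}$ is a closed face of both polytopes. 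The inclusion $W^{\mathrm{aff}}_F\subset W^{\mathrm{aff}}_\lambda\cap W^{\mathrm{aff}}_\mu$, together with Eq.~\eqref{e_affine_Weyl_polytope_1} and the convexity of both polytopes, immediately yields the easy inclusion $\overline{\cF_F}\subset\overline{\cP(\lambda)}\cap\overline{\cP(\mu)}$.

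For the reverse inclusion, the crucial observation is that when $\Phi$ is irreducible every alcove of $(\A,\cH)$ is a simplex, so any two distinct vertices of $\overline{C}$ are the endpoints of an edge of $\overline{C}$; in that case $\lambda$ and $\mu$ are adjacent in the sense of Lemma~\ref{l_intersection_P(lambda)_P(mu)_adjacent}, one has $F=]\lambda,\mu[$, and the statement is precisely Lemma~\ref{l_intersection_P(lambda)_P(mu)_adjacent}. To pass to a general $\Phi$, I would use the decomposition $\A=\bigoplus_{i=1}^\ell\A_i$, $\Phi=\bigsqcup_{i=1}^\ell\Phi_i$ of Eq.~\eqref{e_decomposition_Phi}, together with the induced factorizations $C=\prod_i C_i$, $\lambda=(\lambda_i)_i$, $\mu=(\mu_i)_i$, $\bb_C=(\bb_{C_i})_i$, and the observation that $W^{\mathrm{aff}}_F$ is the product of the corresponding fixators in the affine Weyl groups of the individual factors, indexed by the smallest face $F_i$ of $(\A_i,\Phi_i)$ whose closure contains $\{\lambda_i,\mu_i\}$. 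Since the convex hull of a Cartesian product of finite sets is the product of their convex hulls, Eq.~\eqref{e_affine_Weyl_polytope_1} yields
\[\overline{\cP(\lambda)}\cap\overline{\cP(\mu)}=\prod_{i=1}^\ell\left(\overline{\cP(\lambda_i)}\cap\overline{\cP(\mu_i)}\right),\]
where $\overline{\cP(\lambda_i)}$ denotes the affine weight polytope attached to $\lambda_i$ in the factor $(\A_i,\Phi_i)$, and similarly for $\mu_i$. In each factor, either $\lambda_i=\mu_i$ (the two polytopes coincide and $F_i=\{\lambda_i\}$) or $\lambda_i$ and $\mu_i$ are distinct hence adjacent vertices of the simplex $\overline{C_i}$; in either case the irreducible case above produces $\conv(W^{\mathrm{aff}}_{F_i}.\bb_{C_i})$ for the $i$-th factor. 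Reassembling the product recovers $\conv(W^{\mathrm{aff}}_F.\bb_C)=\overline{\cF_F}$, which closes the reverse inclusion and, combined with the face assertion above, also proves that the intersection is a face of $\overline{\cP(\lambda)}$.

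The main obstacle is the reverse inclusion when $\dim F\geq 2$, a situation that only arises in the reducible setting. Routing through the factorization~\eqref{e_decomposition_Phi} is what makes the proof clean, since in each irreducible component Lemma~\ref{l_intersection_P(lambda)_P(mu)_adjacent} does all the geometric work; the only delicate point is to verify that $F$, $W^{\mathrm{aff}}_F$, $\bb_C$ and both affine weight polytopes factor compatibly through the decomposition, which is routine once the hypothesis that $C$ dominates both vertices is brought to bear.
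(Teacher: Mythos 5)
Your proposal is correct and takes essentially the same route as the paper: the irreducible case is exactly Lemma~\ref{l_intersection_P(lambda)_P(mu)_adjacent} (alcoves being simplices, distinct vertices are adjacent), and the general case is handled by factoring the alcove, the vertices, $\bb_C$, the fixators and the weight polytopes through the decomposition~\eqref{e_decomposition_Phi} and reassembling the products. The only cosmetic difference is that you identify $W^{\mathrm{aff}}_{[\lambda,\mu]}=W^{\mathrm{aff}}_F$ directly via Lemma~\ref{l_fixator_subset_alcove} and state the easy inclusion separately, whereas the paper reads these equalities off the product description; both are valid.
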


\begin{proof}
If $\Phi$ is irreducible, then this is Lemma~\ref{l_intersection_P(lambda)_P(mu)_adjacent}, so we assume that $\Phi$ is reducible. We use the same notation as in Eq. \eqref{e_decomposition_Phi}, with $\Phi$ instead of $\Psi$.  Write $\A=\A_1\oplus\ldots\oplus \A_\ell$ and  $\Phi=\bigsqcup_{i=1}^\ell\Phi_i$. We have  $W^{\mathrm{aff}}= W^{\mathrm{aff}}_{\Phi_1}\times \ldots \times W^{\mathrm{aff}}_{\Phi_\ell}$, where $W^{\mathrm{aff}}_{\Phi_i}$ is the affine  Weyl group of $(\A_i,\Phi_i)$, for $i\in \llbracket 1,\ell\rrbracket$. Write $\bb_C=(\bb_1,\ldots,\bb_\ell)$. We have $\ve(\A)=\ve(\A_1)\times\ldots \times \ve(\A_\ell)$ and if $(\tau_1,\ldots,\tau_\ell)\in \ve(\A)$, we have $\cP_{\bb,\Phi}((\tau_1,\ldots,\tau_\ell))=\cP_{\bb_1,\Phi_1}(\tau_1)\times \ldots \times \cP_{\bb_\ell,\Phi_\ell}(\tau_\ell)$. Write  $\lambda=(\lambda_1,\ldots,\lambda_\ell)$ and $\mu=(\mu_1,\ldots,\mu_\ell)$. Then for all $i\in \llbracket 1,\ell\rrbracket$, either $\lambda_i=\mu_i$, or $\lambda_i$ and $\mu_i$ are adjacent. By Lemma \ref{l_intersection_P(lambda)_P(mu)_adjacent}, we get that:
$$\overline{\cP_{\bb_i,\Phi_i}(\lambda_i)}\cap \overline{\cP_{\bb_i,\Phi_i}(\mu_i)}=\conv(W_{\Phi_i,[\lambda_i,\mu_i]}^{\mathrm{aff}}.\bb_i)$$
and that this intersection is a facet of both $\cP_{\bb_i,\Phi_i}(\lambda_i)$ and $\cP_{\bb_i,\Phi_i}(\mu_i)$. We deduce that:

$$
    \overline{\cP(\lambda)}\cap \overline{\cP(\mu)}=\prod_{i=1}^\ell (\overline{\cP_{\bb_i,\Phi_i}(\lambda_i)}\cap \overline{\cP_{\bb_i,\Phi_i}(\mu_i)})=\prod_{i=1}^\ell \conv(W_{\Phi_i,[\lambda_i,\mu_i]}^{\mathrm{aff}}.\bb_i)
$$
is a face of both $\cP(\lambda)$ and $\cP(\mu)$. The equalities $$\overline{\cP(\lambda)} \cap \overline{\cP(\mu)}=\conv(W^{\mathrm{aff}}_{[\lambda,\mu]}.\bb_C)=\conv(W^{\mathrm{aff}}_{F}.\bb_C)$$ immediately follow from the facts that:
$$
    \overline{\cP(\lambda)}\cap \overline{\cP(\mu)}=\prod_{i=1}^\ell \conv(W_{\Phi_i,[\lambda_i,\mu_i]}^{\mathrm{aff}}.\bb_i)=\conv \left(W_{\prod_{i=1}^\ell  [\lambda_i,\mu_i]}^{\mathrm{aff}}.\bb_C \right),
$$
 that $\overline{F}=\prod_{i=1}^\ell  [\lambda_i,\mu_i]$ and that $W^{\mathrm{aff}}_{[\lambda,\mu]}=W^{\mathrm{aff}}_{F}$.
\end{proof}

\begin{Proposition}\label{p_faces_intersections}
\begin{enumerate}
\item Let $J$ be a non-empty subset of $\ve(\A)$. Then $\bigcap_{\lambda\in J}\overline{\cP(\lambda)}$ is non-empty if and only if there exists an alcove of $(\A,\cH)$ dominating $J$. Moreover, if $F$ is the smallest face dominating $J$ (which exists by Lemma~\ref{l_enclosure_subset_alcove}), then $\bigcap_{\lambda\in J} \overline{\cP(\lambda)}=\conv(W^{\mathrm{aff}}_F.\bb_C)=\overline{\cF_F}=\bigcap_{\lambda\in \ve(F)} \overline{\cP(\lambda)}$, for any alcove $C$ dominating $F$.

\item If $F$ is a face of $(\A,\cH)$, then the set of extreme points of $\overline{\cF}_F$ is $W^{\mathrm{aff}}_F.\bb_C$, for any alcove $C$ dominating $F$. Moreover $\cF_F\cap F=\{x\}$, where $x$ is the barycenter of the extreme points of $\cF_F$.

\item Let $F$ be a face of $(\A,\cH)$ and $x\in \cF_F$. Then $\{\mu\in \ve(\A)\mid x\in \overline{\cP(\mu)}\}=\ve(F)$.

\end{enumerate}
\end{Proposition}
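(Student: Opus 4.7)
The plan is to prove the three parts in sequence: part~(1) by reducing to the vectorial intersection result Proposition~\ref{p_description_faces_P0}(2), part~(2) by a direct extreme-point argument, and part~(3) by invoking part~(1).

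For part~(1), the ``if'' direction is immediate, since if an alcove $C$ dominates $J$ then $\bb_C\in\overline{\cP(\lambda)}$ for every $\lambda\in J$. Conversely, given $x\in\bigcap_{\lambda\in J}\overline{\cP(\lambda)}$, Lemma~\ref{l_inclusion_P(lambda)_union_alcoves}(1) would place $x$ in the star of each $\lambda\in J$, so the face $F_x$ containing $x$ dominates $J$, and any alcove dominating $F_x$ dominates $J$. By Lemma~\ref{l_enclosure_subset_alcove}, the smallest face $F$ dominating $J$ then exists, and $J\subset\ve(F)$. The inclusion $\overline{\cF_F}=\conv(W^{\mathrm{aff}}_F.\bb_C)\subset\bigcap_{\lambda\in J}\overline{\cP(\lambda)}$ follows from $W^{\mathrm{aff}}_F\subset W^{\mathrm{aff}}_\lambda$ for each $\lambda\in J$. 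For the reverse inclusion I would fix $\lambda_0\in J$; each $\lambda\in J$ shares the alcove $C$ with $\lambda_0$, so Lemma~\ref{l_intersection_neighbour_polytopes} gives $\overline{\cP(\lambda)}\cap\overline{\cP(\lambda_0)}=\overline{\cF_{F_{\lambda,\lambda_0}}}$, where $F_{\lambda,\lambda_0}$ is the smallest face dominating $\{\lambda,\lambda_0\}$. Then $\bigcap_{\lambda\in J}\overline{\cP(\lambda)}=\bigcap_{\lambda\in J}\overline{\cF_{F_{\lambda,\lambda_0}}}$, and I would translate via Propositions~\ref{p_bijection_affine_faces_vectorial_faces_vertices} and~\ref{p_description_faces_affine}(2) to the vectorial faces $F^v_{\lambda,\lambda_0}=\R_{>0}(F_{\lambda,\lambda_0}-\lambda_0)$ in $(\A,\cH^0_{\lambda_0})$. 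All of them are dominated by the common chamber $C^v=\R_{>0}(C-\lambda_0)$, so writing them as $\bigoplus_{\alpha\in E_\lambda}\R_{>0}\qp_\alpha$ via Proposition~\ref{p_bourbaki_vectorial_faces}, their Minkowski sum is $\bigoplus_{\alpha\in\bigcup_\lambda E_\lambda}\R_{>0}\qp_\alpha$, i.e., their join in the dominance order. Under the order-preserving isomorphism $\Theta$ of Proposition~\ref{p_bijection_affine_faces_vectorial_faces_vertices}, this join corresponds to the smallest face of $\sF_{\lambda_0}$ dominating all $F_{\lambda,\lambda_0}$, which is $F$ itself. Hence $\sum_\lambda F^v_{\lambda,\lambda_0}=F^v:=\R_{>0}(F-\lambda_0)$, Proposition~\ref{p_description_faces_P0}(2) yields $\bigcap_\lambda\overline{\cF^0_{F^v_{\lambda,\lambda_0}}}=\overline{\cF^0_{F^v}}$, and translating back gives $\bigcap_{\lambda\in J}\overline{\cP(\lambda)}=\overline{\cF_F}$. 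Applying the same argument with $J$ replaced by $\ve(F)$ (whose smallest dominating face remains $F$) will give the final equality $\overline{\cF_F}=\bigcap_{\lambda\in\ve(F)}\overline{\cP(\lambda)}$.

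For part~(2), since $W^{\mathrm{aff}}_F\subset W^{\mathrm{aff}}_\lambda\cong W^v_{\Phi_\lambda}$ by Lemma~\ref{l_translation_fixator_0} for any $\lambda\in\ve(F)$, the group $W^{\mathrm{aff}}_F$ is a finite group of affine automorphisms, and Lemma~\ref{l_extreme_points_P} directly gives $\Extr(\overline{\cF_F})=W^{\mathrm{aff}}_F.\bb_C$. Proposition~\ref{p_description_faces_affine}(3) shows that $\di(\supp(\cF_F))=(\di(\supp(F)))^\perp$, so $\supp(\cF_F)\cap\supp(F)$ contains at most one point; by Lemma~\ref{l_barycenter_orbit}, the barycenter $x$ of $W^{\mathrm{aff}}_F.\bb_C$ lies in $F$, and as the barycenter of the vertices of $\overline{\cF_F}$ it also lies in the relative interior $\cF_F$, yielding $\cF_F\cap F=\{x\}$.

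For part~(3), set $J_x=\{\mu\in\ve(\A)\mid x\in\overline{\cP(\mu)}\}$, which is finite by Lemma~\ref{l_inclusion_P(lambda)_union_alcoves}(2). For $\mu\in\ve(F)$, Proposition~\ref{p_description_faces_affine} makes $\overline{\cF_F}$ a face of $\overline{\cP(\mu)}$, so $x\in\cF_F\subset\overline{\cP(\mu)}$, giving $\ve(F)\subset J_x$. Conversely, part~(1) applied to $J_x$ produces the smallest face $F'$ dominating $J_x$, with $\overline{\cF_{F'}}=\bigcap_{\mu\in J_x}\overline{\cP(\mu)}\ni x$. From $\ve(F)\subset J_x\subset\overline{F'}$ together with convexity of $\overline{F'}$ (enclosed by Lemma~\ref{l_enclosure_faces}), $\overline{F}=\conv(\ve(F))\subset\overline{F'}$, so $F\preceq F'$. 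Fixing $\lambda_0\in\ve(F)$, both $F,F'\in\sF_{\lambda_0}$; in $\overline{\cP(\lambda_0)}$ the open face whose relative interior contains $x$ is $\cF_F$, so $\overline{\cF_F}$ is the minimal closed face containing $x$. Since $\overline{\cF_{F'}}\ni x$, we get $\overline{\cF_{F'}}\supset\overline{\cF_F}$, and the order-reversing bijection of Proposition~\ref{p_description_faces_affine}(3) gives $F'\preceq F$. Therefore $F=F'$ and $J_x\subset\ve(F')=\ve(F)$. The main obstacle will be the reverse inclusion in part~(1): reducing an arbitrary intersection $\bigcap_\lambda\overline{\cP(\lambda)}$ to a single vectorial intersection inside $\overline{\cP^0_{\Phi_{\lambda_0}}}$ and identifying the resulting Minkowski sum of vectorial faces with $F^v$; once that setup is in place, Proposition~\ref{p_description_faces_P0}(2) will do the heavy lifting.
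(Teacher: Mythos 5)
Your proposal is correct, but the engine behind part (1) is genuinely different from the paper's. For the key identity $\bigcap_{\lambda\in J}\overline{\cP(\lambda)}=\overline{\cF_F}$ the paper stays in the affine picture: after reducing to pairwise intersections via Lemma~\ref{l_intersection_neighbour_polytopes}, it computes the extreme points of the intersection using Lemmas~\ref{l_extreme_points_faces_polytope}, \ref{l_extreme_points_P} and \ref{l_fixator_subset_alcove} (so that $\Extr\bigl(\bigcap_{\lambda\in J}\overline{\cP(\lambda)}\bigr)=W^{\mathrm{aff}}_F.\bb_C$, using the regularity of $\bb_C$) and concludes by Krein--Milman. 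You instead transport the problem to the vectorial polytope at a base vertex $\lambda_0$ through Propositions~\ref{p_bijection_affine_faces_vectorial_faces_vertices} and \ref{p_description_faces_affine}(2) and invoke the already-proved vectorial intersection formula, Proposition~\ref{p_description_faces_P0}(2); the price is the combinatorial identification of $\R_{>0}(F-\lambda_0)$ with the Minkowski sum (join) of the faces $F^v_{\lambda,\lambda_0}=\R_{>0}(F_{\lambda,\lambda_0}-\lambda_0)$, the gain is that no new extreme-point computation is needed. That identification is fine but deserves one explicit line: since $F$ is the smallest face dominating $J$ and each $F_{\lambda,\lambda_0}$ is the smallest face dominating $\{\lambda,\lambda_0\}$, a face dominates all the $F_{\lambda,\lambda_0}$ if and only if its closure contains $J$, so $F$ is their join in $\sF_{\lambda_0}(\cH)$, and the order isomorphism $\Theta$ carries it to the join of the $F^v_{\lambda,\lambda_0}$, which equals the asserted sum by Proposition~\ref{p_bourbaki_vectorial_faces}. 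Part (2) is essentially the paper's argument (Lemmas~\ref{l_extreme_points_P} and \ref{l_barycenter_orbit} together with the direction statement of Proposition~\ref{p_description_faces_affine}(3)); you are in fact slightly more careful in noting that the barycenter lies in the relative interior of $\overline{\cF_F}$. Part (3) also deviates: the paper shows directly that for $\mu\notin\ve(F)$ the set $\overline{\cF_F}\cap\overline{\cP(\mu)}$ is either empty or a proper closed face of $\overline{\cF_F}$, hence misses $\cF_F$ by \cite[Corollary 1.7]{bruns2009polytopes}, whereas you apply part (1) to $J_x$ and use minimality of the closed face of $\overline{\cP(\lambda_0)}$ containing $x$. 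This works; only note that your intermediate claim $\overline{F}=\conv(\ve(F))$ is not established in the paper at this stage (it is true, and used implicitly later), but it is also unnecessary: your second step already yields $F'\preceq F$, hence $J_x\subset\ve(F')\subset\ve(F)$, which is all that part (3) requires.
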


\begin{proof}
1) Let $J \subset \ve(\A)$ be non-empty. Assume  that $\bigcap_{\lambda\in J} \overline{\cP(\lambda)}$ is non-empty. Let $x\in \bigcap_{\lambda\in J}\overline{\cP(\lambda)}$ and let $F$ be the face of $(\A,\cH)$ containing $x$. Then by Lemma~\ref{l_inclusion_P(lambda)_union_alcoves}, $F$ dominates $J$ and, in particular, $J$ is dominated by any alcove dominating $F$. Conversely, if $J$ is dominated by a face $F$ of $(\A,\cH)$, then $W^{\mathrm{aff}}_F.\bb_C\subset \bigcap_{\lambda\in J} \overline{\cP(\lambda)}$, for any alcove $C$ dominating $F$.

Let $C$ be an alcove of $(\A,\cH)$ and $J$ be a subset of $\ve(C)$ with cardinality at least $2$. Let $F$ be the smallest face of $C$ dominating $J$, which exists by Lemma~\ref{l_enclosure_subset_alcove}.  Let $\lambda\in J$. Then \[\overline{\cF_j}:=\bigcap_{\mu\in J} \overline{\cP(\mu)}=\bigcap_{\mu\in J\setminus\{\lambda\}}(\overline{\cP(\lambda)}\cap \overline{\cP(\mu)}).\]
 By Lemma~\ref{l_intersection_neighbour_polytopes} and \cite[Theorem 1.10]{bruns2009polytopes}, $\overline{\cF_J}$ is a face of $\overline{\cP(\lambda)}$. \\

We have:   \begin{align*}
  \Extr(\cF_J)&=\bigcap_{\mu\in J\setminus\{\lambda\}}  \Extr(\overline{\cP(\lambda)}\cap \overline{\cP(\mu)}) &\text{ by Lemma~\ref{l_extreme_points_faces_polytope}},\\
  &=\bigcap_{\mu\in J\setminus\{\lambda\}} W^{\mathrm{aff}}_{[\lambda,\mu]}.\bb_C &\text{ by Lemma~\ref{l_extreme_points_P},}\\
  &=(\bigcap_{\mu\in J\setminus\{\lambda\}} W^{\mathrm{aff}}_{[\lambda,\mu]}).\bb_C&\text{ since }\bb_C\text{ is regular,}\\
  &=W^{\mathrm{aff}}_F.\bb_C &\text{by Lemma~\ref{l_fixator_subset_alcove}}. 
 \end{align*}
 Thus by Krein-Milman theorem, $\cF_J=\conv(W^{\mathrm{aff}}_F.\bb_C)$, which proves statement (1).

2) Let $F\in \sF(\cH)$ and $C\in \Alc(\cH)$, dominating $F$. By 1) and by Lemma \ref{l_extreme_points_P}, the set of extreme points of $\overline{\cF}_F$ is $W^{\mathrm{aff}}_F.\bb_C$. Let now $x=\frac{1}{|W^{\mathrm{aff}}_F|}\sum_{w\in W^{\mathrm{aff}}_F}w.\bb_C$. Then $x\in F$ according to Lemma~\ref{l_barycenter_orbit}. By Proposition~\ref{p_description_faces_affine}, the direction of the support of $F\cap \cF_F$  is a singleton, which proves that $\cF_F\cap F=\{x\}$ and completes the proof of statement (2).

3) Let $F\in \sF(\cH)$ and $x\in \cF_F$. Let $\mu\in \ve(\A)\setminus \ve(F)$. If $\ve(F)\cup \{\mu \}$ is not dominated by any alcove, then $\cF_F\cap \overline{\cP(\mu)}=\emptyset$ according to statement (1) and thus $x\notin \overline{\cP(\mu)}$. Assume that $\mu\cup\ve(F)$ is dominated by an alcove $C$. Then it follows from (1) that $\overline{\cF_F}\cap \overline{\cP(\mu)}=\overline{\cF_{F'}}$, where $F'$ is the smallest face dominating $F\cup \{\mu\}$, and by (2) that $\cF_{F'}\neq \cF_F$. Therefore $\cF_{F'}$ is a proper face of $\cF_F$ and by \cite[Corollary 1.7]{bruns2009polytopes}, $x\notin \cF_{F'}$, which proves that $x\notin \overline{\cP(\mu)}$. Therefore $\{\mu\in \ve(\A)\mid x\in \overline{\cP(\mu)}\}\subset \ve(F)$, and this is actually an equality by 1), which proves statement (3).

\end{proof}

\subsection{Tessellation of $\A$ by the affine weight polytopes}\label{ss_tessellation_A}

We now prove that $\A$ is tessellated by the $\overline{\cP(\lambda)}$, for $\lambda\in \ve(\A)$, as stated in the introduction in Theorem \ref{t_tessellation_intro}.

\begin{Theorem}\label{t_tessellation_A} (see Figures~\ref{f_irregular_tessellation_A2}, \ref{f_regular_tessellation_A2}, \ref{f_tessellation_B2} and \ref{f_tessellation_G2})

Let $\A$ be a finite dimensional vector space and $\Phi$ be a finite root system in $\A$. Let $\cH=\{\alpha^{-1}(\{k\})\mid \alpha \in \Phi,\ k\in \Z \}$, let $\sF$ be the set of faces of $(\A,\cH)$ and let $\ve(\A)$ be the set of vertices of $(\A,\cH)$. We choose a fundamental alcove  $C_0$  of $(\A,\cH)$ and choose $\bb=\bb_{C_0}\in C_0$. If $\lambda$ is a vertex of $(\A,\cH)$, we choose an alcove $C_\lambda$ dominating $\lambda$ and we set $\bb_{C_\lambda}=w.\bb$ where $w$ is the element of the affine Weyl group $W^{\mathrm{aff}}$ of $(\A,\cH)$ sending  $C_0$ to $C$. If $F$ is a face of $(\A,\cH)$, we  write $W_F^{\mathrm{aff}}:=\mathrm{Fix}_{W^{\mathrm{aff}}}(F)$ and we set $\cF_F=\In_r(\conv(W^{\mathrm{aff}}_F.\bb_C))$, for any alcove $C$ dominating $F$. Then $\bigsqcup_{F\in \sF} \cF_F=\A$. In particular, if we set $\overline{\cP(\lambda)}=\overline{\cF_{\{\lambda\}}}$, for $\lambda\in \ve(\A)$, then $\A=\bigcup_{\lambda\in \ve(\A)}\overline{\cP(\lambda)}$.
\end{Theorem}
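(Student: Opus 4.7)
The plan is to split the proof into disjointness of the family $\{\cF_F\}_{F \in \sF}$ and its covering property; the disjointness is quick, while the covering reduces to a single auxiliary statement.

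\textbf{Disjointness.} Suppose $x \in \cF_F \cap \cF_{F'}$. Proposition~\ref{p_faces_intersections}(3) applied successively to $F$ and to $F'$ gives
\[
\ve(F) \;=\; \{\mu \in \ve(\A) \mid x \in \overline{\cP(\mu)}\} \;=\; \ve(F').
\]
Proposition~\ref{p_faces_intersections}(1) then yields $\overline{\cF_F} = \bigcap_{\mu \in \ve(F)} \overline{\cP(\mu)} = \overline{\cF_{F'}}$, hence $\cF_F = \cF_{F'}$. By Proposition~\ref{p_faces_intersections}(2) the barycenter of the extreme points of this common polytope lies in $F$ and in $F'$ simultaneously; since the faces of $(\A,\cH)$ partition $\A$, we conclude $F = F'$.

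\textbf{Covering.} Fix $x \in \A$ and set $J := \{\mu \in \ve(\A) \mid x \in \overline{\cP(\mu)}\}$. Granted $J$ is non-empty, the intersection $\bigcap_{\mu \in J}\overline{\cP(\mu)}$ contains $x$ and is therefore non-empty, so Proposition~\ref{p_faces_intersections}(1) furnishes a smallest face $F$ dominating $J$ with this intersection equal to $\overline{\cF_F}$. One checks $\ve(F) = J$ directly: the inclusion $J \subset \overline{F} \cap \ve(\A) = \ve(F)$ is the definition of domination, and $\ve(F) \subset J$ follows because $x \in \overline{\cF_F} \subset \overline{\cP(\lambda)}$ for every $\lambda \in \ve(F)$. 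Pick any $\lambda \in \ve(F)$. By Proposition~\ref{p_description_faces_affine}(3), $\overline{\cF_F}$ is a face of the polytope $\overline{\cP(\lambda)}$ and decomposes as the disjoint union of its open faces $\cF_{F_1}$, where $F_1 \in \sF_\lambda(\cH)$ ranges over the faces dominating $F$. Thus $x \in \cF_{F_1}$ for some such $F_1$. Proposition~\ref{p_faces_intersections}(3) applied to $F_1$ gives $\ve(F_1) = J = \ve(F)$, and the injectivity in Proposition~\ref{p_description_faces_affine}(3) (via $\overline{\cF_{F_1}} = \overline{\cF_F}$ from Proposition~\ref{p_faces_intersections}(1)) forces $F_1 = F$; therefore $x \in \cF_F$.

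\textbf{Main obstacle.} The remaining point is the auxiliary statement
\[
\A \;=\; \bigcup_{\lambda \in \ve(\A)} \overline{\cP(\lambda)},
\]
guaranteeing $J \neq \emptyset$ for every $x \in \A$. The plan is to prove the local version: for each alcove $C$ of $(\A,\cH)$, $\overline{C} \subset \bigcup_{\lambda \in \ve(\overline{C})} \overline{\cP(\lambda)}$; the global claim then follows because the closed alcoves cover $\A$. By Eq.~\eqref{e_affine_Weyl_polytope_2}, each $\overline{\cP(\lambda)}$ is a translate of a vectorial weight polytope, so Lemma~\ref{p_facets_P} supplies explicit half-space inequalities cutting out $\overline{\cP(\lambda)}$, and one attempts to show that for every $x \in \overline{C}$ at least one vertex $\lambda \in \ve(\overline{C})$ yields all the relevant inequalities simultaneously. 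As a topological fallback, the complement $U := \A \setminus \bigcup_\lambda \overline{\cP(\lambda)}$ is open, $W^{\mathrm{aff}}$-invariant, and disjoint from $\ve(\A)$; combined with the disjointness of the open polytopes $\cP(\lambda)$ and a density computation on a bounded $W^{\mathrm{aff}}$-fundamental region, this should force $U$ to be empty.
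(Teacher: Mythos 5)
Your disjointness argument and your reduction of $\bigsqcup_{F\in \sF}\cF_F=\A$ to the covering statement $\A=\bigcup_{\lambda\in \ve(\A)}\overline{\cP(\lambda)}$ are correct and close in spirit to the paper's own final paragraph (the paper gets disjointness by observing that two meeting open faces $\cF_{F_1},\cF_{F_2}$ are faces of one and the same polytope $\overline{\cP(\lambda)}$, while you use Proposition~\ref{p_faces_intersections}(2)--(3); both routes work). The genuine gap is the item you yourself label the ``main obstacle'': the covering statement is the substantive content of Theorem~\ref{t_tessellation_A}, and you do not prove it, you only sketch two possible strategies. The first (``one attempts to show that for every $x\in\overline{C}$ at least one vertex $\lambda$ yields all the relevant inequalities simultaneously'') is exactly the nontrivial geometric fact that has to be established, and no argument is offered for it. The second, topological fallback is not sound as stated: knowing that $U=\A\setminus\bigcup_{\lambda\in\ve(\A)}\overline{\cP(\lambda)}$ is open, $W^{\mathrm{aff}}$-invariant and disjoint from $\ve(\A)$ does not force $U=\emptyset$ --- for instance the $W^{\mathrm{aff}}$-orbit of a small ball around the barycenter of an alcove is open, invariant and avoids every vertex --- and the ``density computation on a bounded $W^{\mathrm{aff}}$-fundamental region'' that is supposed to finish the job is left entirely unspecified. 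Closedness of the union is the easy half (local finiteness, Lemma~\ref{l_inclusion_P(lambda)_union_alcoves}(2)); proving density is the whole difficulty, and nothing in the proposal addresses it.

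For comparison, the paper's argument for the covering runs as follows: the union $E$ is closed by local finiteness; if $x\notin E$, then for $y$ in some $\cP(\lambda)$ the exit point of the segment from $y$ to $x$ out of $E$ lies on the frontier of some $\overline{\cP(\nu)}$, but it cannot lie in the relative interior of a facet $\cF$ of $\overline{\cP(\nu)}$, because Lemma~\ref{l_local_description_polytope} together with the fact that this facet is shared with an adjacent polytope $\overline{\cP(\nu')}$ (Propositions~\ref{p_description_faces_affine} and \ref{p_faces_intersections}) would put a whole ball around that point inside $\overline{\cP(\nu)}\cup\overline{\cP(\nu')}$, contradicting the choice of the exit point; hence $\A\setminus E$ is contained in a countable union of proper affine subspaces, so $E$ is dense and, being closed, equals $\A$. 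Some argument of this kind (or an actual verification of your half-space inequalities at each point of a closed alcove) is indispensable; as it stands, your proposal only shows that, \emph{granted} the polytopes cover $\A$, the open faces $\cF_F$ tile it.
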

 \begin{proof}

Let $E=\bigcup_{\lambda\in \ve(\A)} \overline{\cP(\lambda)}$. We prove that $E=\A$ by proving that $E$ is closed and dense in $\A$.  Take $(z_n)\in E^\N$ a converging sequence. Then $(z_n)$ is bounded and by Lemma~\ref{l_inclusion_P(lambda)_union_alcoves} (2), there exists a finite subset $F\subset \ve(\A)$ such that $\{z_n\mid n\in \N\}\subset \bigcup_{\mu\in F} \overline{\cP(\mu)}$. Then as the $\overline{\cP(\mu)}$ are closed, $\lim z_n\in \bigcup_{\mu\in F} \overline{\cP(\mu)}\subset E$. Therefore $E$ is closed.

 Suppose that $E\neq \A$  and fix $x\in \A\setminus E$. Let $\lambda\in \ve(\A)$ and $y\in \cP(\lambda)$. Let $\gamma:[0,1]\rightarrow [x,y]$ be the affine parametrization of $[x,y]$ such that $\gamma(0)=y$ and $\gamma(1)=x$. Let \[t=\inf\{t'\in [0,1]\mid \gamma(t')\notin E\}.\]

As $E$ is closed,  $\gamma(t)\in E$. Let $\nu\in \ve(\A)$ be such that $\gamma(t)\in \overline{\cP(\nu)}$. Then $\gamma(t)\in \Fr(\cP(\nu))$ and thus there exists a facet $\overline{\cF}$ of $\overline{\cP(\nu)}$ containing $\gamma(t)$. By Propositions~\ref{p_description_faces_affine} and \ref{p_faces_intersections}, there exists $\nu'\in \ve(\A)$, adjacent to $\nu$ and such that $\overline{\cP(\nu)}\cap\overline{\cP(\nu')}=\overline{\cF}$.  If $\gamma(t)\in \cF$, then by Lemma~\ref{l_local_description_polytope}, there exists $r\in \R_{>0}$ such that \[B(\gamma(t),r)\cap\overline{\cP(\nu)}=D\cap B(\gamma(t),r)\text{ and }B(\gamma(t),r)\cap\overline{\cP(\nu')}=D'\cap B(\gamma(t),r),\] where $D$ and $D'$ are the two closed half-spaces delimited by $\supp(\cF)$. But then $B(\gamma(t),r)\subset\overline{\cP(\nu)}\cup\overline{\cP(\nu')}$, which is impossible by definition of $t$. Therefore \[\gamma(t)\in \overline{\cF}\setminus \cF.\]

The facet $\cF$ is a polytope of $\supp(\cF)$ and thus its frontier is contained in a union $\bigcup_{V\in F(\nu,\nu')} V$, where $F(\nu,\nu')$ is a finite set of affine subspaces of dimension at most $\dim \A-2$.  For $\nu,\nu'\in \ve(\A)$, set $F(\nu,\nu')=\emptyset$ if $\nu$ and $\nu'$ are not adjacent.  Then we proved: \[\A\setminus E\subset \bigcup_{\nu,\nu'\in \ve(\A)} \bigcup_{V\in F(\nu,\nu')} \supp(V\cup\{x\}).\] As the right hand side of this formula is a countable union of proper affine subspaces of $\A$, it has Lebesgue measure $0$. In particular, $E$ is dense in $\A$. But as $E$ is closed, $E=\A$. Thus we proved \[\A=\bigcup_{\lambda\in \ve(\A)}\overline{\cP(\lambda)}.\] 
As for $\lambda\in \ve(\A)$, $\overline{\cP(\lambda)}$ is the union of the $\cF_F$, where $F$ runs over the faces of $(\A,\cH)$ dominating $\lambda$ (see Proposition~\ref{p_description_faces_affine}), we deduce that $\A=\bigcup_{F\in\sF} \cF_{F}$. Let $F_1,F_2\in \sF$ be such that $\cF_{F_1}\cap \cF_{F_2}$ is non-empty. Then $\overline{\cF_{F_1}}\cap \overline{\cF_{F_2}}\neq \emptyset$ and thus by Proposition~\ref{p_faces_intersections}, there exists an alcove $C$ dominating $\ve(F_1)\cup\ve( F_2)$, and $\cF_{F_1}, \cF_{F_2}$ are faces  of $\overline{\cP(\lambda)}$, for any $\lambda\in \ve(C)$, by Proposition~\ref{p_description_faces_affine} 3). As the faces of $\overline{\cP(\lambda)}$ form a partition $\overline{\cP(\lambda)}$ (by \cite[Theorem 1.10]{bruns2009polytopes}), we deduce that $F_1=F_2$, which concludes the proof of the theorem.
\end{proof}

 \begin{figure}[h]
 \centering
 \includegraphics[scale=0.2]{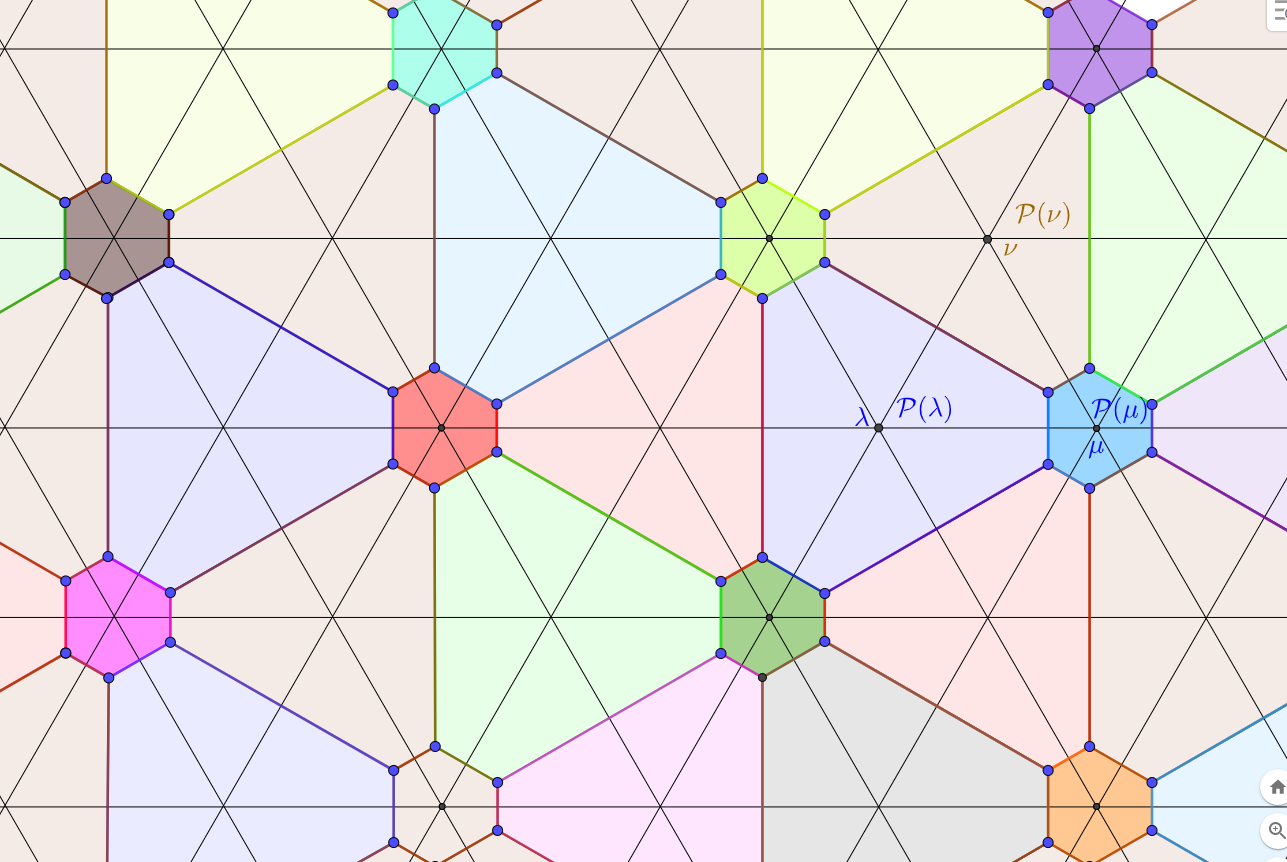}
 \caption{Illustration of Theorem~\ref{t_tessellation_A}: the case of $\mathrm{A}_2$ when $\bb$ is not the barycenter of the fundamental alcove.}\label{f_irregular_tessellation_A2}
\end{figure}

 \begin{figure}[h]
 \centering
 \includegraphics[scale=0.2]{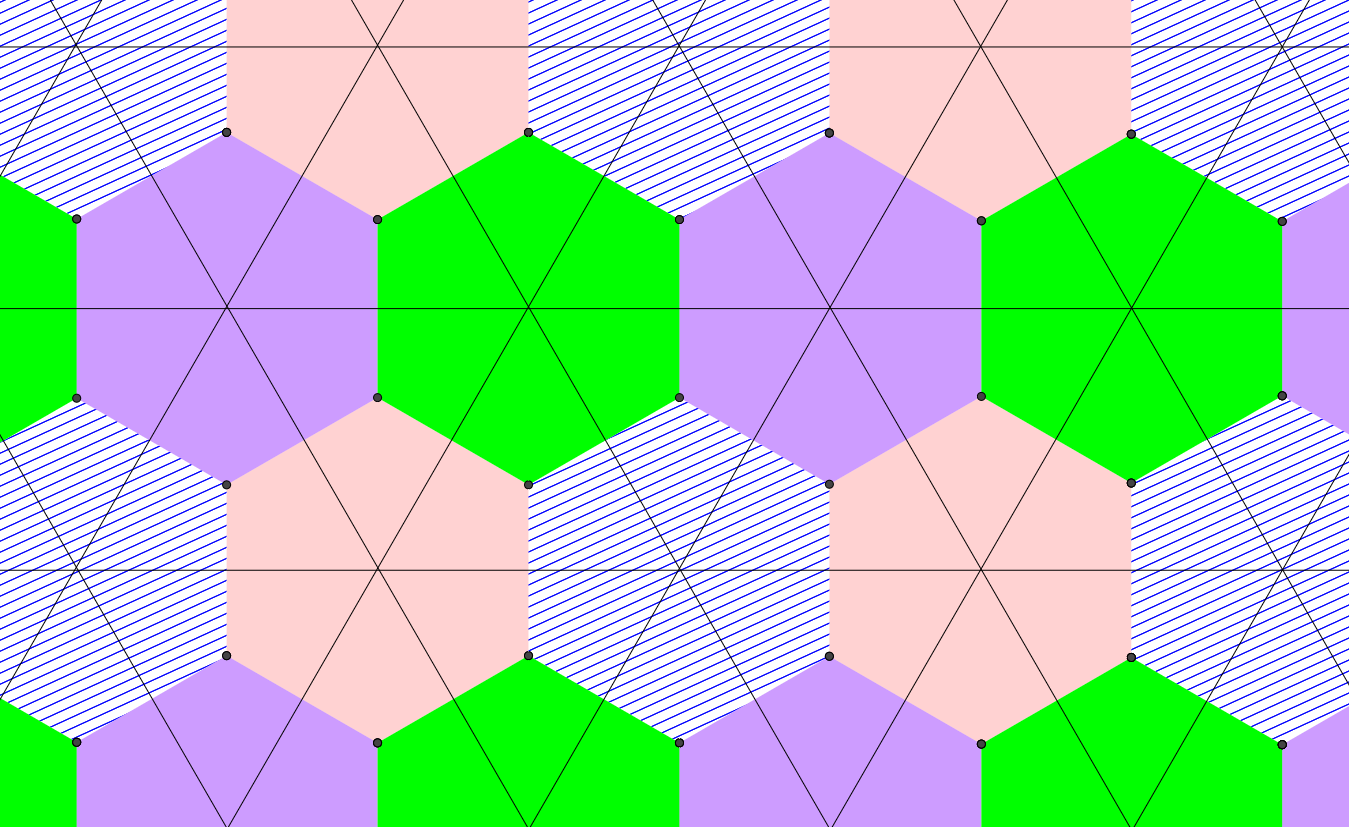}
 \caption{Illustration of Theorem~\ref{t_tessellation_A}: the case of $\mathrm{A}_2$ when $\bb$ is the barycenter of the fundamental alcove.}\label{f_regular_tessellation_A2}
\end{figure}

 \begin{figure}[h]
 \centering
 \includegraphics[scale=0.2]{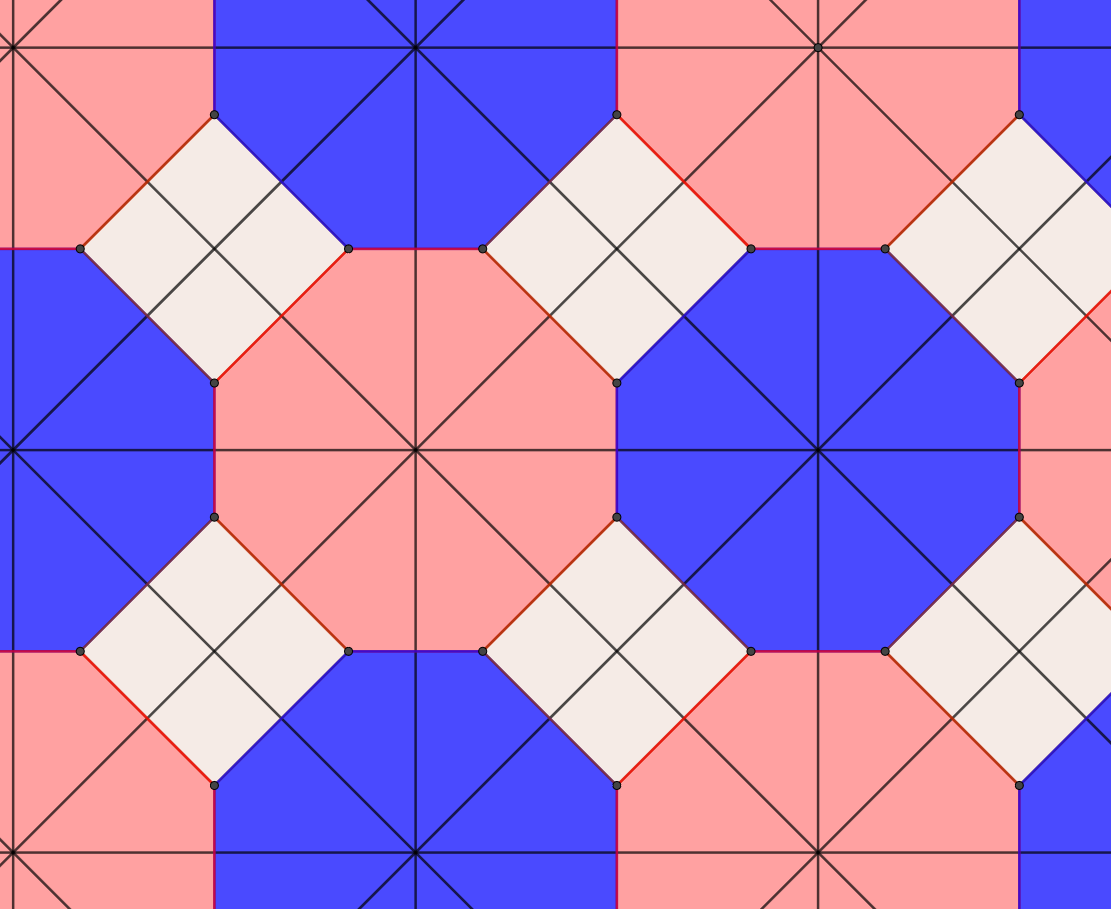}
 \caption{Illustration of Theorem~\ref{t_tessellation_A}: the case of $\mathrm{B}_2$.}\label{f_tessellation_B2}
\end{figure}

 \begin{figure}[h]
 \centering
 \includegraphics[scale=0.2]{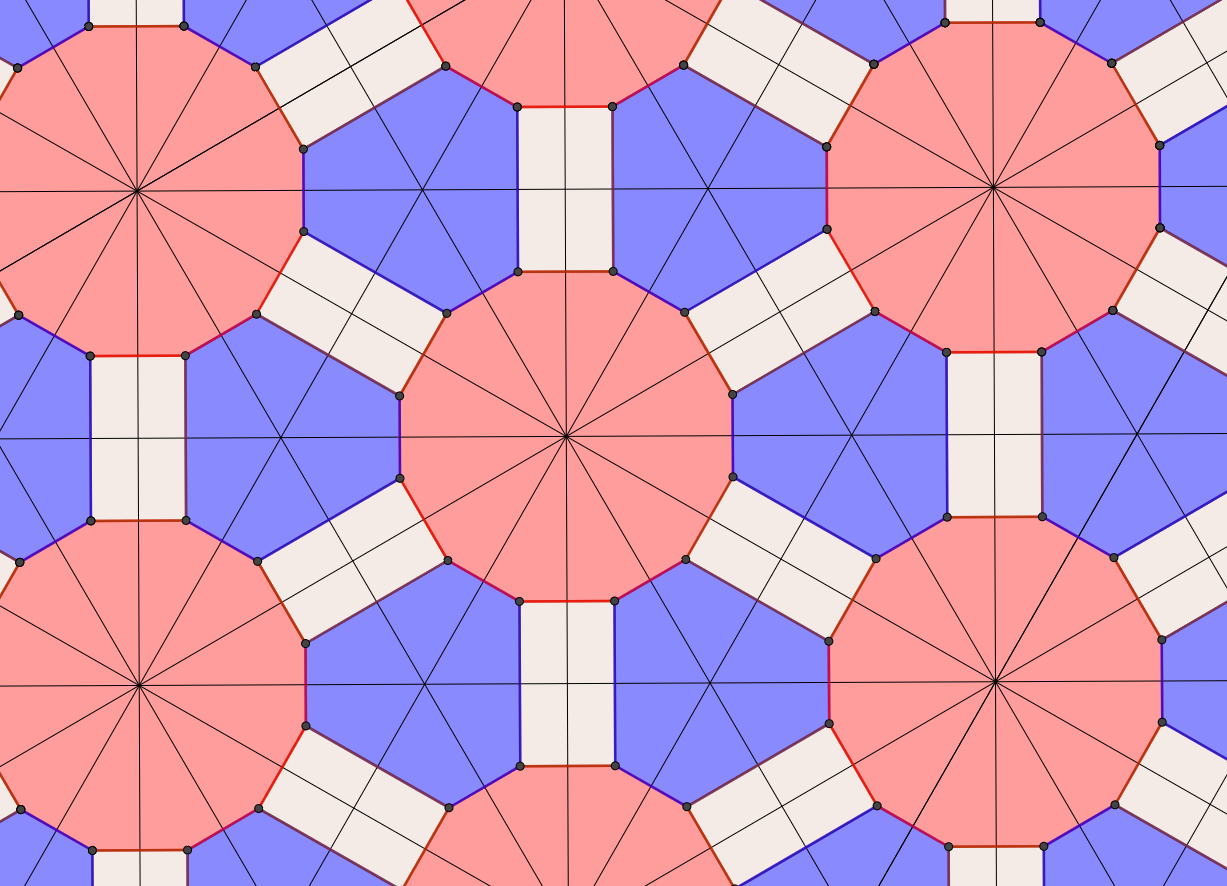}
 \caption{Illustration of Theorem~\ref{t_tessellation_A}: the case of $\mathrm{G}_2$.}\label{f_tessellation_G2}
\end{figure}

\section{Thickened tessellation of $\A$}\label{s_thickened_tessellation}

The aim of this section is as follows. Take $\eta\in ]0,1[$. For $\lambda\in \ve(\A)$, denote by $h_{\lambda,\eta}: \A \to \A, h_{\lambda,\eta}(x)=\lambda+\eta (x-\lambda)$\index{h@$h_{\lambda,\eta}$} the homothety centered at $\lambda$ with ratio $\eta$. Let $x\in h_{\lambda,\eta}(\overline{\cP(\lambda)})$. Write $x=h_{\lambda,\eta}(x')$, with $x' \in \overline{\cP(\lambda)}$.  Let $F$ be the face of $(\A,\cH)$ such that $x'\in \cF_F$. Let $\overline{\tilde{F}_\eta(x)}=x+(1-\eta)(\overline{F}-\lambda)$. This is a copy of $F$ which contains exactly one point of $h_{\mu,\eta}(\overline{\cP(\mu)})$, for each vertex $\mu$ of $F$ (and which is the convex hull of these points).
The main goal of this section is to prove Theorem~\ref{t_thickened_tessellation}, which
 states that $\A=\bigcup_{\lambda\in \ve(\A),x\in h_{\lambda,\eta}(\overline{\cP(\lambda)})} \overline{\tilde{F}_\eta(x)}$ (see Figures~\ref{f_explanation_t_thickened},   \ref{f_explanation_t_thickened_A2} and \ref{f_explanation_t_thickened_G2}). 
Since for all $\mu\in \ve(F)$, we have $\overline{\tilde{F}_\eta(h_{\mu,\eta}(x'))}=\overline{\tilde{F}_\eta(x)}$, in order to describe $\A$ as a disjoint union, we will introduce a new notation by replacing $\overline{\tilde{F}_\eta(x)}$ by $\overline{F_\eta(x')}$ (see Definition~\ref{d_Feta}) We will then have $\A=\bigsqcup_{x'\in \A} \overline{F_\eta(x')}$.

We divide this section in two parts.
Firstly,
in subsection~\ref{ss_projection_polytope_neighbour_polytope}, we prove that if $\lambda$ and $\mu$ are two adjacent vertices of $\A$, then the projection of $\overline{\cP(\mu)}$ on $\overline{\cP(\lambda)}$ is contained in $\overline{\cP(\lambda)}\cap \overline{\cP(\mu)}$ (see Proposition~\ref{p_projection_polytope_neighbour}).
Then,
in subsection~\ref{ss_thickened_tessellation}, we prove Theorem~\ref{t_thickened_tessellation}.

 \begin{figure}[h]
 \centering
 \includegraphics[scale=0.35]{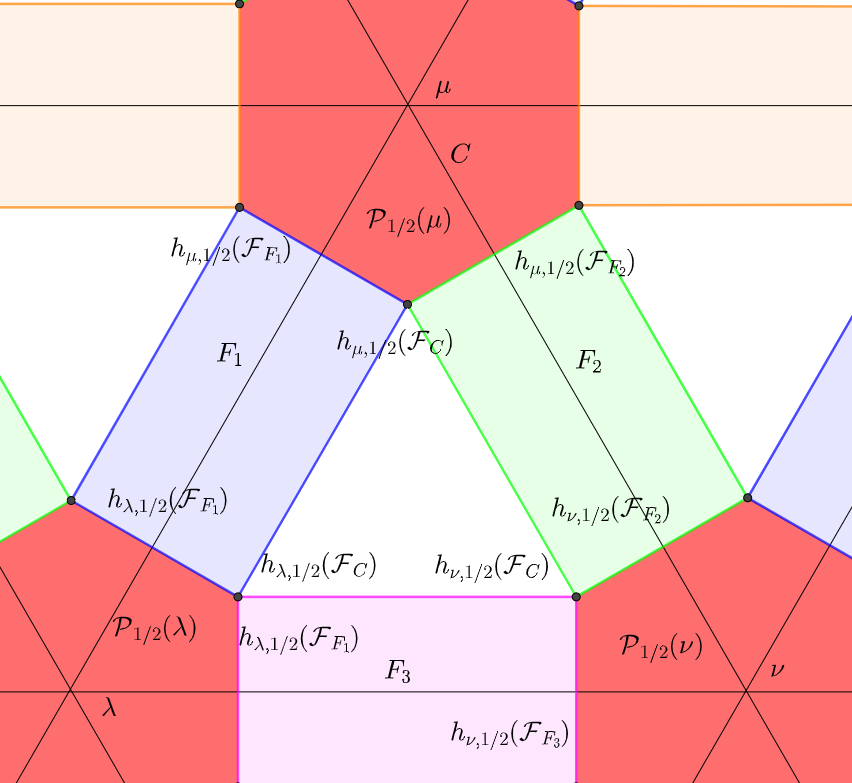}
 \caption{Illustration of Theorem~\ref{t_thickened_tessellation} in the case of $\mathrm{A}_2$. The represented alcove $C$ is the equilateral triangle delimited by the black segments. The hexagons $\cP_{1/2}(\lambda), \cP_{1/2}(\mu)$ and $\cP_{1/2}(\nu)$ are represented in red. The edges of $C$ are the segments $F_1,F_2$ and $F_3$ and the vertices of $C$ are $\lambda,\mu$ and $\nu$.  The blue rectangle is the union of the $\overline{\tilde{F}_{1/2}(x)}$ for $x\in h_{\lambda,1/2}(\cF_{F_1})$ (or equivalently $x\in h_{\mu,1/2}(\cF_{F_1})$). Similarly, the green and pink rectangles correspond to the 
 $\overline{\tilde{F}_{1/2}(x)}$ for $x$ in $h_{\mu,1/2}(\cF_{F_2})$ and $h_{\nu,1/2}(\cF_{F_2})$ respectively. The face $\overline{\cF_C}$ is the (non represented) vertex $\overline{\cP(\lambda)}\cap \overline{\cP(\mu)}\cap \overline{\cP(\nu)}$.  The central white  triangle is  $\overline{\tilde{F}_{1/2}}(z)$, for $z\in h_{\lambda,1/2}(\cF_{C})$ (or equivalently, $z\in h_{\mu,1/2}(\cF_{C})$ or $z \in h_{\nu,1/2}(\cF_{C})$).                   }
 \label{f_explanation_t_thickened}
\end{figure}

\subsection{Projection of a polytope on a neighbour polytope}\label{ss_projection_polytope_neighbour_polytope}

If $F$ is a face of $(\A,\cH)$ and $\lambda\in \ve(F)$, we denote by $\ve_\lambda(F)$\index{v@$\ve_\lambda(F)$} the set of vertices of $F$ which are adjacent  to $\lambda$ (and different from $\lambda$). If $\lambda\in \ve(\A)$, we denote by $\pi_{\lambda}$\index{p@$\pi_\lambda$} instead of $\pi_{\overline{\cP(\lambda)}}$ the orthogonal projection on $\overline{\cP(\lambda)}$. 

In this subsubsection, we prove that if $\lambda$ and $\mu$ are two vertices of $\A$ contained in a common closed alcove, then $\pi_{\lambda}(\overline{\cP(\mu)})\subset \overline{\cP(\lambda)}\cap \overline{\cP(\mu)}$ (see Proposition~\ref{p_projection_polytope_neighbour}).

\begin{Lemma}\label{l_adjacent_vertices}
\begin{enumerate}
    \item Let $F$ be a face of $(\A,\cH)$ and $\lambda\in \ve(F)$.  Then: \[\R_{>0}(F-\lambda)=\sum_{\mu\in \ve_\lambda(F)}\R_{>0}(\mu-\lambda)=\sum_{\mu\in \ve(F)}\R_{>0}(\mu-\lambda).\]

    \item Let $\lambda\in \ve(\A)$ and  $F,F'$ be faces of $(\A,\cH)$ dominating $\lambda$. Then $\ve_\lambda(F)\subset \ve_\lambda(F')$ if and only if $F$ is dominated by $F'$. 
\end{enumerate}
\end{Lemma}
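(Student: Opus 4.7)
The plan is to use the order-preserving bijection $\Theta:\sF_\lambda(\cH)\to \sF(\cH_\lambda^0)$, $F\mapsto F^v:=\R_{>0}(F-\lambda)$, from Proposition~\ref{p_bijection_affine_faces_vectorial_faces_vertices}, together with the description of vectorial faces as simplicial open cones in Proposition~\ref{p_bourbaki_vectorial_faces}. The idea is that part 1 becomes a statement about the extreme rays of $F^v$, which are easy to identify, and part 2 then reduces to the bijection being order-preserving.

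For part 1, I would fix an alcove $C$ dominating $F$ and set $C^v=\R_{>0}(C-\lambda)$, a chamber of $(\A,\cH_\lambda^0)$ dominating $F^v$. By Proposition~\ref{p_bourbaki_vectorial_faces} applied to $(\Phi_\lambda,C^v)$ with dual basis $(\qp_\alpha)_{\alpha\in\Sigma_{C^v}}$, there exists $E\subseteq \Sigma_{C^v}$ with $F^v=\sum_{\alpha\in E}\R_{>0}\qp_\alpha$, and the $1$-dimensional sub-vectorial-faces of $F^v$ are exactly the rays $\R_{>0}\qp_\alpha$ for $\alpha\in E$. Via $\Theta^{-1}$, each such ray corresponds to a $1$-dimensional face of $(\A,\cH)$ dominating $\lambda$ and dominated by $F$, which is necessarily an open segment $]\lambda,\mu[$ with $\mu$ a vertex of $\overline{F}$ adjacent to $\lambda$, i.e., $\mu\in\ve_\lambda(F)$; conversely every such $\mu$ yields such an edge, and under $\Theta$ the edge $]\lambda,\mu[$ maps to $\R_{>0}(\mu-\lambda)$. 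This yields the first equality $F^v=\sum_{\mu\in\ve_\lambda(F)}\R_{>0}(\mu-\lambda)$. For the second equality, every $\mu\in\ve(F)\setminus\{\lambda\}$ satisfies $\mu-\lambda\in\overline{F}-\lambda\subseteq \overline{F^v}$; since $F^v$ is the relative interior of the closed convex cone $\overline{F^v}$, one readily checks (via the accessibility property for convex sets) that $F^v+\R_{>0}(\mu-\lambda)=F^v$, so iterating over the non-adjacent vertices yields $\sum_{\mu\in\ve(F)}\R_{>0}(\mu-\lambda)=F^v$.

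Part 2 follows from part 1 via $\Theta$ being an isomorphism of ordered sets: $F$ is dominated by $F'$ iff $F^v\subseteq\overline{(F')^v}$. If $\ve_\lambda(F)\subseteq\ve_\lambda(F')$, then by part 1 every generator $\mu-\lambda$ of $F^v$ lies in $\overline{F'}-\lambda\subseteq\overline{(F')^v}$, so $F^v\subseteq\overline{(F')^v}$ by the convexity of the closed cone $\overline{(F')^v}$. Conversely, if $F$ is dominated by $F'$, then for each $\mu\in\ve_\lambda(F)$ the edge $]\lambda,\mu[\subseteq\overline{F}\subseteq\overline{F'}$ is a $1$-dimensional face of $(\A,\cH)$ joining $\lambda$ and $\mu$ inside $\overline{F'}$, so $\mu\in\ve_\lambda(F')$. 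The main subtle point is the identification in part 1 of the extreme rays of $F^v$ with the vectors $\R_{>0}(\mu-\lambda)$ for $\mu\in\ve_\lambda(F)$: this requires $\Theta$ to preserve dimensions so that $1$-dim sub-vectorial-faces of $F^v$ really correspond to $1$-dim affine faces (edges) of $F$ through $\lambda$, which in turn follows from the inclusion $\Xi(F^v)\subseteq\lambda+F^v$ established in Proposition~\ref{p_bijection_affine_faces_vectorial_faces_vertices}.
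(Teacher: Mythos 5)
Your proof is correct, and for the first equality in (1) and for part (2) it follows essentially the paper's route: identify the one-dimensional faces of $(\A,\cH)$ dominating $\lambda$ and contained in $\overline{F}$ with the edges $]\lambda,\mu[$, $\mu\in\ve_\lambda(F)$, transport everything through the order isomorphism $\Theta$ of Proposition~\ref{p_bijection_affine_faces_vectorial_faces_vertices}, and use Proposition~\ref{p_bourbaki_vectorial_faces} to write $\R_{>0}(F-\lambda)$ as the sum of its one-dimensional vectorial subfaces. Where you genuinely diverge is the second equality in (1): the paper reduces to the irreducible case via the product decomposition $\A=\prod_i\A_i$, $\Phi=\bigsqcup_i\Phi_i$ (in the irreducible case alcoves are simplices, so all vertices of $F$ other than $\lambda$ are adjacent to $\lambda$ and the two sums coincide trivially), whereas you argue purely convex-geometrically: for $\mu\in\ve(F)$ the vector $\mu-\lambda$ lies in the closure of the cone $F^v=\R_{>0}(F-\lambda)$, and absorbing the ray $\R_{>0}(\mu-\lambda)$ does not change the relatively open convex cone $F^v$. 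Your variant avoids the reducible/irreducible dichotomy and is self-contained; the paper's variant makes the combinatorial reason (simpliciality of the complex in the irreducible case) explicit. Two small points you should spell out: the accessibility (line-segment) principle only gives $F^v+\R_{>0}(\mu-\lambda)\subset F^v$, and the reverse inclusion needs the easy extra remark that $\mu-\lambda$ lies in the linear span of $F^v$, so that, $F^v$ being relatively open, $z-t(\mu-\lambda)\in F^v$ for $z\in F^v$ and $t>0$ small; and the dimension preservation of $\Theta$ that you justify via $\Xi(F^v)\subset\lambda+F^v$ is in fact automatic, since $\lambda\in\overline{F_1}$ implies that the linear span of $\R_{>0}(F_1-\lambda)$ equals the direction of the affine span of $F_1$.
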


\begin{proof}
1) The map $\mu\mapsto ]\lambda,\mu[$  is a bijection from $\ve_\lambda(F)$ to the set of one dimensional faces of $F$ dominating $\lambda$. By Proposition~\ref{p_bourbaki_vectorial_faces} and Proposition~\ref{p_bijection_affine_faces_vectorial_faces_vertices}, $\R_{>0}(F-\lambda)$ is the sum of its one-dimensional faces and thus we have $\R_{>0}(F-\lambda)=\sum_{\mu \in\ve_\lambda(F)} \R_{>0}(\mu-\lambda)$. Using the notation of Eq. \eqref{e_decomposition_Phi} (with $\ell=1$ if $\Phi$ is irreducible), we write $F=F_1\times \ldots \times F_\ell$ and $\lambda=(\lambda_1,\ldots,\lambda_\ell)$. We have  $\ve_\lambda(F)=\bigsqcup_{i=1}^\ell \{(\lambda_1,\ldots,\lambda_{i-1})\}\times \ve_{\lambda_i}(F_i)\times \{(\lambda_{i+1},\ldots,\lambda_\ell)\}$ and $\ve(F)=\ve(F_1)\times \ldots \times \ve(F_\ell)$ and the second equality of 1) follows.

 2) If $F$ is dominated by $F'$, then $\ve(F)\subset \ve(F')$ and thus $\ve_\lambda(F)\subset \ve_\lambda(F')$. Conversely, if $\ve_\lambda(F)\subset \ve_\lambda(F')$, then $\overline{\R_{>0}(F-\lambda)}=\sum_{\mu\in \ve_\lambda(F)}\R_{\geq 0}(\mu-\lambda)\subset \sum_{\mu\in \ve_\lambda(F')}\R_{\geq 0}(\mu-\lambda)=\overline{\R_{> 0}(F'-\lambda)}$, whence the result  follows. 
\end{proof}

\begin{Lemma}\label{l_description_fibers_pi_affine}
 Let $F$ be a face of $(\A,\cH)$ and $\lambda\in \ve(F)$. Let $x\in \cF_F$. Then $\pi_\lambda^{-1}(\{x\})=x+\R_{\geq 0}(\overline{F}-\lambda)$. 
\end{Lemma}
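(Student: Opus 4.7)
The plan is to reduce to the vectorial setting already handled in Section~\ref{s_vectorial_Weyl_polytopes}, and then convert the vectorial cone $\overline{F^v}$ back to the affine cone $\R_{\geq 0}(\overline{F}-\lambda)$.

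First I would invoke Eq.~\eqref{e_affine_Weyl_polytope_2}: for any alcove $C$ dominating $\lambda$ we have $\overline{\cP(\lambda)}=\lambda+\overline{\cP^0}$, where $\overline{\cP^0}=\overline{\cP^0_{-\lambda+\bb_C,\Phi_\lambda}}$. Since translation is an isometry, the orthogonal projection onto $\overline{\cP(\lambda)}$ decomposes as $\pi_\lambda(y)=\lambda+\pi_{\overline{\cP^0}}(y-\lambda)$, hence
\[
\pi_\lambda^{-1}(\{x\})=\lambda+\pi_{\overline{\cP^0}}^{-1}(\{x-\lambda\}).
\]
By Proposition~\ref{p_bijection_affine_faces_vectorial_faces_vertices}, $F^v:=\R_{>0}(F-\lambda)$ is a face of $(\A,\cH^0_\lambda)$, and Proposition~\ref{p_description_faces_affine}(2) gives $\cF_F=\lambda+\cF^0_{F^v}$, so $x-\lambda\in\cF^0_{F^v}$.

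Next I would apply Corollary~\ref{c_fibers_pi} (the analogous statement in the vectorial setting): $\pi_{\overline{\cP^0}}^{-1}(\{x-\lambda\})=(x-\lambda)+\overline{F^v}$. Translating back, this yields $\pi_\lambda^{-1}(\{x\})=x+\overline{F^v}$, so it remains to identify
\[
\overline{F^v}=\overline{\R_{>0}(F-\lambda)}=\R_{\geq 0}(\overline{F}-\lambda).
\]

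For the inclusion $\overline{F^v}\subseteq \R_{\geq 0}(\overline{F}-\lambda)$: the right-hand side contains $F-\lambda$ (with coefficient $1$) and is a cone, so it contains $F^v$; it is closed because it is the conic hull of the compact convex set $\overline{F}-\lambda$ which contains $0$ (as $\lambda\in \overline{F}$). For the reverse inclusion, let $y=t(z-\lambda)$ with $t\geq 0$ and $z\in\overline{F}$. If $t=0$, then $y=0$ is a limit of $f_n-\lambda\in F^v$ for any sequence $f_n\in F$ with $f_n\to\lambda$ (which exists since $F$ is relatively open and dense in $\overline{F}$, and $\lambda\in\overline{F}$). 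If $t>0$, pick $z_n\in F$ with $z_n\to z$; then $t(z_n-\lambda)\in F^v$ converges to $y$. Combining these gives $\pi_\lambda^{-1}(\{x\})=x+\R_{\geq 0}(\overline{F}-\lambda)$, which is the claim.

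The proof is essentially a translation of known vectorial results. The only non-cosmetic step is the last identification $\overline{F^v}=\R_{\geq 0}(\overline{F}-\lambda)$; this should be the main thing to verify carefully (alternatively one could invoke Lemma~\ref{l_adjacent_vertices}(1) to write both sides as $\sum_{\mu\in\ve_\lambda(F)}\R_{\geq 0}(\mu-\lambda)$, which makes the identification immediate).
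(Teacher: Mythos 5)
Your proof is correct, but it takes a genuinely different route from the paper's. The paper argues directly on the affine polytope: by Proposition~\ref{p_description_faces_affine}, the closed facets of $\overline{\cP(\lambda)}$ containing $x$ are the $\overline{\cF_{]\lambda,\mu[}}$ for $\mu\in\ve_\lambda(F)$, with rays $\delta_{\cF_{]\lambda,\mu[}}=\R_{\geq 0}(\mu-\lambda)$, so Proposition~\ref{p_description_fibers_pi} gives $\pi_\lambda^{-1}(\{x\})=x+\sum_{\mu\in\ve_\lambda(F)}\R_{\geq 0}(\mu-\lambda)$, and Lemma~\ref{l_adjacent_vertices} then converts this sum into $\R_{\geq 0}(\overline{F}-\lambda)$. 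You instead translate by $-\lambda$ via Eq.~\eqref{e_affine_Weyl_polytope_2} and quote the vectorial Corollary~\ref{c_fibers_pi}, which already encapsulates the facet-and-ray analysis (including the orientation check $\delta_{G^v}=\overline{G^v}$), so your only new work is the identification $\overline{F^v}=\R_{\geq 0}(\overline{F}-\lambda)$. This buys economy and mirrors how the paper itself handles other affine statements (e.g.\ Lemma~\ref{l_projection_contained_face_affine}); the paper's direct argument, in exchange, never needs that cone identification because it stays with the generators $\mu-\lambda$ throughout.

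One caveat on your last step: the justification that $\R_{\geq 0}(\overline{F}-\lambda)$ is closed ``because it is the conic hull of a compact convex set containing $0$'' is not a valid general principle --- the cone over a closed disk passing through the origin is not closed. It is true here because $\overline{F}=\conv(\ve(F))$ is a polytope with $\lambda$ among its vertices, so $\R_{\geq 0}(\overline{F}-\lambda)=\sum_{\mu\in\ve(F)}\R_{\geq 0}(\mu-\lambda)$ is a finitely generated, hence closed, convex cone. Your suggested alternative via Lemma~\ref{l_adjacent_vertices}(1) is exactly the right fix, and is in fact how the paper concludes.
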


\begin{proof}
By Proposition~\ref{p_description_faces_affine}, the facets of $\cP(\lambda)$ are the $\cF_{F_1}$ for $F_1\in \sF_{\lambda}(F)$ of dimension $1$.  If $F_1=]\lambda,\mu[$ is such a face, with $\mu\in \ve(\A)$ , then by Proposition~\ref{p_description_faces_affine},  we have $\delta_{\cF_{F_1}}=\R_{\geq 0} (\mu-\lambda)$,  with the notation of subsection~\ref{ss_preliminary_polytopes}.  Using Proposition~\ref{p_description_fibers_pi}, we deduce $\pi_{\lambda}^{-1}(\{x\})=x+\sum_{\mu\in \ve_\lambda(F)}\R_{\geq 0}(\mu-\lambda)$. Then, the result follows from Lemma~\ref{l_adjacent_vertices}.  
\end{proof}

\begin{Lemma}\label{l_intermediate_step_projection_P_mu} 
Let $C$ be an alcove of $(\A,\cH)$ and $F$ be a proper subface of $C$ which is not a vertex. Let $x\in \cF_F$. Let $\lambda,\mu\in\ve(F)$  and $\nu\in \ve(C)\setminus \ve(F)$. Then $\pi_\lambda^{-1}(\{x\})\cap \overline{\cP(\mu)}\cap \overline{\cP(\nu)}=\emptyset$. 
\end{Lemma}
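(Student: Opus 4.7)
I argue by contradiction. Suppose there exists $y\in \pi_\lambda^{-1}(\{x\})\cap \overline{\cP(\mu)}\cap \overline{\cP(\nu)}$. By Lemma~\ref{l_description_fibers_pi_affine}, I write $y=x+v$ with $v\in \R_{\geq 0}(\overline{F}-\lambda)$. Two preliminary observations fall out immediately. Since $\mu\in \ve(F)$, Proposition~\ref{p_description_faces_affine} applied at the vertex $\mu$ implies that $\cF_F$ is a face of $\overline{\cP(\mu)}$, so $x\in \overline{\cP(\mu)}$; by convexity the entire segment $[x,y]$ lies in $\overline{\cP(\mu)}$, and it also lies in the (convex) fiber $\pi_\lambda^{-1}(\{x\})$. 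On the other hand, $\nu \notin \ve(F)$ combined with Proposition~\ref{p_faces_intersections}(3) applied to $x\in \cF_F$ yields $x\notin \overline{\cP(\nu)}$. Thus $y\neq x$, and in particular $v\neq 0$.

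Next, let $F_y\in \sF$ be the unique face of $(\A,\cH)$ such that $y\in \cF_{F_y}$ (Theorem~\ref{t_tessellation_A}). By Proposition~\ref{p_faces_intersections}(3), $\ve(F_y)=\{\rho\in \ve(\A):y\in \overline{\cP(\rho)}\}$, so $\{\mu,\nu\}\subseteq \ve(F_y)$. Moreover, the equality $\pi_\lambda(y)=x\neq y$ forces $y\notin \overline{\cP(\lambda)}$ (else $\pi_\lambda(y)=y$), and hence $\lambda \notin \ve(F_y)$. So the structural upshot is that $F_y$ dominates $\mu$ and $\nu$ but not $\lambda$, while $F$ dominates $\lambda$ and $\mu$ but not $\nu$; the proof must exploit this incompatibility together with the directional constraint $v\in \R_{\geq 0}(\overline{F}-\lambda)\setminus\{0\}$.

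The main obstacle is to derive the contradiction from these pieces. My plan is to translate the problem to the vectorial picture at $\lambda$: by Eq.~\eqref{e_affine_Weyl_polytope_2} and Proposition~\ref{p_description_faces_affine}(2), setting $z=x-\lambda\in \cF^0_{F^v}$ with $F^v=\R_{>0}(F-\lambda)$ the vectorial face of $(\A,\cH^0_\lambda)$ corresponding to $F$ via Proposition~\ref{p_bijection_affine_faces_vectorial_faces_vertices}, one has $v\in \overline{F^v}\subseteq \mathrm{vect}(F^v)$ and $y-\lambda=z+v$. I would then invoke Proposition~\ref{p_projection_preserving_affine_spaces} applied with $\Psi=\Phi_\lambda$ on the affine slice $z+\mathrm{vect}(F^v)$, to constrain the projection behaviour and the possible vectorial faces of $(\A,\cH^0_\lambda)$ in which $y-\lambda$ can sit. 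The crucial input is that $\nu \notin \ve(F)$ translates, via Proposition~\ref{p_bourbaki_vectorial_faces}, into the statement that the ray $\R_{>0}(\nu-\lambda)$ is a one-dimensional face of the chamber $C^v=\R_{>0}(C-\lambda)$ that is \emph{not} contained in $\overline{F^v}$.

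The hard step will be to reconcile this local vectorial information at $\lambda$ with the global affine condition $y\in \overline{\cP(\nu)}$. Since $\overline{\cP(\nu)}$ is contained in the star of $\nu$ by Lemma~\ref{l_inclusion_P(lambda)_union_alcoves}, the face of $(\A,\cH)$ containing $y$ dominates $\nu$, which combined with $\lambda\notin \ve(F_y)$ shows that $y$ lies "on the $\nu$ side" of the walls through $\lambda$ separating the star of $\lambda$ from $\nu$; this must be shown incompatible with the fact that $v\in \overline{F^v}\setminus\{0\}$ only moves $x$ within the vectorial cone spanned by the rays $\R_{>0}(m-\lambda)$ for $m\in \ve(F)$, none of which is $\R_{>0}(\nu-\lambda)$. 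Pinning down this incompatibility precisely is, I expect, the main technical difficulty.
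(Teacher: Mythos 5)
Your preliminary observations are correct (indeed $x\in\overline{\cP(\mu)}$, $x\notin\overline{\cP(\nu)}$, the fiber is $x+\R_{\geq 0}(\overline{F}-\lambda)$, and any offending $y$ lies in a face dominating $\mu$ and $\nu$ but not $\lambda$), but the proof stops exactly where the lemma's content begins: you state yourself that ``pinning down this incompatibility precisely'' remains to be done, so what you have is a setup plus a plan, not a proof. Moreover, the plan as sketched is unlikely to close the gap. The incompatibility you hope for cannot be phrased purely in terms of which poly-simplicial faces dominate which vertices: moving from $x$ in directions of $\overline{F^v}=\R_{\geq 0}(\overline{F}-\lambda)$ can perfectly well land you in faces of $(\A,\cH)$ whose closure contains $\nu$ (e.g.\ in the open alcove $C$ itself), so ``$y$ lies on the $\nu$ side of the walls through $\lambda$'' is not contradicted by the directional constraint alone. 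The obstruction is metric, about the polytopes, not combinatorial about the simplicial faces. Also, Proposition~\ref{p_projection_preserving_affine_spaces} at $\lambda$ constrains where $\pi_\lambda$ of points in the slice lands, which you already know ($\pi_\lambda(y)=x$); it says nothing about the position of $y$ relative to $\overline{\cP(\nu)}$.

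For comparison, the paper's argument is centred at $\mu$, not $\lambda$. It first reduces to the case where $\mu$ and $\nu$ are adjacent, using Proposition~\ref{p_faces_intersections} to write $\overline{\cP(\mu)}\cap\overline{\cP(\nu)}$ as an intersection over $\nu'\in\ve_\mu(F(\mu,\nu))$ and Lemma~\ref{l_adjacent_vertices} to pick such a $\nu'\notin\ve(F)$. Then, with $V=x+\vect(F-\mu)$ (which contains $\pi_\lambda^{-1}(\{x\})$ since $\lambda,\ve(F)\subset\overline F$), it passes to the vectorial picture at $\mu$ and applies Lemma~\ref{l_faces_affine_subspace}: since $\nu-\mu$ spans a one-dimensional face of $C^v=\R_{>0}(C-\mu)$ not contained in $\overline{F^v}$, the slice $V\cap\overline{\cP(\mu)}$ lies in the \emph{open} half-space $\mathring{\DC}(\mu,\nu)$ bounded by $\supp(\overline{\cP(\mu)}\cap\overline{\cP(\nu)})$, which is disjoint from $\overline{\cP(\nu)}$. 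That strict half-space estimate on the parabolic slice is the missing key step in your write-up; without it (or an equivalent quantitative input), the contradiction you aim for does not follow.
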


\begin{proof}
Let $F(\mu,\nu)$ be the smallest face of $(\A,\cH)$ dominating both $\mu$ and $\nu$.
Such face exists according to  Lemma~\ref{l_enclosure_subset_alcove}. By Proposition~\ref{p_faces_intersections}, we have:
$$\overline{\cP(\mu)}\cap \overline{\cP(\nu)}=\bigcap_{\nu'\in \ve_\mu(F(\mu,\nu))}\overline{\cP(\mu)}\cap \overline{\cP(\nu')}.$$
 We have $F(\mu,\nu)\not\subseteq \overline{F}$ and thus Lemma~\ref{l_adjacent_vertices} implies the existence of  $\nu'$ belonging to $ \ve_\mu(F(\mu,\nu))\setminus \ve(F)$. Up to replacing $\nu$ by $\nu'$, we may assume that $\mu$ and $\nu$ are adjacent.

Let $\DC(\mu,\nu)$ be the closed half-space delimited by $\overline{\cP(\mu)}\cap \overline{\cP(\nu)}$ and containing $\overline{\cP(\mu)}$. Let $V=x+\sum_{\tau\in \ve_\mu(F)}\R (\tau-\mu)=x+\vect(F-\mu)$. By Lemma~\ref{l_description_fibers_pi_affine}, we have \[\pi_\lambda^{-1}(\{x\})=x+ \R_{\geq 0} (\overline{F}-\lambda)\subset V.\]

Let $C^v=\R_{>0}(C-\mu)$ and $F^v=\R_{>0}(F-\mu)$. Then $C^v$ (resp. $F^v$) is a chamber (resp. face) of $(\A,\cH^0_\mu)$ by Proposition~\ref{p_bijection_affine_faces_vectorial_faces_vertices}. Let $\Sigma$ be the basis of $\Phi_\mu$ associated with $C^v$. Write $\Sigma=\{\alpha_i,i\in I\}$ for some indexing set $I$ satisfying $|I|=|\Sigma|$. Let $(\qp_i)\in (\A^*)^I$ be the dual basis of $(\alpha_i)_{i\in I}$. Let $J=\{i\in I\mid \qp_i\in \overline{F^v}\}$. Then $\R_{>0}(\nu-\mu)$ is a one dimensional face of $C^v$ and thus $\nu-\mu\in \R_{>0}\qp_i$, for some $i\in I\setminus J$. Then $\DC(\mu,\nu)-\mu$ is the half-space of $\A$ containing $\cP^0_{\Phi_\mu,\bb_C-\mu}$ and delimited by the support of  $W^v_{\R_{>0}\qp_i}.(\bb_C-\mu)$, where $W^v_{\R_{>0}\qp_i}$ is the fixator of $\R_{>0}\qp_i$ in $W^v_{\Phi_\mu}$. This half-space is denoted $\DC_i$ in Lemma~\ref{l_faces_affine_subspace}.
Therefore by Lemma~\ref{l_faces_affine_subspace}, we have $\mathring{\DC}(\mu,\nu)-\mu\supset (V-\mu)\cap (\overline{\cP(\mu)}-\mu)$ and hence $\mathring{\DC}(\mu,\nu)\supset V\cap \overline{\cP(\mu)}\supset \pi_\lambda^{-1}(\{x\})\cap \overline{\cP(\mu)}$, which proves the lemma, since $\mathring{\DC}(\mu,\nu)\cap \overline{\cP(\nu)}$ is empty. 
\end{proof}

\begin{Lemma}\label{l_projection_contained_face_affine}
Let $\lambda\in \ve(\A)$ and $F$ be a face of $(\A,\cH)$ dominating $\lambda$. Let $x\in F$. Then $\pi_\lambda(x)\in F$. 
\end{Lemma}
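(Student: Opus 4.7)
The plan is to reduce the claim to its vectorial counterpart, Lemma~\ref{l_projection_contained_face}, via the translation identification of $\overline{\cP(\lambda)}$ with a vectorial weight polytope at the vertex $\lambda$, and then use the star description from Lemma~\ref{l_star_vertex_intersected_face} to upgrade ``$\pi_\lambda(x)$ lies on the right ray structure'' to ``$\pi_\lambda(x)$ lies in $F$ itself''.

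First, fix an alcove $C$ dominating $\lambda$. By Eq.~\eqref{e_affine_Weyl_polytope_2} we have $\overline{\cP(\lambda)}=\lambda+\overline{\cP^0_{\bb_C-\lambda,\Phi_\lambda}}$, and since orthogonal projections commute with translations, the projection $\pi_\lambda$ becomes $\pi_\lambda(y)=\lambda+\pi_{\overline{\cP^0}}(y-\lambda)$ for every $y\in \A$, where $\pi_{\overline{\cP^0}}$ denotes the orthogonal projection onto the vectorial weight polytope $\overline{\cP^0_{\bb_C-\lambda,\Phi_\lambda}}$ associated with the root system $\Phi_\lambda$ and the regular point $\bb_C-\lambda$ (regularity holds since $\alpha(\bb_C)\notin\Z$ for all $\alpha\in\Phi$).

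Next, set $F^v:=\R_{>0}(F-\lambda)$. By Proposition~\ref{p_bijection_affine_faces_vectorial_faces_vertices}, $F^v$ is a face of $(\A,\cH^0_\lambda)$, and since $x\in F$, we have $x-\lambda\in F-\lambda\subset F^v$. Applying Lemma~\ref{l_projection_contained_face} to the root system $\Psi=\Phi_\lambda$ with the vectorial weight polytope $\overline{\cP^0_{\bb_C-\lambda,\Phi_\lambda}}$, we obtain $\pi_{\overline{\cP^0}}(x-\lambda)\in F^v$. Translating back gives
\[
\pi_\lambda(x)\in \lambda+F^v=\lambda+\R_{>0}(F-\lambda).
\]

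Finally, to conclude that $\pi_\lambda(x)$ actually belongs to $F$ (and not merely to the ray $\lambda+\R_{>0}(F-\lambda)$), note that by Lemma~\ref{l_inclusion_P(lambda)_union_alcoves}(1) the image $\pi_\lambda(x)\in\overline{\cP(\lambda)}$ lies in the star $E$ of $\lambda$. By Lemma~\ref{l_star_vertex_intersected_face}, $F=E\cap\left(\lambda+\R_{>0}(F-\lambda)\right)$, so $\pi_\lambda(x)\in F$, as desired. The only substantive input beyond bookkeeping is Lemma~\ref{l_projection_contained_face}, which was the corresponding vectorial statement; the novelty here is just recognizing that the affine polytope at $\lambda$ is a translate of the vectorial one at $0$ for the sub-root system $\Phi_\lambda$, so no additional difficulty arises.
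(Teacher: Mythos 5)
Your proof is correct and follows essentially the same route as the paper: both reduce to the vectorial statement (Lemma~\ref{l_projection_contained_face}) via Eq.~\eqref{e_affine_Weyl_polytope_2} and Proposition~\ref{p_bijection_affine_faces_vectorial_faces_vertices}, obtaining $\pi_\lambda(x)\in \lambda+\R_{>0}(F-\lambda)$, and both then use Lemma~\ref{l_inclusion_P(lambda)_union_alcoves} to pin the projection inside the star of $\lambda$. The only difference is cosmetic: where the paper re-runs the partition-of-faces argument with an auxiliary face $F'$, you invoke Lemma~\ref{l_star_vertex_intersected_face}, which packages exactly that argument, so there is no gap.
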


\begin{proof}
We have $x-\lambda\in \R_{>0}(F-\lambda)$. Using Lemma~\ref{l_projection_contained_face},  Eq. \eqref{e_affine_Weyl_polytope_2} and Proposition~\ref{p_bijection_affine_faces_vectorial_faces_vertices} we deduce $\pi_{\overline{\cP(\lambda)}-\lambda}(x-\lambda)=\pi_{\lambda}(x)-\lambda\in \R_{>0}(F-\lambda)$. Consequently, $\pi_{\lambda}(x)\in \lambda+\R_{>0}(F-\lambda)$.

Let $F'$ be the face of $(\A,\cH)$ containing $\pi_\lambda(x)$. By Lemma~\ref{l_inclusion_P(lambda)_union_alcoves}, $F'$ dominates $\lambda$. Therefore $\pi_\lambda(x)\in F'\subset \lambda+\R_{>0} (F'-\lambda)$. We deduce $\R_{>0}(F-\lambda)\cap \R_{>0}(F'-\lambda)\neq \emptyset$. As  the faces of $(\A,\cH_\lambda^0)$ partition $\A$, we deduce $\R_{>0}(F-\lambda)=\lambda+\R_{>0}(F'-\lambda)$ and thus $F=F'$, by Proposition~\ref{p_bijection_affine_faces_vectorial_faces_vertices}. 
\end{proof}

\begin{Lemma}\label{l_intermediate_step_projection_P_mu_2}
Let $\lambda\in \ve(\A)$, $C$ be an alcove of $\A$ dominating $\lambda$ and $\mu\in \ve(C)\setminus\{\lambda\}$. Then $\pi_\lambda(\overline{\cP(\mu)}\cap \overline{C})\subset \overline{\cP(\lambda)}\cap \overline{\cP(\mu)}$.
\end{Lemma}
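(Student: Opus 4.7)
The plan is to reduce to the vectorial setting attached to $\lambda$ via the shift from Eq.~\eqref{e_affine_Weyl_polytope_2} and apply Proposition~\ref{p_projection_preserving_affine_spaces} to the vectorial chamber $C^v := \R_{>0}(C - \lambda)$ of $(\A, \cH^0_\lambda)$, and then to conclude by contradiction using Lemma~\ref{l_intermediate_step_projection_P_mu}. Concretely, I identify $\overline{\cP(\lambda)} = \lambda + \overline{\cP^0_\lambda}$ so that $\pi_\lambda(x) = \lambda + \pi(x - \lambda)$ with $\pi := \pi_{\overline{\cP^0_\lambda}}$; since $\lambda \in \ve(C)$ one has $\overline{C} - \lambda \subseteq \overline{C^v}$. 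I then apply Proposition~\ref{p_projection_preserving_affine_spaces} to the root system $\Phi_\lambda$ with $F^v = F_1^v = C^v$ and $z := \bb_C - \lambda$, which is the unique point of $\cF^0_{C^v}$ since $C^v$ is a chamber; as $\vect(C^v) = \A$, the membership condition $x - \lambda \in (z + \vect(C^v)) \cap \overline{C^v}$ reduces to the automatic inclusion $x - \lambda \in \overline{C^v}$. The proposition provides $G^v \in \sF(\cH^0_\lambda)$ with $G^v \subseteq \overline{C^v}$ and $\pi(x - \lambda) \in \cF^0_{G^v}$. Via the order-preserving bijection of Proposition~\ref{p_bijection_affine_faces_vectorial_faces_vertices} and the identification of Proposition~\ref{p_description_faces_affine}, this translates to the existence of a face $F \in \sF_\lambda(\cH)$ with $F \subseteq \overline{C}$ and $\pi_\lambda(x) \in \cF_F$.

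By Proposition~\ref{p_faces_intersections}(3), it suffices to establish that $\mu \in \ve(F)$. I would argue by contradiction, assuming $\mu \notin \ve(F)$. If $F = \{\lambda\}$, Lemma~\ref{l_description_fibers_pi_affine} gives $\pi_\lambda^{-1}(\{\pi_\lambda(x)\}) = \{\pi_\lambda(x)\}$, so $x = \pi_\lambda(x) \in \cP(\lambda)$; then Proposition~\ref{p_faces_intersections}(3) applied at $x$ forces $\mu = \lambda$, a contradiction. Otherwise $F$ has dimension at least one and, since $\mu \in \ve(C) \setminus \ve(F)$, is a proper subface of $C$ that is not a vertex. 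The plan is then to apply Lemma~\ref{l_intermediate_step_projection_P_mu} with the alcove $C$, the face $F$, the vertices $\lambda$ and $\mu'$ of $F$, and $\nu := \mu$, yielding $\pi_\lambda^{-1}(\{\pi_\lambda(x)\}) \cap \overline{\cP(\mu')} \cap \overline{\cP(\mu)} = \emptyset$, and then to exhibit a $\mu' \in \ve(F) \setminus \{\lambda\}$ with $x \in \overline{\cP(\mu')}$, which would place $x$ in the above empty intersection.

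The main obstacle is this last production of $\mu'$. Using Lemma~\ref{l_description_fibers_pi_affine} together with Lemma~\ref{l_adjacent_vertices}, I would decompose $x - \pi_\lambda(x) = \sum_{\mu' \in \ve(F) \setminus \{\lambda\}} t_{\mu'}(\mu' - \lambda)$ with coefficients $t_{\mu'} \geq 0$. If all $t_{\mu'}$ vanish then $x = \pi_\lambda(x) \in \overline{\cP(\lambda)} \cap \overline{\cP(\mu)}$, and $\pi_\lambda(x) = x \in \overline{\cP(\mu)}$ contradicts our standing assumption. Otherwise one exploits the convexity of each $\overline{\cP(\mu')}$ together with the fact that $x$ and $\pi_\lambda(x)$ sit close to the alcove $\overline{C}$ (whose vertex set contains $\mu'$) to show that $x \in \overline{\cP(\mu')}$ for at least one $\mu'$ with $t_{\mu'} > 0$, yielding the desired contradiction.
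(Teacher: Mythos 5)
Your opening moves are sound and close in spirit to the paper's proof: establishing that $\pi_\lambda(x)\in\cF_F$ for a face $F\subset\overline{C}$ dominating $\lambda$ (your detour through Proposition~\ref{p_projection_preserving_affine_spaces} with $F^v=C^v$ works, though Lemma~\ref{l_projection_contained_face} already gives this), reducing via Proposition~\ref{p_faces_intersections}~(3) to proving $\mu\in\ve(F)$, and disposing of the case $F=\{\lambda\}$ are all correct.

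The step you yourself call the main obstacle is, however, a genuine gap, and it is exactly where the whole difficulty of the lemma sits. To apply Lemma~\ref{l_intermediate_step_projection_P_mu} at the point $x$ you must first produce $\mu'\in\ve(F)$ with $x\in\overline{\cP(\mu')}$. Theorem~\ref{t_tessellation_A} together with Lemma~\ref{l_inclusion_P(lambda)_union_alcoves} only gives $x\in\overline{\cP(\nu)}$ for some $\nu\in\ve(C)$, and nothing in your sketch rules out that every such $\nu$ lies in $\ve(C)\setminus\ve(F)$ (you already know one such vertex, namely $\mu$). Convexity of an individual $\overline{\cP(\mu')}$ only yields segments such as $[\pi_\lambda(x),\mu']$, whereas $x=\pi_\lambda(x)+\sum_{\mu'}t_{\mu'}(\mu'-\lambda)$ lies along the directions $\mu'-\lambda$, where boundedness of $\overline{\cP(\mu')}$ works against you; the constraint $x\in\overline{C}$ is what must be exploited, and ``$x$ sits close to $\overline{C}$'' is not an argument. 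What you need is precisely the containment $\pi_\lambda^{-1}(\{\pi_\lambda(x)\})\cap\overline{C}\subset\bigcup_{\nu\in\ve(F)}\overline{\cP(\nu)}$, which is the intermediate claim \eqref{e_converse_absurd_assumption} in the paper's proof, i.e.\ essentially the statement being proved. The paper obtains it by a first-contact argument inside the convex fiber: if some point $y$ of $\pi_\lambda^{-1}(\{\pi_\lambda(x)\})\cap\overline{C}$ lay in a ``bad'' polytope $\overline{\cP(\mu'')}$ with $\mu''\in\ve(C)\setminus\ve(F)$, one parametrizes $[\pi_\lambda(x),y]$, takes the first parameter $t_0$ at which a bad polytope is met, notes that for $t<t_0$ the point lies in some good $\overline{\cP(\nu)}$, $\nu\in\ve(F)$ (tessellation, the star property, Proposition~\ref{p_faces_intersections}~(3) at $\pi_\lambda(x)$, plus closedness of the polytopes), so that $\gamma(t_0)$ lies simultaneously in a good polytope, a bad polytope, and the fiber; only then is Lemma~\ref{l_intermediate_step_projection_P_mu} invoked, at this intermediate point, where membership in a good polytope comes for free. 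Applying that lemma at such a first-contact point rather than at $x$ itself is the missing idea; without it (or an equivalent substitute) your argument does not close.
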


\begin{proof}
Let $x\in \Fr(\cP(\lambda))\cap \overline{C}$.   Let $F=F(x)$ be the face dominating $\lambda$ and such that $x\in \cF_F$. Then by Proposition~\ref{p_faces_intersections}, $\{\nu\in \ve(\A)\mid x\in \overline{\cP(\nu)}\}=\ve(F)$.
By contradiction, 
we make the following assumption: \begin{equation}\label{e_absurd_assumption}
\exists \mu'\in \ve(C)\setminus\ve(F) \mid (\pi_\lambda)^{-1}(\{x\})\cap \overline{\cP(\mu')}\cap \overline{C}\neq \emptyset.
\end{equation}

Let $\mu'\in \ve(C)\setminus \ve(F)$ be  such that there exists $y\in \pi_{\lambda}^{-1}(\{x\})\cap \overline{\cP(\mu')}\cap \overline{C}$. By Proposition~\ref{p_faces_intersections}, $x\notin \overline{\cP(\mu')}$ and thus $x\neq y$. 
Let $\gamma:[0,1]\rightarrow [x,y]$ be the affine parametrization such that $\gamma(0)=x$ and $\gamma(1)=y$. 
For $t\in ]0,1]$, we have $|y-\gamma(t)|<|y-x|=|y-\pi_\lambda(y)|$ and thus $\gamma(t)\notin \overline{\cP(\lambda)}$ for $t\in ]0,1]$. 

Let $E=\{t\in [0,1]\mid \exists \mu''\in \ve(C)\setminus \ve(F), \gamma(t)\in \overline{\cP(\mu'')}\}$. Then $E$ is non-empty by Eq. \eqref{e_absurd_assumption}. Let $t_0=\inf E$. Then $\gamma(t_0)\in \overline{\cP(\mu'')}$ for some $\mu''\in \ve(C)\setminus \ve(F)$, since the $\overline{\cP(\tau)}$ are closed, for  $\tau\in \ve(\A)$. Therefore $t_0=\min E$ and $t_0>0$. By Theorem~\ref{t_tessellation_A} and Lemma~\ref{l_inclusion_P(lambda)_union_alcoves}, we have $\gamma([0,t_0[)\subset \bigcup_{\nu\in \ve(F)} \overline{\cP(\nu)}$. Let $\nu\in \ve(F)$ be such that for some $\epsilon>0$, we have: \[\gamma(t)\in \overline{\cP(\nu)},\ \forall t\in ]t_0-\epsilon,t_0[.\]

Then $\gamma(t_0)\in \overline{\cP(\nu)}\cap \overline{\cP(\mu')}$. By Lemma~\ref{l_description_fibers_pi_affine}, $\pi_\lambda^{-1}(\{x\})$ is convex and contains $\gamma(t_0)$. This contradicts Lemma~\ref{l_intermediate_step_projection_P_mu}. Therefore, Eq. \eqref{e_absurd_assumption} is wrong. In other words, \begin{equation}\label{e_converse_absurd_assumption}
\forall x\in \overline{C}\cap \overline{\cP(\lambda)}, (\pi_\lambda)^{-1}(\{x\})\cap \overline{C}\subset \overline{C}\setminus \bigcup_{\mu'\in \ve(C)\setminus \ve(F(x))} \overline{\cP(\mu')}.
 \end{equation}

Let now $y\in \overline{\cP(\mu)}\cap \overline{C}$. Then by Lemma~\ref{l_projection_contained_face_affine}, $x:=\pi_\lambda(y)\in \overline{C}$. Let $F\in \sF_\lambda$ be such that $x\in \cF_F$.  Then by Eq. \eqref{e_converse_absurd_assumption}, $y\in \overline{C}\setminus \bigcup_{\mu'\in \ve(C)\setminus \ve(F)}\overline{\cP(\mu')}$ and thus $\mu\in \ve(F)$. Therefore $\overline{\cP(\mu)}\supset \overline{\cF_F}$, which completes the proof of the lemma. 
\end{proof}

\begin{Lemma}\label{l_alcoves_containing_vertex}
Let $C_1\in \Alc(\cH)$ and   $\lambda,\mu\in \ve(C_1)$. Then: \[\bigcup_{C\in \Alc_\mu(\cH)} \overline{C}\subset \bigcup_{C\in \Alc_{\lambda,\mu}(\cH)} \lambda+\R_{\geq 0}(\overline{C}-\lambda),\] where $\Alc_{\lambda,\mu}(\cH)$ is the set of alcoves of $(\A,\cH)$ dominating $\lambda$ and $\mu$.
\end{Lemma}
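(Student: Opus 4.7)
My plan is to translate the problem, via Proposition~\ref{p_bijection_affine_faces_vectorial_faces_vertices}, into a question about the vectorial arrangement $\cH^0_\lambda$. Let $F_0$ be the smallest face of $(\A,\cH)$ dominating $\{\lambda,\mu\}$ (given by Lemma~\ref{l_enclosure_subset_alcove}) and let $F_0^v:=\R_{>0}(F_0-\lambda)\in\sF(\cH^0_\lambda)$ be its image under the isomorphism $\Theta$. A short argument using the minimality of $F_0$ (the open segment $]\lambda,\mu[$ cannot lie on any proper subface of $\overline{F_0}$) shows that $\mu-\lambda\in F_0^v$. Fix $x\in\overline{C}$ with $\mu\in\ve(C)$. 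It is enough to produce a chamber $C'^v$ of $\cH^0_\lambda$ whose closure contains both $x-\lambda$ and $F_0^v$: indeed, Proposition~\ref{p_bijection_affine_faces_vectorial_faces_vertices} then provides an alcove $C'$ with $F_0\subseteq\overline{C'}$, hence $C'\in\Alc_{\lambda,\mu}(\cH)$; and the identity $\overline{C'^v}=\R_{\geq 0}(\overline{C'}-\lambda)$ (the closed convex cone from $\lambda$ over the polytope $\overline{C'}$) then yields $x\in\lambda+\R_{\geq 0}(\overline{C'}-\lambda)$.

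The key geometric input is the following \emph{no-strict-separation} property: for every $\alpha\in\Phi_\lambda$, the scalars $\alpha(x-\lambda)$ and $\alpha(\mu-\lambda)$ are never strictly of opposite signs. Indeed, assume for contradiction $\alpha(x)>\alpha(\lambda)>\alpha(\mu)$ (the reverse case being symmetric); since $\alpha\in\Phi$ and $\alpha(\lambda)\in\Z$, the hyperplane $\alpha^{-1}(\{\alpha(\lambda)\})$ lies in $\cH$, so the alcove $C$ lies strictly on one of its two sides and $\overline{C}$ in the corresponding closed half-space---which cannot contain simultaneously $x$ and $\mu$, strictly on opposite sides, a contradiction.

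To construct $C'^v$, I will perturb $\mu-\lambda$ slightly in the direction of $x-\lambda$: set $v:=(\mu-\lambda)+\epsilon(x-\lambda)$ for $\epsilon>0$ sufficiently small (a condition on the finitely many roots in $\Phi_\lambda$), and let $H^v$ be the face of $\cH^0_\lambda$ containing $v$. By the choice of $\epsilon$, $\mathrm{sign}(\alpha(v))$ equals $\mathrm{sign}(\alpha(\mu-\lambda))$ whenever $\alpha(\mu-\lambda)\neq 0$, and $\mathrm{sign}(\alpha(x-\lambda))$ otherwise. A direct case-by-case check---splitting according to whether $\alpha(\mu-\lambda)$ and $\alpha(x-\lambda)$ vanish, and invoking no-strict-separation in the single non-trivial case where both are non-zero---shows that both $\mu-\lambda$ and $x-\lambda$ lie in $\overline{H^v}$; since $\mu-\lambda\in F_0^v$, this forces $F_0^v\preceq H^v$. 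Any chamber $C'^v$ of $\cH^0_\lambda$ with $H^v\preceq C'^v$ then satisfies all the required properties.

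The main delicate step is the sign-vector verification above: one cannot directly take $v$ in a chamber of $\cH^0_\lambda$, for if $\alpha(\mu-\lambda)=\alpha(x-\lambda)=0$ for some $\alpha\in\Phi_\lambda$ then $\alpha(v)=0$ and $v$ lies on a wall of $\cH^0_\lambda$; the argument therefore genuinely uses general faces (not only chambers), and a chamber upper bound $C'^v$ is extracted only at the end. Once $C'^v$ is in hand, the remaining steps---the translation back through Proposition~\ref{p_bijection_affine_faces_vectorial_faces_vertices} to produce $C'\in\Alc_{\lambda,\mu}(\cH)$, and the identification of $\overline{C'^v}$ with the closed cone $\R_{\geq 0}(\overline{C'}-\lambda)$---are routine.
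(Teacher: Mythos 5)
Your argument is correct, but it takes a genuinely different route from the paper's. The paper works globally: it forms the cone $E$ obtained as the union of $\R_{>0}(F-\lambda)$ over all faces $F$ dominating both $\lambda$ and $\mu$, identifies it, via Proposition~\ref{p_bijection_affine_faces_vectorial_faces_vertices} and the star description of Lemma~\ref{l_union_chambers_dominating_face}, with an intersection $\bigcap_{\alpha\in X}\alpha^{-1}(\R_{>0})$ for some $X\subset\Phi_\lambda$, notes that $\lambda+E$ is open and contains $\mu$, hence meets every alcove dominating $\mu$, and then uses that $\lambda+\overline{E}$ is enclosed to conclude that it contains every such closed alcove; since $\lambda+\overline{E}$ lies in $\bigcup_{C\in\Alc_{\lambda,\mu}(\cH)}\lambda+\R_{\geq0}(\overline{C}-\lambda)$, the lemma follows. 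You instead argue pointwise: for $x\in\overline{C}$ with $\mu\in\ve(C)$, the observation that no wall through $\lambda$ strictly separates $x$ from $\mu$ (both lie in $\overline{C}$), combined with the perturbation $v=(\mu-\lambda)+\epsilon(x-\lambda)$, produces a face $H^v$ of $(\A,\cH^0_\lambda)$ whose closure contains both $x-\lambda$ and $\mu-\lambda$, hence a chamber dominating it, and hence, through $\Theta$, a single explicit alcove $C'\in\Alc_{\lambda,\mu}(\cH)$ with $x\in\lambda+\R_{\geq0}(\overline{C'}-\lambda)$. Your supporting claims all check out: the sign-vector verification is sound (and in fact, given no-strict-separation, the smallness of $\epsilon$ is not even needed); $\mu-\lambda\in F_0^v$ does follow since $]\lambda,\mu[$ lies in a single face, which by minimality must be $F_0$; and $\overline{C'^v}=\R_{\geq0}(\overline{C'}-\lambda)$ holds because $\overline{C'}$ is a polytope containing $\lambda$, so the right-hand cone is finitely generated and closed --- the same identity the paper uses without comment, e.g.\ in Lemma~\ref{l_conical_property_alcove}. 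The paper's route is shorter because it reuses the enclosure formalism; yours is more elementary and has the small advantage of exhibiting, for each point $x$, a concrete alcove $C'$ whose cone at $\lambda$ contains it.
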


\begin{proof}
Let $E=\bigcup_{F\in \sF_{\lambda,\mu}}\R_{>0} (F-\lambda)$, where $\sF_{\lambda,\mu}$ is the set of faces dominating $\lambda$ and $\mu$. Then $\lambda+ \overline{E}\subset \lambda+\bigcup_{C\in \Alc_{\lambda,\mu}(\cH)}\R_{\geq 0}(\overline{C}-\lambda)$.  By Proposition~\ref{p_bijection_affine_faces_vectorial_faces_vertices} and Lemma~\ref{l_union_chambers_dominating_face}, there exists $X\subset \Phi_\lambda$ such that $E=\bigcap_{\alpha\in X} \alpha^{-1}(\R_{>0})$. As $E$ is a cone containing $]0,\mu-\lambda[$, it contains $\mu-\lambda$.  Therefore $\lambda+E$ is open and contains $\mu$. Consequently, $\lambda+E$ contains a point of each alcove dominating $\mu$. Moreover $\lambda+\overline{E}=\bigcap_{\alpha\in X} \alpha^{-1}(\R_{\geq \alpha(\lambda)})$ and thus it is enclosed, which proves that it contains $\bigcup_{C\in \Alc_\mu(\cH)} \overline{C}$.
\end{proof}

\begin{Lemma}\label{l_conical_property_alcove}
Let $C\in \Alc(\cH)$ and $\lambda\in \ve(C)$. Let $E'$ be the union of the faces of $C$ dominating $\lambda$ and $x\in E'$. Let $y\in \lambda+\R_{\geq 0}(\overline{C}-\lambda)$. Then $[x,y]\cap E'$ contains a neighbourhood of $x$ in $[x,y]$. 
\end{Lemma}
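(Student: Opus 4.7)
The plan is to describe $E'$ explicitly in terms of the affine roots in the basis $\Sigma_C^{\mathrm{aff}}$ of $\Phi^{\mathrm{aff}}$ associated to $C$ (as in Lemma~\ref{l_description_faces_intersections}), and then to conclude by a direct continuity argument. Since $\lambda\in \overline{C}$, we have $\underline{\alpha}(\lambda)\geq 0$ for every $\underline{\alpha}\in \Sigma_C^{\mathrm{aff}}$, so I would partition $\Sigma_C^{\mathrm{aff}}=\Sigma_\lambda^{0}\sqcup \Sigma_\lambda^{+}$ according to whether $\underline{\alpha}(\lambda)=0$ or $\underline{\alpha}(\lambda)>0$. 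Lemma~\ref{l_description_faces_intersections}(1) applied to $F=\{\lambda\}$ yields $\cM_{\{\lambda\}}=\Sigma_\lambda^{+}$; since a face $F$ of $C$ dominates $\lambda$ exactly when $\lambda\in \overline{F}$, which in turn translates into $\cM_F\supset \Sigma_\lambda^{+}$, I would deduce the description
\[E'=\{u\in \overline{C}\mid \underline{\alpha}(u)>0 \text{ for every } \underline{\alpha}\in \Sigma_\lambda^{+}\}.\]

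Next, I would write $y=\lambda+r(z-\lambda)$ with $r\geq 0$ and $z\in \overline{C}$, and set $u_t:=(1-t)x+ty$ for $t\in [0,1]$. For $\underline{\alpha}\in \Sigma_\lambda^{0}$, a short computation using the affinity of $\underline{\alpha}$ and $\underline{\alpha}(\lambda)=0$ gives $\underline{\alpha}(y)=r\,\underline{\alpha}(z)\geq 0$; combined with $\underline{\alpha}(x)\geq 0$ (as $x\in \overline{C}$), this ensures $\underline{\alpha}(u_t)\geq 0$ for every $t\in [0,1]$. For $\underline{\alpha}\in \Sigma_\lambda^{+}$, the description of $E'$ gives $\underline{\alpha}(x)>0$, so by continuity there is a right-neighbourhood $[0,\varepsilon_{\underline{\alpha}})$ of $0$ on which $\underline{\alpha}(u_t)>0$. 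Choosing $\varepsilon$ to be the minimum of these finitely many $\varepsilon_{\underline{\alpha}}$ for $\underline{\alpha}\in \Sigma_\lambda^{+}$, I would get $u_t\in \overline{C}$ together with $\underline{\alpha}(u_t)>0$ for every $\underline{\alpha}\in \Sigma_\lambda^{+}$, whenever $t\in [0,\varepsilon)$; hence $u_t\in E'$ throughout this interval, which provides the desired neighbourhood of $x$ in $[x,y]$.

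The point that requires care is that $y$ itself need not lie in $\overline{C}$, so a naive convexity argument for $\overline{C}$ is unavailable. This is where the cone hypothesis $y\in \lambda+\R_{\geq 0}(\overline{C}-\lambda)$ enters crucially: it forces non-negativity of the affine roots in $\Sigma_\lambda^{0}$ on $y$, which are exactly those that can vanish at $x$ (strict positivity at $x\in E'$ already rules out vanishing on $\Sigma_\lambda^{+}$). The affine roots in $\Sigma_\lambda^{+}$ are then controlled by continuity, using the strict inequalities satisfied by $x$.
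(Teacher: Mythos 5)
Your proof is correct, and it takes a genuinely different route from the paper's. The paper first uses the openness of the star of $\lambda$ (Lemma~\ref{l_union_alcoves_containing_face}) to place a small initial subsegment $]x,x+\epsilon(y-x)[$ inside a single face $F$ dominating $\lambda$, and then identifies $F$ with a subface of $C$ by intersecting with the closed cone $\lambda+\R_{\geq 0}(\overline{C}-\lambda)$, which is partitioned into the sets $\lambda+\R_{>0}(F_1-\lambda)$ for $F_1$ a subface of $C$ dominating $\lambda$ via Proposition~\ref{p_bijection_affine_faces_vectorial_faces_vertices}; this gives $F=F_1\subset E'$. You instead argue directly with the finitely many affine roots in $\Sigma^{\mathrm{aff}}_C$: the sign description $E'=\{u\in \overline{C}\mid \underline{\alpha}(u)>0,\ \forall \underline{\alpha}\in \Sigma^+_\lambda\}$ (which is Eq.~\eqref{e_description_E'} for $F=\{\lambda\}$), the observation that the cone hypothesis forces $\underline{\alpha}(y)=r\,\underline{\alpha}(z)\geq 0$ for every $\underline{\alpha}$ vanishing at $\lambda$ (these being exactly the roots that may vanish at $x$), and continuity for the strict inequalities. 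Your computation is valid: $\overline{C}=\bigcap_{\underline{\alpha}\in \Sigma^{\mathrm{aff}}_C}\underline{\alpha}^{-1}(\R_{\geq 0})$ by Lemma~\ref{l_description_faces_intersections}, and affinity of the $\underline{\alpha}$ gives the convex-combination estimates on $u_t$. The one small step worth making explicit when you "deduce the description" of $E'$ is that every $u\in\overline{C}$ lies in a face of $(\A,\cH)$ contained in $\overline{C}$ (e.g.\ by Lemma~\ref{l_enclosure_faces}, $\cl(u)\subset \overline{C}$ since $\overline{C}$ is enclosed), whose set $\cM$ of positive simple affine roots is $\{\underline{\alpha}\mid \underline{\alpha}(u)>0\}$; with that, your equivalence $u\in E'\Leftrightarrow(\underline{\alpha}(u)>0,\ \forall\underline{\alpha}\in\Sigma^+_\lambda)$ follows exactly as you indicate. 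The trade-off: the paper's proof reuses structural results already established (openness of the star, the bijection with vectorial faces at $\lambda$), while yours is more elementary and self-contained, relying only on Lemma~\ref{l_description_faces_intersections} and handling the reducible case without any product decomposition.
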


\begin{proof}
Let $E$ be the union of the faces of $(\A,\cH)$ dominating $\lambda$. Then $E$ is open by Lemma~\ref{l_union_alcoves_containing_face}. Therefore there exists a face $F$ dominating $\lambda$ and $\epsilon>0$ such that $F\supset ]x,x+\epsilon(y-x)[$. We also have $\lambda+\R_{>0}(F-\lambda)\supset F$. Moreover $x,y \in\lambda+\R_{\geq 0} (\overline{C}-\lambda)$  and thus it contains $]x,x+\epsilon(y-x)[$. The closed chamber $\overline{C^v}:=\R_{\geq 0}(\overline{C}-\lambda)$ is the union of the faces of $(\A,\cH^0_\lambda)$ dominated by $\overline{C^v}$. Thus by Proposition~\ref{p_bijection_affine_faces_vectorial_faces_vertices}, we have \[\overline{C^v}=\bigsqcup_{F_1\in \sF_\lambda(C)} \R_{>0}(F_1-\lambda),\] where $\sF_\lambda(C)$ is the set of subfaces of $C$ dominating $\lambda$. Therefore there exists $F_1\in \sF_\lambda(C)$ such that $\R_{>0}(F-\lambda)\cap \R_{>0}(F_1-\lambda)$ is non-empty, and as the faces of $(\A,\cH^0_\lambda)$ partition $\A$, we deduce that $F=F_1$. But $F_1\subset E'$ and the lemma follows.
\end{proof}

\begin{Proposition}\label{p_projection_polytope_neighbour}
Let $\lambda,\mu\in \ve(\A)$ be such that there exists an alcove of $(\A,\cH)$ dominating $\lambda$ and $\mu$. Then $\pi_{\lambda}(\overline{\cP(\mu)})\subset \overline{\cP(\lambda)}\cap \overline{\cP(\mu)}$.
\end{Proposition}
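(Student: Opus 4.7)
The plan is to take any $y\in\overline{\cP(\mu)}$ and show $\pi_\lambda(y)\in\overline{\cP(\lambda)}\cap\overline{\cP(\mu)}$; the inclusion $\pi_\lambda(y)\in\overline{\cP(\lambda)}$ being automatic, only $\pi_\lambda(y)\in\overline{\cP(\mu)}$ requires work. First I would invoke Lemma~\ref{l_inclusion_P(lambda)_union_alcoves}(1) to locate $y$ in some $\overline{C_y}$ with $C_y\in\Alc_\mu(\cH)$, and then apply Lemma~\ref{l_alcoves_containing_vertex} to write $y=\lambda+t(z-\lambda)$ with $z\in\overline{C}$ for some $C\in\Alc_{\lambda,\mu}(\cH)$ and $t\geq 0$. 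When $t\leq 1$, the point $y$ lies in $[\lambda,z]\subset\overline{C}$, so $y\in\overline{\cP(\mu)}\cap\overline{C}$ and Lemma~\ref{l_intermediate_step_projection_P_mu_2} gives the conclusion immediately.

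The substantial case is $t>1$, where $y$ lies outside every alcove in $\Alc_{\lambda,\mu}(\cH)$. I would argue by contradiction. Set $x=\pi_\lambda(y)$ and let $F\in\sF_\lambda(\cH)$ be the unique face with $x\in\cF_F$; assume $\mu\notin\ve(F)$, so that $x\notin\overline{\cP(\mu)}$ by Proposition~\ref{p_faces_intersections}(3). By Lemma~\ref{l_description_fibers_pi_affine}, the fibre $\pi_\lambda^{-1}(\{x\})=x+\sum_{\nu\in\ve_\lambda(F)}\R_{\geq 0}(\nu-\lambda)$ is a convex cone apexed at $x$ containing $y$. Since $x\notin\overline{\cP(\mu)}$ and $y\in\overline{\cP(\mu)}$, the segment $[x,y]$ crosses $\partial\overline{\cP(\mu)}$, and I let $p$ be the first point of $[x,y]$ (moving from $x$) inside $\overline{\cP(\mu)}$. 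By Theorem~\ref{t_tessellation_A}, $p$ belongs to a unique open face $\cF_{F'}$ of the tessellation, and Proposition~\ref{p_faces_intersections}(3) ensures $\mu\in\ve(F')$ and $p\in\overline{\cP(\nu)}$ for every $\nu\in\ve(F')$.

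To produce the contradiction I want to invoke Lemma~\ref{l_intermediate_step_projection_P_mu}. The plan is to exhibit a vertex $\nu_0\in\ve_\lambda(F)\cap\ve(F')$: geometrically, $p-x$ is a non-zero element of the fibre cone, and the local description of $\overline{\cP(\mu)}$ at $p$ from Lemma~\ref{l_local_description_polytope} matches the inward direction of $[x,y]$ against a facet of $\overline{\cP(\mu)}$ containing $p$, whose corresponding edge from $p$ toward the centre $\mu$ pairs some vertex of $F'$ with some vertex of $F$ appearing with positive coefficient. With such a $\nu_0$ in hand, $p\in\overline{\cP(\nu_0)}\cap\overline{\cP(\mu)}$ is non-empty, so Proposition~\ref{p_faces_intersections}(1) yields an alcove dominating $\{\nu_0,\mu\}$, which combined with $\nu_0,\lambda\in\ve(F)$ and Lemma~\ref{l_enclosure_subset_alcove} produces an alcove $\tilde C$ with $F\subset\overline{\tilde C}$, $\lambda,\nu_0\in\ve(F)$, and $\mu\in\ve(\tilde C)\setminus\ve(F)$. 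Lemma~\ref{l_intermediate_step_projection_P_mu} then forces $\pi_\lambda^{-1}(\{x\})\cap\overline{\cP(\nu_0)}\cap\overline{\cP(\mu)}=\emptyset$, contradicting the existence of $p$.

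The main obstacle is precisely the combinatorial step of extracting the shared vertex $\nu_0\in\ve_\lambda(F)\cap\ve(F')$: $F$ and $F'$ need not lie in a common alcove a priori, and without such a vertex Lemma~\ref{l_intermediate_step_projection_P_mu} cannot be invoked. I expect this to require a careful local analysis at $p$ using the facet structure of $\overline{\cP(\mu)}$ (via Lemma~\ref{l_local_description_polytope}) together with the cone structure of the fibre at $x$, to match the direction of first entrance of $[x,y]$ into $\overline{\cP(\mu)}$ with one of the one-dimensional faces $]\lambda,\nu_0[$ of $F$.
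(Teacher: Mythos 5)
Your reduction to the case $y\in\lambda+\R_{\geq 0}(\overline{C}-\lambda)$ for some $C\in\Alc_{\lambda,\mu}(\cH)$, and the easy case $y\in\overline{C}$ via Lemma~\ref{l_intermediate_step_projection_P_mu_2}, are fine and parallel the paper. But the substantial case ($y$ outside $\overline{C}$) is not proved: the step you yourself flag as "the main obstacle" --- producing a vertex $\nu_0\in\ve_\lambda(F)\cap\ve(F')$ by matching the entry direction of $[x,y]$ into $\overline{\cP(\mu)}$ against the facet structure at $p$ --- is exactly the crux, and it is only described, not established. Moreover, even granting such a $\nu_0$, your intended application of Lemma~\ref{l_intermediate_step_projection_P_mu} needs an alcove $\tilde C$ whose closure contains $F$ (as a proper, non-vertex subface) and with $\mu\in\ve(\tilde C)\setminus\ve(F)$. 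Your appeal to Lemma~\ref{l_enclosure_subset_alcove} cannot deliver this: that lemma only concerns subsets already contained in a single closed alcove, whereas the existence of a closed alcove containing both $F$ and $\mu$ is precisely what is unclear once you assume $\mu\notin\ve(F)$ (knowing only that some alcove dominates $\{\nu_0,\mu\}$ and that $F$ contains $\lambda,\nu_0$ does not glue these into one alcove dominating $F\cup\{\mu\}$). You would also need to treat separately the degenerate cases where $F$ is a vertex or an alcove, which the lemma excludes.

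For comparison, the paper avoids both difficulties by never analyzing the boundary-crossing point. It projects $y$ onto the auxiliary convex set $\cQ=\overline{C}\cap\overline{\cP(\lambda)}\cap\overline{\cP(\mu)}$, uses Lemma~\ref{l_conical_property_alcove} to pick $z\in\,]\pi_\cQ(y),y]\cap\overline{C}$ (so $z\in\overline{\cP(\mu)}\cap\overline{C}$ by convexity), applies Lemma~\ref{l_intermediate_step_projection_P_mu_2} to get $\pi_\lambda(z)\in\cQ$, deduces $\pi_\lambda(z)=\pi_\cQ(z)=\pi_\cQ(y)$, and then transfers this to $y$ via the variational characterization \eqref{e_characterization_projection}. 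If you want to salvage your route, you would have to supply a genuine proof of the shared-vertex claim and of the common-alcove hypothesis; as written, the argument is incomplete at its central step.
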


\begin{proof}
Let $y\in \overline{\cP(\mu)}$. By Lemma~\ref{l_inclusion_P(lambda)_union_alcoves} and  Lemma~\ref{l_alcoves_containing_vertex}, there exists an alcove $C$ of $(\A,\cH)$ dominating $\lambda
$ and $\mu$ and  such that $y\in \lambda+\R_{\geq 0}(\overline{C}-\lambda)$. Let $\cQ=\overline{C}\cap \overline{\cP(\lambda)}\cap \overline{\cP(\mu)}$. Let $x=\pi_{\cQ}(y)\in \cQ$.  By Lemmas~\ref{l_inclusion_P(lambda)_union_alcoves} and \ref{l_conical_property_alcove}, there exists $z\in ]x,y]\cap \overline{C}$. By Eq. \eqref{e_characterization_projection}, we have $\pi_{\cQ}(z)=\pi_{\cQ}(y)=x$. 

By Lemmas~\ref{l_projection_contained_face_affine} and \ref{l_intermediate_step_projection_P_mu_2}, we have $\pi_\lambda(z)\in \cQ$ and by definition $|a-z|\geq |a-\pi_\lambda(z)|$ for all $a\in \overline{\cP(\lambda)}$. As $\cQ\subset \overline{\cP(\lambda)}$, we deduce $\pi_\lambda(z)=\pi_{\cQ}(z)$. By Eq. \eqref{e_characterization_projection}, we have $\pi_\lambda(z)=\pi_\lambda(y)$. Therefore $\pi_\lambda(y)\in \overline{\cP(\lambda)}\cap \overline{\cP(\mu)}$, which proves the proposition.
\end{proof}

\subsection{Tessellation of apartments by thickened polytopes}\label{ss_thickened_tessellation}

Recall that, for $\eta\in [0,1]$ and $\lambda\in \ve(\A)$, we have the homothety $h_{\lambda,\eta}:\A\rightarrow \A$ given by $h_{\lambda,\eta}(x)=\lambda+\eta (x-\lambda)$, for $x\in \A$. We write $\pi_{\lambda,\eta}$\index{p@$\pi_{\lambda,\eta}$} instead of  the orthogonal projection $\pi_{h_{\lambda,\eta}(\overline{\cP(\lambda)})}$ on $h_{\lambda,\eta}(\overline{\cP(\lambda)})$. 

Let $C$ be an alcove of $(\A,\cH)$ dominating $\lambda$. Then by \eqref{e_affine_Weyl_polytope_2}, we have $\overline{\cP(\lambda)}=\lambda+\overline{\cP^0_{-\lambda+\bb_C,\Phi_\lambda}}$. Therefore \begin{equation}\label{e_homothetic_polytope}
h_{\lambda,\eta}(\overline{\cP(\lambda)})=h_{\lambda,\eta}(\lambda)+\eta \conv(W^v_{\Phi_\lambda}.(-\lambda+\bb_C))=\lambda+\overline{\cP^0_{\eta(-\lambda+\bb_C),\Phi_\lambda}}.
\end{equation}

\begin{Remark}\label{r_fibers_pi_lambda_eta}
    Let $F\in \sF$ and $a\in \cF_F$. Then similarly as in Lemma~\ref{l_description_fibers_pi_affine}, we have  \[(\pi_{\lambda,\eta})^{-1}(h_{\lambda,\eta}(a))=h_{\lambda,\eta}(a)+\R_{\geq 0}(\overline{F}-\lambda).\] 
\end{Remark}

\begin{Lemma}\label{l_projection_stabilizing_affine_faces}
    Let $F$ be a face of $(\A,\cH)$, $\lambda\in \ve(F)$ and $\eta\in ]0,1]$. Then $\pi_{\lambda,\eta}(F)\subset F$ and $\pi_{\lambda,\eta}(\overline{F})\subset \overline{F}$.
\end{Lemma}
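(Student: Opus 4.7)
The plan is to reduce the statement to its vectorial counterpart (Lemma~\ref{l_projection_contained_face}) by conjugating $\pi_{\lambda,\eta}$ back to $\pi_\lambda$ via the homothety $h_{\lambda,\eta}$. The starting observation is the identity $\pi_{\lambda,\eta} = h_{\lambda,\eta} \circ \pi_\lambda \circ h_{\lambda,1/\eta}$: for any $x\in \A$ and any $y'\in \overline{\cP(\lambda)}$, setting $y = h_{\lambda,\eta}(y')$ gives $|y-x| = \eta\,|y' - h_{\lambda,1/\eta}(x)|$, so minimizing the distance from $x$ to $h_{\lambda,\eta}(\overline{\cP(\lambda)})$ is equivalent, after the change of variables $y' = h_{\lambda,1/\eta}(y)$, to minimizing the distance from $h_{\lambda,1/\eta}(x)$ to $\overline{\cP(\lambda)}$.

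Next, for $x\in F$, I would set $F^v := \R_{>0}(F-\lambda)$, which by Proposition~\ref{p_bijection_affine_faces_vectorial_faces_vertices} is a face of $(\A,\cH_\lambda^0)$. Since $F^v$ is a cone, $(x-\lambda)/\eta \in F^v$, so $h_{\lambda,1/\eta}(x) \in \lambda + F^v$. By Eq.~\eqref{e_affine_Weyl_polytope_2}, the translation by $-\lambda$ identifies $\pi_\lambda$ with the orthogonal projection onto the vectorial weight polytope $\overline{\cP^0_{\bb_C-\lambda,\Phi_\lambda}}$; applying Lemma~\ref{l_projection_contained_face}(1) (for the root system $\Phi_\lambda$ and the face $F^v$) gives $\pi_\lambda(h_{\lambda,1/\eta}(x)) \in \lambda + F^v$. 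Applying $h_{\lambda,\eta}$ and using again that $F^v$ is a cone (so $\eta F^v = F^v$), this yields $\pi_{\lambda,\eta}(x) \in \lambda + F^v$.

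To conclude, I would combine this with a containment in the star $E$ of $\lambda$. By Lemma~\ref{l_inclusion_P(lambda)_union_alcoves}(1), $\overline{\cP(\lambda)} \subset E$, and by Lemma~\ref{l_union_alcoves_containing_face}, $E$ is open, convex, and contains $\lambda$. Convexity together with $\lambda \in E$ forces $h_{\lambda,\eta}(E) \subset E$ for $\eta \in ]0,1]$, hence $h_{\lambda,\eta}(\overline{\cP(\lambda)}) \subset E$ and therefore $\pi_{\lambda,\eta}(x) \in E$. Combining with the previous paragraph, $\pi_{\lambda,\eta}(x) \in E \cap (\lambda + F^v) = F$, the last equality being exactly Lemma~\ref{l_star_vertex_intersected_face}. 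This proves $\pi_{\lambda,\eta}(F) \subset F$. The inclusion $\pi_{\lambda,\eta}(\overline{F}) \subset \overline{F}$ then follows formally: $\pi_{\lambda,\eta}$ is continuous and $F$ is dense in $\overline{F}$, so $\pi_{\lambda,\eta}(\overline{F}) \subset \overline{\pi_{\lambda,\eta}(F)} \subset \overline{F}$.

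The main obstacle I foresee is simply bookkeeping around the conjugation by the homothety: one must check that the vectorial face $F^v$ is preserved not only by the vectorial projection (where Lemma~\ref{l_projection_contained_face} applies directly) but also by the two homotheties, which works precisely because $F^v$ is a $\R_{>0}$-stable cone. All other steps are either formal consequences of this reduction or direct citations of the results established earlier in the paper.
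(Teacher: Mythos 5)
Your proof is correct and follows essentially the same route as the paper: reduce to the vectorial statement (Lemma~\ref{l_projection_contained_face}) — you via the conjugation identity $\pi_{\lambda,\eta}=h_{\lambda,\eta}\circ\pi_\lambda\circ h_{\lambda,1/\eta}$, the paper via Eq.~\eqref{e_homothetic_polytope} identifying $h_{\lambda,\eta}(\overline{\cP(\lambda)})$ with a vectorial weight polytope for $\Phi_\lambda$ — and then conclude with the star of $\lambda$ (Lemmas~\ref{l_inclusion_P(lambda)_union_alcoves} and \ref{l_star_vertex_intersected_face}) and continuity of the projection for the closure statement.
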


\begin{proof}
    Let $x\in F$. Then $x\in \lambda+\R_{>0}(F-\lambda)$. By Lemma~\ref{l_projection_contained_face} and Eq.~\eqref{e_homothetic_polytope}, $\pi_{\lambda,\eta}(x)\in \lambda+\R_{>0}(F-\lambda)$. Let $E$ be the star of $\lambda$. By Lemma~\ref{l_inclusion_P(lambda)_union_alcoves}, since $h_{\lambda,\eta}$ stabilizes $E$, we have $h_{\lambda,\eta}(\overline{\cP(\lambda)})\subset E$. Therefore by Lemma~\ref{l_star_vertex_intersected_face}, we have $\pi_{\lambda,\eta}(x)\in E\cap (\lambda+\R_{>0}(F-\lambda))=F$. As $\pi_{\lambda,\eta}$ is continuous (by \cite[A. (3.1.6)]{hiriart2012fundamentals}), we deduce the result.
\end{proof}

\begin{Proposition}\label{p_conical_description_faces}
    Let $F$ be a face of $(\A,\cH)$. Then \[F=\bigcap_{\lambda\in \ve(F)} \left(\lambda+\R_{>0}(F-\lambda)\right).\]
\end{Proposition}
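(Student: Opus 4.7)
The plan is to establish the two inclusions separately. The inclusion ``$\subset$'' is trivial: for any $\lambda\in\ve(F)$ and any $x\in F$, one has $x-\lambda=1\cdot(x-\lambda)\in\R_{>0}(F-\lambda)$, so $x\in\lambda+\R_{>0}(F-\lambda)$.

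For the reverse inclusion, fix $x$ in the intersection. First I would show that $x\in\overline{F}$ by a Hahn--Banach-style separation argument. Since $\Phi$ spans $\A^*$ in the sense of Bourbaki, every face is bounded, $\overline{F}$ is a convex polytope, and by the Krein--Milman theorem $\overline{F}=\conv(\ve(F))$; in particular any linear form on $\A$ attains its minimum on $\overline{F}$ at some element of $\ve(F)$. If $x$ lay outside $\overline{F}$, strict separation of the compact set $\{x\}$ from the closed convex set $\overline{F}$ would yield a linear form $\phi$ on $\A$ and a constant $c\in\R$ with $\phi(x)<c\leq\phi(\overline{F})$. Choosing $\lambda_0\in\ve(F)$ minimizing $\phi$, the inequality $\phi(y-\lambda_0)\geq 0$ for all $y\in F$ would propagate by positive scaling to all of $\R_{>0}(F-\lambda_0)$, and hence to $x-\lambda_0$, giving $\phi(x)\geq\phi(\lambda_0)\geq c$ -- contradicting $\phi(x)<c$.

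The main obstacle is then to upgrade ``$x\in\overline{F}$'' to the stronger ``$x\in F$''. Let $G$ be the unique face of $(\A,\cH)$ containing $x$; since the faces partition $\A$ and $x\in\overline{F}$, one has $G\subset\overline{F}$, and in particular $\ve(G)\subset\ve(F)$. I would argue by contradiction: if $G\neq F$, pick any $\lambda\in\ve(G)$, so that both $F$ and $G$ dominate $\lambda$. Applying Proposition~\ref{p_bijection_affine_faces_vectorial_faces_vertices} to both produces two vectorial faces $F^v=\R_{>0}(F-\lambda)$ and $G^v=\R_{>0}(G-\lambda)$ of $(\A,\cH^0_\lambda)$, distinct because the bijection $\Theta$ is injective. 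Since vectorial faces form a partition of $\A$, one has $F^v\cap G^v=\emptyset$. But $x-\lambda\in G-\lambda\subset G^v$ (by taking $t=1$), while the hypothesis $x\in\lambda+\R_{>0}(F-\lambda)$ reads $x-\lambda\in F^v$; this contradiction forces $G=F$, and hence $x\in F$, completing the proof.
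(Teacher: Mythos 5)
Your argument is correct, but it follows a genuinely different route from the paper's. The paper proves the non-trivial inclusion by reducing to the irreducible case, invoking the fact that a closed alcove is then a simplex (\cite[V 3.9 Proposition 8]{bourbaki1981elements}), and carrying out an explicit barycentric-coordinate computation which shows that any point of the intersection has strictly positive barycentric coordinates with respect to $\ve(F)$, hence lies in $\Int_r(\conv(\ve(F)))=F$; the reducible case is then handled by a product decomposition. You instead split the problem into two softer steps: first, a convex separation argument (using that $\overline{F}$ is a compact convex polytope with $\overline{F}=\conv(\ve(F))$, and evaluating the separating functional at a minimizing vertex $\lambda_0$) shows that the intersection is contained in $\overline{F}$; second, if $G$ is the face of $(\A,\cH)$ containing $x$, you use that $\overline{F}$ is a union of faces, pick $\lambda\in\ve(G)\subset\ve(F)$, and apply the injectivity of the map $\Theta$ of Proposition~\ref{p_bijection_affine_faces_vectorial_faces_vertices} together with the partition of $\A$ by the faces of $(\A,\cH^0_\lambda)$ to force $G=F$. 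What your approach buys is uniformity: no irreducible/reducible dichotomy, no use of the simplex property of alcoves, and each step uses the hypothesis at only one vertex. What it costs is reliance on two facts you state rather quickly — that $\Extr(\overline{F})=\ve(F)$ (equivalently $\overline{F}=\conv(\ve(F))$), and that the face of $(\A,\cH)$ containing a point of $\overline{F}$ is contained in $\overline{F}$ — both of which do follow from the paper's identification of the closed faces of $(\A,\cH)$ inside a closed alcove with the polytope faces of that alcove together with Lemma~\ref{l_extreme_points_faces_polytope}, and the first of which the paper's own proof also uses implicitly when it writes $F=\Int_r(\conv(\ve(F)))$; spelling these two points out would make your proof complete.
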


\begin{proof}
Let $J=\ve(F)$. Set $E=\bigcap_{\lambda\in J} \left(\lambda+ \R_{> 0} (F-\lambda)\right)$. As $F\subset \lambda+\R_{>0}(F-\lambda)$ for $\lambda\in J$, we have $F\subset E$. 

We now prove the converse inclusion. By Lemma~\ref{l_adjacent_vertices}, we have: 
$$E=\bigcap_{\lambda\in J} \left(\lambda+\bigoplus_{\mu\in J} \R_{> 0} (\mu-\lambda)\right).$$
We first assume that $\Phi$ is irreducible. Then by \cite[V 3.9 Proposition 8]{bourbaki1981elements}, $C$ is a simplex and thus  for every $\lambda\in \ve(C)$, $(\mu-\lambda)_{\mu\in \ve(C)\setminus\{\lambda\}}$ is a basis of $\A$. Fix $\lambda\in J$. Up to changing the $0$ of the affine space $\A$, we may assume $\lambda=0$. Let $x\in E$. Then we can write $x=\sum_{\mu\in J\setminus \{0\}} t_\mu \mu$, where $(t_\mu)\in (\R_{> 0})^{J\setminus \{0\}}$. Let $\nu\in J$. Then $x\in \nu+\R_{> 0} (-\nu)+\bigoplus_{\mu\in J\setminus\{0\}}\R_{> 0}(\mu-\nu)$, so we can write $x=\nu+t'_0(-\nu)+ \sum_{\mu\in J\setminus \{0,\nu\}}t'_\mu (\mu-\nu)$ for some $t_0' \in \mathbb{R}$ and $(t'_{\mu})\in (\R_{> 0})^{J\setminus\{\nu\}}$. Thus we have \[(1-t'_0-\sum_{\mu\in J\setminus\{0,\nu\}} t'_\mu) \nu+\sum_{\mu\in J\setminus\{0,\nu\}} t'_\mu \mu=\sum_{\mu\in J\setminus\{0\}} t_\mu \mu.\] Therefore $t'_\mu=t_\mu$ for $\mu\in J\setminus\{0,\nu\}$, $t_\nu< 1$ and $\sum_{\mu\in J\setminus\{0\}} t_\mu=(1-t_0'- \sum_{\mu\in J\setminus\{0,\nu\}} t'_\mu)+\sum_{\mu\in J\setminus\{0,\nu\}} t'_\mu< 1$. This proves that $E\subset \Int_r(\conv(J))=F$ and thus when $\Phi$ is irreducible, we have $E=F$.

We no longer assume the irreducibility of $\Phi$. In the notation of Eq. \eqref{e_decomposition_Phi}, with $\Phi$ instead of $\Psi$, we write $\A=\A_1\times \ldots\times \A_\ell$. Let $x=(x_i)_{i\in \llbracket 1,\ell\rrbracket}\in \A$ and $i\in \llbracket 1,\ell\rrbracket$. We have $F=F_1\times \ldots \times F_\ell$, where $F_i\in \sF(\A_i,\cH_i)$ for $i\in \llbracket 1,\ell\rrbracket$. 

We decompose $J$ as  $J=J_1\times \ldots  \times J_\ell$, where $J_i= \ve(F_i)$ for $i\in \llbracket 1,\ell\rrbracket$. Let $x\in E$. 
 Then: \begin{align*} x&\in\bigcap_{\lambda\in J_i}\left(\prod_{j=1}^{i-1}\A_j\times (\lambda+\sum_{\mu\in J_i\setminus\{\lambda\}} \R_{> 0} (\mu-\lambda))\times \prod_{j=i+1}^\ell \A_j\right)\\
 &=\prod_{j=1}^{i-1} \A_j\times \bigcap_{\mu\in J_i\setminus\{\lambda\}}\left(\lambda+ \sum_{\mu\in J_i} \R_{> 0} (\mu-\lambda)\right)\times \prod_{j=i+1}^\ell \A_j\end{align*}
  By the first case we deduce $x_i\in \Int_r(\conv(J_i))$. Consequently $x\in \prod_{i=1}^\ell \Int_r(\conv(J_i))=\Int_r(\conv(J))$, which completes the proof of the proposition. 
\end{proof}

\begin{Lemma}\label{l_projection_homothety}
Let $\mu\in \ve(\A)$, $\eta\in [0,1]$ and $x\in \A$. Let $F\in \sF(\cH)$ be such that $\pi_\mu(x)\in \cF_{F}$. Let $\lambda\in \ve(F)$. Then $\pi_{\mu,\eta}(\hh_{\lambda,\eta}(x))=\hh_{\mu,\eta}(\pi_\mu(x))$. 
\end{Lemma}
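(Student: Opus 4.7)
The plan is to verify the claimed equality by showing that $h_{\lambda,\eta}(x)$ lies in the fiber $\pi_{\mu,\eta}^{-1}(h_{\mu,\eta}(\pi_\mu(x)))$, using the explicit description of these fibers provided by Remark~\ref{r_fibers_pi_lambda_eta}. First, since $\pi_\mu(x) \in \cF_F \cap \overline{\cP(\mu)}$, Proposition~\ref{p_faces_intersections}(3) gives $\mu \in \ve(F)$. So both $\lambda$ and $\mu$ are vertices of $F$, and Lemma~\ref{l_description_fibers_pi_affine} (applied to the fiber of $\pi_\mu$ over $\pi_\mu(x)$) yields
\[
x - \pi_\mu(x) \in \R_{\geq 0}(\overline{F} - \mu).
\]

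Next, I would compute the difference $h_{\lambda,\eta}(x) - h_{\mu,\eta}(\pi_\mu(x))$ directly:
\[
h_{\lambda,\eta}(x) - h_{\mu,\eta}(\pi_\mu(x)) = (1-\eta)(\lambda - \mu) + \eta(x - \pi_\mu(x)).
\]
Since $\lambda \in \ve(F) \subset \overline{F}$, we have $\lambda - \mu \in \overline{F} - \mu$, hence $(1-\eta)(\lambda-\mu) \in \R_{\geq 0}(\overline{F}-\mu)$; and from the first step, $\eta(x-\pi_\mu(x)) \in \R_{\geq 0}(\overline{F}-\mu)$. The set $\R_{\geq 0}(\overline{F}-\mu)$ is a convex cone (by Lemma~\ref{l_adjacent_vertices} it equals $\sum_{\nu \in \ve(F)} \R_{\geq 0}(\nu - \mu)$), so it is closed under addition, and therefore
\[
h_{\lambda,\eta}(x) - h_{\mu,\eta}(\pi_\mu(x)) \in \R_{\geq 0}(\overline{F}-\mu).
\]

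Finally, $h_{\mu,\eta}(\pi_\mu(x)) \in h_{\mu,\eta}(\overline{\cP(\mu)})$ since $\pi_\mu(x) \in \overline{\cP(\mu)}$, and Proposition~\ref{p_description_faces_affine} together with Equation~\eqref{e_homothetic_polytope} ensures $h_{\mu,\eta}(\pi_\mu(x)) \in h_{\mu,\eta}(\cF_F)$, which is the face of $h_{\mu,\eta}(\overline{\cP(\mu)})$ indexed by $F$ for the polytope $h_{\mu,\eta}(\overline{\cP(\mu)})$. Remark~\ref{r_fibers_pi_lambda_eta} then gives
\[
\pi_{\mu,\eta}^{-1}\bigl(h_{\mu,\eta}(\pi_\mu(x))\bigr) = h_{\mu,\eta}(\pi_\mu(x)) + \R_{\geq 0}(\overline{F}-\mu),
\]
and the previous step shows $h_{\lambda,\eta}(x)$ belongs to this fiber, which yields the desired equality.

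The only subtle point is the book-keeping at the start: one must observe that $\mu$ is automatically a vertex of $F$ (so that $\R_{\geq 0}(\overline{F}-\mu)$ is the cone appearing in the fiber formulas and $\lambda - \mu \in \overline{F} - \mu$); once this is in place, the computation is one line and the rest is a direct application of the fiber description. No obstacle of any real depth is expected.
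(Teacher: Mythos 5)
Your argument is correct and follows essentially the same route as the paper: the same identity $h_{\lambda,\eta}(x)-h_{\mu,\eta}(\pi_\mu(x))=(1-\eta)(\lambda-\mu)+\eta(x-\pi_\mu(x))$, the fiber description from Lemma~\ref{l_description_fibers_pi_affine}, and the conclusion via Remark~\ref{r_fibers_pi_lambda_eta}. Your explicit observation that $\mu\in\ve(F)$ (via Proposition~\ref{p_faces_intersections}) is a point the paper leaves implicit, and it is a welcome clarification rather than a divergence.
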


\begin{proof}
Let $a=\pi_\mu(x)$. By Lemma~\ref{l_description_fibers_pi_affine}, we have $x-a\in  \R_{\geq 0}(\overline{F}-\mu)$. We have: \[
\hh_{\lambda,\eta}(x)=\hh_{\mu,\eta}(a)+(1-\eta)(\lambda-\mu)+\eta(x-a)\in  \hh_{\mu,\eta}(a) +  \R_{\geq 0} (\overline{F}-\mu),\] and we conclude with Remark~\ref{r_fibers_pi_lambda_eta}.

\end{proof}

\begin{Definition}\label{d_Feta}
For $x\in \A$, we denote by $F_{\cP}(x)$\index{f@$F_{\cP}(x)$} the face of $(\A,\cH)$ such that $x\in \cF_{F_\cP(x)}$, which is well-defined by Theorem~\ref{t_tessellation_A}. For $\eta\in [0,1]$, we set $F_\eta(x)=\Int_r(\conv(\{h_{\lambda,\eta}(x)\mid \lambda\in \ve(F_\cP(x))\}))$ and $\overline{F_\eta(x)}= \conv(\{h_{\lambda,\eta}(x)\mid \lambda\in  \ve(F_\cP(x))\})$ (see Figures~\ref{f_Feta_A2} and \ref{f_Feta_A1A1}). We have:  \begin{equation}\label{e_description_F_eta_bar}
    \overline{F_\eta(x)}=\eta x +(1-\eta)\conv\left(\ve\left(F_{\cP}\left(x\right)\right)\right)=\eta x +(1-\eta)\overline{F_\cP(x)}.
\end{equation}
\end{Definition}

 \begin{figure}[h]
 \centering
 \includegraphics[scale=0.3]{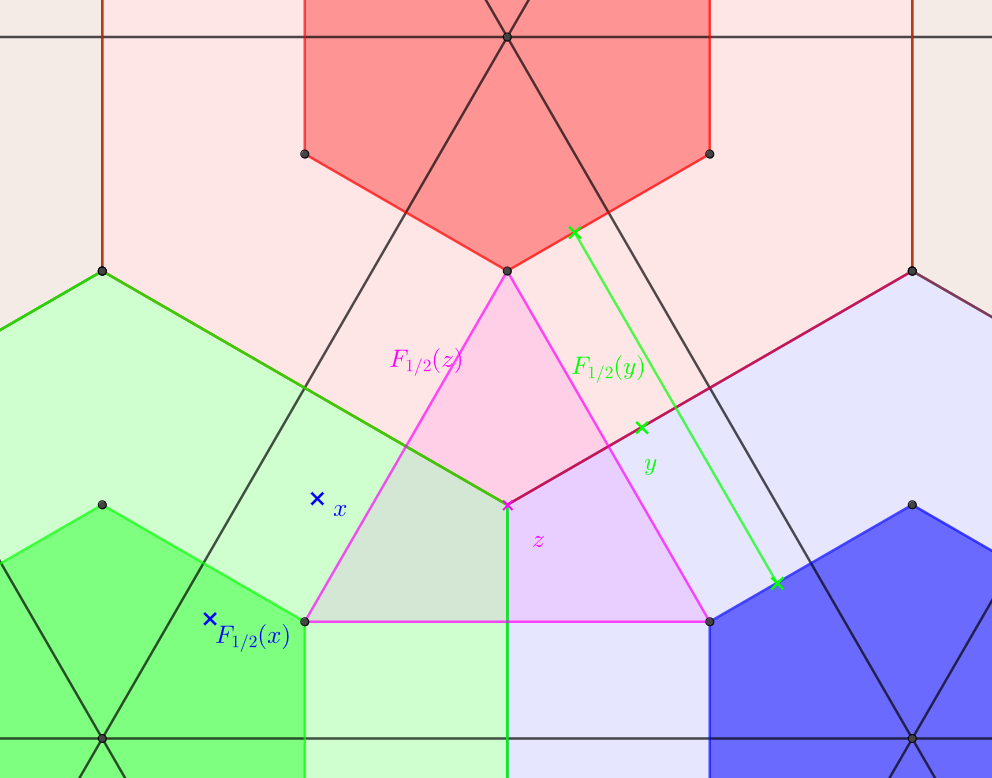}
 \caption{Pictures of $F_\eta$ in the case of $\mathrm{A}_2$.}
 \label{f_Feta_A2}
\end{figure}

 \begin{figure}[h]
 \centering
 \includegraphics[scale=0.35]{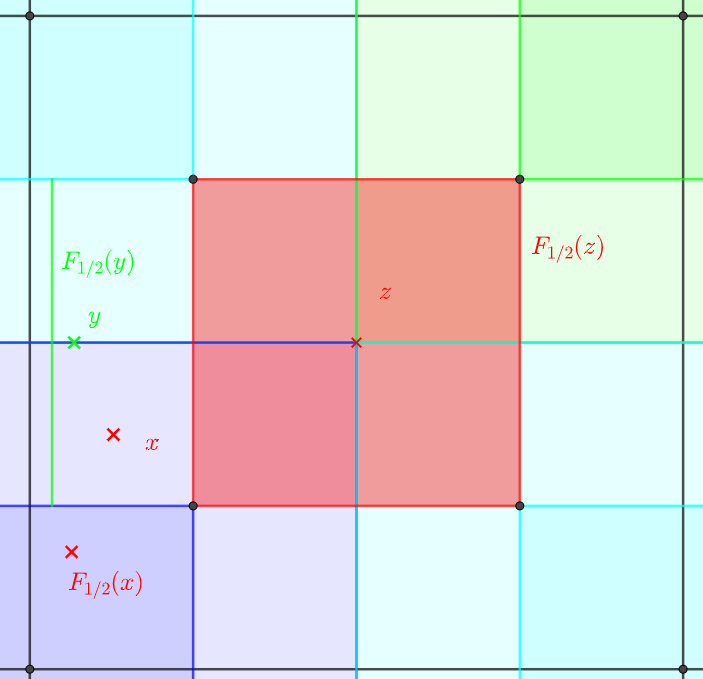}
 \caption{Pictures of $F_\eta$ in the case of $\mathrm{A}_1\times \mathrm{A}_1$.}
\label{f_Feta_A1A1}
\end{figure}

\begin{Lemma}\label{l_projection_F_eta}
Let $C$ be an alcove of $(\A,\cH)$ and $x\in \overline{C}$.  Then: \begin{enumerate}
\item for all $y\in F_\eta(x)$, for all $\mu\in \ve(C)$,  we have $\pi_{\mu,\eta}(y)=h_{\mu,\eta}(\pi_\mu(x))$,

\item  for all $\mu\in \ve(C)$, for all $y\in F_\eta(x)$,  we have  $\ve(F_{\cP}(x))\subset \{\nu\in \ve(C)\mid \pi_{\mu,\eta}(y)\in h_{\mu,\eta}(\overline{\cP(\nu)})\}$ and this is an equality if and only if $\mu\in \ve(F_{\cP}(x))$. 
\end{enumerate}
\end{Lemma}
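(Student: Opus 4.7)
The plan is to fix, for any $\mu \in \ve(C)$, the point $a := \pi_\mu(x)$ and the unique face $F_\mu \in \sF$ with $a \in \cF_{F_\mu}$ (well-defined by Theorem~\ref{t_tessellation_A}). The central observation I want to establish is the double inclusion $\ve(F_\cP(x)) \subseteq \ve(C) \cap \ve(F_\mu)$; both statements of the lemma will follow quickly from it.

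To prove the central observation, I first note that $\ve(F_\cP(x)) \subseteq \ve(C)$: Lemma~\ref{l_inclusion_P(lambda)_union_alcoves} shows that $\overline{\cF_{F_\cP(x)}}$ lies in the star of $F_\cP(x)$, so $F_\cP(x)$ is dominated by the unique face $F''$ of $(\A,\cH)$ containing $x$; since $x \in \overline{C}$, Lemma~\ref{l_description_faces_intersections} forces $F'' \subseteq \overline{C}$, hence $\overline{F_\cP(x)} \subseteq \overline{C}$ and $\ve(F_\cP(x)) \subseteq \ve(C)$. For the substantive inclusion $\ve(F_\cP(x)) \subseteq \ve(F_\mu)$: given $\nu \in \ve(F_\cP(x))$, Proposition~\ref{p_faces_intersections}(3) yields $x \in \overline{\cP(\nu)}$; since $\mu,\nu \in \ve(C)$ are vertices of a common alcove, Proposition~\ref{p_projection_polytope_neighbour} gives $a = \pi_\mu(x) \in \overline{\cP(\mu)} \cap \overline{\cP(\nu)}$; applying Proposition~\ref{p_faces_intersections}(3) to $a \in \cF_{F_\mu}$ then delivers $\nu \in \ve(F_\mu)$.

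For statement (1), I will apply Lemma~\ref{l_projection_homothety} to the face $F_\mu$: for each $\lambda \in \ve(F_\mu)$, and in particular for each $\lambda \in \ve(F_\cP(x))$ by the central observation, it gives $\pi_{\mu,\eta}(h_{\lambda,\eta}(x)) = h_{\mu,\eta}(a)$. By Remark~\ref{r_fibers_pi_lambda_eta} the fiber $\pi_{\mu,\eta}^{-1}(h_{\mu,\eta}(a)) = h_{\mu,\eta}(a) + \R_{\geq 0}(\overline{F_\mu} - \mu)$ is convex, so it contains the convex hull of the points $h_{\lambda,\eta}(x)$ for $\lambda \in \ve(F_\cP(x))$, which is exactly $\overline{F_\eta(x)}$ by Definition~\ref{d_Feta}. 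Hence $\pi_{\mu,\eta}(y) = h_{\mu,\eta}(a) = h_{\mu,\eta}(\pi_\mu(x))$ for every $y \in F_\eta(x)$.

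For statement (2), by (1) and bijectivity of $h_{\mu,\eta}$ on $\A$, the condition $\pi_{\mu,\eta}(y) \in h_{\mu,\eta}(\overline{\cP(\nu)})$ is equivalent to $a \in \overline{\cP(\nu)}$, so the set in question equals $\{\nu \in \ve(C) : a \in \overline{\cP(\nu)}\} = \ve(C) \cap \ve(F_\mu)$ (using Proposition~\ref{p_faces_intersections}(3)), and the inclusion follows from the central observation. For the equivalence: if $\mu \in \ve(F_\cP(x))$, then $x \in \overline{\cP(\mu)}$ forces $a = x$ and Proposition~\ref{p_faces_intersections}(3) collapses the two sets; if $\mu \notin \ve(F_\cP(x))$, then $\mu \in \ve(F_\mu) \cap \ve(C)$ (as $a \in \overline{\cP(\mu)}$ trivially) witnesses a strict inclusion. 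The main obstacle is the second half of the central observation, whose crucial ingredient is the projection containment of Proposition~\ref{p_projection_polytope_neighbour}; everything else amounts to convexity of fibers and the tessellation dictionary.
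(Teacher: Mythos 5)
Your proof is correct and follows essentially the same route as the paper's: establish $\ve(F_{\cP}(x))\subset\ve(C)$ and, via Proposition~\ref{p_faces_intersections} and Proposition~\ref{p_projection_polytope_neighbour}, that $\pi_\mu(x)$ lies in $\bigcap_{\nu\in\ve(F_\cP(x))\cup\{\mu\}}\overline{\cP(\nu)}$ (so $\ve(F_\cP(x))\subset\ve(F_\mu)$), then apply Lemma~\ref{l_projection_homothety} and convexity of the fiber of $\pi_{\mu,\eta}$ to collapse $\overline{F_\eta(x)}$ onto $h_{\mu,\eta}(\pi_\mu(x))$, and deduce (2) by injectivity of $h_{\mu,\eta}$. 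The only cosmetic quibble is that the containment of the face of $x$ in $\overline{C}$ is more directly Lemma~\ref{l_enclosure_faces} (enclosedness of $\overline{C}$) than Lemma~\ref{l_description_faces_intersections}, and your treatment of the equality case is, if anything, slightly more explicit than the paper's.
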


\begin{proof}
  By Lemma~\ref{l_inclusion_P(lambda)_union_alcoves}, we have $\ve(F_{\cP}(x))\subset \ve(C)$.   Let $\mu\in \ve(C)$.  By Propositions~\ref{p_faces_intersections} and   \ref{p_projection_polytope_neighbour}, we have \begin{equation}\label{e_set_containing_pi_mu}
\pi_\mu(x)\in \bigcap_{\nu\in \ve(F_\cP(x))\cup \{\mu\}} \overline{\cP(\nu)}.
\end{equation}  Let $\lambda\in \ve(F_{\cP}(x))$. By Proposition \ref{p_faces_intersections}, we have $\pi_\mu(x)\in  \overline{\cF_{F_{\cP}(x)}}$. Hence, if $F'$ is the face such that $\pi_\mu(x)\in \cF_{F'}$, we deduce that $\ve(F_\cP(x))\subset F'$. By Lemma~\ref{l_projection_homothety}, we get $\pi_{\mu,\eta}(\hh_{\lambda,\eta}(x))=\hh_{\mu,\eta}(\pi_\mu(x))$. By Eq. \eqref{e_characterization_projection}, $\pi_{\mu,\eta}^{-1}\left(\hh_{\mu,\eta}\left(\pi_\mu\left(x\right)\right)\right)$  is convex. Therefore $\pi_{\mu,\eta}(F_\eta(x))=\{h_{\mu,\eta}(\pi_\mu(x))\}$.

Let $\mu\in \ve(C)$. Then \begin{align*}
\{\nu\in \ve(C)\mid \pi_{\mu,\eta}(y)\in h_{\mu,\eta}(\overline{\cP(\nu)})\}&=\{\nu\in \ve(C)\mid h_{\mu,\eta}(\pi_\mu(x))\in h_{\mu,\eta}(\overline{\cP(\nu)})\}\\
&= \{\nu\in \ve(C)\mid \pi_\mu(x)\in \overline{\cP(\nu)}\}.
\end{align*} 
We conclude with Eq . \eqref{e_set_containing_pi_mu}, since $\pi_\mu(x)=x$, for $\mu\in \ve(F_\cP(x))$. 
\end{proof}

\begin{Lemma}\label{l_projection_parabolic_affine_subspaces_homothety}
Let $F\in \sF(\cH)$.  Let  $f\in \cF_F$ and $C$ be an alcove dominating $f$. Let $\lambda\in \ve(F)$ and $x\in \left(h_{\lambda,\eta}(f)+\vect(F-\lambda)\right)\cap \overline{C}$. Then \[\pi_{\lambda,\eta}(x)\in h_{\lambda,\eta}(f)+\vect(F-\lambda)\] and \[\{\mu\in \ve(\A)\mid \pi_{\lambda,\eta}(x)\in h_{\lambda,\eta}(\overline{\cP(\mu)})\}\subset \ve(F).\]   
\end{Lemma}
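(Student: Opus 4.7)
The plan is to reduce the lemma to its vectorial counterpart, Proposition~\ref{p_projection_preserving_affine_spaces}, by translating everything by $-\lambda$. By Equation~\eqref{e_homothetic_polytope}, $h_{\lambda,\eta}(\overline{\cP(\lambda)}) = \lambda + \overline{\cP^0_{\bb',\Phi_\lambda}}$, with $\bb' := \eta(\bb_C - \lambda)$ regular for $\Phi_\lambda$. Set $F^v := \R_{>0}(F-\lambda)$ and $C^v := \R_{>0}(C-\lambda)$; by Proposition~\ref{p_bijection_affine_faces_vectorial_faces_vertices}, these are respectively a face and a chamber of $(\A,\cH_\lambda^0)$, with $C^v$ dominating $F^v$ (since $C$ dominates $F$). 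Set $z := \eta(f-\lambda)$ and $\tilde x := x-\lambda$, and denote by $\pi$ the orthogonal projection onto $\overline{\cP^0_{\bb',\Phi_\lambda}}$, so that $\pi_{\lambda,\eta}(y) = \lambda + \pi(y-\lambda)$ for every $y \in \A$.

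Next I would verify the hypotheses of Proposition~\ref{p_projection_preserving_affine_spaces}. Applying Proposition~\ref{p_description_faces_affine}(2) with base point $\bb_C - \lambda$, the assumption $f\in \cF_F$ implies that $f - \lambda$ lies in the open face of $\overline{\cP^0_{\bb_C-\lambda,\Phi_\lambda}}$ corresponding to $F^v$; scaling by $\eta$, $z$ lies in the corresponding open face of $\overline{\cP^0_{\bb',\Phi_\lambda}}$. The identity $\overline{C^v} = \R_{\geq 0}(\overline{C}-\lambda)$, combined with $f, x \in \overline{C}$, gives $\tilde x \in \overline{C^v}$ and $z = \eta(f-\lambda) \in \overline{C^v}$ (since $\overline{C^v}$ is a cone invariant under positive dilations). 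By hypothesis, $\tilde x \in z + \vect(F^v)$. Taking $F_1^v := C^v$, which dominates $z$, Proposition~\ref{p_projection_preserving_affine_spaces}(1) yields $\pi(\tilde x) \in (z + \vect(F^v)) \cap \overline{C^v}$; in particular, $\pi(\tilde x) \in z + \vect(F^v)$. Translating back, $\pi_{\lambda,\eta}(x) = \lambda + \pi(\tilde x) \in \lambda + z + \vect(F^v) = h_{\lambda,\eta}(f) + \vect(F-\lambda)$, proving the first claim.

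For the second claim, Proposition~\ref{p_projection_preserving_affine_spaces}(2) produces a vectorial face $G^v \in \sF(\cH_\lambda^0)$ with $G^v \subset \overline{F^v}$ such that $\pi(\tilde x)$ lies in the face of $\overline{\cP^0_{\bb',\Phi_\lambda}}$ corresponding to $G^v$. Let $G \in \sF_\lambda(\cH)$ correspond to $G^v$ under the order isomorphism $\Theta$ of Proposition~\ref{p_bijection_affine_faces_vectorial_faces_vertices}; then $G \subseteq \overline{F}$, so $\ve(G) \subseteq \ve(F)$. Combining Proposition~\ref{p_description_faces_affine}(2) with the scaling by $\eta^{-1}$ and the translation by $\lambda$, the membership of $\pi(\tilde x)$ translates into $\pi_{\lambda,\eta}(x) \in h_{\lambda,\eta}(\cF_G)$. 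Consequently, if $\mu \in \ve(\A)$ satisfies $\pi_{\lambda,\eta}(x) \in h_{\lambda,\eta}(\overline{\cP(\mu)})$, then $h_{\lambda,\eta}^{-1}(\pi_{\lambda,\eta}(x)) \in \cF_G \cap \overline{\cP(\mu)}$, which by Proposition~\ref{p_faces_intersections}(3) forces $\mu \in \ve(G) \subseteq \ve(F)$. The main technical point is precisely this reformulation in the vectorial setting and the careful verification that the hypotheses of Proposition~\ref{p_projection_preserving_affine_spaces} are met (notably that both $\tilde x$ and $z$ lie in $\overline{C^v}$, which allows the choice $F_1^v := C^v$); once this is done, the lemma follows at once.
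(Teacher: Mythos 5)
Your proof is correct and follows essentially the same route as the paper's: both reduce the statement, after translating by $-\lambda$, to Proposition~\ref{p_projection_preserving_affine_spaces} (with $F_1^v=C^v$ dominating $z$) and then conclude via the face dictionary of Propositions~\ref{p_description_faces_affine}, \ref{p_bijection_affine_faces_vectorial_faces_vertices} and \ref{p_faces_intersections}~(3). The only (harmless) organizational difference is that you absorb the homothety by viewing $h_{\lambda,\eta}(\overline{\cP(\lambda)})$ as the vectorial weight polytope with regular base point $\eta(\bb_C-\lambda)$ as in Eq.~\eqref{e_homothetic_polytope}, whereas the paper writes $x=h_{\lambda,\eta}(y)$ and uses the commutation $\pi_{\lambda,\eta}\circ h_{\lambda,\eta}=h_{\lambda,\eta}\circ\pi_\lambda$ from Lemma~\ref{l_projection_homothety} to work with the unscaled polytope.
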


\begin{proof}
    We have \begin{align*}
        &h_{\lambda,\eta}(f)+\vect(F-\lambda)\cap \overline{C}\\ 
        &\subset \left(h_{\lambda,\eta}(f)+\vect(F-\lambda)\right)\cap \left(\lambda+\R_{\geq 0}(\overline{C-\lambda)}\right)\\
        &= h_{\lambda,\eta}\left(f+\vect(F-\lambda)\right)\cap h_{\lambda,\eta}\left(\lambda+\R_{\geq 0}(\overline{C}-\lambda)\right)\\
        &=h_{\lambda,\eta}\left(\left(f+\vect(F-\lambda)\right)\cap \left(\lambda+\R_{\geq 0}(\overline{C}-\lambda)\right)\right).
    \end{align*}

    By Lemma~\ref{l_projection_homothety} applied to $\mu=\lambda$, we have $\pi_{\lambda,\eta}\circ h_{\lambda,\eta}=h_{\lambda,\eta}\circ \pi_\lambda$. Therefore if $x\in \left(h_{\lambda,\eta}(f)+\vect(F-\lambda)\right)\cap \overline{C}$, we have \[\pi_{\lambda,\eta}(x)=h_{\lambda,\eta}(\pi_\lambda(y)),\] for some $y\in  \left(f+\vect(F-\lambda)\right)\cap \left(\lambda+\R_{\geq 0}(\overline{C}-\lambda)\right)$. Using Proposition~\ref{p_projection_preserving_affine_spaces}, we have $\pi_\lambda(y)\in f+\vect(F-\lambda)$ and we deduce $\pi_{\lambda,\eta}(x)\in h_{\lambda,\eta}(f)+\vect(F-\lambda)$.

    Now we have $\pi_\lambda(y)=\pi_{\overline{\cP(\lambda)}}(y)=\pi_{\overline{\cP(\lambda)}-\lambda}(y-\lambda)+\lambda$. By Eq. \eqref{e_affine_Weyl_polytope_2}, we have $\overline{\cP(\lambda)}-\lambda=\overline{\cP^0_{\bb_C-\lambda}}$. By Proposition~\ref{p_description_faces_affine} and Proposition~\ref{p_bijection_affine_faces_vectorial_faces_vertices}, the map $\cF_{F_1}\mapsto \cF^0_{\R_{>0}(F_1-\lambda)}$ is a bijection between the faces of $\overline{\cP(\lambda)}$ and those of $\overline{\cP(\lambda)}-\lambda$. Hence  we have $\cF^0_{\R_{>0}(F_1-\lambda)}=\cF_{F_1}-\lambda$, for $F_1\in \sF_\lambda(\cH)$. By Proposition~\ref{p_projection_preserving_affine_spaces}~(2), $\pi_{\overline{\cP(\lambda)}-\lambda}(y-\lambda)\in \cF^0_{\R_{>0}(F_1-\lambda)}$, for some subface $F_1$ of $F$ dominating $\lambda$. Therefore $\pi_\lambda(y)\in \cF_{F_1}$, and we get:
    \[\{\mu\in \ve(\A)\mid \pi_{\lambda,\eta}(x)\in h_{\lambda,\eta}(\overline{\cP(\mu)})\}=\{\mu\in \ve(\A)\mid \pi_{\lambda}(y)\in \overline{\cP(\mu)}\}=\ve(F_1),\] which proves the lemma. 
\end{proof}

\begin{Lemma}\label{l_convex_hull_projections}
Let $\eta\in ]0,1]$. Let $C\in \Alc(\cH)$ and $x\in \overline{C}$. For $\lambda\in \ve(C)$, set $J_\lambda:=J_{\lambda,\eta}(x):=\{\nu\in \ve(C)\mid \pi_{\lambda,\eta}(x)\in h_{\lambda,\eta}(\overline{\cP(\nu)})\}$. Let $N_C(x)=N_C:=\min \{|J_{\mu}|\mid \mu \in \ve(C)\}$\index{n@$N_C=N_C(x)$} and:
\[J=\{\lambda\in \ve(C) \mid |J_{\lambda}|= N_C \}.\] Then:\begin{enumerate}
    \item There exists a face $F$ of $(\A,\cH)$ such that $J=\ve(F)$. There exists $\ff_x\in \overline{C}\cap \cF_{F}$ such that $\pi_{\lambda,\eta}(x)=h_{\lambda,\eta}(\ff_x)$ for all $\lambda\in J$. More generally, for every $\mu\in \ve(C)$, we have $\pi_{\mu,\eta}(x)=h_{\mu,\eta}(\pi_\mu(\ff_x))$.

    \item For every $\mu\in \ve(C)$, we have $J_\mu\supset J$; in the particular case when $\mu \in J$, there is an equality $J_\mu=J$.

    \item We have   $x\in \overline{F_\eta(\ff_x)}$. 
\end{enumerate}   
\end{Lemma}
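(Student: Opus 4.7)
The plan is as follows. For each $\mu\in\ve(C)$, let $\ff_\mu\in\overline{\cP(\mu)}$ be the unique point with $\pi_{\mu,\eta}(x)=h_{\mu,\eta}(\ff_\mu)$; the characterization of orthogonal projections (Eq.~\eqref{e_characterization_projection}) shows $\ff_\mu=\pi_\mu(h_{\mu,1/\eta}(x))$. Let $F_\mu$ denote the face of $(\A,\cH)$ such that $\ff_\mu\in\cF_{F_\mu}$ (well-defined by Proposition~\ref{p_description_faces_affine}); Proposition~\ref{p_faces_intersections}(3) then yields $J_\mu=\ve(F_\mu)\cap\ve(C)$. Fix $\mu_0\in J$ and set $F:=F_{\mu_0}$, $\ff_x:=\ff_{\mu_0}$. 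The proof proceeds in three main stages: first, show $\ff_x\in\overline{C}$ (so that $F\subset\overline{C}$, $\ve(F)\subset\ve(C)$, and $|\ve(F)|=|J_{\mu_0}|=N_C$); second, propagate from $\mu_0$ to each $\lambda\in\ve(F)$ to conclude $\ff_\lambda=\ff_x$ and $J=\ve(F)$; third, deduce Parts~(2) and~(3).

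For the second stage, Remark~\ref{r_fibers_pi_lambda_eta} gives $x\in h_{\mu_0,\eta}(\ff_x)+\R_{\geq 0}(\overline{F}-\mu_0)$, so $x$ lies in the affine subspace $A:=h_{\mu_0,\eta}(\ff_x)+\vect(F-\mu_0)$, and a short calculation using $(1-\eta)(\lambda_1-\lambda_2)\in\di F$ shows $A=h_{\lambda,\eta}(\ff_x)+\vect(F-\lambda)$ for every $\lambda\in\ve(F)$. Applying Lemma~\ref{l_projection_parabolic_affine_subspaces_homothety} with this common $F$, $f=\ff_x$, and each $\lambda\in\ve(F)$ gives $\pi_{\lambda,\eta}(x)\in A$ and $J_\lambda\subset\ve(F)$. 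The minimality $|J_\lambda|\geq N_C=|\ve(F)|$ then forces $J_\lambda=\ve(F)$, so $\lambda\in J$ and (since faces in a poly-simplicial complex are determined by their vertex sets) $F_\lambda=F$. The vector $\ff_\lambda-\ff_x$ lies both in $\di(\supp\cF_F)=(\di F)^\perp$ (Proposition~\ref{p_description_faces_affine}(3)) and in $\di F$ (from $\pi_{\lambda,\eta}(x)\in A$), so it vanishes, and $\ff_\lambda=\ff_x$ for every $\lambda\in\ve(F)$. Part~(3) follows at once: Remark~\ref{r_fibers_pi_lambda_eta} at each $\lambda\in\ve(F)$ gives $x-h_{\lambda,\eta}(\ff_x)\in\R_{\geq 0}(\overline{F}-\lambda)$, whence (for $\eta<1$) $p:=(x-\eta\ff_x)/(1-\eta)\in\lambda+\R_{\geq 0}(\overline{F}-\lambda)$ for each such $\lambda$ (the case $\eta=1$ being trivial with $\ff_x=x$). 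A standard separating-hyperplane argument yields $\bigcap_{\lambda\in\ve(F)}(\lambda+\R_{\geq 0}(\overline{F}-\lambda))=\overline{F}$, so $p\in\overline{F}$ and $x\in\overline{F_\eta(\ff_x)}$ by Eq.~\eqref{e_description_F_eta_bar}. Finally, applying Lemma~\ref{l_projection_F_eta} (extended by continuity to $x\in\overline{F_\eta(\ff_x)}$) yields $\pi_{\mu,\eta}(x)=h_{\mu,\eta}(\pi_\mu(\ff_x))$ for every $\mu\in\ve(C)$ (the ``more general'' assertion in~(1)) together with $\ve(F)\subset J_\mu$, with equality iff $\mu\in\ve(F)$, which is Part~(2); combined with the minimality of $N_C$, this last equivalence forces $J\subset\ve(F)$, whence $J=\ve(F)$ and Part~(1) is complete.

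The main obstacle will be the first stage: proving $\ff_x=\pi_{\mu_0}(h_{\mu_0,1/\eta}(x))\in\overline{C}$. Although $y_{\mu_0}:=h_{\mu_0,1/\eta}(x)$ lies in the cone $\mu_0+\R_{\geq 0}(\overline{C}-\mu_0)$, the polytope $\overline{\cP(\mu_0)}$ generically extends into every alcove sharing the vertex $\mu_0$, so a priori the projection could land outside $\overline{C}$. I plan to rule this out by combining Lemma~\ref{l_projection_contained_face_affine} (which constrains $\ff_x$ to lie in the face of $(\A,\cH)$ containing $y_{\mu_0}$) with Lemma~\ref{l_intermediate_step_projection_P_mu_2} and Proposition~\ref{p_projection_polytope_neighbour}, so as to pin down $\ff_x$ inside $\overline{\cP(\mu_0)}\cap\overline{C}$.
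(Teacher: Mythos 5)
Your stages 2 and 3 are sound and essentially reproduce the paper's own argument: the propagation to every $\lambda\in\ve(F)$ via Lemma~\ref{l_projection_parabolic_affine_subspaces_homothety}, the minimality argument forcing $J_\lambda=\ve(F)$, the orthogonality $\di(\supp\cF_F)=\di(\supp F)^\perp$ to get $\ff_\lambda=\ff_x$, and Proposition~\ref{p_conical_description_faces} for part (3) are exactly the steps of the paper; your only cosmetic deviation is to conclude the ``more general'' identity and part (2) by extending Lemma~\ref{l_projection_F_eta} by continuity, where the paper reruns the Lemma~\ref{l_projection_homothety}/convex-fibre argument directly, and that is legitimate since Lemma~\ref{l_projection_F_eta} precedes the present lemma.

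The genuine gap is your stage 1, on which everything else depends: you never prove that $\ff_x=\pi_{\mu_0}(h_{\mu_0,1/\eta}(x))$ lies in $\overline{C}$ (equivalently that $F\subset\overline{C}$ and $|\ve(F)|=N_C$), and the tools you propose cannot deliver it. When $\eta$ is small, $y:=h_{\mu_0,1/\eta}(x)$ lies deep in the cone $\mu_0+\R_{\geq 0}(\overline{C}-\mu_0)$, far from $\overline{C}$: the face of $(\A,\cH)$ containing $y$ in general does not dominate $\mu_0$, so Lemma~\ref{l_projection_contained_face_affine} says nothing about $\pi_{\mu_0}(y)$; and $y$ need not belong to $\overline{\cP(\nu)}$ for any $\nu$ sharing an alcove with $\mu_0$, so neither Lemma~\ref{l_intermediate_step_projection_P_mu_2} nor Proposition~\ref{p_projection_polytope_neighbour} applies to it. What is actually needed is the \emph{vectorial} face-preservation statement at $\mu_0$ (Lemma~\ref{l_projection_contained_face} for $\Phi_{\mu_0}$, transported by Eq.~\eqref{e_homothetic_polytope}), i.e.\ precisely Lemma~\ref{l_projection_stabilizing_affine_faces}, which you never invoke. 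The paper closes this step by working with $\pi_{\mu_0,\eta}(x)$ rather than with $y$: Lemma~\ref{l_projection_stabilizing_affine_faces} gives $\pi_{\mu_0,\eta}(x)\in\overline{C}$; if $F_1$ is the face of $C$ containing it, then $F_1$ dominates $F$ because $h_{\mu_0,\eta}(\cF_F)$ lies in the star of $F$ (Lemma~\ref{l_inclusion_P(lambda)_union_alcoves}); and $\ff_x$, which lies on the ray from $\mu_0$ through $\pi_{\mu_0,\eta}(x)$ and in the star of $\mu_0$, then lies in $F_1\subset\overline{C}$ by Lemma~\ref{l_star_vertex_intersected_face}. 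Until stage 1 is carried out along these (or equivalent) lines, the proof is incomplete; also note, as a minor point, that your parenthetical treatment of $\eta=1$ (``trivial with $\ff_x=x$'') is not immediate and needs the same argument, e.g.\ via $\bigcap_{\lambda\in\ve(F)}\R_{\geq 0}(\overline{F}-\lambda)=\{0\}$.
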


\begin{proof}
 Let $\lambda_0\in J$. Let $F$ be the face of $(\A,\cH)$ dominating $\lambda_0$ and such that $\pi_{\lambda_0,\eta}(x)\in h_{\lambda_0,\eta}(\cF_{F})$.  By Lemma~\ref{l_projection_stabilizing_affine_faces}, we have $\pi_{\lambda_0,\eta}(x)\in \overline{C}$. Let $F_1$ be the face of $\overline{C}$ containing $\pi_{\lambda_0,\eta}(x)$. By Lemma~\ref{l_inclusion_P(lambda)_union_alcoves}, we have that $\cF_F$ is contained in the star $E$ of $F$. As $\lambda_0\in \ve(F)$ and $\eta\in ]0,1]$, the set  $E$ is stabilized by $h_{\lambda_0,\eta}$ and hence  $h_{\lambda_0,\eta}(\cF_F)\subset E$. Therefore $F_1$ dominates $F$.
 
 Write $\pi_{\lambda_0,\eta}(x)=h_{\lambda_0,\eta}(\ff_x)$, with $\ff_x\in \cF_{F}\cap \overline{C}$.
We have: \begin{align*}
    J_{\lambda_0}&= \{\nu\in \ve(C)\mid \pi_{\lambda_0,\eta}(x)\in h_{\lambda_0,\eta}(\overline{\cP(\nu)})\}\\
    &=  \{\nu\in \ve(C)\mid h_{\lambda_0,\eta}(f_x)\in h_{\lambda_0,\eta}(\overline{\cP(\nu)})\}\\
    &= \{\nu\in \ve(C)\mid f_x\in \overline{\cP(\nu)}\}=\ve(F),
\end{align*}
 by Proposition~\ref{p_faces_intersections}. By Remark~\ref{r_fibers_pi_lambda_eta}, we have \[x\in h_{\lambda_0,\eta}(\ff_x)+\mathrm{vect}(F-\lambda_0).\]

Let $\mu\in \ve(F)$. Then \[x\in h_{\mu,\eta}(\ff_x)+(1-\eta)(\lambda_0-\mu)+\mathrm{vect}(F-\lambda_0)=h_{\mu,\eta}(\ff_x)+\mathrm{vect}(F-\mu).\] By Lemma~\ref{l_projection_parabolic_affine_subspaces_homothety}, we have $\pi_{\mu,\eta}(x)\in h_{\mu,\eta}(\ff_x)+\mathrm{vect}(F-\mu)$ and \[J_\mu=\{\nu\in \ve(\A)\mid \pi_{\mu,\eta}(x)\in h_{\mu,\eta}(\overline{\cP(\nu)})\}\subset \ve(F)=J_{\lambda_0}.\] 
By minimality of $J_{\lambda_0}$, we deduce \begin{equation}\label{e_J_mu_subset_J}
J_\mu=J_{\lambda_0}=\ve(F)\subset J. 
\end{equation}

By the previous computations and by Proposition~\ref{p_faces_intersections}, we have $h_{\mu,\eta}(\ff_x),\pi_{\mu,\eta}(x)\in h_{\mu,\eta}(\cF_F)\cap (x+\vect(F-\mu))$. It follows from Proposition~\ref{p_description_faces_affine} that   $\di(\supp(h_{\mu,\eta}(\cF_F)))=\di(\supp(\cF_F))=\di(\supp(F))^\perp$ and thus $h_{\mu,\eta}(\cF_F)\cap (x+\vect(F-\mu))$ is either empty or a singleton. Therefore  \[h_{\mu,\eta}(\ff_x)=\pi_{\mu,\eta}(x).\]

Let $\lambda \in \ve(F)$. By Lemma~\ref{l_description_fibers_pi_affine}, we have \[x\in \pi_{\lambda,\eta}(x)+\R_{\geq 0} (\overline{F}-\lambda)=\hh_{\lambda,\eta}(\ff_x)+\R_{\geq 0} (\overline{F}-\lambda).\] Using Proposition~\ref{p_conical_description_faces}, we deduce that $x$ belongs to: 
\begin{align*}
    \bigcap_{\lambda\in \ve(F)}\left(\hh_{\lambda,\eta}(\ff_x)+\R_{\geq 0}(\overline{F}-\lambda)\right)&= \eta\ff_x+(1-\eta)\bigcap_{\lambda\in \ve(F)}(\lambda+\R_{\geq 0} (\overline{F}-\lambda))\\&=\eta \ff_x+(1-\eta)\overline{F}.
\end{align*}
  Using \eqref{e_description_F_eta_bar} we deduce statement 3): \begin{equation}\label{e_convex_hull_hLambda_f_x}
        x\in \conv(\{h_{\lambda,\eta}(f_x)\mid \lambda\in \ve(F)\}).
    \end{equation}

Let $\mu\in \ve(C)$.  By Propositions~\ref{p_faces_intersections} and   \ref{p_projection_polytope_neighbour}, we have \begin{equation}\label{e_pi_mu_f_x_intersection}
    \pi_\mu(\ff_x)\in \bigcap_{\nu\in \ve(F)\cup \{\mu\}} \overline{\cP(\nu)}.
\end{equation} Let $F_2$ be the face of $(\A,\cH)$ such that $\pi_\mu(f_x)\in \cF_{F_2}$. Then by Proposition~\ref{p_faces_intersections}, we have $\ve(F_2)\supset \ve(F).$  Let $\lambda\in \ve(F)$. By Lemma~\ref{l_projection_homothety}, we have $\pi_{\mu,\eta}(\hh_{\lambda,\eta}(\ff_x))=\hh_{\mu,\eta}(\pi_\mu(\ff_x))$. By Eq.~\eqref{e_characterization_projection}, $(\pi_{\mu,\eta})^{-1}\left(\hh_{\mu,\eta}\left(\pi_\mu\left(\ff_x\right)\right)\right)$  is convex. Using Eq.~\eqref{e_convex_hull_hLambda_f_x} we deduce \[\pi_{\mu,\eta}(x)=\hh_{\mu,\eta}\left(\pi_\mu\left(\ff_x\right)\right).\]  Using Eq.~\eqref{e_pi_mu_f_x_intersection} we deduce that $J_\mu\supset \ve(F)\cup\{\mu\}=J_{\lambda_0}\cup \{\mu\}$. Consequently for $\mu\in \ve(C)\setminus J_{\lambda_0}$, we have $|J_\mu|\geq |J_{\lambda_0}|+1=N_C+1$ and $\mu\notin J$. Therefore  $J\subset J_{\lambda_0}$ and hence by \eqref{e_J_mu_subset_J}, we have \[J=J_{\lambda_0}=\ve(F),\] which completes the proof of the lemma.

\end{proof}

\begin{Lemma}\label{l_projection_separation}
Let $C\in \Alc(\cH)$ and $x,x'\in  \overline{C}$. Then $x=x'$ if and only if (for all $\lambda\in \ve(C)$, $\pi_\lambda(x)=\pi_\lambda(x')$).
\end{Lemma}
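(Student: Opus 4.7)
The forward implication is immediate, so the plan focuses on showing that if $\pi_\lambda(x)=\pi_\lambda(x')$ for all $\lambda\in \ve(C)$, then $x=x'$.

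First, I would invoke Theorem~\ref{t_tessellation_A} to obtain the unique face $F$ of $(\A,\cH)$ with $x\in \cF_F$. Since $x\in \overline{C}$ and $\overline{\cF_F}$ is contained in the star of $F$ (Lemma~\ref{l_inclusion_P(lambda)_union_alcoves}), any face of $\overline{C}$ meeting $\cF_F$ dominates $F$, so $\overline{F}\subset \overline{C}$ and in particular $\ve(F)\subset \ve(C)$. This ensures that the hypothesis applies for every $\lambda\in \ve(F)$.

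Next, for each $\lambda\in \ve(F)$, Proposition~\ref{p_faces_intersections}(1) gives $\cF_F\subset \overline{\cP(\lambda)}$, so $\pi_\lambda(x)=x$ and therefore $\pi_\lambda(x')=x$ by hypothesis. Applying Lemma~\ref{l_description_fibers_pi_affine}, I would deduce $x'\in \pi_\lambda^{-1}(\{x\})=x+\R_{\geq 0}(\overline{F}-\lambda)$ for every $\lambda\in \ve(F)$, i.e.,
\[
x'-x\in \bigcap_{\lambda\in \ve(F)}\R_{\geq 0}(\overline{F}-\lambda).
\]

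The final step — which I expect to be the main content of the proof — is to show that this intersection reduces to $\{0\}$. For any $v$ in the intersection, the affine functional $f\mapsto \langle v\mid f\rangle$ attains its maximum on the compact polytope $\overline{F}$ at some extreme point $\lambda_0\in \ve(F)$ (Krein--Milman, together with the fact that the extreme points of $\overline{F}$ are exactly $\ve(F)$ via \cite[Theorem 1.10, Proposition 1.8]{bruns2009polytopes}). Hence $\langle v\mid f-\lambda_0\rangle\leq 0$ for all $f\in \overline{F}$, which extends by conic combinations to $\langle v\mid u\rangle\leq 0$ for every $u\in \R_{\geq 0}(\overline{F}-\lambda_0)$. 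Since $v$ itself lies in this cone, we get $|v|^2\leq 0$, hence $v=0$. This yields $x=x'$ and completes the proof.
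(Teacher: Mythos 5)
Your argument is correct, but it follows a genuinely different route from the paper. The paper deduces the lemma in two lines from the technical Lemma~\ref{l_convex_hull_projections} applied with $\eta=1$: the sets $J_\lambda$ and $J$ built from the (common) projections produce a vertex $\lambda\in J$ with $x,x'\in\overline{\cP(\lambda)}$, so $\pi_\lambda(x)=x$ and $\pi_\lambda(x')=x'$ and the hypothesis gives $x=x'$. You instead avoid Lemma~\ref{l_convex_hull_projections} altogether: you locate the face $F$ with $x\in\cF_F$ via Theorem~\ref{t_tessellation_A}, use $\overline{\cF_F}\subset$ star of $F$ (Lemma~\ref{l_inclusion_P(lambda)_union_alcoves}) and the fact that $\overline{C}$ is a union of faces to get $\ve(F)\subset\ve(C)$, use $\overline{\cF_F}=\bigcap_{\lambda\in\ve(F)}\overline{\cP(\lambda)}$ (Proposition~\ref{p_faces_intersections}) to see $\pi_\lambda(x)=x$, then the fiber description $\pi_\lambda^{-1}(\{x\})=x+\R_{\geq 0}(\overline{F}-\lambda)$ (Lemma~\ref{l_description_fibers_pi_affine}) to place $x'-x$ in $\bigcap_{\lambda\in\ve(F)}\R_{\geq 0}(\overline{F}-\lambda)$, and finally a clean extreme-point/cone argument (maximize $\langle v\mid\cdot\rangle$ on $\overline{F}$ at a vertex $\lambda_0$ and use $v\in\R_{\geq 0}(\overline{F}-\lambda_0)$ to get $|v|^2\leq 0$) to show this intersection is $\{0\}$; this last step is a closed-cone analogue of Proposition~\ref{p_conical_description_faces}, proved directly. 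What each approach buys: the paper's proof is short because it reuses heavy machinery that is needed for the thickened tessellation anyway, whereas yours is more elementary and self-contained, and in fact proves slightly more — you only use the hypothesis at the vertices of $F_{\cP}(x)$ and never use $x'\in\overline{C}$. The only point to tighten is the justification of $\ve(F)\subset\ve(C)$: say explicitly that the face $F_1$ of $(\A,\cH)$ containing $x$ lies in $\overline{C}$ (the faces partition $\A$ and $\overline{C}$ is a union of closed faces) and dominates $F$ by Lemma~\ref{l_inclusion_P(lambda)_union_alcoves}, whence $\overline{F}\subset\overline{F_1}\subset\overline{C}$.
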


\begin{proof}
Assume that $\pi_\lambda(x)=\pi_\lambda(x')$ for every $\lambda\in \ve(C)$. For $\lambda\in \ve(C)$, set \[J_\lambda=\{\mu\in \ve(C)\mid \pi_{\lambda}(x)\in \overline{\cP(\lambda)}\cap \overline{\cP(\mu)}\}.\] Let $J=\{\lambda\in \ve(C)\mid |J_\lambda|=N_C(x)   \}$. By Lemma~\ref{l_convex_hull_projections} applied with $\eta=1$, we have $\{x,x'\}\subset \bigcap_{\lambda\in J} \overline{\cP(\lambda)}$. Take $\lambda\in J$. Then $\pi_\lambda(x)=x=\pi_\lambda(x')=x'$. 
\end{proof}

\begin{Lemma}\label{l_inclusion_F_eta_face}
Let $x\in \A$, $\eta\in ]0,1]$ and $F$ be the face of $(\A,\cH)$ containing $x$. Then $\overline{F_\eta(x)}\subset F$. 
\end{Lemma}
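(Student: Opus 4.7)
The plan is to reduce the statement to the classical line-segment principle from convex analysis, after correctly linking the face $F$ containing $x$ to the face $F_{\cP}(x)$ indexing the polytope cell that $x$ lies in.

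First, I would show that every $\lambda \in \ve(F_{\cP}(x))$ belongs to $\overline{F}$. Since $x \in \cF_{F_{\cP}(x)}$ and, by Lemma~\ref{l_inclusion_P(lambda)_union_alcoves}(1), $\overline{\cF_{F_{\cP}(x)}}$ is contained in the star of $F_{\cP}(x)$, the face $F$ that contains $x$ dominates $F_{\cP}(x)$. Hence $F_{\cP}(x) \subset \overline{F}$, and in particular $\ve(F_{\cP}(x)) \subset \ve(F) \subset \overline{F}$.

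Second, I would invoke the line-segment principle applied to the convex set $\overline{F}$. The face $F$ is relatively open and convex (by Lemma~\ref{l_enclosure_faces}, it is an intersection of affine hyperplanes and strict open half-spaces), so $F = \In_r(\overline{F})$. A standard result of convex analysis (see e.g.\ \cite[A. 2.1.6]{hiriart2012fundamentals}) then says that if $y \in \In_r(\overline{F})$ and $z \in \overline{F}$, then $(1-t)z + ty \in F$ for every $t \in ]0,1]$. Applied with $y = x$ and $z = \lambda$, this yields $h_{\lambda,\eta}(x) = \eta x + (1-\eta)\lambda \in F$ for every $\lambda \in \ve(F_{\cP}(x))$ and every $\eta \in ]0,1]$.

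Finally, by Definition~\ref{d_Feta}, $\overline{F_\eta(x)} = \conv(\{h_{\lambda,\eta}(x) \mid \lambda \in \ve(F_{\cP}(x))\})$. Since each of these points lies in $F$ by the preceding step, and $F$ is convex, we conclude $\overline{F_\eta(x)} \subset F$.

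There is essentially no obstacle: the argument is a short convex-analysis fact once one identifies the dominance relation $F \succeq F_{\cP}(x)$ coming from Lemma~\ref{l_inclusion_P(lambda)_union_alcoves}(1). The only subtlety worth verifying explicitly is the convexity and relative-openness of $F$, both of which follow directly from its description via the affine roots in Lemma~\ref{l_enclosure_faces}.
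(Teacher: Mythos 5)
Your proof is correct and follows essentially the same route as the paper: both use Lemma~\ref{l_inclusion_P(lambda)_union_alcoves}(1) to see that $F$ dominates $F_{\cP}(x)$, then the line-segment principle (the paper phrases it as $]\lambda,x]\subset F$ for $\lambda\in\ve(F_{\cP}(x))$) to place each $h_{\lambda,\eta}(x)$ in $F$, and conclude by convexity of $F$. Your extra care about $F=\In_r(\overline{F})$ is a harmless elaboration of the same step.
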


\begin{proof}
    Let $F_1\in \sF(\cH)$ be  such that $x\in \cF_{F_1}$. Then by Lemma~\ref{l_inclusion_P(lambda)_union_alcoves}, we have $F_1\subset \overline{F}$. For $\lambda\in \ve(F_1)$, we thus have $]\lambda,x]\subset F$ and $h_{\lambda,\eta}(x)\in F$. Consequently $\overline{F_\eta(x)}=\conv(\{h_{\lambda,\eta}(x)\mid \lambda\in \ve(F_1\})\subset F$. 
\end{proof}

\begin{Theorem}\label{t_thickened_tessellation}
Let $\A$ be a finite dimensional vector space and $\Phi$ be a finite root system in $\A$. Let $\cH=\{\alpha^{-1}(\{k\})\mid \alpha \in \Phi, \,\, k\in \Z\}$. We choose a fundamental alcove $C_0$  of $(\A,\cH)$ and choose $\bb_0\in C_0$. If $\lambda$ is a vertex of $(\A,\cH)$, we choose an alcove $C_\lambda$ dominating $\lambda$ and  we set $\bb_{C_\lambda}=w.\bb_0$ if $w$ is the element of the affine Weyl group $W^{\mathrm{aff}}$ of $(\A,\cH)$ sending $C_0$ to $C$. We set $\overline{\cP(\lambda)}=\conv(W^{\mathrm{aff}}_\lambda.\bb_{C_\lambda})$, where $W^{\mathrm{aff}}_\lambda$ is the fixator of $\lambda$ in $W^{\mathrm{aff}}$. If $x\in \A$, we define $F_{\cP}(x)$ as the unique face of $(\A,\cH)$ such that $x\in \cF_{F_{\cP}(x)}:=\Int_r(\conv(W^{\mathrm{aff}}_{F_{\cP}(x)}.\bb_C))$, where $\bb_C$ is an alcove dominating $F_{\cP}(x)$ and $W^{\mathrm{aff}}_{F_\cP(x)}$ is the fixator of $F_{\cP}(x)$ in $W^{\mathrm{aff}}$. If $\eta\in ]0,1]$, we set $F_\eta(x)=\eta x+(1-\eta)F_{\cP}(x)$.    Then we have: \[\forall \eta\in ]0,1],\ \A=\bigsqcup_{x\in \A} \overline{F_\eta(x)}.\]
\end{Theorem}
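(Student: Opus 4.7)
The plan is to establish two things: covering, $\A=\bigcup_{x\in\A}\overline{F_\eta(x)}$, and disjointness, $\overline{F_\eta(x)}\cap\overline{F_\eta(x')}=\emptyset$ whenever $x\neq x'$. The heavy lifting is done by Lemma~\ref{l_convex_hull_projections}: given an alcove $C$ and a point $y\in\overline{C}$, it produces a distinguished element $\ff_y\in\overline{C}\cap\cF_F$ (for a certain face $F$ of $(\A,\cH)$) such that $y\in\overline{F_\eta(\ff_y)}$ and $\pi_{\mu,\eta}(y)=h_{\mu,\eta}(\pi_\mu(\ff_y))$ for every $\mu\in\ve(C)$. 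Granted this lemma, the entire proof reduces to showing that whenever $y\in\overline{F_\eta(x)}$ for some $x\in\A$, one necessarily has $x=\ff_y$.

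Covering is immediate: for $y\in\A$, choose any alcove $C$ with $y\in\overline{C}$ and apply Lemma~\ref{l_convex_hull_projections}(3) to obtain $\ff_y$ with $y\in\overline{F_\eta(\ff_y)}$.

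For the identification $x=\ff_y$, suppose $y\in\overline{F_\eta(x)}$. By Lemma~\ref{l_inclusion_F_eta_face}, $x$ and $y$ belong to the same face of $(\A,\cH)$; consequently any alcove $C$ containing $y$ in its closure also contains $F_\cP(x)$, and hence $\ve(F_\cP(x))\subset\ve(C)$. Writing $y=\eta x+(1-\eta)z$ with $z\in\overline{F_\cP(x)}$, for each $\lambda\in\ve(F_\cP(x))$ one obtains $y-h_{\lambda,\eta}(x)=(1-\eta)(z-\lambda)\in\R_{\geq 0}(\overline{F_\cP(x)}-\lambda)$, and Remark~\ref{r_fibers_pi_lambda_eta} then forces $\pi_{\lambda,\eta}(y)=h_{\lambda,\eta}(x)$. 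Combined with Proposition~\ref{p_faces_intersections}(3), this identifies the set $J_\mu$ from Lemma~\ref{l_convex_hull_projections} with $\ve(F_\cP(x))$ for every $\mu\in\ve(F_\cP(x))$. The universal inclusion $J\subset J_\mu$ from Lemma~\ref{l_convex_hull_projections}(2) then yields $J\subset\ve(F_\cP(x))$; choosing any $\lambda\in J\subset\ve(F_\cP(x))$ and comparing $h_{\lambda,\eta}(\ff_y)=\pi_{\lambda,\eta}(y)=h_{\lambda,\eta}(x)$ gives $\ff_y=x$, since $h_{\lambda,\eta}$ is an affine bijection.

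The main technical obstacle has already been isolated in Lemma~\ref{l_convex_hull_projections}, which packages the geometric information needed to reconstruct $\ff_y$ from the projections $\pi_{\mu,\eta}(y)$. Once that input is in hand, the argument above is a relatively clean bookkeeping exercise built around Remark~\ref{r_fibers_pi_lambda_eta} and Proposition~\ref{p_faces_intersections}(3).
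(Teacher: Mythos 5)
Your proof is correct and shares the paper's skeleton, but the uniqueness half follows a genuinely different route. The covering step is identical: both you and the paper get it from Lemma~\ref{l_convex_hull_projections}(3). For disjointness, the paper first proves (Lemma~\ref{l_projection_F_eta}) that $\pi_{\mu,\eta}(y)=h_{\mu,\eta}(\pi_\mu(x))$ for \emph{every} vertex $\mu$ of a common alcove, deduces $\pi_\mu(x)=\pi_\mu(x')$ for all such $\mu$, and concludes with the separation statement Lemma~\ref{l_projection_separation} (itself proved by applying Lemma~\ref{l_convex_hull_projections} with $\eta=1$). You instead recover $x$ directly as the distinguished point $\ff_y$: for $\lambda\in \ve(F_\cP(x))$ one has $\pi_\lambda(x)=x$, so only the fiber description of Remark~\ref{r_fibers_pi_lambda_eta} is needed to get $\pi_{\lambda,\eta}(y)=h_{\lambda,\eta}(x)$, whence $J_\lambda=\ve(F_\cP(x))$ by Proposition~\ref{p_faces_intersections}(3), and the inclusion $J\subset J_\lambda$ from Lemma~\ref{l_convex_hull_projections}(2) forces $\ff_y=x$. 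This bypasses Lemma~\ref{l_projection_F_eta} (hence Proposition~\ref{p_projection_polytope_neighbour} at this final stage) and Lemma~\ref{l_projection_separation}, at the cost of invoking parts (1) and (2) of Lemma~\ref{l_convex_hull_projections} rather than only part (3); since that lemma is the real engine in both arguments the dependency trees are comparable, but your version is leaner at the level of the theorem's proof. Two small points you should make explicit: fix a single alcove $C$ with $y\in\overline{C}$ and justify $\ve(F_\cP(x))\subset \ve(C)$ (since $x$ and $y$ lie in the same face $F$ by Lemma~\ref{l_inclusion_F_eta_face}, one has $F\subset\overline{C}$, and $F_\cP(x)\subset\overline{F}$ by Lemma~\ref{l_inclusion_P(lambda)_union_alcoves}); and observe that $x$ and $x'$ are both identified with the same point $\ff_y$ attached to this one choice of $C$, so that $x=\ff_y=x'$.
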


\begin{proof}
Let $C$ be an alcove of $(\A,\cH)$ and $y\in \overline{C}$. For $\lambda\in \ve(C)$, set  $J_{\lambda,\eta}(y)=\{\mu\in \ve(C)\mid \pi_{\lambda,\eta}(y)\in h_{\lambda,\eta}(\overline{\cP(\mu)})\}$. Let
$N_{C,\eta}=\min \{|J_{\mu,\eta}(y)|\mid \mu\in \ve(C)\}$, and 
\[\tilde{J}=\{\lambda\in \ve(C)\mid |J_{\lambda,\eta}(y)|=N_{C,\eta}\}.\] By Lemma~\ref{l_convex_hull_projections}, there exists  $\ff_y\in \Int_r\left(\bigcap_{\nu\in\tilde{J}}\overline{\cP(\nu)}\right)$ such that $y\in \overline{F_\eta(\ff_y)}$. Therefore $\bigcup_{x\in \overline{C}} \overline{F_\eta(x)}=\overline{C}$.

Let $x,x'\in \A$ be such that  $\overline{F_\eta}(x)\cap \overline{F_{\eta}}(x')\neq \emptyset$ and let $y\in \overline{F_\eta(x)}\cap \overline{F_{\eta}(x')}$. Let $F$ and $F'$ be the faces of $(\A,\cH)$ containing $x$ and $x'$ respectively. Then by Lemma~\ref{l_inclusion_F_eta_face}, $y\in F\cap F'$, which proves that $F=F'$. Let $C$ be an alcove dominating $F$.   By Lemma~\ref{l_projection_F_eta}, we have $\pi_{\mu,\eta}(y)=\hh_{\mu,\eta}(\pi_\mu(x))=\hh_{\mu,\eta}(\pi_\mu(x'))$ for every $\mu\in \ve(C)$. Therefore $\pi_\mu(x)=\pi_\mu(x')$ for every $\mu\in \ve(C)$ and $x=x'$ by Lemma~\ref{l_projection_separation}.
\end{proof}

\begin{Corollary}
  Let $\eta\in ]0,1]$.  We have: \[\A=\bigsqcup_{\lambda\in \ve(\A)}h_{\lambda,\eta}(\cP(\lambda))\sqcup \bigsqcup_{x\in \bigcup_{\lambda\in \ve(\A)}\Fr(\cP(\lambda))} \overline{F_\eta(x)}.\]
\end{Corollary}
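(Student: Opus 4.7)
The plan is to derive this corollary directly from Theorem~\ref{t_thickened_tessellation}, which already gives $\A=\bigsqcup_{x\in \A} \overline{F_\eta(x)}$. What remains is to split the indexing set $\A$ into two parts according to the dimension of $F_{\cP}(x)$, and to identify these two parts with the two unions appearing in the corollary. The natural partition comes from Theorem~\ref{t_tessellation_A}, which yields $\A=\bigsqcup_{F\in \sF}\cF_F$; one sets $\A_0=\bigsqcup_{\lambda\in \ve(\A)}\cF_{\{\lambda\}}=\bigsqcup_{\lambda\in \ve(\A)} \cP(\lambda)$ and $\A_{\geq 1}=\bigsqcup_{F\in \sF,\ \dim F\geq 1}\cF_F$.

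For the first piece, if $x\in \cP(\lambda)$ then $F_{\cP}(x)=\{\lambda\}$, so $\overline{F_\eta(x)}=\eta x+(1-\eta)\overline{\{\lambda\}}=\{h_{\lambda,\eta}(x)\}$ is a singleton. As $x$ ranges over $\cP(\lambda)$, this singleton ranges over $h_{\lambda,\eta}(\cP(\lambda))$, and $\bigsqcup_{x\in \A_0}\overline{F_\eta(x)}=\bigsqcup_{\lambda\in \ve(\A)} h_{\lambda,\eta}(\cP(\lambda))$. The disjointness for distinct $\lambda$'s follows from the disjointness in Theorem~\ref{t_thickened_tessellation} (and also directly from the injectivity of each $h_{\lambda,\eta}$ combined with the observation that $h_{\lambda,\eta}(\cP(\lambda))$ is the singleton $\overline{F_\eta(x)}$ for the unique $x=h_{\lambda,\eta}^{-1}(h_{\lambda,\eta}(x))\in \cP(\lambda)$).

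For the second piece, the key set-theoretic equality to verify is
\[
\A_{\geq 1}\;=\;\bigcup_{\lambda\in \ve(\A)}\Fr(\cP(\lambda)).
\]
The inclusion ``$\subseteq$'' uses Proposition~\ref{p_description_faces_affine}: if $x\in \cF_F$ with $\dim F\geq 1$, pick any $\lambda\in \ve(F)$; then $\cF_F$ is a proper face of $\overline{\cP(\lambda)}$, so $x\in \overline{\cP(\lambda)}\setminus \cP(\lambda)=\Fr(\cP(\lambda))$. For ``$\supseteq$'', if $x\in \Fr(\cP(\lambda))$, then $x\in \overline{\cP(\lambda)}\setminus \cF_{\{\lambda\}}$; by Theorem~\ref{t_tessellation_A} there is a unique $F\in \sF$ with $x\in \cF_F$, and this $F$ cannot equal $\{\lambda\}$. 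Moreover, by Proposition~\ref{p_faces_intersections}(3), the inclusion $x\in \overline{\cP(\lambda)}$ forces $\lambda\in \ve(F)$, so $F$ has at least two vertices and $\dim F\geq 1$.

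Once this identification is in place, combining both pieces gives
\[
\A=\bigsqcup_{x\in \A_0}\overline{F_\eta(x)}\ \sqcup\ \bigsqcup_{x\in \A_{\geq 1}}\overline{F_\eta(x)}=\bigsqcup_{\lambda\in \ve(\A)} h_{\lambda,\eta}(\cP(\lambda))\ \sqcup\ \bigsqcup_{x\in \bigcup_{\lambda}\Fr(\cP(\lambda))}\overline{F_\eta(x)},
\]
which is the desired formula. I expect no genuine obstacle here: the only subtlety is checking the set equality $\A_{\geq 1}=\bigcup_\lambda \Fr(\cP(\lambda))$, which is immediate from Theorem~\ref{t_tessellation_A} together with Propositions~\ref{p_description_faces_affine} and~\ref{p_faces_intersections}; the disjointness and the singleton computation on $\A_0$ are formal.
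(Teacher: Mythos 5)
Your proof is correct, and it is exactly the deduction the paper leaves implicit: the corollary is stated without proof as an immediate consequence of Theorem~\ref{t_thickened_tessellation}, and your splitting of the index set $\A$ according to whether $F_{\cP}(x)$ is a vertex, together with the identifications $\bigsqcup_{x\in\cP(\lambda)}\overline{F_\eta(x)}=h_{\lambda,\eta}(\cP(\lambda))$ and $\bigsqcup_{\dim F\geq 1}\cF_F=\bigcup_{\lambda\in\ve(\A)}\Fr(\cP(\lambda))$ (via Theorem~\ref{t_tessellation_A} and Propositions~\ref{p_description_faces_affine} and~\ref{p_faces_intersections}), is the intended argument. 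No gaps.
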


\begin{figure}[h]
 \centering
 \includegraphics[scale=0.35]{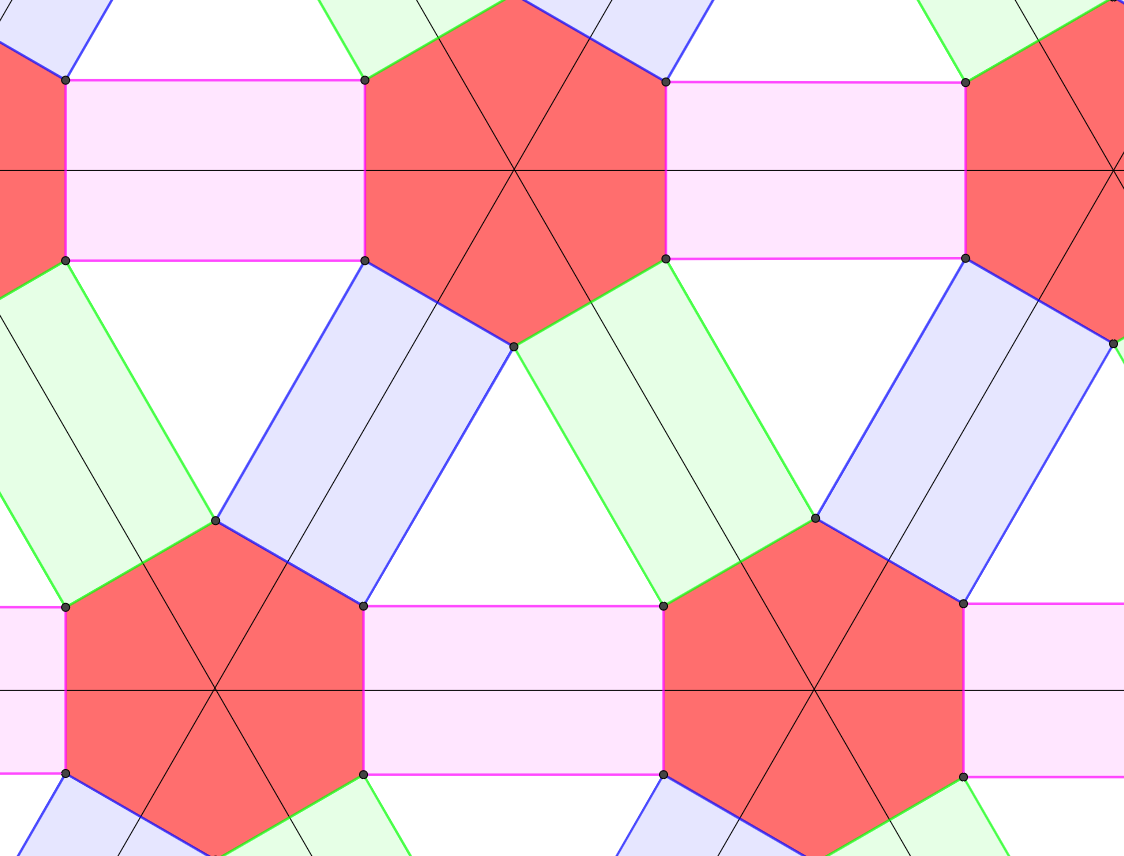}
 \caption{Illustration of Theorem~\ref{t_thickened_tessellation} in the case of $\mathrm{A}_2$ and $\eta=1/2$.}
 \label{f_explanation_t_thickened_A2}
\end{figure}

\begin{figure}[h]
 \centering
 \includegraphics[scale=0.35]{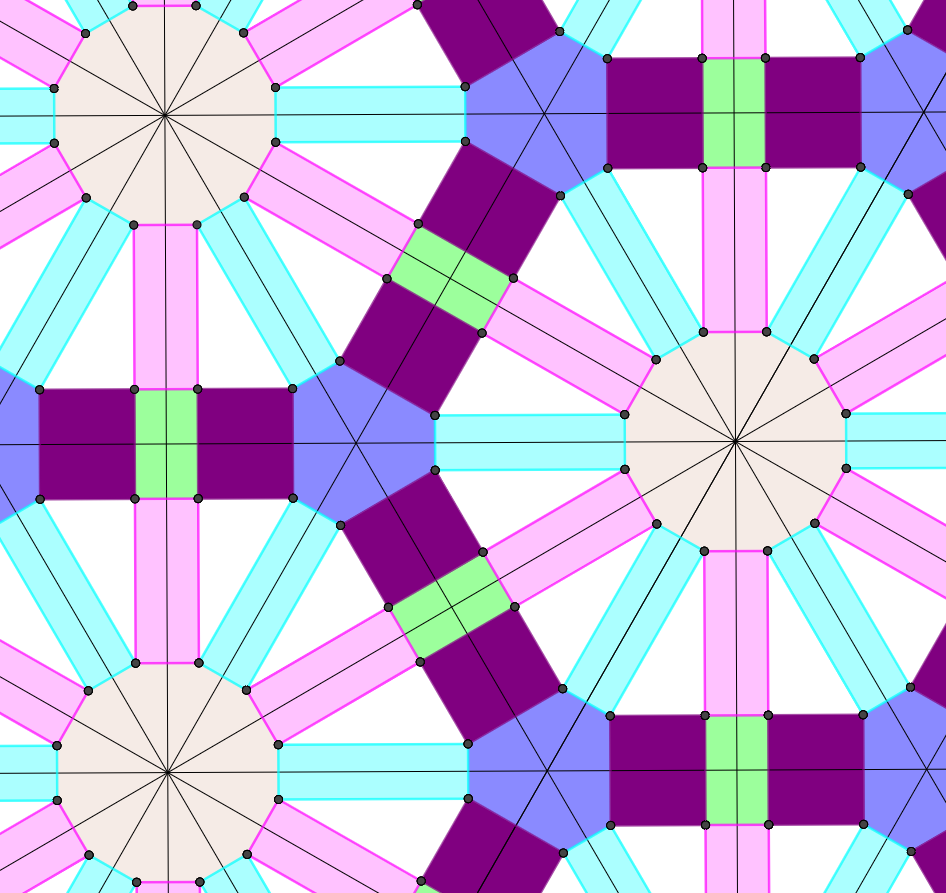}
 \caption{Illustration of Theorem~\ref{t_thickened_tessellation} in the case of $\mathrm{G}_2$ and $\eta=1/2$.}
 \label{f_explanation_t_thickened_G2}  
\end{figure}

\begin{Remark}
\begin{enumerate}
\item Note that when $\eta=0$, we have $F_0(\bb_C)=\overline{C}$ for each alcove $C$. Therefore we still have $\A=\bigcup_{x\in \A} F_0(x)$, but this is no longer a disjoint union according to Lemma \ref{l_intersection_P(lambda)_P(mu)_adjacent}.

\item Let $F$ be a face of $(\A,\cH)$. Let $x\in \cF_F$ and $\eta\in ]0,1[$. Then for every vertex $\lambda$ of $F$, we have $F_\eta(x)=\eta x+(1-\eta) F=h_{\lambda,\eta}(x)+(1-\eta)(F-\lambda)$. Indeed, we have $h_{\lambda,\eta}(x)=\lambda+\eta(x-\lambda)=(1-\eta)\lambda+\eta x$.

\end{enumerate}

\end{Remark}

\printindex

\bibliography{bibliographie.bib}

\end{document}